\RequirePackage{fix-cm}
\documentclass[smallcondensed]{svjour3}
\smartqed

\usepackage{booktabs}
\usepackage{amssymb}
\usepackage{float}
\usepackage{xcolor}
\usepackage[most]{tcolorbox}
\usepackage{cite}
\usepackage[title,titletoc]{appendix}
\usepackage[ruled,vlined]{algorithm2e}
\UseRawInputEncoding
\usepackage{graphics,graphicx,epstopdf}
\usepackage{physics}
\usepackage{hyperref}
\usepackage{amsmath}
\usepackage{bm}
\usepackage{bbm}
\usepackage{subcaption}
\usepackage[export]{adjustbox}
\usepackage{array}
\usepackage{verbatim}
\allowdisplaybreaks
 \newcommand{\BR}{\mathsf{BR}}
\definecolor{blue}{rgb}{0,0,0.9}
\definecolor{red}{rgb}{0.9,0,0}
\definecolor{green}{rgb}{0,0.6,0}

\newcommand{\blue}[1]{\textcolor{blue}{#1}}

\SetKwInput{KwInput}{Input}
\SetKwInput{KwConstants}{Constants}
\SetKwInput{KwOutput}{Output}
\SetKwFor{AgentFor}{each agent}{in parallel:}{}
\SetKwFor{SnodeFor}{each supernode}{in parallel:}{}
\SetKwFor{SuperEdgeFor}{each superedge}{in parallel:}{}
\SetKwFor{EdgeFor}{each edge}{in parallel:}{}
\SetKwFor{SubgraphFor}{each subgraph}{in parallel:}{}
\SetInd{0.25em}{0.45em}
\newcommand{\ben}{\begin{enumerate}}
\newcommand{\een}{\end{enumerate}}
\newcommand{\bfr}{\begin{frame}}
\newcommand{\efr}{\end{frame}}
\newcommand{\bit}{\begin{itemize}}
\newcommand{\eit}{\end{itemize}}
\newcommand{\argmin}{\mathop{\rm argmin}}

\newcommand{\R}{\mathbb R}

\newcommand{\cA}{\mathcal A}
\newcommand{\cB}{\mathcal B}
\newcommand{\cC}{\mathcal C}
\newcommand{\cD}{\mathcal D}
\newcommand{\cE}{\mathcal E}
\newcommand{\cF}{\mathcal F}
\newcommand{\cG}{\mathcal G}
\newcommand{\cH}{\mathcal H}
\newcommand{\cI}{\mathcal I}
\newcommand{\cJ}{\mathcal J}
\newcommand{\cL}{\mathcal L}
\newcommand{\cM}{\mathcal M}

\newcommand{\cP}{\mathcal P}

\newcommand{\cR}{\mathcal R}
\newcommand{\cS}{\mathcal S}
\newcommand{\cT}{\mathcal T}
\newcommand{\cU}{\mathcal U}
\newcommand{\cV}{\mathcal V}

\def\inner#1{\left\langle#1\right\rangle}

\makeatletter
\@ifundefined{definition}{\newtheorem{definition}{Definition}[section]}{}
\@ifundefined{theorem}{\newtheorem{theorem}{Theorem}[section]}{}
\@ifundefined{corollary}{\newtheorem{corollary}{Corollary}[section]}{}
\@ifundefined{lemma}{\newtheorem{lemma}{Lemma}[section]}{}
\@ifundefined{proposition}{\newtheorem{proposition}{Proposition}[section]}{}
\@ifundefined{remark}{\newtheorem{remark}{Remark}[section]}{}
\@ifundefined{assumption}{\newtheorem{assumption}{Assumption}}{}
\@ifundefined{thm}{}{}
\@ifundefined{example}{\newtheorem{example}{Example}[section]}{}
\makeatother

\def\dist{{\rm dist}}

\def\Range{{\rm Range}}

\journalname{Journal Name}

\begin{document}

\title{ From Mixing to Tearing: Graph Decomposition in Decentralized Optimization via Message Passing}
\titlerunning{Decentralized Optimization via Message Passing}
\author{Kuangyu Ding \and Gesualdo Scutari \vspace{-3cm}}
\institute{K. Ding \at Edwardson School of Industrial Engineering, Purdue University, West Lafayette, IN. 
              \email{ding433@purdue.edu}. \and G. Scutari \at Edwardson School of Industrial Engineering \& Elmore Family School of Electrical and Computer Engineering, Purdue University, West Lafayette, IN. \email{gscutari@purdue.edu}. \smallskip \newline The work of Ding and Scutari has been supported by the ONR Grant \#  N00014-24-1-2751.       }
\date{}
\maketitle
 
 \begin{abstract}\vspace{-1.4cm }We study   decentralized minimization of sums of smooth strongly convex functions, one per  agent, with  communication restricted to   neighbors in a graph. Existing decentralized algorithms typically emulate a {\it centralized} method; {\it the network serves only to exchange information}, from which agents approximate   the global quantities that the method   needs. This paper breaks with this paradigm. We introduce a {\it graph decomposition framework} that uses
  the network to design the method, not merely execute it.
  It jointly determines how agreement is imposed, which variables
  are optimized jointly (blocks), and which connected subgraphs host
  the agents solving each block. The convergence analysis
  quantifies the trade-offs among these choices. 
  GATE (Graph-Tearing message passing) instantiates the framework  by assigning one dual variable per edge and partitioning the
  edges into trees. Agents interleave edge updates with local
  message passing recursions that summarize optimization within each
  tree (block). 
  Its surrogate variant, GATE-S, uses tractable local models and
  compact messages to reduce computation and communication.
  We establish linear convergence and show that   the topology-aware design acts as a \emph{preconditioner}:      the rate depends {\it jointly} on
  topology and function geometry through the coupling between
  subproblems defined on the selected subgraphs,    whereas consensus-based optimization methods pay the worst case of each separately, the centralized condition number times the inverse spectral gap. 
   On tree-dominant graphs, convergence is provably faster: the rate scales with the tree {\it diameter}, linearly in the number of agents on rings and barbells against quadratically for non-accelerated consensus-optimization methods. Experiments support the predicted scaling and savings in iterations and communication. 
   \vspace{-0.5cm}
\end{abstract}
\section{Introduction}
\label{sec:introduction} \vspace{-0.2cm}
The paper studies
  decentralized optimization over an undirected, connected, and simple   graph \(\cG=(\cV,\cE)\), where   $\cV$ is the set of nodes (hereafter, agents), with \(|\cV|=m\), and $\cE$ is the set of edges; neighboring agents can exchange information over single-hop links.    Each agent \(i\in\cV\) knows only its own  cost function \(f_i:\R^d\to\R\),  assumed strongly convex and smooth on  \(\R^d\).  The agents seek to   solve  cooperatively  the following optimization problem
\begin{equation}
    \min_{x\in\R^d} F(x):=\sum_{i\in\cV} f_i(x).
    \label{eq:clean_prob}\tag{P}\vspace{-0.1cm}
\end{equation}
This class of problems and setting   arise in  several applications of interest, including signal processing, machine learning, multi-agent systems, and communications. 

\textbf{(i) Inexact methods. }
 Early methods solved \eqref{eq:clean_prob} by decentralizing the gradient algorithm:   
 the Distributed Gradient Descent (DGD) was introduced in~\cite{Lopes:ja} for least squares and later generalized to nonquadratic functions  in~\cite{Nedic2009}, with a refined rate analysis in~\cite{Yuan:2016dv}.  Each iteration combines a weighted average of the neighbors' estimates of the solution with a local gradient step;  the schemes differ  in how the two operations are interleaved.  The stepsize sets a trade-off between speed and accuracy: a constant stepsize $\alpha$ yields a {\it linear} rate, but only to an $\mathcal{O}(\alpha)$-neighborhood of the solution~\cite{Nedic2009,Yuan:2016dv,sayed2014adaptation},  whereas exact convergence requires a vanishing stepsize and hence a {\it sublinear} rate.

\textbf{(ii) Exact methods. } The speed-accuracy dilemma motivated a second generation of schemes aiming for {\it exact} solutions using a {\it constant} stepsize.      They differ in how the local gradient direction is corrected, along three routes.
  \textbf{(a)} The first acts  in the primal domain, either by tracking the global gradient $\nabla F$  \cite{Xu2015augmented,LorenzoScutari-J'16,qu2016harnessing,scutari2019distributed,sun2022distributed,nedic2017achieving,Pu20}, or through correction terms  based on past iterates and gradients \cite{shi2015extra,Berahas:bs}.  \textbf{(b)}  The second  enforces consensus
  among the
  local copies $x_i$'s through linear constraints, whose  (augmented) Lagrangian   multipliers      adjust the local gradient direction  \cite{shi2014linear,boyd2011distributed,6926737,pmlr-v70-scaman17a,hendrikx2021optimal}.
\textbf{(c)}  The third  turns the design into one of communication: dedicated routing or aggregation protocols replace gossip and  approximate an {\it all-reduce} average of the agents' iterates or gradients over the network, which then feed the centralized update.
Finite-time consensus~\cite{Sundaram08JSAC,Hendrickx14,jiang2025sparse}   factorizes the averaging matrix into finitely many graph-compliant communication steps~\cite{jiang2025sparse,doi:10.1137/24M1661455,lu2021optimal}, reproducing  a centralized gradient step   exactly every finitely many rounds. Routing over a spanning tree achieves the same end: the agents' updates are relayed along the tree with    hop-distance delays~\cite{vogels2021relaysum}, or the gradients are aggregated at a root that runs a centralized  accelerated method~\cite{pmlr-v70-scaman17a}.  

The routes are not independent: several primal exact methods admit a dual interpretation~\cite{Jakovetic:da}, and the third realizes with all-reduce averaging what the first two realize with mixing.
The  common approach is   to rewrite~\eqref{eq:clean_prob} as a {\it centralized} problem with linear consensus constraints, making centralized optimization techniques readily applicable. The  constraints are chosen to support implementation over the network, and   the resulting formulation  determines the dual variables on which the algorithm operates.
Specifically, stack the   copies   \(X=[x_1^\top;\ldots;x_m^\top]\), and call  $\mathsf A\in \mathbb{R}^{m\times q}$  {\it faithful} if    \(\ker(\mathsf A^\top)=\operatorname{span}\{\mathbf 1\}\) (\(\mathbf 1\) is the all-ones vector). Then,   \(\mathsf A^\top X=0\) enforces  exactly consensus.  
The lifted problem of \eqref{eq:clean_prob} and its dual read    \vspace{-0.1cm}
\begin{align}
&\min_{X\in\R^{m\times d}}\ \sum_{i\in\cV}f_i(x_i)
\qquad\text{s.t.}\quad \mathsf A^\top X=0,\nonumber
\\[-1mm]
&\min_{Y\in\R^{q\times d}}\ g_{\mathsf A}(Y):=\sum_{i\in\cV}f_i^*\bigl((\mathsf A Y)_i\bigr).\label{eq:clean_dual}\tag{D}\vspace{-0.2cm}
\end{align}
Here,  \(f_i^*\) is the Fenchel conjugate of $f_i$, and each  column \(j\) of \(\mathsf A\) introduces one   multiplier row \(Y_j\in \mathbb{R}^d\).  A dual solution $Y^\star$ recovers the primal solution through $x_i^\star=\nabla f_i^* ((\mathsf A Y^\star)_i)$, $i\in \mathcal V$.

Faithful choices used in the decentralized methods discussed above   
include   \textbf{(i)}  \(\mathsf A=I-W\), for a symmetric gossip matrix   \(W\)   with \(\ker(I-W)=\operatorname{span}\{\mathbf 1\}\)
\cite{Nedic2009,shi2015extra,Jakovetic:da,Xu2015augmented,LorenzoScutari-J'16,qu2016harnessing,scutari2019distributed,sun2022distributed,nedic2017achieving}; \textbf{(ii)} the graph Laplacian  or operators built from it ~\cite{6926737,pmlr-v70-scaman17a,hendrikx2021optimal};
  \textbf{(iii)} edge-indexed choices--the oriented node-edge incidence matrix \(B\) of \(\cG\), or that of a spanning tree, as in the master-node implementation of~\cite{pmlr-v70-scaman17a}; and \textbf{(iv)} the all-reduce averaging operator \(\mathsf A=I-\tfrac1m\mathbf 1\mathbf 1^\top\), realized through finite-time consensus or tree routing~\cite{lu2021optimal,doi:10.1137/24M1661455,vogels2021relaysum}.
These representations all enforce the {\it same} consensus constraint in \eqref{eq:clean_dual}, but generally induce {\it different} algorithms.
Each column \(j\) of \(\mathsf A\) creates a dual variable \(Y_j\), and its support identifies the conjugates \(f_i^*\)  that depend on it; grouping columns   determines which dual variables are optimized jointly.
  This is a design layer preceding the block partition,  absent from classical (centralized) block-coordinate methods (of Jacobi or Gauss--Seidel type), where the variables are given and only their grouping is a design choice.  Here the representation \(\mathsf A\)  first  determines the variables available for grouping and their dependencies.  

\textbf{The missing design step.} 
The methods above start from a prescribed centralized method:
a first- or a second-order scheme on \eqref{eq:clean_prob}, or a Lagrangian-based method on its consensus reformulation \eqref{eq:clean_dual}. 
A communication protocol is then added to supply  the information   the method
 needs to be implemented  on the network: gossip, spanning-tree relaying,
  finite-time averaging, or exchanges along an edge-supported
  representation \(\mathsf A\). {\it The network serves only as a communication medium} over which the method's variables are exchanged, mixed, or aggregated. Topology guides communication and parameter
  choices, but {\it does not influence the decomposition of the optimization problem}. 
   Long paths, dense regions, and sparse interfaces
  therefore serve communication, rather than define distinct
  optimization subproblems.    Representative convergence rate bounds reflect this separation.
  Function geometry enters through the  condition number of $F$, while
   communication enters through a spectral gap, an aggregation
  time, or a delay-dependent mixing constant.
  The rate combines these {\it separately} assessed quantities,
  possibly after
  acceleration~\cite{sun2022distributed,pmlr-v70-scaman17a,Li2020,vogels2021relaysum}.   Lower bounds, even when matched, establish limits of
  {\it prescribed} computation and communication models, {\it not
  limits of the graph topology itself}~\cite{pmlr-v70-scaman17a}.
  The missing design step is thus not another improvement of
  the optimization and communication factors separately, but
  a joint design in which graph structure determines the
  subproblems   and how agents  solve them.\\

 \indent The ring illustrates the potential of this change.
  On a ring of \(m\) agents, standard nearest-neighbor gossip averaging
  has inverse spectral gap \(\Theta(m^2)\)~\cite{Olshevsky09},
   despite a  diameter of \(\Theta(m)\).
  Non-accelerated consensus baselines inherit this quadratic
  network dependence
  (Sec.~\ref{subsec:clean_consensus_comparison}).
  For RelaySum on a spanning path, the analysis yields  an \(O(m^{3/2})\)   network factor, which becomes \(O(m^{5/2})\) for convex losses~\cite{vogels2021relaysum}.   Yet a path can do more than transport or average iterates or gradients.
  With edge-indexed dual variables, {\it one can  ``open'' the ring into a path}: one long path of edges is optimized jointly, with the remaining edges defining separate, individual subproblems. As shown in  Sec.~\ref{sec:clean_mp_jacobi},  the path subproblem can be  solved {\it exactly} by eliminating one node after another in $O(m)$ steps--structural grounds for $O(m)$ overall complexity, 
  rather than \(O(m^2)\).
The path   determines both what is optimized jointly and
  how the computation is distributed.
  Establishing a general framework for this joint design is
  the goal of this paper.\vspace{-0.5cm}

\subsection{From algorithm execution on a graph to graph decomposition}
 \vspace{-0.2cm}
We rethink   decomposition methods  from the foundations, accounting for the graph as part of the algorithm design.
   The major contributions are summarized next.\smallskip

 \noindent \textbf{(i) Graph decomposition framework.}  Starting from the dual formulation~\eqref{eq:clean_dual}, we introduce a formal paradigm of algorithmic decomposition for~\eqref{eq:clean_prob} over $\mathcal G$.  Two
observations shape the construction.   \begin{itemize}\item[\textbf{(a)}] {\bf The algebraic factor graph:}  The dependency of the conjugate terms \(f_i^*((\mathsf A Y)_i)\) on the dual coordinates \(Y_j\) is itself a graph--the bipartite {\it factor graph} induced by \(\mathsf A\):
  \begin{equation*}
  \cH_{\mathsf A}
  :=
  \bigl(
  \cV\sqcup\{1,\ldots,q\},\,
  \{(i,j):\mathsf A_{ij}\neq0\}
  \bigr),
  \end{equation*}
 with one factor node per conjugate \(f_i^*\), one variable node per dual
coordinate \(Y_j\), and an edge exactly when \(Y_j\) enters \(f_i^*\).   An algebraic decomposition is then obtained by
selecting \(p\) nonempty blocks of dual coordinates. A block
\(\cJ_r\subseteq\{1,\ldots,q\}\) specifies the coordinates optimized jointly,
while its neighborhood in \(\cH_{\mathsf A}\) identifies the conjugates
entering the resulting block subproblem.
\item[\textbf{(b)}] {\bf Blocks on the physical network:}  The factor graph records algebraic dependencies, not communication links: the agents involved in a block need not induce a connected subgraph of \(\cG\). Since \(\cG\) is connected, they can still cooperate through intermediate agents, possibly the whole network, at substantial communication cost. We therefore make the cooperating agents part of the design, assigning each block a \emph{carrier} \(\cC_r\subseteq\cV\): a set of agents containing all those whose conjugates enter the block subproblem and inducing a connected physical subgraph \(\cG[\cC_r]\). Additional carrier agents serve as relays.\end{itemize}

 The joint choice of the three--representation,
blocks, and carriers--is a {\it graph decomposition} of~\eqref{eq:clean_prob}
over \(\cG\): a triple
\begin{equation*}
  \left(
  \underbrace{\mathsf A}_{\text{representation}},\,
  \underbrace{\{\cJ_r\}_{r=1}^p}_{\text{dual-coordinate blocks}},\,
  \underbrace{\{\cC_r\}_{r=1}^p}_{\text{physical carriers}}
  \right).
  \end{equation*}
  Here, \(\mathsf A\) is faithful; the nonempty blocks \(\cJ_r\) cover \(\{1,
  \ldots,q\}\) and need not be disjoint; and each carrier \(\cC_r\subseteq\cV\) contains every agent \(i\)
  such that \(\mathsf A_{i,\cJ_r}\neq0\) and induces a connected subgraph   of \(\cG\). The three layers have distinct roles: the
representation creates the variables and their dependencies; the blocks fix
{\it what} is optimized jointly; the carriers fix {\it where} it is executed.   Sec.~\ref{sec:clean_problem_dual} develops this definition and  gives the triple its algorithmic meaning. 

\smallskip
\noindent\textbf{(ii) GATE as an instance.}
GATE (Graph-Tearing message passing)  instantiates the framework above by tearing the graph along its edges.  Agreement is imposed edge by edge, so that each edge carries one dual variable shared by its two endpoints only  (\(\mathsf A=\mathsf B\), the node-edge incidence matrix).  The edges are   partitioned into trees (\(\cP=\{\cE_r\}_{r=1}^p\), with \(\cJ_r=\cE_r\) and \(\cT_r=(\cV_r,\cE_r)\) a tree). Each tree defines one block optimized cooperatively by the
  agents in \(\cV_r\), with carrier \(\cC_r=\cV_r\).  On a tree, fixing one edge variable splits the block subproblem
  into two independent ones, one on each side of the edge.
  With out-of-block variables held fixed, the optimal value on
  each side, as a function of that edge variable, defines an
  endpoint \emph{cost-to-go message}, computed recursively from
  the leaves toward the edge.  GATE interleaves one step of the message recursion with updates
  of the edge variables. Each edge update is based on minimizing
  the sum of its two endpoint messages.
  The message updates use the latest out-of-block
  variables, accounting for the coupling between trees in a
  Jacobi fashion. The implementation is node-owned and uses single-hop
  exchanges, with no edge- or block-level coordinator  (Secs.~\ref{sec:clean_mp_jacobi}--\ref{subsec:clean_node_owned_dmp}). 

\noindent\textbf{(iii) Delayed block-Jacobi identification
  and convergence analysis.}  We establish linear convergence of GATE through an analysis of independent interest that identifies GATE's single-loop dynamics with an \emph{exact delayed block-Jacobi scheme}. The identification is  nontrivial: edge variables and messages evolve together, and the tree recursion is never completed while keeping the exterior coordinates fixed. Unfolding the messages through earlier rounds shows that, after a finite propagation phase, each edge response is a coordinate of an exact block minimizer. Its exterior coordinates are delayed by the distance from the edge to the boundary agents of the block. Linear convergence then follows by combining a block error
  bound with a perturbation analysis of these minimizers
  that exploits the tree structure of the delays.
  The resulting explicit rate is discussed next.

  \noindent\textbf{(iv) Block preconditioning, scalability,
  and partition design.}
  The   linear rate \(\rho\) reads:
\(\rho=1-\tau/(2\kappa_{\cP}),\)  where \(\tau\) is the
  relaxation stepsize and \(\kappa_{\cP}\) is a novel  block error-bound constant; it  measures the
  coupling remaining between block subproblems after exact
  block minimization; it depends on both the partition and
  the agents' cost functions.
  Three further quantities govern the admissible stepsize $\tau$, hence the final rate:  block overlap, the largest number of
  blocks sharing an agent; propagation delays, determined by within-tree distances from
  boundary agents to updated edges;
  and ``interface sensitivity'', measuring how changes in exterior coordinates affect block
  minimizers.
Function geometry and graph topology therefore enter the
  rate \emph{jointly}, unlike standard consensus bounds built
  from separately assessed conditioning and communication
  factors. The partition \(\cP\) lets us exploit this interaction:
  choosing which variables are minimized jointly changes the
  coupling between block subproblems--a
  \emph{block-preconditioning effect}--whose benefits must be
  balanced against block overlap, propagation delays, and
  interface sensitivity.

These rate dependences provide concrete guidance for partition design.  For instance, under the curvature conditions specified in Sec.~\ref{subsec:clean_rate_examples}, the bound on rings favors one long path block with \(O(1)\) singleton edges over equal-length paths.  The former gives \((1-\rho)^{-1}=O(m/\log m)\) and, under mild bounds on initial transients, an \(\epsilon\)-solution in \(O(m+(m/\log m)\log(1/\epsilon))\) rounds, including initial propagation.  For equal-length paths, the best bound we derive on
     \((1-\rho)^{-1}\) is \(O(m^{3/2})\), compared with
     \(O(m^2)\) for nonaccelerated consensus baselines.  The same principle gives \((1-\rho)^{-1}=O(D/\log D)\) on tree-dominant graphs with dominant-tree diameter \(\Theta(D)\), and   finite termination on trees when all edges form
     one block and \(\tau=1\) (Sec.~\ref{subsec:clean_rate_examples}).

\noindent\textbf{(v) Surrogate variant--GATE-S.}
To reduce GATE's per-iteration computation and communication,
  we develop GATE-S using tractable local surrogates.
  Our examples include first-order quadratic models with fixed
  curvature and second-order models whose curvature is updated
  using local Hessian information at the current iterate.
  We provide implementations requiring only vector-sized
  exchanges, using scalar curvature coefficients or incremental
  updates of locally cached Hessians.
  Under regularity  assumptions, we establish linear convergence with partition-dependent
  rates quantifying the effect of the surrogate approximation
  (Sec.~\ref{sec:clean_surrogate}).

\noindent\textbf{(vi) Numerical experiments.}  We conduct numerical experiments to validate the theoretical results and evaluate the iteration and communication efficiency of GATE and its surrogate variants relative to consensus-based methods. The results corroborate the theoretical scaling and demonstrate the efficiency of our algorithms in both iteration and communication.

\vspace{-0.3cm}

\vspace{-0.2cm}
\subsection{Notation and paper organization}
\label{subsec:intro_notation}\vspace{-0.2cm}

For a positive integer \(q\), write \([q]:=\{1,\ldots,q\}\), and use \(\bigsqcup\) for disjoint union.  For functions \(F(x,y)\) and \(G(x,y)\), write \(F\sim_x G\) when \(F-G\) is independent of \(x\).  When the variable is clear from context, we write \(F\sim G\).

We use row-stacked matrices for \(d\)-vectors.  If the rows of a matrix \(A\) are indexed by a finite set \(\mathcal J\), then \(A_\alpha\) denotes the row indexed by \(\alpha\in\mathcal J\), and \(A_S\) denotes the submatrix of rows indexed by \(S\subseteq\mathcal J\).  For matrices with both row and column indices, \(A_{I,S}\) denotes the submatrix with rows \(I\) and columns \(S\).  Whenever such a row vector appears as an argument of \(f_i^*\), in an inner product, or in a norm, it is identified with the corresponding vector in \(\R^d\).  For node variables \((x_i)_{i\in\cV}\), \(X\) denotes the row-stacked matrix whose \(i\)-th row is \(x_i^\top\).  Inner products between row-stacked matrices use the Frobenius convention.  For scalar functions on row-stacked matrices, \(\nabla\Phi(A)^\top\) denotes the row-stacked gradient.  We write \(\mathbb S^d\) for the space of \(d\times d\) symmetric matrices, and \(\mathbb S_+^d\) and \(\mathbb S_{++}^d\) for its positive semidefinite and positive definite cones.  For a scalar differentiable function \(\phi\), \(\nabla\phi\) is the column gradient and \(\nabla^2\phi\) is the Hessian.  For a vector-valued map \(F=(F_1,\ldots,F_k):\R^d\to\R^k\), we set \(\nabla F(x):=[\nabla F_1(x),\ldots,\nabla F_k(x)]\in\R^{d\times k}\); hence \(\nabla F(x)^\top\) is the Jacobian and \(\nabla F(x)^\top h\) is the directional variation along \(h\).  For a matrix-valued map \(H\), \(\nabla H(x)[h]\) denotes the directional derivative in direction \(h\).  For \(T:\R^d\to\R^d\), write
  \(
      \operatorname{Lip}(T)
      :=
      \sup_{\eta\ne\eta'}
       {\|T(\eta)-T(\eta')\|}/{\|\eta-\eta'\|}
  \)
  for its global Lipschitz modulus. We use \(O(\cdot)\) for asymptotic upper bounds and \(\Theta(\cdot)\) for matching asymptotic upper and lower bounds, up to positive constant factors.

\paragraph{Organization.}
The remainder of the paper is organized as follows. Sec.~\ref{sec:clean_problem_dual} develops the graph decomposition framework. Sec.~\ref{sec:clean_gate} specializes it to the incidence representation and tree blocks to derive GATE. Sec.~\ref{sec:clean_convergence} establishes linear convergence and partition-dependent rate bounds. Sec.~\ref{sec:clean_surrogate} develops the surrogate variants of GATE,   establishes their  convergence, and compare their performance with consensus-based and tree-based state-of-the-art methods. Sec.~\ref{sec:clean_numerics} presents numerical results, and Sec.~\ref{sec:conclusion} concludes.
\vspace{-0.3cm}
 
 \section{Graph Decomposition Framework}
\label{sec:clean_problem_dual}

\subsection{Problem and graph setting}
\label{subsec:clean_problem_setup}
We study \eqref{eq:clean_prob} over the graph $\mathcal G$
under the following standard assumptions.

\begin{assumption}[problem class]
\label{ass:clean_problem_class}
For every $i\in\cV$, $f_i:\R^d\to\R$ is $\mu_{f_i}$-strongly convex and
$L_{f_i}$-smooth for some $\mu_{f_i},L_{f_i}>0$.  Define\vspace{-0.2cm}
\[
    \mu:=\min_{i\in\cV}\mu_{f_i},
    \quad
    L:=\max_{i\in\cV}L_{f_i},
    \quad
    \kappa:=\frac{L}{\mu}.
\]
\end{assumption}
 
 Consequently, each \(f_i^*\) is finite-valued, differentiable, \(1/L_{f_i}\)-strongly convex and \(1/\mu_{f_i}\)-smooth.

\begin{assumption}[Network]
\label{ass:clean_network}
The graph $\cG=(\mathcal V,\mathcal E)$ is undirected, connected, and simple.
\end{assumption}
Fix an arbitrary orientation of $\cE$ and let
$\mathsf B\in\R^{m\times|\cE|}$ be the node--edge incidence matrix, with
\[
  \mathsf  B_{ve}:=
    \begin{cases}
        1, & \text{if \(v\) is the tail of \(e\)},\\
        -1, & \text{if \(v\) is the head of \(e\)},\\
        0, & \text{otherwise}.
    \end{cases}
\]
For $v\in\cV$ and $e\in\cE$, let 
  $$\cA_v:=\{e\in\cE:\ e\text{ is incident to }v\}
\quad\text{and}\quad
 \cI_e:=\{v\in\cV:\ v\text{ is an endpoint of }e\}. $$

The node-edge incidence graph of a graph is the bipartite
graph on its nodes and edges in which a node and an edge are adjacent exactly
when the node is an endpoint of the edge.

For a physical subgraph
$\cG_s\subseteq\cG$, write ${\mathcal V}(\cG_s)$ and ${\mathcal E}(\cG_s)$ for its vertex and edge sets; for
$S\subseteq\cV$, $\cG[S]$ denotes the subgraph of $\cG$ induced by $S$.\vspace{-0.3cm}

\subsection{Graph decomposition}
This section builds the proposed decomposition framework.

\vspace{-0.2cm}
\subsubsection{Representations and their factor graphs}
\label{subsec:factor_graphs}
Recall from Sec.~1 that a matrix \(\mathsf A\in\R^{m\times q}\) is
{\it faithful} if \(\ker(\mathsf A^\top)=\operatorname{span}\{\mathbf 1\}\),
so that \(\mathsf A^\top X=0\) holds exactly for consensual stacked copies
\(X=[x_1^\top;\ldots;x_m^\top]\).  As derived there, the lifted
consensus form of the problem  has the dual formulation~\eqref{eq:clean_dual},
with one multiplier row \(Y_j\in\R^d\) per column \(j\) of \(\mathsf A\).
Every faithful \(\mathsf A\) represents the same consensus set, but the dual
\eqref{eq:clean_dual} depends on \(\mathsf A\) itself: the columns create the
dual coordinates, and the sparsity pattern of \(\mathsf A\) decides which
coordinates enter which conjugate.  This dependency structure is captured by the factor graph.
\begin{definition}[Factor graph]
\label{def:clean_factor_graph}
The {\it factor graph induced by \(\mathsf A\)} is the bipartite graph
\begin{equation*}
  \cH_{\mathsf A}
  :=
  \bigl(
    \cV\sqcup\{1,\ldots,q\},\;
    \{(i,j):\mathsf A_{ij}\neq0\}
  \bigr),
\end{equation*}
whose factor node \(i\) represents the term \(f_i^*((\mathsf A Y)_i)\) and
whose variable node \(j\) represents the dual coordinate \(Y_j\).
\end{definition}

The two sides of \(\cH_{\mathsf A}\) are not symmetric: each factor node
is anchored at a physical agent, whereas a variable node need not have  a prescribed physical location
on \(\cG\).  Consequently, a single variable \(Y_j\)--column
\(\mathsf A_{:,j}\)--may enter the conjugates of nonadjacent agents.
\begin{example}\label{ex:clean_faithful_matrices}
 Given the  path graph  \(1-2-3\), with both edges oriented from left to
right (Figure~\ref{fig:clean_alignment}(a)),  consider the following three faithful
  choices of \(\mathsf A\):\vspace{-0.1cm}
\begin{equation}
    \mathsf A=\mathsf A_{\rm ex}:=
    \begin{bmatrix}
      1&1\\
      0&-2\\
      -1&1
    \end{bmatrix},
    \qquad
    \mathsf A=\mathsf L:=\frac{1}{3}\mathsf B\mathsf B^\top,
    \qquad\text{or}\qquad
    \mathsf A=\mathsf B.
    \label{eq:example_faithful_matrices}\vspace{-0.2cm}
  \end{equation}
  Here, \(\mathsf B\) is the incidence matrix of \(\cG\), and \(\mathsf L\) is
  its scaled Laplacian. The three matrices enforce the same consensus constraint,
yet they induce the distinct factor graphs of
Figure~\ref{fig:clean_alignment}(b)-(d).  \footnote{In panels~(b) and~(c), the subscripts of \(Y\) index the columns of
  \(\mathsf A_{\rm ex}\) and \(\mathsf L\), respectively; in panel~(d),
  \(Y_{12}\) and \(Y_{23}\) are indexed by the physical edges \((1,2)\) and
  \((2,3)\).}   For \(\mathsf A_{\rm ex}\), the
coordinate \(Y_1\) enters the factors of the {\it nonadjacent} agents \(1\)
and \(3\).  For \(\mathsf L\), \(Y_1\) and \(Y_2\) both enter \(f_1^*\) and
\(f_2^*\), so \(\cH_{\mathsf L}\) contains a {\it cycle} although \(\cG\) is
a path.  For \(\mathsf B\), each coordinate corresponds to {\it one physical
edge} and enters exactly its endpoint factors, so \(\cH_{\mathsf B}\) is the
node-edge incidence graph of \(\cG\),   that is, the bipartite
graph on \(\cV\sqcup\cE\) in which \(i\) and \(e\) are adjacent exactly when
\(i\in\cI_e\).
   This distinction matters when the dual coordinates are grouped and optimized
jointly: a group of coordinates of \(\mathsf B\) {\it is} a set of physical
edges, one per coordinate, whereas a group under \(\mathsf L\) or
\(\mathsf A_{\rm ex}\) may 
couple nonadjacent agents.\end{example}
\begin{figure}[t]
\centering
\resizebox{0.82\linewidth}{!}{%
\begin{tikzpicture}[
  every node/.style={font=\small},
  fnode/.style={draw,fill=black!12,rectangle,minimum size=5mm,inner sep=1pt},
  vnode/.style={draw,circle,minimum size=5mm,inner sep=1pt},
  pnode/.style={draw,circle,fill=black!12,minimum size=5mm,inner sep=1pt}
]
\begin{scope}[xshift=0cm,yshift=4cm]
  \node[pnode] (g1) at (0,0) {$1$};
  \node[pnode] (g2) at (1.3,0) {$2$};
  \node[pnode] (g3) at (2.6,0) {$3$};
  \draw (g1)--(g2)--(g3);
  \node[align=center] at (1.3,-1.8) {(a) physical path $\cG$};
\end{scope}
\begin{scope}[xshift=5cm,yshift=4cm]
  \node[fnode] (f1) at (0,.8) {$f_1^*$};
  \node[fnode] (f2) at (1.3,.8) {$f_2^*$};
  \node[fnode] (f3) at (2.6,.8) {$f_3^*$};
  \node[vnode] (v1) at (.65,-.8) {$Y_1$};
  \node[vnode] (v2) at (1.95,-.8) {$Y_2$};
  \draw (f1)--(v1) (f3)--(v1);
  \draw (f1)--(v2) (f2)--(v2) (f3)--(v2);
  \node[align=center] at (1.3,-1.8)
    {(b) $\cH_{\mathsf A_{\rm ex}}$};
\end{scope}
\begin{scope}[xshift=0cm]
  \node[fnode] (m1) at (0,.8) {$f_1^*$};
  \node[fnode] (m2) at (1.3,.8) {$f_2^*$};
  \node[fnode] (m3) at (2.6,.8) {$f_3^*$};
  \node[vnode] (z1) at (0,-.8) {$Y_1$};
  \node[vnode] (z2) at (1.3,-.8) {$Y_2$};
  \node[vnode] (z3) at (2.6,-.8) {$Y_3$};
  \draw (m1)--(z1) (m2)--(z1);
  \draw (m1)--(z2) (m2)--(z2) (m3)--(z2);
  \draw (m2)--(z3) (m3)--(z3);
  \node[align=center] at (1.3,-1.8)
    {(c) $\cH_{\mathsf L}$};
\end{scope}
\begin{scope}[xshift=5cm]
  \node[fnode] (h1) at (0,.8) {$f_1^*$};
  \node[fnode] (h2) at (1.3,.8) {$f_2^*$};
  \node[fnode] (h3) at (2.6,.8) {$f_3^*$};
  \node[vnode] (y12) at (.65,-.8) {$Y_{12}$};
  \node[vnode] (y23) at (1.95,-.8) {$Y_{23}$};
  \draw (h1)--(y12) (h2)--(y12);
  \draw (h2)--(y23) (h3)--(y23);
  \node[align=center] at (1.3,-1.8) {(d) $\cH_B$};
\end{scope}
\end{tikzpicture}
}
\caption{Three faithful representations (see  \eqref{eq:example_faithful_matrices}) of the same path graph \(1-2-3\) induce distinct factor graphs: the generic \(\mathsf A_{\rm ex}\) couples nonadjacent factors, the Laplacian \(\mathsf L\) produces a cyclic factor graph, and the incidence matrix \( \mathsf B\) reproduces the node-edge incidence structure.}
\label{fig:clean_alignment}\vspace{-0.4cm}
\end{figure}
\vspace{-0.3cm}

\subsubsection{Block covers and conditional responses}\vspace{-0.2cm}
To minimize \(g_{\mathsf A}\) {\it blockwise}, we
  group the dual coordinates into (possibly overlapping) blocks. Each
  block-subproblem minimizes \(g_{\mathsf A}\) over its block coordinates, holding   the remaining coordinates fixed. 
\begin{definition}[Block cover and conditional responses]
\label{def:clean_block_decomposition}
For a fixed $\mathsf A$, a \emph{block cover} 
is a family $\{\cJ_r\}_{r=1}^p$ of nonempty subsets  covering  $[q]$ (the index set of the dual coordinates). Let  \(\overline{\cJ}_r:=[q]\setminus\cJ_r\). 
For each block \(\cJ_r\), define its touched factor set    and block factor
graph by\vspace{-0.1cm}
\begin{equation}
  \cV_{\mathsf A,r}
  :=
  \{i\in\cV:\mathsf A_{i,\cJ_r}\neq0\},
  \qquad
  \cF_{\mathsf A,r}
  :=
  \cH_{\mathsf A}[\cV_{\mathsf A,r}\sqcup\cJ_r].
  \label{eq:clean_generic_block_cluster}\vspace{-0.1cm}
\end{equation}
Here, \(\cH_{\mathsf A}[S]\) denotes the subgraph of \(\cH_{\mathsf A}\) induced by the vertex set \(S\).

The   conditional response
  of block \(r\) is\vspace{-0.1cm}
\begin{equation}
  \BR_{\mathsf A,r}(Y_{\overline{\cJ}_r})
  :=
  \argmin_{W\in\R^{|\cJ_r|\times d}}
  \sum_{i\in\cV_{\mathsf A,r}}
  f_i^*\!\left(
    \mathsf A_{i,\cJ_r}W
    +\mathsf A_{i,\overline{\cJ}_r}Y_{\overline{\cJ}_r}
  \right).
  \label{eq:clean_general_block_response}
\end{equation}
Under Assumption~\ref{ass:clean_problem_class}, \(\BR_{\mathsf A,r}(Y_{\overline{\cJ}_r})\) is nonempty for every \(r\in[p]\) and \(Y_{\overline{\cJ}_r}\), but need not be a singleton.
When $\mathsf A$ is fixed, we suppress it and write $\cV_r$, $\cF_r$,
and $\BR_r$.
\end{definition}

Thus \(\cJ_r\) identifies the coordinates optimized jointly,
  \(\cV_{\mathsf A,r}\) the agents whose conjugates enter the block
  subproblem, and \(\cF_{\mathsf A,r}\) records which block coordinates
  enter each conjugate.

\begin{proposition}[Block fixed points]
\label{prop:clean_generic_block_fixed_point} Let
Assumption~\ref{ass:clean_problem_class}
  hold, let \(\mathsf A\) be
faithful, and let \(\{\cJ_r\}_{r=1}^p\) be a block cover.   Then,
  \begin{equation}
    Y^\star\in\argmin_{Y\in\R^{q\times d}} g_{\mathsf A}(Y)
    \quad\Longleftrightarrow\quad
    Y_{\cJ_r}^\star\in
    \BR_r(Y_{\overline{\cJ}_r}^\star)
    \quad \forall r\in[p].
    \label{eq:clean_generic_block_fixed_point}
  \end{equation}
\end{proposition}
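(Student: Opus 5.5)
The proof rests on convexity and differentiability of $g_{\mathsf A}$. Under Assumption~\ref{ass:clean_problem_class}, each $f_i^*$ is finite-valued, convex and differentiable with Lipschitz gradient. Hence $g_{\mathsf A}(Y)=\sum_i f_i^*((\mathsf AY)_i)$ is a convex, continuously differentiable function on $\R^{q\times d}$. Its gradient is $\nabla g_{\mathsf A}(Y)^\top=\mathsf A^\top \nabla f^*(\mathsf AY)$, where $\nabla f^*(Z)$ denotes the row-stacked matrix with rows $\nabla f_i^*(Z_i)^\top$. The block-$r$ partial gradient is therefore $\mathsf A_{:,\cJ_r}^\top\nabla f^*(\mathsf AY)$. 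Since $\mathsf A_{i,\cJ_r}=0$ for $i\notin\cV_r$, only rows $i\in\cV_r$ contribute.

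\emph{Step 1: rewrite the block objective.} Fix $Y_{\overline{\cJ}_r}$. The objective in \eqref{eq:clean_general_block_response}, viewed as a function of $W$, equals $g_{\mathsf A}(W,Y_{\overline{\cJ}_r})$ minus the terms with $i\notin\cV_r$. Those terms do not depend on $W$. Thus the objective is $\sim_W g_{\mathsf A}(W,Y_{\overline{\cJ}_r})$, and $\BR_r(Y_{\overline{\cJ}_r})$ is exactly the set of minimizers of $g_{\mathsf A}$ in the block $\cJ_r$ with the other coordinates fixed.

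\emph{Step 2: first-order characterization.} The block objective is convex and differentiable in $W$. Hence $W\in\BR_r(Y_{\overline{\cJ}_r})$ if and only if the block partial gradient vanishes, namely $\mathsf A_{:,\cJ_r}^\top\nabla f^*(\mathsf A[W;Y_{\overline{\cJ}_r}])=0$. Likewise, $Y^\star$ is a global minimizer of the convex differentiable $g_{\mathsf A}$ if and only if $\mathsf A^\top\nabla f^*(\mathsf AY^\star)=0$.

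\emph{Step 3: assemble the blocks.} Global stationarity means every coordinate $j\in[q]$ of the gradient vanishes. This gives the forward direction, since each block's partial gradient is a sub-collection of those coordinates. Conversely, suppose $Y^\star_{\cJ_r}\in\BR_r(Y^\star_{\overline{\cJ}_r})$ for every $r$. Then the partial gradient vanishes on each $\cJ_r$. Because the blocks cover $[q]$, every coordinate $j$ lies in some $\cJ_r$, so the full gradient vanishes and $Y^\star$ is a global minimizer. Overlaps cause no difficulty: the gradient is evaluated at the same point $Y^\star$ in every block, so the conditions are consistent.

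\emph{Where the care is needed.} The main point is the converse direction, since coordinatewise minimality does not imply global minimality for nonsmooth convex functions. It holds here because $g_{\mathsf A}$ is differentiable. Differentiability follows from the $1/\mu_{f_i}$-smoothness of $f_i^*$, which comes from strong convexity of $f_i$; this is exactly where Assumption~\ref{ass:clean_problem_class} is used. Faithfulness of $\mathsf A$ is not needed for the equivalence itself. It only guarantees that dual minimizers exist and recover the primal solution, as noted after \eqref{eq:clean_dual}. Finally, the possible non-uniqueness of $\BR_r$ is harmless, since the claim is stated as set membership.
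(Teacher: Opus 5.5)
Your proof is correct. It is essentially the standard first-order argument behind this proposition; in the source provided, the paper states it without giving a separate proof. Smoothness of the $f_i^*$ makes $g_{\mathsf A}$ convex and differentiable, so both global and blockwise minimality reduce to vanishing of the corresponding rows of $\mathsf A^\top\nabla f^*(\mathsf AY^\star)$, and the cover property assembles the blocks. You are also right that faithfulness plays no role in the equivalence itself.
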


 Proposition~\ref{prop:clean_generic_block_fixed_point} holds for
  {\it every} faithful \(\mathsf A\) and {\it every} block cover:
  optimality restricts neither choice.  Different choices produce
different response maps $\BR_{\mathsf A,r}(Y_{\overline{\cJ}_r})$--hence, potentially,  different algorithms--but the algebraic decomposition \((\mathsf A,\{\cJ_r\})\)  does not
  specify where these responses are computed on \(\cG\).

Since \(f_i^*\) is available only at agent \(i\), computing \(\BR_r\)
requires the agents indexed by \(\cV_r\) to exchange information over the
links of \(\cG\).  These exchanges can be confined to the agents of
\(\cV_r\) only if \(\cG[\cV_r]\) is connected, which
Definition~\ref{def:clean_block_decomposition} does not guarantee.  In
Example~\ref{ex:clean_faithful_matrices}, the subproblem of the block
\(\{1\}\) (the variable \(Y_1\)) of \(\mathsf A_{\rm ex}\) contains the
conjugates \(f_1^*\) and \(f_3^*\), stored at the nonadjacent agents \(1\)
and \(3\), which can communicate only through agent \(2\), whose conjugate
does not enter the subproblem.  The computation of a block response thus
involves, besides the agents of \(\cV_r\), those that relay between them;
the set of all agents taking part in it must induce a connected subgraph
containing \(\cV_r\).  This is formalized next.\vspace{-0.3cm}

\subsubsection{Carriers and realizability}\vspace{-0.2cm}
 We associate with each block \(\cJ_r\) a set of agents \(\cC_r\)
that enable the joint computation of  the conditional response \(\BR_r\) of the block on \(\cG\).  This
set, called the \emph{carrier} of \(\cJ_r\), must induce a connected
subgraph of \(\cG\) and contain the agents in \(\cV_r\), whose conjugates
enter the block subproblem \eqref{eq:clean_general_block_response}.  We
formalize the joint choice of representation, block cover, and carriers
as a {\it graph decomposition}.

\begin{definition}[Graph decomposition]
\label{def:clean_graph_decomposition}
A {\it graph decomposition over \(\cG\)} is a triple
\begin{equation}
  \cD:=\bigl(\mathsf A,\ \{\cJ_r\}_{r=1}^p,\ \{\cC_r\}_{r=1}^p\bigr),
  \label{eq:clean_triple}
\end{equation}
where \(\mathsf A\) is faithful,
\(\{\cJ_r\}_{r=1}^p\) is a block cover of \(g_{\mathsf A}\), and the  set \(\emptyset\neq \cC_r\subseteq\cV\) is the {\it carrier} of block \(r\) satisfying
\begin{equation}
       \cV_{r}\subseteq\cC_r
  \qquad\text{and}\qquad
  \cG[\cC_r]\ \text{is connected}.
  \label{eq:clean_carrier_conditions}
\end{equation}
\end{definition}

The three components have distinct roles: \(\mathsf A\) creates the dual
coordinates and their dependencies, \(\cJ_r\) fixes what is optimized
jointly, and \(\cC_r\) specifies the subgraph on which that computation is
performed.  The agents indexed by \(\cC_r\setminus\cV_r\) contribute no
term to the subproblem \eqref{eq:clean_general_block_response}: their only
role is to relay information between agents of \(\cV_r\) that are not
adjacent in \(\cG\) (as agent \(2\) does for the block \(\{1\}\) of
\(\mathsf A_{\rm ex}\) in Example~\ref{ex:clean_faithful_matrices}).  When
\(\cG[\cV_r]\) is connected, the carrier can be taken equal to \(\cV_r\)
and no additional relay agents are required.

For a subgraph \(\cG_s=(\cV_s,\cE_s)\subseteq\cG\), let
  \(\mathsf B_{\cG_s}\in\R^{m\times|\cE_s|}\) be the matrix
  obtained from \(\mathsf B\) by retaining the columns indexed
  by \(\cE_s\). The rows indexed by
  \(\cV\setminus\cV_s\) are zero. 
The carrier conditions~\eqref{eq:clean_carrier_conditions}
admit the following equivalent matrix factorization.
\begin{proposition}[realizability]
\label{prop:clean_realizability}
Fix \(r\in[p]\). Let \(\mathsf A\) be faithful, let \(\emptyset\neq\cJ_r\subseteq[q]\), and
let \(\emptyset\neq\cC_r\subseteq\cV\). Conditions
\eqref{eq:clean_carrier_conditions} hold if and only if there exist a
connected subgraph \(\cG_r\subseteq\cG\), with \(\mathcal V(\cG_r)=\cC_r\), and a
matrix \(M_r\in\R^{|\cJ_r|\times|\mathcal E(\cG_r)|}\) such that\vspace{-0.1cm}
\begin{equation}
  \mathsf A_{:,\cJ_r}=\mathsf B_{\cG_r}M_r^\top.
  \label{eq:clean_realizability}\vspace{-0.1cm}
\end{equation}
\end{proposition}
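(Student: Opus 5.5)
The plan is to prove the two implications separately. The only nontrivial input is the description of the range of an incidence matrix of a connected graph. First, faithfulness gives \(\mathsf A^\top\mathbf 1=0\), since \(\mathbf 1\in\ker(\mathsf A^\top)\). Hence every column \(\mathsf A_{:,j}\) sums to zero; this is the only property of faithfulness I intend to use.

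\emph{Sufficiency (\(\Leftarrow\)).} Suppose \(\cG_r\) and \(M_r\) satisfy \eqref{eq:clean_realizability}. The rows of \(\mathsf B_{\cG_r}\) indexed by \(\cV\setminus\cC_r\) are zero, so the same holds for \(\mathsf A_{:,\cJ_r}=\mathsf B_{\cG_r}M_r^\top\). Therefore \(\mathsf A_{i,\cJ_r}\neq0\) forces \(i\in\cC_r\), which gives \(\cV_r\subseteq\cC_r\). Moreover, \(\cG_r\) is a connected subgraph of \(\cG\) with vertex set \(\cC_r\), so it is a connected spanning subgraph of \(\cG[\cC_r]\). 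Hence \(\cG[\cC_r]\) is connected.

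\emph{Necessity (\(\Rightarrow\)).} Take \(\cG_r:=\cG[\cC_r]\), which is connected with \(\mathcal V(\cG_r)=\cC_r\); a spanning tree of it would work equally well. The key lemma is
\[
\Range(\mathsf B_{\cG_r})=\Bigl\{v\in\R^m:\ v_i=0\ \ \forall i\notin\cC_r,\ \ \textstyle\sum_i v_i=0\Bigr\}.
\]
The inclusion ``\(\subseteq\)'' is immediate, since each column of \(\mathsf B_{\cG_r}\) has one \(+1\) and one \(-1\) on vertices of \(\cC_r\). For equality, I will compare dimensions: the right-hand side has dimension \(|\cC_r|-1\). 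By rank--nullity, the rank of \(\mathsf B_{\cG_r}\) equals \(|\cC_r|\) minus the dimension of the space of vectors \(u\) supported on \(\cC_r\) with \(\mathsf B_{\cG_r}^\top u=0\). That condition says \(u_a=u_b\) for every edge \((a,b)\) of \(\cG_r\), so by connectivity \(u\) is constant on \(\cC_r\), and this space is one-dimensional. Hence the rank is \(|\cC_r|-1\) and the two spaces coincide.

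Now fix \(j\in\cJ_r\). The column \(\mathsf A_{:,j}\) is supported on \(\cV_r\subseteq\cC_r\), by the definition of \(\cV_r\) in \eqref{eq:clean_generic_block_cluster}, and it sums to zero. By the lemma, there is \(m_j\in\R^{|\mathcal E(\cG_r)|}\) with \(\mathsf A_{:,j}=\mathsf B_{\cG_r}m_j\). Stacking the vectors \(m_j^\top\), \(j\in\cJ_r\), as the rows of \(M_r\) yields \eqref{eq:clean_realizability}.

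The degenerate case \(|\cC_r|=1\) is consistent with this argument: \(\cG_r\) has no edges, \(M_r\) is empty, and every column of \(\mathsf A_{:,\cJ_r}\) is a zero-sum vector supported on a single vertex, hence zero.

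The main step is the range lemma. Everything else is bookkeeping of supports, so I do not expect further obstacles.
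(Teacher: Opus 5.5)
Your proof is correct and complete. Sufficiency is support bookkeeping plus the observation that \(\cG_r\) is a connected spanning subgraph of \(\cG[\cC_r]\). Necessity follows from identifying \(\Range(\mathsf B_{\cG_r})\) with the zero-sum vectors supported on \(\cC_r\), which is the same incidence-range fact the paper uses elsewhere (cf.\ Lemma~\ref{lem:clean_tree_inverse} and Proposition~\ref{prop:connected}). The paper gives no separate written proof of this proposition, so your argument is the natural one and supplies it.
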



  Proposition~\ref{prop:clean_realizability} decomposes the linear couplings
  recorded by adjacency in the block factor graph \(\cF_r\)
  (see \eqref{eq:clean_generic_block_cluster})--which agents'
  conjugate terms depend on each dual coordinate \(Y_j\)--into couplings
  between adjacent agents in the connected physical subgraph \(\cG_r\)
  (with vertex set \(\cC_r\)). This opens the way to design methods that
  compute \(\BR_r\) through neighbor-to-neighbor communication within
  the carrier.

  To make the connection explicit, recall from
  \eqref{eq:clean_general_block_response} that each block coordinate
  \(Y_j\) enters the argument of \(f_i^*\) with coefficient
  \(\mathsf A_{ij}\).
  By \eqref{eq:clean_realizability}, \(\mathsf A_{:,j}\) is a linear
  combination of the columns of \(\mathsf B\) indexed by the edges
  of \(\cG_r\), with coefficients given by the corresponding row
  of \(M_r\); its nonzero entries are the agents whose factor nodes are adjacent to the
variable node \(j\) in \(\cF_r\).  If the row of \(M_r\) corresponding to \(Y_j\) has a single nonzero
  entry, \(Y_j\) appears only in the conjugates  \(f_i^*\) of the two endpoint
  agents of one physical edge, which are neighbors in \(\cG\) and
  can communicate directly.
 If that row has several nonzero entries, \(Y_j\) may appear in
  the conjugates  of nonadjacent agents or of more than two agents;
  these agents can exchange information along the edges of
  \(\cG_r\), with intermediate agents relaying it when needed.
At agents in \(\cC_r\setminus\cV_r\), the linear combinations
  in \eqref{eq:clean_realizability} vanish for every block coordinate:
  their conjugate terms are independent of the block variables,
  and these agents can serve as relays.
  Example~\ref{ex:clean_faithful_matrices} revisited and
  Figure~\ref{fig:clean_carriers} illustrate these distinctions.

\begin{figure}[t!]
\centering
\resizebox{0.98\linewidth}{!}{%
\begin{tikzpicture}[
  every node/.style={font=\small},
  fnode/.style={draw,fill=black!12,rectangle,minimum size=5mm,inner sep=1pt},
  vnode/.style={draw,circle,minimum size=5mm,inner sep=1pt},
  pnode/.style={draw,circle,fill=black!12,minimum size=5mm,inner sep=1pt},
  rnode/.style={draw,dashed,circle,minimum size=5mm,inner sep=1pt},
  onode/.style={draw=black!30,circle,minimum size=5mm,inner sep=1pt,text=black!30}
]
\begin{scope}[xshift=0cm,yshift=3.2cm]
  \node[fnode] (f1) at (0,.8) {$f_1^*$};
  \node[fnode] (f3) at (2.6,.8) {$f_3^*$};
  \node[vnode] (v1) at (1.3,-.6) {$Y_1$};
  \draw (f1)--(v1) (f3)--(v1);
  \node[align=center] at (1.3,-1.5) {(a) $\cF_{\mathsf A_{\rm ex},\{1\}}$};
\end{scope}
\begin{scope}[xshift=5cm,yshift=3.2cm]
  \node[fnode] (m1) at (0,.8) {$f_1^*$};
  \node[fnode] (m2) at (1.3,.8) {$f_2^*$};
  \node[fnode] (m3) at (2.6,.8) {$f_3^*$};
  \node[vnode] (z2) at (1.3,-.6) {$Y_2$};
  \draw (m1)--(z2) (m2)--(z2) (m3)--(z2);
  \node[align=center] at (1.3,-1.5) {(b) $\cF_{\mathsf L,\{2\}}$};
\end{scope}
\begin{scope}[xshift=10cm,yshift=3.2cm]
  \node[fnode] (h1) at (0,.8) {$f_1^*$};
  \node[fnode] (h2) at (1.3,.8) {$f_2^*$};
  \node[vnode] (y1) at (.65,-.6) {$Y_{e_1}$};
  \draw (h1)--(y1) (h2)--(y1);
  \node[align=center] at (1.3,-1.5) {(c) $\cF_{\mathsf B,\{e_1\}}$};
\end{scope}
\begin{scope}[xshift=0cm]
  \node[pnode] (a1) at (0,0) {$1$};
  \node[rnode] (a2) at (1.3,0) {$2$};
  \node[pnode] (a3) at (2.6,0) {$3$};
  \draw (a1)--(a2) node[midway,above] {\scriptsize $1$};
  \draw (a2)--(a3) node[midway,above] {\scriptsize $1$};
  \node[align=center] at (1.3,-1.5) {(d) $\cG_r$ for block $\{1\}$ of $\mathsf A_{\rm ex}$\\[1pt] $\cV_r=\{1,3\}$, $\cC_r=\{1,2,3\}$};
\end{scope}
\begin{scope}[xshift=5cm]
  \node[pnode] (b1) at (0,0) {$1$};
  \node[pnode] (b2) at (1.3,0) {$2$};
  \node[pnode] (b3) at (2.6,0) {$3$};
  \draw (b1)--(b2) node[midway,above] {\scriptsize $-\tfrac13$};
  \draw (b2)--(b3) node[midway,above] {\scriptsize $\tfrac13$};
  \node[align=center] at (1.3,-1.5) {(e) $\cG_r$ for block $\{2\}$ of $\mathsf L$\\[1pt] $\cV_r=\cC_r=\{1,2,3\}$};
\end{scope}
\begin{scope}[xshift=10cm]
  \node[pnode] (c1) at (0,0) {$1$};
  \node[pnode] (c2) at (1.3,0) {$2$};
  \node[onode] (c3) at (2.6,0) {$3$};
  \draw (c1)--(c2) node[midway,above] {\scriptsize $1$};
  \draw[dashed,black!30] (c2)--(c3);
  \node[align=center] at (1.3,-1.5) {(f) $\cG_r$ for block $\{e_1\}$ of $\mathsf B$\\[1pt] $\cV_r=\cC_r=\{1,2\}$};
\end{scope}
\begin{scope}[xshift=14.2cm,yshift=1.6cm]
  \node[fnode] at (0,1.4) {};  \node[anchor=west] at (.35,1.4) {conjugate of an agent};
  \node[vnode] at (0,.7) {};   \node[anchor=west] at (.35,.7) {dual coordinate};
  \node[pnode] at (0,0) {};    \node[anchor=west] at (.35,0) {touched agent};
  \node[rnode] at (0,-.7) {};  \node[anchor=west] at (.35,-.7) {relay agent};
  \node[onode] at (0,-1.4) {}; \node[anchor=west] at (.35,-1.4) {outside the carrier};
\end{scope}
\end{tikzpicture}
}
\caption{Example~\ref{ex:clean_faithful_matrices} revisited.  Top: block factor graphs.  Bottom: carrier subgraphs \(\cG_r\), with the entries of \(M_r\) on the edges.
}
\label{fig:clean_carriers}\vspace{-0.3cm}
\end{figure}

\paragraph{Example~\ref{ex:clean_faithful_matrices} revisited.}
 Let \(e_1=\{1,2\}\) and  \(e_2=\{2,3\}\), with both edges oriented
  from left to right.
  For each representation in Example~\ref{ex:clean_faithful_matrices},
  choose the singleton block cover, with one block per dual coordinate.
  We examine the blocks \(\{1\}\) and \(\{2\}\) of
  \(\mathsf A_{\rm ex}\), the block \(\{2\}\) of \(\mathsf L\),
  and the block \(\{e_1\}\) of \(\mathsf B\).  Their conditional responses, with terms in \(g_{\mathsf A}\) independent of \(W\) omitted, are
\begin{equation*}
\begin{aligned}
  \BR_{\mathsf A_{\rm ex},\{1\}}(Y_2)
  &=\argmin_W\bigl\{f_1^*(W+Y_2)+f_3^*(-W+Y_2)\bigr\},\\
  \BR_{\mathsf A_{\rm ex},\{2\}}(Y_1)
  &=\argmin_W\bigl\{f_1^*(Y_1+W)+f_2^*(-2W)+f_3^*(-Y_1+W)\bigr\},\\
  \BR_{\mathsf L,\{2\}}(Y_1,Y_3)
  &=\argmin_W\bigl\{f_1^*\bigl(\tfrac13(Y_1-W)\bigr)
  +f_2^*\bigl(\tfrac13(2W-Y_1-Y_3)\bigr)
  +f_3^*\bigl(\tfrac13(Y_3-W)\bigr)\bigr\},\\
  \BR_{\mathsf B,\{e_1\}}(Y_{e_2})
  &=\argmin_W\bigl\{f_1^*(W)+f_2^*(Y_{e_2}-W)\bigr\}.
\end{aligned}
\end{equation*}
The factor sets touched by the four blocks, read off from the responses,
are \(\cV_r=\{1,3\}\) for the first, as in Figure~\ref{fig:clean_carriers}(a);
\(\{1,2,3\}\) for the second and third, with the Laplacian case shown in
panel~(b); and \(\{1,2\}\) for the last, as in panel~(c). Only the first
fails to induce a connected subgraph of \(\cG\).  Its carrier must
therefore contain agent~\(2\), giving \(\cC_r=\{1,2,3\}\) (panel~(d)): this agent relays between agents~\(1\) and~\(3\) without
contributing a conjugate to the subproblem.  For the other three blocks,
choose \(\cC_r=\cV_r\); panels~(e),(f) illustrate the Laplacian and
incidence cases.  Taking \(\cG_r=\cG[\cC_r]\) gives the whole path for
the first three blocks and the single edge \(e_1\) for the last, so that
\(\mathsf B_{\cG_r}=\mathsf B_{:,e_1}\) there.  The corresponding
factorizations from Proposition~\ref{prop:clean_realizability} are\vspace{-0.2cm}
\begin{equation}
\begin{aligned}
  (\mathsf A_{\rm ex})_{:,1}
  &=\mathsf B_{:,e_1}+\mathsf B_{:,e_2},
  &\qquad
  (\mathsf A_{\rm ex})_{:,2}
  &=\mathsf B_{:,e_1}-\mathsf B_{:,e_2},\\
  \mathsf L_{:,2}
  &=\tfrac13\bigl(-\mathsf B_{:,e_1}+\mathsf B_{:,e_2}\bigr),
  &\qquad
  \mathsf B_{:,e_1}
  &=\mathsf B_{\cG_r}.
\end{aligned}
\label{eq:clean_path_cases}\vspace{-0.1cm}
\end{equation}
The matrices \(M_r\) are respectively \((1,1)\), \((1,-1)\),
\(\tfrac13(-1,1)\), and \((1)\); their entries label the physical edges in
Figure~\ref{fig:clean_carriers}(d)--(f) for the illustrated blocks.  In
the first identity, the two columns of \(\mathsf B\) have opposite entries
at agent~\(2\), which cancel.  Thus \(f_2^*(-2Y_2)\) is independent of
\(W\): agent~\(2\) is absent from the block factor graph but present in
the physical subgraph as a relay.  The second and third combinations have
nonzero entries at all three agents, so every agent contributes a
conjugate to the response.  For the last block, \(M_r=(1)\) uses only
\(e_1\), whose endpoint agents are neighbors and communicate directly.
Here \(f_3^*\) is absent because \(e_1\) is not incident to agent~\(3\),
rather than through cancellation.\vspace{-0.3cm}

\section{ Graph Tearing Along Edges: Message Passing on Tree Blocks}\vspace{-0.2cm}
\label{sec:clean_gate}

Proposition~\ref{prop:clean_generic_block_fixed_point}
    holds for every faithful \(\mathsf A\) and every block cover, while Proposition~\ref{prop:clean_realizability} characterizes the admissible carriers (i.e., satisfying \eqref{eq:clean_carrier_conditions}).
  The framework leaves these choices and the methods for computing the block responses open, with   different communication and computation costs. Choices   range from 
  Laplacian-type representations
  in which a coordinate couples an agent with all its neighbors, to
  the incidence  representation, in which a coordinate couples one pair of neighbors. %

 We now use this freedom to design a decentralized algorithm, termed GATE
(Graph-Tearing message passing). 
We choose a graph decomposition in which
{\it selecting dual coordinates amounts to selecting edges of \(\cG\)}, so that
each coordinate appears in the conjugates of two neighboring agents, and
a block response can be computed by exchanges between the neighboring agents  whose conjugates enter it, without relay agents. More formally, we have the following.\vspace{-0.2cm}
 \begin{itemize}\item[\textbf{(i)}] \textbf{Representation:} We choose \(\mathsf A\) so
that, for each block, the factorization~\eqref{eq:clean_realizability}
admits a matrix \(M_r\) with {\it one nonzero entry per row} and with
{\it different} rows selecting {\it distinct} edges. Each coordinate then
enters exactly the conjugates at the two endpoints of its selected edge,
avoiding redundant coordinates on the same edge. Let \(\cE_r\) denote the
set of these edges. Indexing the coordinates by \(\cE_r\) and absorbing
each nonzero coefficient into its coordinate
 (a change of variables in the dual~\eqref{eq:clean_dual}) gives
  \(\mathsf A_{:,\cJ_r}=\mathsf B_{:,\cE_r}\).  
   \item[\textbf{(ii)}] \textbf{Partition: } We retain one coordinate for every physical edge, leaving the whole network available for block
selection. We partition the  edge-set $\mathcal E$  into  nonempty sets \(\cP:=\{\cE_r\}_{r=1}^p\), so that each variable receives one block
update. Write
  \(\cT_r:=(\cV_r,\cE_r)\) for the subgraph selected by block \(r\); by Definition 2, $\cV_r$ is the set of endpoints of $\cE_r$.
  We take each \(\cT_r\) to be connected, without loss of generality (the response \eqref{eq:clean_general_block_response} of a
disconnected \(\cT_r\) separates into independent responses on its connected
components, which can be taken as separate blocks).
  \item[\textbf{(iii)}] \textbf{Carriers:} We use only the agents whose conjugates enter the response:  take \(\cC_r:=\cV_r\). Connectivity of \(\cT_r\) makes \(\cG[\cV_r]\) connected,
so the carrier conditions~\eqref{eq:clean_carrier_conditions} hold, and no
additional relay agents are needed.\vspace{-0.2cm}
 \end{itemize}

The three design choices are summarized by\vspace{-0.2cm}
\begin{equation}
  \mathsf A_{:,\cJ_r}=\mathsf B_{:,\cE_r},
  \qquad
  \bigsqcup_{r=1}^p\cE_r=\cE,
  \qquad
  \cC_r=\cV_r,\quad r\in[p].
  \label{eq:clean_unit_alignment}\vspace{-0.2cm}
\end{equation}

 After ordering the columns, \(\mathsf A=\mathsf B\), with \(\ker(\mathsf B^\top)=\operatorname{span}\{\mathbf 1\}\) (under Assumption~\ref{ass:clean_network}); hence   $\mathsf A$ is faithful.    Since \(\{\cE_r\}_{r=1}^p\) is a block cover
and \eqref{eq:clean_carrier_conditions} holds with \(\cC_r=\cV_r\),
\eqref{eq:clean_unit_alignment} defines a graph decomposition over \(\cG\)
(Definition~\ref{def:clean_graph_decomposition}): the block factor graph \(\cF_r\) is the node-edge incidence graph of
\(\cT_r\), that is, the bipartite graph on \(\cV_r\sqcup\cE_r\) in which \(i\)
and \(e\) are adjacent exactly when \(i\in\cI_e\); and
\eqref{eq:clean_realizability} holds with \(\cG_r=\cT_r\) and \(M_r=I\). 
The partition remains a design choice: finer partitions
give smaller block subproblems but may need more rounds to resolve the
coupling between blocks. The convergence analysis in Sec.~\ref{sec:clean_convergence}
  will quantify the effect of the partition on the rate and guide
  this computation-communication tradeoff.

The remainder of this section derives the algorithm GATE:
 the structure of the blocks is specified, and the block
fixed-point conditions of Proposition~\ref{prop:clean_generic_block_fixed_point}
are rewritten as relations for individual edge variables and local messages.
Their relaxed parallel iteration yields GATE.\vspace{-0.3cm}

\subsection{Tree-block responses and message equations}\vspace{-0.2cm}
\label{sec:clean_mp_jacobi}
\label{subsec:clean_logical_dmp}

Fix a block $r\in [p]$ and the exterior coordinates \(Y_{\overline{\cE}_r}\), where \(\overline{\cE}_r:=\cE\setminus\cE_r\); under
  \eqref{eq:clean_unit_alignment},
  the conditional response becomes\vspace{-0.2cm}
  \begin{equation}
   \BR_r(Y_{\overline{\cE}_r})
   =\argmin_{W\in\R^{|\cE_r|\times d}}
   \sum_{i\in\cV_r}
   f_i^*\bigl(\mathsf B_{i,\cE_r}W+
         \mathsf B_{i,\overline{\cE}_r}Y_{\overline{\cE}_r}\bigr).
   \label{eq:gate_block_response}\vspace{-0.2cm}
  \end{equation}
The conjugates in~\eqref{eq:gate_block_response} are held by different
agents.  We seek subgraphs  \(\cT_r\) that enable the computation of  \(\BR_r\) through exchanges between neighboring agents only.   Fixing the coordinate \(Y_e\) separates the remaining minimization in~\eqref{eq:gate_block_response}
into two independent subproblems {\it whenever removing \(e\) disconnects
\(\cT_r\)}. {\bf Trees} permit this separation at {\it every} edge. 
 They also eliminate coordinate redundancy: minimizing over the
  \(|\cV_r|-1\) edge variables of a spanning tree, while fixing those
  on the other edges of \(\cT_r\), gives the same optimal value as
  minimizing over the entire block.  This is established in the following proposition.
 \begin{proposition}
  \label{prop:connected}  Let \(\cT_r\) be connected and let \(\cE'_r\) be the edge
  set of any spanning tree of \(\cT_r\). Then
  $ \Range(\mathsf B_{:,\cE'_r})
   =\Range(\mathsf B_{:,\cE_r})$ and  $
   \ker(\mathsf B_{:,\cE'_r})=\{0\}.$ Furthermore, under Assumption~\ref{ass:clean_problem_class}, the response
  \(\BR_r(Y_{\overline{\cE}_r})\) is a singleton for every fixed
  \(Y_{\overline{\cE}_r}\) if and only if \(\cT_r\) is a tree.
  \end{proposition}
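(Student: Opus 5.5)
The plan is to prove the two linear-algebra facts first and then deduce the singleton characterization from strict convexity of the conjugates.

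\emph{Range and kernel.} For the range identity, write \(\mathsf B_{:,\cE'_r}\subseteq\mathsf B_{:,\cE_r}\) columnwise, so one inclusion is immediate. For the reverse inclusion, take any \(e=(u,v)\in\cE_r\setminus\cE'_r\). The spanning tree contains a unique path \(u=w_0,w_1,\ldots,w_k=v\). Summing the signed columns \(\pm\mathsf B_{:,(w_{l-1},w_l)}\), each sign chosen to orient the edge from \(w_{l-1}\) to \(w_l\), telescopes to \(\mathbf e_u-\mathbf e_v=\pm\mathsf B_{:,e}\). Hence every column of \(\mathsf B_{:,\cE_r}\) lies in \(\Range(\mathsf B_{:,\cE'_r})\).

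For the kernel, suppose \(\mathsf B_{:,\cE'_r}z=0\) and induct by leaf removal. A leaf \(\ell\) of the tree meets exactly one tree edge \(e\), so row \(\ell\) of the equation reads \(\pm z_e=0\). Deleting \(\ell\) and \(e\) leaves a smaller tree, and iterating gives \(z=0\). Equivalently, one can count dimensions: \(\operatorname{rank}\mathsf B_{:,\cE_r}=|\cV_r|-1=|\cE'_r|\) because \(\ker(\mathsf B_{\cT_r}^\top)\) restricted to \(\cV_r\) is \(\operatorname{span}\{\mathbf 1\}\) by connectivity. The same argument shows \(\ker(\mathsf B_{:,\cE_r})=\{0\}\) iff \(|\cE_r|=|\cV_r|-1\), i.e. iff the connected \(\cT_r\) is a tree.

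\emph{Singleton characterization.} With the exterior coordinates fixed, the objective in \eqref{eq:gate_block_response} has the form \(\Phi(\mathsf B_{\cV_r,\cE_r}W+C)\). Here \(\Phi(U)=\sum_{i\in\cV_r}f_i^*(U_i)\) is strongly convex because each \(f_i^*\) is \(1/L_{f_i}\)-strongly convex (Assumption~\ref{ass:clean_problem_class}). Rows outside \(\cV_r\) vanish, so \(\ker\mathsf B_{\cV_r,\cE_r}=\ker\mathsf B_{:,\cE_r}\).

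\textbf{If \(\cT_r\) is a tree:} the kernel is trivial, so \(W\mapsto\Phi(\mathsf BW+C)\) is strongly convex. A minimizer exists, as already noted in Definition~\ref{def:clean_block_decomposition}, and it is unique.

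\textbf{If \(\cT_r\) is not a tree:} pick a nonzero \(z\in\ker\mathsf B_{:,\cE_r}\), for instance the signed indicator of a cycle. Then for any minimizer \(W^\star\), every \(W^\star+zh^\top\) with \(h\in\R^d\) attains the same value, so the response is not a singleton for any \(Y_{\overline{\cE}_r}\).

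The only mildly delicate step is the kernel/rank argument, and specifically the identification of \(\ker(\mathsf B_{\cT_r}^\top)\) with constants on \(\cV_r\). This is the standard connectivity argument: \(\mathsf B^\top x=0\) forces \(x_u=x_v\) along every edge.
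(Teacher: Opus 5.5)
Your proof is correct: the path-telescoping argument gives the range identity, and leaf peeling or rank--nullity gives injectivity. The strong-convexity versus kernel-direction argument settles the singleton characterization, with the signed cycle indicator making every response non-unique. The paper writes out no separate proof of this proposition; it relies on the same standard facts you establish, namely that a tree's incidence block has full column rank \(|\cV_r|-1\) (invoked in Lemma~\ref{lem:clean_tree_inverse}), and that \(g(Y)=G(\mathsf BY)\) is constant along \(\ker\mathsf B\) on cyclic graphs (Sec.~\ref{sec:clean_convergence}).
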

   We therefore choose each \(\cT_r\) to be a tree.
 \begin{assumption}
  \label{ass:clean_tree_delay}For every block $r\in [p]$, the associated subgraph \(\cT_r\) 
   is a tree.
\vspace{-0.3cm}

 \paragraph{\bf Splitting at the edge.}
For each block \(r\), uniqueness of the conditional response
(Proposition~\ref{prop:connected}) gives the following equivalent
conditions:
\vspace{-0.1cm}
\begin{equation}
\begin{aligned}
 &Y_{\cE_r}^\star\in\BR_r(Y_{\overline{\cE}_r}^\star)\\
 &\quad\Longleftrightarrow\quad
 \ Y_e^\star\in\argmin_{y_e\in\R^{1\times d}}\Bigg\{
 \min_{\substack{Y_{\cE_r}\\Y_e=y_e}}
 \sum_{i\in\cV_r}
 f_i^*\bigl(\mathsf B_{i,\cE_r}Y_{\cE_r}+
       \mathsf B_{i,\overline{\cE}_r}Y_{\overline{\cE}_r}^\star\bigr)
 \Bigg\} \quad\forall e\in\cE_r.
\end{aligned}
 \label{eq:clean_conditional_value}
\end{equation}
By Proposition~\ref{prop:clean_generic_block_fixed_point},
\(Y^\star\) solves~\eqref{eq:clean_dual} if and only if these
conditions hold for every block.
 To decompose the inner minimization
  in~\eqref{eq:clean_conditional_value}, fix \(Y_e=y_e\).
  Removing \(e\) from \(\cT_r\) separates this
  minimization
  into two decoupled subproblems, one on each resulting
  connected subgraph.
  We solve them recursively from the leaves
  toward \(e\).

 For each endpoint \(i\in\cI_e\) of \(e\), let
\(\cV_e(i)\) and \(\cE_e(i)\) denote the vertex and edge sets
of the resulting connected subgraph containing \(i\).
 Figure~\ref{fig:messages_tree_block}(a) illustrates the two
resulting subgraphs.
Every block edge other than \(e\)
lies in one of these two subgraphs.
Thus, with \(Y_e=y_e\), no remaining block
variable enters conjugates from both subgraphs, and
\begin{equation}
\begin{aligned}
 &\min_{\substack{Y_{\cE_r}\in\R^{|\cE_r|\times d}\\Y_e=y_e}}
 \sum_{k\in\cV_r}f_k^*\bigl(\mathsf B_{k,\cE_r}Y_{\cE_r}
       +\mathsf B_{k,\overline{\cE}_r}Y^\star_{\overline{\cE}_r}\bigr)\\
 &\quad=\sum_{i\in\cI_e}\underbrace{
 \min_{Y_{\cE_e(i)}\in\R^{|\cE_e(i)|\times d}}\sum_{k\in\cV_e(i)}
 f_k^*\bigl(\mathsf B_{k,\cE_e(i)}Y_{\cE_e(i)}
       +\mathsf B_{k,e}y_e
       +\mathsf B_{k,\overline{\cE}_r}Y^\star_{\overline{\cE}_r}\bigr)
 }_{\displaystyle =:\,\mu_{i\to e}(y_e)}.
\end{aligned}
\label{eq:clean_conditional_split}
\end{equation}
 The sum over \(\cI_e\)
in~\eqref{eq:clean_conditional_split} reads
\(\mu_{i\to e}(y_e)+\mu_{i'\to e}(y_e)\), where $i$ and $i'$ are the two endpoints of $e$; \(\mu_{i\to e}(y_e)\) [resp. \(\mu_{i'\to e}(y_e)\)] represents the   cost-to-go for the subgraph $(\cV_e(i),\cE_e(i))$ [resp. $(\cV_e(i'),\cE_e(i'))$];    together, they determine \(Y_e^\star\)
  through~\eqref{eq:clean_conditional_value}.
We call them \emph{messages}, as this information is exchanged
between the endpoints of \(e\); the arrows in
Figure~\ref{fig:messages_tree_block}(a) indicate the two messages.
Their dependence on \(Y^\star_{\overline{\cE}_r}\) is suppressed
for notational convenience. 
  \vspace{-0.2cm}
\begin{figure}[t!]
\centering
\includegraphics[width=0.98\linewidth]{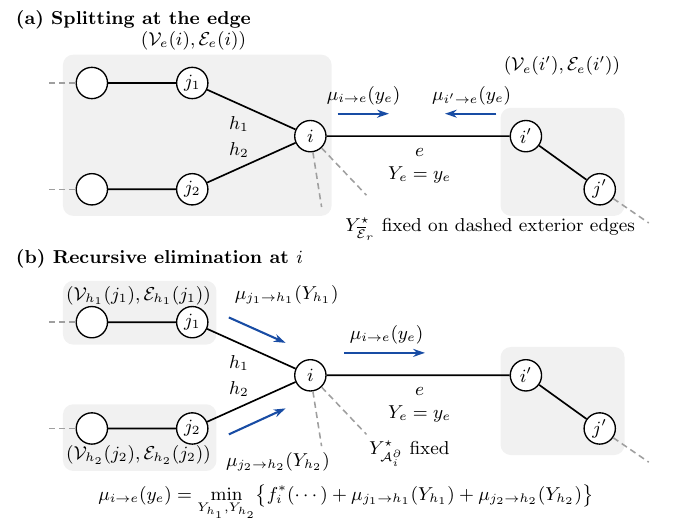}
\caption{Messages on the same tree block: (a) fixing
\(Y_e=y_e\) separates the minimization into two subgraphs;
(b) eliminating \(Y_{h_1},Y_{h_2}\) combines the two incoming
messages with \(f_i^*\), while the dashed exterior-edge coordinates
remain fixed at \(Y^\star_{\overline{\cE}_r}\).}
\label{fig:messages_tree_block}
\end{figure}

\paragraph{\bf Computing \(\mu_{i\to e}\) via recursive elimination along the subtrees.}
  We evaluate the minimization defining \(\mu_{i\to e}
  (y_e)\)
  by successively eliminating the edge variables \(Y_{\cE_e(i)}\) of the subgraph
  \((\cV_e(i),\cE_e(i))\), proceeding from its leaves
  toward \(e\) (through the endpoint
  $i$), while  \(Y_e=y_e\) and \(Y^\star_{\overline{\cE}_r}\)
  remain fixed.

\textit{(i) Leaves.} If \(\cV_e(i)=\{i\}\), then
\(\cE_e(i)=\emptyset\), and
\begin{equation}
 \mu_{i\to e}(y_e)
 =f_i^*\bigl(\mathsf B_{i,e}y_e+
       \mathsf B_{i,\overline{\cE}_r}Y^\star_{\overline{\cE}_r}\bigr).
 \label{eq:gate_leaf_message}
\end{equation}
The message is finite-valued and strongly convex by
Assumption~\ref{ass:clean_problem_class}, since
\(\mathsf B_{i,e}=\pm1\).

\textit{(ii) Remaining agents.}
 Otherwise, \(|\cV_e(i)|>1\). Removing \(i\) from
  \((\cV_e(i),\cE_e(i))\) leaves
  \(|\cA_i^\circ|-1\) connected subgraphs, one for each
  edge
  \(h\in\cA_i^\circ\setminus\{e\}\).
  For each such \(h\), let \(j\) be its endpoint other
  than
  \(i\), so that \(\{j\}=\cI_h\setminus\{i\}\).
  The resulting subgraph containing \(j\) is
  \((\cV_h(j),\cE_h(j))\): its agents \(k\) are exactly
  those
  whose unique path from \(i\) to \(k\) in \(\cT_r\)
  begins
  with \(h\). As \(h\) varies over \(\cA_i^\circ\setminus\{e\}\),
   the sets
  \(\cV_h(j)\)
  are pairwise disjoint and, together
  with \(\{i\}\), form a partition of \(\cV_e(i)\).
  The corresponding edge sets \(\{h\}\cup\cE_h(j)\)
  partition \(\cE_e(i)\): any additional edge would join
  two distinct subgraphs \((\cV_h(j),\cE_h(j))\) and
  create a cycle in \(\cT_r\). Hence,
\begin{equation}
\begin{aligned}
 \cV_e(i)&=\{i\}\sqcup
  \bigsqcup_{\substack{h\in\cA_i^\circ\setminus\{e\}\\j\in\cI_h\setminus\{i\}}}
             \cV_h(j),\\
 \cE_e(i)&=
  \bigsqcup_{\substack{h\in\cA_i^\circ\setminus\{e\}\\j\in\cI_h\setminus\{i\}}}
             \bigl(\{h\}\sqcup\cE_h(j)\bigr).
\end{aligned}
\label{eq:gate_component_partition}
\end{equation}

 We proceed by induction on \(|\cV_e(i)|\), starting
  from
  the leaf case~\eqref{eq:gate_leaf_message}.
  For each \(h\in\cA_i^\circ\setminus\{e\}\), let
  \(\{j\}=\cI_h\setminus\{i\}\).
  Since \(|\cV_h(j)|<|\cV_e(i)|\), the induction
  hypothesis
  ensures that \(\mu_{j\to h}\) is finite-valued and
  strongly
  convex, and its defining minimum is uniquely attained
  for every fixed \(Y_h\).

   In the minimization defining \(\mu_{i\to e}(y_e)\)
  in~\eqref{eq:clean_conditional_split}, first hold
  \(Y_{\cA_i^\circ\setminus\{e\}}\) fixed.
  By~\eqref{eq:gate_component_partition}, the sum of
  conjugates
  on each \(\cV_h(j)\) can be minimized independently
  over
  \(Y_{\cE_h(j)}\), giving \(\mu_{j\to h}(Y_h)\), since every block edge of an agent of \(\cV_h(j)\) lies in \(\{h\}\cup\cE_h(j)\).
  Minimizing the sum of these messages and agent \(i\)'s
  conjugate over \(Y_{\cA_i^\circ\setminus\{e\}}\), while
  \(Y_e=y_e\) and \(Y^\star_{\overline{\cE}_r}\) remain
  fixed,
  gives\vspace{-0.2cm}
\begin{equation}
\begin{aligned}
 \mu_{i\to e}(y_e)=
 \min_{Y_{\cA_i^\circ\setminus\{e\}}}\Bigg\{
 &f_i^*\bigl(\mathsf B_{i,e}y_e+
      \mathsf B_{i,\cA_i^\circ\setminus\{e\}}Y_{\cA_i^\circ\setminus\{e\}}+
      \mathsf B_{i,\overline{\cE}_r}Y^\star_{\overline{\cE}_r}\bigr)\\
 &+\sum_{h\in\cA_i^\circ\setminus\{e\}}\ \sum_{j\in\cI_h\setminus\{i\}}
       \mu_{j\to h}(Y_h)
 \Bigg\},\quad i\in\cV_r,\ e\in\cA_i^\circ.
\end{aligned}
\label{eq:clean_fixed_factor_to_var}
\end{equation}
 Figure~\ref{fig:messages_tree_block}(b) illustrates this
recursion for two edges incident to \(i\) other than \(e\).

  The objective in this recursion is finite-valued and
  jointly
  strongly convex in
  \((y_e,Y_{\cA_i^\circ\setminus\{e\}})\).
  For each \(y_e\), its minimum is therefore uniquely
  attained,
  and partial minimization yields a finite-valued,
  strongly
  convex \(\mu_{i\to e}\).
  Once the minimizing values \(Y_h\),
  \(h\in\cA_i^\circ\setminus\{e\}\), are fixed, the
  induction
  hypothesis uniquely determines the minimizing
  \(Y_{\cE_h(j)}\), where \(\{j\}=\cI_h\setminus\{i\}\).
  By~\eqref{eq:gate_component_partition}, these variables
  constitute the unique minimizer \(Y_{\cE_e(i)}\) in
  \eqref{eq:clean_conditional_split}, completing the
  induction.
  The same induction shows that any family satisfying
  \eqref{eq:clean_fixed_factor_to_var} coincides with the
  messages defined in~\eqref{eq:clean_conditional_split}.

  \begin{proposition}[Message fixed points]
  \label{prop:gate_message_fixed_point}
  Under Assumptions~\ref{ass:clean_problem_class},
  \ref{ass:clean_network},
  and~\ref{ass:clean_tree_delay},
  consider the
  decomposition~\eqref{eq:clean_unit_alignment}.
  For \(Y^\star\in\R^{|\cE|\times d}\), let
  \(\mu_{i\to e}\) be the messages defined in
  \eqref{eq:clean_conditional_split}, with
  \(Y^\star_{\overline{\cE}_r}\) fixed in each block.
  Then \(Y^\star\) solves~\eqref{eq:clean_dual} if and
  only if
  \begin{equation}
   \{Y_e^\star\}
   =
   \argmin_{y_e\in\R^{1\times d}}
   \sum_{i\in\cI_e}\mu_{i\to e}(y_e),
   \qquad e\in\cE_r,\quad r\in[p].
   \label{eq:clean_edge_fixed_point_decomposition}
  \end{equation}
  \end{proposition}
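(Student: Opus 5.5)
The plan is to chain three facts already established: Proposition~\ref{prop:clean_generic_block_fixed_point}, the uniqueness part of Proposition~\ref{prop:connected}, and the splitting identity~\eqref{eq:clean_conditional_split}. By Proposition~\ref{prop:clean_generic_block_fixed_point}, \(Y^\star\) solves~\eqref{eq:clean_dual} if and only if \(Y^\star_{\cE_r}\in\BR_r(Y^\star_{\overline{\cE}_r})\) for every \(r\in[p]\). So it suffices to fix one block \(r\) and show that, with \(Y^\star_{\overline{\cE}_r}\) frozen, the block condition is equivalent to~\eqref{eq:clean_edge_fixed_point_decomposition} holding for all \(e\in\cE_r\).

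Fix \(r\) and write \(\Phi_r(Y_{\cE_r})\) for the block objective in~\eqref{eq:gate_block_response} evaluated at \(Y^\star_{\overline{\cE}_r}\). Under Assumption~\ref{ass:clean_tree_delay}, \(\mathsf B_{:,\cE_r}\) has trivial kernel by Proposition~\ref{prop:connected}, applied with \(\cE'_r=\cE_r\). Composing the strongly convex \(f_i^*\) with this injective linear map makes \(\Phi_r\) strongly convex, so its minimizer \(\widehat W\) is unique; this is the singleton statement of Proposition~\ref{prop:connected}. For each \(e\in\cE_r\), define the partial minimum \(\phi_e(y_e):=\min\{\Phi_r(Y_{\cE_r}):Y_e=y_e\}\). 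By~\eqref{eq:clean_conditional_split}, \(\phi_e=\sum_{i\in\cI_e}\mu_{i\to e}\), and the induction preceding the statement shows each message is finite-valued and strongly convex. Hence \(\phi_e\) is strongly convex with a unique minimizer. I would also check that this minimizer equals \(\widehat W_e\): since \(\min_{y_e}\phi_e(y_e)=\min\Phi_r=\Phi_r(\widehat W)\) and \(\phi_e(\widehat W_e)\le\Phi_r(\widehat W)\), the point \(\widehat W_e\) minimizes \(\phi_e\). Together with uniqueness, this gives \(\argmin\phi_e=\{\widehat W_e\}\).

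Both directions now follow for the block. \((\Rightarrow)\) If \(Y^\star_{\cE_r}=\widehat W\), then \(\{Y^\star_e\}=\{\widehat W_e\}=\argmin\sum_{i\in\cI_e}\mu_{i\to e}\) for every \(e\in\cE_r\). \((\Leftarrow)\) If \(\{Y^\star_e\}=\argmin\phi_e\) for every \(e\in\cE_r\), then \(Y^\star_e=\widehat W_e\) edge by edge, so \(Y^\star_{\cE_r}=\widehat W\in\BR_r(Y^\star_{\overline{\cE}_r})\). This is exactly the coordinatewise reformulation~\eqref{eq:clean_conditional_value}, now using that the argmin is a singleton. Applying the argument over all blocks and invoking Proposition~\ref{prop:clean_generic_block_fixed_point} completes the proof.

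The only delicate point is the \((\Leftarrow)\) direction. The condition is imposed edge by edge, so one must rule out a mixture of coordinates taken from different minimizers. This is handled by the uniqueness of the block minimizer from Proposition~\ref{prop:connected}, which is why the tree assumption is essential. The messages must also be well-defined as finite strongly convex functions for \(\argmin\sum_{i\in\cI_e}\mu_{i\to e}\) to be a singleton, but that was already established by the induction preceding the statement.
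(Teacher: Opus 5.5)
Your proposal is correct and follows the paper's own argument. Proposition~\ref{prop:clean_generic_block_fixed_point} reduces the claim to a per-block condition, the uniqueness in Proposition~\ref{prop:connected} gives the edgewise reformulation~\eqref{eq:clean_conditional_value}, and the splitting~\eqref{eq:clean_conditional_split}, together with the inductively established strong convexity of the messages, yields~\eqref{eq:clean_edge_fixed_point_decomposition}. Your explicit check that \(\widehat W_e\) minimizes the partial minimum \(\phi_e\) spells out a step the paper leaves implicit in~\eqref{eq:clean_conditional_value}.
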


  The message
  recursion~\eqref{eq:clean_fixed_factor_to_var}
  and the edge conditions
  \eqref{eq:clean_edge_fixed_point_decomposition}
  form the fixed-point system whose relaxed parallel
  iteration defines GATE. This is formalized next.\vspace{-0.4cm}

\subsection{The node-owned GATE algorithm}
\label{subsec:clean_node_owned_dmp}\vspace{-0.2cm}
GATE iterates the edge conditions~\eqref{eq:clean_edge_fixed_point_decomposition} and the recursion~\eqref{eq:clean_fixed_factor_to_var}, with relaxed edge updates, as summarized in
  Algorithm~\ref{alg:gate_uniform_proposed}.
  To avoid an edge-level coordinator, an edge-node assignment is introduced,
  \(\mathfrak a:\cE\to\cV\) with \(\mathfrak
  a(e)\in\cI_e\).   Define
  \(\cA_i^{\mathfrak a}:=\{e\in\cA_i:\mathfrak
  a(e)=i\}\).
  These sets are pairwise disjoint and cover \(\cE\).
  Node \(i\) updates \(Y_e\) for \(e\in\cA_i^{\mathfrak
  a}\),
  while both endpoint-nodes retain a copy.
  For each edge \(e\), \(r\) denotes its block index, and
  \(\cA_i^\circ\), \(\cA_i^\partial\) are understood
  relative to that block.

GATE is fully decentralized: each agent
  computes locally
  and communicates only with its neighbors, without a
  central coordinator. This is discussed next.

 \textbf{Initialization.}
  For each edge \(e\), its owner \(\mathfrak a(e)\)
  chooses \(Y_e^0\)
  arbitrarily and sends it to the other endpoint.
  Each node \(i\) then selects a finite-valued strongly
  convex message
  \(\mu_{i\to e}^0\) for every \(e\in\cA_i\) and sends it
  to the
  other endpoint of \(e\). For instance, one local choice is
  \(
  \mu_{i\to e}^0(y_e)
  =
  f_i^*\!\big(
  \mathsf B_{i,e}y_e+
  \mathsf B_{i,\cA_i\setminus\{e\}}
  Y_{\cA_i\setminus\{e\}}^0
  \big).
  \)

\begin{algorithm}[t!]
{\fontsize{6.2}{7.0}\selectfont
\DontPrintSemicolon
\SetKwFor{ForEach}{foreach}{}{end foreach}
\SetKwFor{NodeActions}{Each node}{executes the following steps, in parallel with the other nodes:}{}
\SetAlgoRefName{1}
\caption{GATE: Graph-Tearing message passing}
\label{alg:gate_uniform_proposed}\medskip
\KwIn{Edge assignment \(\mathfrak a\) with ownership sets
\(\{\cA_i^{\mathfrak a}\}_{i\in\cV}\); edge partition
\(\{\cE_r\}_{r=1}^p\); initial edge variables \(Y^0\);
finite-valued strongly convex initial messages
\(\mu_{i\to e}^0\), \(e\in\cE\), \(i\in\cI_e\);
relaxation parameters \(\tau_r^\nu\in[0,1]\); \(N\geq1\).}

\For{\(\nu=0,1,\ldots,N-1\)}{
\NodeActions{\(i\in\cV\)}{
\BlankLine
\texttt{(1) Assigned edge-variable update.}\;
\ForEach{\(r\in[p]\) and \(e\in\cA_i^{\mathfrak a}\cap\cE_r\), compute}{
\begin{subequations}\label{eq:clean_node_owned_updates}
\begin{align}\vspace{-0.4cm}
\hat Y_e^{\nu+1}
&\in\argmin_{y_e\in\R^{1\times d}}
\left\{\sum_{j\in\cI_e}\mu_{j\to e}^{\nu}(y_e)\right\},
\label{eq:clean_dmp_updates_a}\\
Y_e^{\nu+1}
&=Y_e^\nu+\tau_r^\nu\bigl(\hat Y_e^{\nu+1}-Y_e^\nu\bigr).
\label{eq:clean_dmp_updates_b}
\end{align}
\end{subequations}
Send \(Y_e^{\nu+1}\) to the other endpoint of \(e\).\;
}
\BlankLine
\texttt{(2) Node-to-edge message update:}
\ForEach{\(r\in[p]\) and \(e\in\cA_i\cap\cE_r\), compute}{
\begin{equation}
\begin{aligned}
\mu_{i\to e}^{\nu+1}(y_e)
&=\min_{Y_{\cA_i^\circ\setminus\{e\}}}
\Bigg\{
f_i^*\!\left(\mathsf B_{i,e}y_e
+\mathsf B_{i,\cA_i^\circ\setminus\{e\}}
Y_{\cA_i^\circ\setminus\{e\}}
+\mathsf B_{i,\cA_i^\partial}
Y_{\cA_i^\partial}^{\nu+1}\right)\\
&\qquad+\sum_{h\in\cA_i^\circ\setminus\{e\}}
\ \sum_{j\in\cI_h\setminus\{i\}}
\mu_{j\to h}^{\nu}(Y_h)
\Bigg\}.
\end{aligned}
\label{eq:clean_factor_to_var}
\end{equation}
Send \(\mu_{i\to e}^{\nu+1}\) to the other endpoint of \(e\).\;
}
}
}
\BlankLine
\textbf{Primal recovery.}
Each node \(i\in\cV\) computes
\(x_i^N:=\nabla f_i^*((\mathsf BY^N)_i)\).
}
\end{algorithm}

\textbf{Local computations and communications.}
 \texttt{Step (1):}
Each node updates only its assigned edges, using the two endpoint
messages at round \(\nu\). Relaxation damps the edge update, which is
then shared with the other endpoint.
 \texttt{Step (2):} Each node updates a message on every incident edge, whether or not it
owns that edge.  The local minimization uses its conjugate and the
incoming messages \(\mu^\nu\), with incident exterior coordinates
fixed at the newly available \(Y^{\nu+1}\). Its minimizing variables
are local to this computation and do not overwrite the stored edge
iterates. The outgoing messages are sent to the other endpoints for
round \(\nu+1\).

Nodes work in parallel within each step, once the required data have
arrived. Each round advances every message once and sends one \(d\)-vector and two
functions per edge. Primal recovery uses only the incident edge coordinates. 
 
 \begin{remark}[Relation to \cite{ding2025hypergraphmp}] 
 MP-Jacobi~\cite{ding2025hypergraphmp} develops message-passing
  algorithms for nonlinear programs with a prescribed interaction
  (hyper)graph. Although its hypergraph model can represent the
  consensus dual for a fixed \(\mathsf A\), MP-Jacobi cannot be
  applied directly to obtain GATE or establish its convergence.
  GATE's boundary-message updates jointly minimize over other
  in-block variables that MP-Jacobi holds fixed in its
  inter-cluster factors. Moreover, the    convergence
  theory in \cite{ding2025hypergraphmp} requires strong convexity in the optimization variables,
  which the incidence dual lacks on cyclic graphs. 
  Beyond these algorithmic and analytical differences, a major departure from \cite{ding2025hypergraphmp} is the development here of   a general
  graph-decomposition framework that jointly designs the
  consensus representation, dual-coordinate blocks, and
  connected sets of agents hosting their computation;
  GATE is one instance. 
\vspace{-0.5cm} 
\end{remark}
\section{Convergence Analysis}
\label{sec:clean_convergence}\vspace{-0.2cm}

In this section, we establish  convergence of Algorithm~\ref{alg:gate_uniform_proposed}.  We begin with the quantities the analysis is built on.\vspace{-0.3cm}
\subsection{Image variables and block error bound}\vspace{-0.1cm}
For the incidence representation \(\mathsf A=\mathsf B\), write \(g:=g_{\mathsf B}\) and define
\[
    G(U):=\sum_{i\in\cV}f_i^*(U_i),
    \qquad U\in\R^{m\times d}.
\]
 Under Assumption~\ref{ass:clean_problem_class}, \(G\) is strongly convex. On loopy graphs, however, the incidence map is not injective and  \(g(Y)=G(\mathsf BY)\) is constant along \(\ker\mathsf B\): it is not strongly convex in the edge variables $Y$. The analysis therefore runs on the image variable \(Z:=\mathsf BY\), which ranges over the subspace
\[
    \cS:=\{\mathsf BY:\ Y\in\R^{|\cE|\times d}\}\subseteq\R^{|\cV|\times d}.
\] Changing \(Y_{\cE_r}\) alone moves \(Z\) along the affine subspace \(Z+\cU_r\), where
\[
    \cU_r:=\{\mathsf B_{\cV,\cE_r}H:\ H\in\R^{|\cE_r|\times d}\}\subseteq \cS,\qquad r\in[p].
\]
For \(Z=\mathsf BY\in\cS\), the value of the block minimization~\eqref{eq:gate_block_response} equals the minimum of \(G\) over the slice \(Z+\cU_r\):
\begin{equation}
    \min_{W_r\in\R^{|\cE_r|\times d}} g(W_r,Y_{\overline{\cE}_r})
    \;=\;
    \min_{U\in Z+\cU_r}G(U)
    \;=:\;G(O_r(Z)),
\label{eq:clean_block_image_minimizer}
\end{equation}
where \(O_r(Z)\) is the unique minimizer, well defined because \(G\) is strongly convex and \(Z+\cU_r\) is closed. Let
\begin{equation}\label{eq:def_Delta_r}
    \Delta_r(Z):=G(Z)-G(O_r(Z))\ge 0
\end{equation}be the progress of one exact minimization of block \(r\) from \(Z\). 
 Let \(G^\star:=\min_{U\in\cS}G(U)\), attained because \(G\) is strongly convex and \(\cS\) is closed. The analysis rests on the following condition, which bounds the optimality gap \(G(Z)-G^\star\) by the progress of all blocks.


\begin{assumption}[block error bound]
\label{ass:clean_beb}
There exists a partition-dependent constant \(\kappa_{\cP}<\infty\) such that \vspace{-0.2cm}
\begin{equation}
    G(Z)-G^\star\le \kappa_{\cP}\sum_{r=1}^p\Delta_r(Z),
    \quad Z\in\cS.
    \label{eq:clean_beb_image}
\end{equation}
\end{assumption}

\begin{remark}
\label{rem:clean_beb_motivation}
The sum \(\sum_{r=1}^p\Delta_r(Z)\) collects the decreases of \(G\) obtained by minimizing each block separately from the same \(Z\), with its exterior coordinates fixed. These separate minimizations need not add up to the global gap, because the block subproblems interact through the conjugate terms of their shared agents. Condition~\eqref{eq:clean_beb_image} controls how much suboptimality can remain relative to the progress they offer, and its smallest constant is the largest value of this ratio over \(\cS\). Thus \(\kappa_{\cP}\) quantifies the coupling among the blocks, accounting jointly for the partition and the functions. With the single-block partition \(\cP=\{\cE\}\) on a tree, \(\cU_1=\cS\), so \(\Delta_1(Z)=G(Z)-G^\star\) and \(\kappa_{\cP}=1\), independently of the condition number of $F$  and graph spectrum.

The coupling measured by \(\kappa_{\cP}\) admits a \emph{block-preconditioning} interpretation, which becomes explicit for quadratic local functions. With the edge vectors stacked, the dual Hessian is
\[
H=[H_{rs}]_{r,s\in[p]}
=(\mathsf B^\top\otimes I_d)
\operatorname{blkdiag}\!\left((\nabla^2 f_1)^{-1},\ldots,(\nabla^2 f_m)^{-1}\right)
(\mathsf B\otimes I_d)\succeq0.
\]
Set \(M_{\cP}:=\operatorname{blkdiag}(H_{11},\ldots,H_{pp})\). Each tree incidence block has full column rank, so \(H_{rr}\succ0\). Writing \(h\) for the vectorized difference between \(Y\) and a dual minimizer, exact block minimization gives \(\sum_{r=1}^p\Delta_r(Z)=\frac12h^\top HM_{\cP}^{-1}Hh\), with \(Z=\mathsf BY\). Thus the smallest admissible block error-bound constant is
\[
\kappa_{\cP}
=\sup_{h^\top Hh>0}\frac{h^\top Hh}{h^\top HM_{\cP}^{-1}Hh}
=\frac{1}{\lambda_{\min}^{+}\!\left(M_{\cP}^{-1/2}HM_{\cP}^{-1/2}\right)},
\]
where \(\lambda_{\min}^{+}\) denotes the smallest positive eigenvalue. The preconditioned Hessian has identity diagonal blocks; its off-diagonal blocks \(H_{rr}^{-1/2}H_{rs}H_{ss}^{-1/2}\) capture coupling normalized by the block curvatures.

Generic bounds based only on global curvature and spectral estimates need not capture this partition dependence; exploiting it yields sharper bounds in the regimes studied in Sec.~\ref{subsec:clean_rate_examples}.
\end{remark}

We prove convergence of Algorithm~\ref{alg:gate_uniform_proposed}
  in two steps. {\bf Step~1 }establishes an exact delayed block-Jacobi
  representation of its edge updates, which is a technical contribution of independent interest; {\bf Step~2} controls the delays
  to prove linear convergence.\vspace{-0.2cm}

\subsection{Step 1: GATE as a delayed block-Jacobi scheme}
\label{subsec:clean_delayed_form}\vspace{-0.2cm}

GATE interleaves message and edge updates without waiting for the message recursion to run through a whole tree (no block subproblem is ever solved to completion). Nevertheless, after a finite transient, each proposed edge update coincides with the corresponding coordinate of an exact block minimizer, evaluated at past values of the exterior coordinates, with delays that depend on the updated edge.

To establish it, we expand the messages used in \texttt{step~1} of Algorithm~\ref{alg:gate_uniform_proposed} through the recursion in \texttt{step~2}.  Each substitution goes back one round. If the expansion reaches a leaf of \(\cT_r\) at distance \(k\) from the nearer endpoint of \(e\), its message has index \(\nu-k\). For \(\nu-k\ge1\), substituting the recursion replaces this message by the leaf's conjugate term; for \(\nu-k=0\), the expansion stops at the arbitrary initial function \(\mu^0\). Since \(k\le\operatorname{diam}(\cT_r)-1\), \(\nu\ge\operatorname{diam}(\cT_r)\) ensures that no initial message remains. The sum \(\sum_{j\in\cI_e}\mu^\nu_{j\to e}(y_e)\) minimized in \texttt{step~1} then equals the minimum of the block objective over \(Y_{\cE_r\setminus\{e\}}\), with \(Y_e=y_e\). In each boundary agent's conjugate term, the exterior coordinates retain the values used when that agent formed its message. The delay associated with a boundary agent \(i\) is its distance from the nearer endpoint of the updated edge \(e\):
\begin{equation}
    \delta_{e,i}
    :=\min_{j\in\cI_e}\dist_{\cT_r}(i,j)
    =\frac{\dist_{\cF_r}(e,i)-1}{2},
    \qquad e\in\cE_r,\quad i\in\cV_r^\partial .
    \label{eq:clean_dual_delay}
\end{equation}
In particular, \(0\le\delta_{e,i}\le\operatorname{diam}(\cT_r)-1\).

The following proposition makes this representation precise.

\begin{proposition}
\label{prop:clean_delayed_bilevel}
Suppose Assumptions~\ref{ass:clean_problem_class} and~\ref{ass:clean_tree_delay} hold.  Then, for every \(r\in[p]\), \(e\in\cE_r\) and \(\nu\ge\operatorname{diam}(\cT_r)\), the updates of Algorithm~\ref{alg:gate_uniform_proposed} read
\begin{subequations}
\label{eq:clean_delayed_block_update}
\begin{align}
Y_e^{\nu+1}
&=Y_e^\nu+\tau_r^\nu(\hat Y_e^{\nu+1}-Y_e^\nu),
\label{eq:clean_delayed_block_update_a}
\\
\hat Y_e^{\nu+1}
&=
\argmin_{Y_e}\ \min_{Y_{\cE_r\setminus\{e\}}}
\left\{
\sum_{i\in\cV_r}
f_i^*\!\left(
\mathsf B_{i,\cA_i^\circ}Y_{\cA_i^\circ}
	+
	\sum_{h\in\cA_i^\partial}\mathsf B_{i,h}Y_h^{\nu-\delta_{e,i}}
	\right)
	\right\},
\label{eq:clean_expanded_delayed_oracle}
\end{align}
\end{subequations}
where the minimizer in~\eqref{eq:clean_expanded_delayed_oracle} is unique.
\end{proposition}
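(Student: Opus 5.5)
The plan is to unfold the message recursion~\eqref{eq:clean_factor_to_var} backward in time and show, by induction on the subtree size, that each message is itself a delayed partial minimum. Fix a block \(r\). For an oriented pair \((i,e)\) with \(i\in\cI_e\), \(e\in\cE_r\), let \((\cV_e(i),\cE_e(i))\) be the subtree on the side of \(i\) after removing \(e\), as in~\eqref{eq:clean_conditional_split}, and let \(\operatorname{ht}(i,e):=\max_{k\in\cV_e(i)}\dist_{\cT_r}(i,k)\) be its height.

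\textbf{Claim.} For every \(\nu\ge \operatorname{ht}(i,e)+1\),
\[
\mu^{\nu}_{i\to e}(y_e)=\min_{Y_{\cE_e(i)}}\sum_{k\in\cV_e(i)} f_k^*\Bigl(\mathsf B_{k,\cA_k^\circ}Y_{\cA_k^\circ}+\sum_{h\in\cA_k^\partial}\mathsf B_{k,h}Y_h^{\nu-\dist_{\cT_r}(i,k)}\Bigr),
\]
where \(Y_e=y_e\) inside \(Y_{\cA_i^\circ}\). The minimum is uniquely attained, and the result is finite-valued and strongly convex in \(y_e\).

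The proof of the claim is by induction on \(|\cV_e(i)|\).

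In the leaf case, \(\cA_i^\circ=\{e\}\), so~\eqref{eq:clean_factor_to_var} at round \(\nu\ge1\) gives directly \(f_i^*(\mathsf B_{i,e}y_e+\mathsf B_{i,\cA_i^\partial}Y^{\nu}_{\cA_i^\partial})\). This is the claim with delay \(0\).

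In the inductive step, apply~\eqref{eq:clean_factor_to_var} at round \(\nu\). It involves \(\mu^{\nu-1}_{j\to h}\) for the children edges \(h\), and \(\operatorname{ht}(j,h)\le \operatorname{ht}(i,e)-1\), so \(\nu-1\ge\operatorname{ht}(j,h)+1\). The induction hypothesis therefore expresses \(\mu^{\nu-1}_{j\to h}(Y_h)\) as a minimum over \(Y_{\cE_h(j)}\) in which agent \(k\) carries exterior data at time \((\nu-1)-\dist(j,k)=\nu-\dist(i,k)\).

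Next, use the partition~\eqref{eq:gate_component_partition}. The subproblems on different children share no variables, so the nested minima merge into one joint minimum over \(Y_{\cE_e(i)}\): first minimize over the \(Y_{\cE_h(j)}\) given the \(Y_h\), then over the \(Y_h\). This is the same nested-minimization identity used before Proposition~\ref{prop:gate_message_fixed_point}. Uniqueness and strong convexity follow exactly as in that induction, since the initial messages have been entirely eliminated and only conjugate terms remain.

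Finally, in step~(1) at round \(\nu\), the edge variable is computed from \(\mu^{\nu}_{i\to e}+\mu^{\nu}_{i'\to e}\). The claim applies to both endpoints provided \(\nu\ge \max_{i\in\cI_e}\operatorname{ht}(i,e)+1\). Each \(\operatorname{ht}(i,e)\) is the length of a path in \(\cT_r\) that can be extended through \(e\), so \(\operatorname{ht}(i,e)+1\le\operatorname{diam}(\cT_r)\). Hence \(\nu\ge\operatorname{diam}(\cT_r)\) suffices.

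Summing the two sides, every \(k\in\cV_r\) lies on exactly one side of \(e\). Its delay is \(\dist(i,k)\) for the endpoint \(i\) on its side, which equals \(\min_{j\in\cI_e}\dist_{\cT_r}(k,j)=\delta_{e,k}\) in~\eqref{eq:clean_dual_delay}. Interior agents have \(\cA_k^\partial=\emptyset\), so their delays are immaterial.

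The combined objective is strongly convex in \(y_e\). Indeed, it is the partial minimum of a function strongly convex in all block variables: the tree incidence \(\mathsf B_{:,\cE_r}\) is injective by Proposition~\ref{prop:connected}, and the \(f_k^*\) are strongly convex. This gives uniqueness of the minimizer in~\eqref{eq:clean_expanded_delayed_oracle}, which equals \(\hat Y_e^{\nu+1}\). The relaxation~\eqref{eq:clean_delayed_block_update_a} is just~\eqref{eq:clean_dmp_updates_b}.

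The main obstacle is the time bookkeeping in the inductive step. I need to check that the exterior iterate used in~\eqref{eq:clean_factor_to_var} at round \(\nu\) is \(Y^{\nu}\), i.e.\ the freshly updated one labeled \(\nu\) relative to message index \(\nu\). The claim must be indexed consistently so that delays add exactly one per hop and the leaf condition \(\nu-\text{depth}\ge1\) matches the threshold \(\operatorname{diam}(\cT_r)\). I will first fix this indexing convention on a path of three edges before writing the general induction.
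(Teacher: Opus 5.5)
Your proof is correct and follows essentially the paper's argument. The paper roots $\cF_r$ at $e$ and proves, by induction on subtree height, that $\mu^{\nu-q_i}_{i\to{\rm pa}(i)}$ equals the delayed subtree minimum with $q_i=\delta_{e,i}$; this is your claim reindexed via $\nu'=\nu-q_i$, and the paper then sums the two endpoint messages exactly as you do. Your time bookkeeping is already consistent ($\mu^\nu$ uses $Y^\nu$ at the boundary and $\mu^{\nu-1}$ from the children, with leaf threshold $\nu\ge1$), and your uniqueness argument via injectivity of $\mathsf B_{:,\cE_r}$ makes explicit a point the paper leaves implicit.
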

\begin{proof}
See Appendix~\ref{app:clean_delayed_bilevel_proof}. \qed
\end{proof}\vspace{-0.5cm}

\subsection{Step 2: Convergence of the delayed block-Jacobi scheme}
\label{subsec:clean_delayed_block_jacobi_convergence}

We now establish convergence for the delayed block-Jacobi scheme in~\eqref{eq:clean_delayed_block_update}. The analysis runs in the image variable \(Z=\mathsf BY\), where \(G\) is strongly convex, and compares, for each block,   the delayed edge responses that the algorithm implements with the exact block response at the current exterior coordinates. Three ingredients give the rate: \textbf{(i)} the descent of \(G\) along the iterates (Lemma~\ref{lem:clean_one_step}), \textbf{(ii)} the block error bound (Lemma~\ref{lem:clean_main_recursion}), and \textbf{(iii)}  a perturbation analysis of the block minimizers, bounding the mismatch caused by the delays  (Lemma~\ref{lem:clean_tree_mismatch}). 

 Write \(Z^\nu:=\mathsf BY^\nu\), with \(Z_i^\nu\) its \(i\)-th row, and   define, for \(r\in[p]\),\vspace{-0.1cm}\begin{equation}
\begin{aligned}
    Z_r^\nu
    &:=
    \mathsf B_{\cV,\cE_r}Y_{\cE_r}^\nu,\\
    \hat Z_r^{\nu+1}
    &:=
    \mathsf B_{\cV,\cE_r}\hat Y_{\cE_r}^{\nu+1},\\
    \bar Z_r^{\nu+1}
    &:=
    O_r(Z^\nu)-Z^\nu+Z_r^\nu.
\end{aligned}
\label{eq:clean_block_images}
\end{equation}
These are the contributions of block \(r\) to the image of the current iterate, of the delayed edgewise minimizers~\eqref{eq:clean_expanded_delayed_oracle}, and of the exact minimization at the current exterior coordinates.
Applying \(\mathsf B\) to the relaxed edge updates \eqref{eq:clean_delayed_block_update_a} gives\begin{equation}
    Z^{\nu+1}
    =
    Z^\nu+\sum_{r=1}^p\tau_r^\nu
    \bigl(\hat Z_r^{\nu+1}-Z_r^\nu\bigr).
    \label{eq:Z_update}
\end{equation}

Blocks sharing an agent \(i\) all move the row \(Z_i\), each by its own step \(\tau_r^\nu\). The admissible range of \(\tau_r^\nu\) is therefore limited by the number of overlapping blocks at each agent, defined as\vspace{-0.2cm}
\begin{equation}
    \omega_{\cP}
    :=
    \max_{i\in\cV}
    \bigl|\{r\in[p]:i\in\cV_r\}\bigr|,
    \label{eq:clean_block_overlap}
\end{equation}
where \(1\le\omega_{\cP}\le\min\{p,\max_{i\in\cV}|\cA_i|\}\). This is formalized next.\smallskip

\smallskip
\noindent\textbf{(i) Descent of \(G\) along the iterates.} Each round decreases \(G\) by the damped progress of the blocks, up to the error made by using delayed instead of current exterior coordinates. Along \(\cU_r\) only the conjugates of the agents in \(\cV_r\) vary, so \(G\) is \((1/L_r)\)-strongly convex and \((1/\mu_r)\)-smooth there, with
\[
    \mu_r:=\min_{i\in\cV_r}\mu_{f_i},
    \qquad
    L_r:=\max_{i\in\cV_r}L_{f_i}.
\]

\begin{lemma}
\label{lem:clean_one_step}
 Under Assumption~\ref{ass:clean_problem_class}, let  \(0\le\tau_r^\nu\le1/\omega_{\cP}\) for every \(r\in[p]\). Then, 
\begin{equation}
G(Z^{\nu+1})
\le
G(Z^\nu)
-\sum_{r=1}^p\tau_r^\nu\Delta_r(Z^\nu)
+\sum_{r=1}^p\frac{\tau_r^\nu}{2\mu_r}
\|\hat Z_r^{\nu+1}-\bar Z_r^{\nu+1}\|^2,
\label{eq:clean_one_step}\vspace{0.2cm}
\end{equation}
with \(\bar Z_r^{\nu+1}\) as in~\eqref{eq:clean_block_images}.
\end{lemma}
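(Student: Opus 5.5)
The proof is a direct convexity computation in the image variable that uses only the row-separability of \(G\). The delayed structure from Proposition~\ref{prop:clean_delayed_bilevel} plays no role here: it is absorbed entirely into the mismatch term \(\hat Z_r^{\nu+1}-\bar Z_r^{\nu+1}\).

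Abbreviate \(Z:=Z^\nu\), \(\tau_r:=\tau_r^\nu\), and set \(D_r:=\hat Z_r^{\nu+1}-Z_r^\nu\in\cU_r\). By~\eqref{eq:Z_update}, \(Z^{\nu+1}=Z+\sum_r\tau_rD_r\). Since \(\cU_r\) is spanned by columns of \(\mathsf B_{\cV,\cE_r}\), the matrix \(D_r\) has nonzero rows only in \(\cV_r\).

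\textbf{Step 1 (decoupling the blocks via overlap).} Fix an agent \(i\) and let \(R_i:=\{r: i\in\cV_r\}\), so \(n_i:=|R_i|\le\omega_{\cP}\). Write the \(i\)-th row as the convex combination
\[
Z_i^{\nu+1}=\Bigl(1-\tfrac{n_i}{\omega_{\cP}}\Bigr)Z_i+\sum_{r\in R_i}\tfrac{1}{\omega_{\cP}}\bigl(Z_i+\omega_{\cP}\tau_r(D_r)_i\bigr).
\]
Jensen's inequality for \(f_i^*\) then bounds \(f_i^*(Z_i^{\nu+1})-f_i^*(Z_i)\) by \(\frac{1}{\omega_{\cP}}\sum_{r\in R_i}[f_i^*(Z_i+\omega_{\cP}\tau_r(D_r)_i)-f_i^*(Z_i)]\). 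Sum over \(i\) and exchange the order of summation. Because \(G(Z+tD_r)-G(Z)\) only involves rows in \(\cV_r\), this yields
\[
G(Z^{\nu+1})-G(Z)\le\sum_{r=1}^p\tfrac{1}{\omega_{\cP}}\bigl[G(Z+\omega_{\cP}\tau_rD_r)-G(Z)\bigr].
\]
This regrouping is the only step needing care, and it is exactly where the hypothesis \(\tau_r\le1/\omega_{\cP}\) enters.

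\textbf{Step 2 (convexity along the segment).} Since \(t:=\omega_{\cP}\tau_r\in[0,1]\), convexity of \(G\) gives \(G(Z+tD_r)\le(1-t)G(Z)+tG(Z+D_r)\). Combining with Step 1,
\[
G(Z^{\nu+1})-G(Z)\le\sum_{r=1}^p\tau_r\bigl[G(Z+D_r)-G(Z)\bigr].
\]

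\textbf{Step 3 (comparison with the exact block minimizer).} By~\eqref{eq:clean_block_images},
\[
Z+D_r=O_r(Z)+w_r,\qquad w_r:=\hat Z_r^{\nu+1}-\bar Z_r^{\nu+1}.
\]
Here \(w_r\in\cU_r\), because \(\hat Z_r^{\nu+1}\in\cU_r\), \(Z_r^\nu\in\cU_r\), and \(O_r(Z)-Z\in\cU_r\). Since \(O_r(Z)\) minimizes \(G\) over \(Z+\cU_r\), first-order optimality gives \(\langle\nabla G(O_r(Z)),w\rangle=0\) for all \(w\in\cU_r\). Along \(\cU_r\) only the conjugates \(f_i^*\) with \(i\in\cV_r\) vary, and each is \(1/\mu_{f_i}\)-smooth, so \(G\) is \((1/\mu_r)\)-smooth there. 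The descent lemma then gives
\[
G(Z+D_r)\le G(O_r(Z))+\tfrac{1}{2\mu_r}\|w_r\|^2.
\]
Subtracting \(G(Z)\) and using \(\Delta_r(Z)=G(Z)-G(O_r(Z))\) from~\eqref{eq:def_Delta_r}, we get
\[
G(Z+D_r)-G(Z)\le-\Delta_r(Z)+\tfrac{1}{2\mu_r}\|w_r\|^2.
\]
Inserting this into Step 2 yields~\eqref{eq:clean_one_step}.
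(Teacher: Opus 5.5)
Your proof is correct and follows the paper's argument. A row-wise Jensen step decouples the update into per-block terms \(\tau_r^\nu\bigl[G(Z^\nu-Z_r^\nu+\hat Z_r^{\nu+1})-G(Z^\nu)\bigr]\). First-order optimality of \(O_r(Z^\nu)\) on \(Z^\nu+\cU_r\), together with \((1/\mu_r)\)-smoothness along \(\cU_r\), then bounds each term by \(-\Delta_r(Z^\nu)\) plus the mismatch.

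The only difference is that you split the Jensen step into two: weights \(1/\omega_{\cP}\), then convexity along a segment. As a result, the hypothesis \(\tau_r^\nu\le1/\omega_{\cP}\) is used in your Step~2 (through \(t\in[0,1]\)), not in Step~1 as you state. The paper instead applies Jensen once with weights \(\tau_r^\nu\) and \(1-\sum_{r:\,i\in\cV_r}\tau_r^\nu\), which only needs \(\sum_{r:\,i\in\cV_r}\tau_r^\nu\le1\) at each agent.
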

\begin{proof}
Fix \(r\in[p]\);   by~\eqref{eq:clean_block_images}, \(Z^\nu-Z_r^\nu+\bar Z_r^{\nu+1}=O_r(Z^\nu)\) minimizes \(G\) over \(Z^\nu+\cU_r\), so
\[
\inner{
\nabla G(Z^\nu-Z_r^\nu+\bar Z_r^{\nu+1})^\top,H
}=0,
\quad H\in\cU_r .
\]
Since \(\hat Z_r^{\nu+1}-\bar Z_r^{\nu+1}\in\cU_r\), the \(1/\mu_r\)-smoothness of \(G\) along \(\cU_r\) and  \eqref{eq:def_Delta_r} give
\[
G(Z^\nu-Z_r^\nu+\hat Z_r^{\nu+1})
\le
G(Z^\nu-Z_r^\nu+\bar Z_r^{\nu+1})
+\frac{1}{2\mu_r}\|\hat Z_r^{\nu+1}-\bar Z_r^{\nu+1}\|^2 .
\]
Next, \((\hat Z_r^{\nu+1}-Z_r^\nu)_i=0\) for every \(i\notin\cV_r\); so (\ref{eq:Z_update}) reads
\begin{equation}
Z_i^{\nu+1}
=
\left(1-\sum_{r\in[p]:\,i\in\cV_r}\tau_r^\nu\right)Z_i^\nu
+
\sum_{r\in[p]:\,i\in\cV_r}\tau_r^\nu
\left(Z^\nu-Z_r^\nu+\hat Z_r^{\nu+1}\right)_i.
\label{eq:clean_z_relaxed}
\end{equation}
Since \(\tau_r^\nu\le1/\omega_{\cP}\), applying Jensen's inequality to~\eqref{eq:clean_z_relaxed} yields
\[
f_i^*(Z_i^{\nu+1})
\le
\left(1-\sum_{r\in[p]:\,i\in\cV_r}\tau_r^\nu\right)f_i^*(Z_i^\nu)
+
\sum_{r\in[p]:\,i\in\cV_r}\tau_r^\nu
f_i^*\!\left((Z^\nu-Z_r^\nu+\hat Z_r^{\nu+1})_i\right).
\]
For \(i\notin\cV_r\), \((Z^\nu-Z_r^\nu+\hat Z_r^{\nu+1})_i=Z_i^\nu\). Hence,
\[
\begin{aligned}
&G(Z^{\nu+1})-G(Z^\nu)=\sum_{i\in\cV}\left[f_i^*(Z_i^{\nu+1})-f_i^*(Z_i^\nu)\right]\\
\le&\sum_{i\in\cV}\sum_{r\in[p]:\,i\in\cV_r}\tau_r^\nu
\left[f_i^*\!\left((Z^\nu-Z_r^\nu+\hat Z_r^{\nu+1})_i\right)-f_i^*(Z_i^\nu)\right]\\
=&\sum_{r=1}^p\tau_r^\nu
\left[G(Z^\nu-Z_r^\nu+\hat Z_r^{\nu+1})-G(Z^\nu)\right].
\end{aligned}
\]
Combining this inequality with the definition of \(\Delta_r(Z^\nu)\) (cf.  \eqref{eq:def_Delta_r}) yields \eqref{eq:clean_one_step}. \qed
\end{proof}

  \noindent\textbf{(ii) Block error bound.} Assumption~\ref{ass:clean_beb} converts the block progress in~\eqref{eq:clean_one_step} into a contraction of the global gap. Only half of the progress is spent on it; the other half is kept as a negative term in the block increments \(Z_r^{\nu+1}-Z_r^\nu\), instrumental in controlling the adversarial positive term in the mismatch \(\hat Z_r^{\nu+1}-\bar Z_r^{\nu+1}\).

\begin{lemma}
\label{lem:clean_main_recursion}
Under Assumptions~\ref{ass:clean_problem_class} and~\ref{ass:clean_beb}, let \(0<\tau_r^\nu\le1/\omega_{\cP}\) for every \(r\in[p]\), and \(\underline\tau^\nu:=\min_{r\in[p]}\tau_r^\nu\). Then,
\begin{equation}
\begin{aligned}
G(Z^{\nu+1})-G^\star
\le\ &
\left(1-\frac{\underline\tau^\nu}{2\kappa_{\cP}}\right)
\bigl(G(Z^\nu)-G^\star\bigr)
-\sum_{r=1}^p\frac{1}{8L_r\tau_r^\nu}
\|Z_r^{\nu+1}-Z_r^\nu\|^2\\
&\quad
+\sum_{r=1}^p\tau_r^\nu\frac{2/\mu_r+1/L_r}{4}
\|\hat Z_r^{\nu+1}-\bar Z_r^{\nu+1}\|^2.
\end{aligned}
\label{eq:clean_main_recursion}
\end{equation}
\end{lemma}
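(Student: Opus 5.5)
The plan is to start from Lemma~\ref{lem:clean_one_step} and split the block progress term $-\sum_r\tau_r^\nu\Delta_r(Z^\nu)$ into two halves. One half will be converted into contraction of the gap through Assumption~\ref{ass:clean_beb}. The other half will be traded for the negative increment term $-\frac{1}{8L_r\tau_r^\nu}\|Z_r^{\nu+1}-Z_r^\nu\|^2$, at the price of an additional mismatch term. Subtracting $G^\star$ from both sides of~\eqref{eq:clean_one_step} then gives~\eqref{eq:clean_main_recursion}, once the pieces are assembled as below.

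\emph{Step 1 (a quadratic lower bound on $\Delta_r$).} Since $O_r(Z^\nu)$ minimizes $G$ over $Z^\nu+\cU_r$, we have $\langle\nabla G(O_r(Z^\nu))^\top,H\rangle=0$ for all $H\in\cU_r$. We also have $Z^\nu-O_r(Z^\nu)\in\cU_r$, and $G$ is $(1/L_r)$-strongly convex along $\cU_r$. Together these give
\[
\Delta_r(Z^\nu)\ge \frac{1}{2L_r}\|O_r(Z^\nu)-Z^\nu\|^2 .
\]

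\emph{Step 2 (bounding the block increment).} By the relaxed update and~\eqref{eq:clean_block_images},
\[
Z_r^{\nu+1}-Z_r^\nu=\tau_r^\nu\bigl(\hat Z_r^{\nu+1}-Z_r^\nu\bigr)=\tau_r^\nu\bigl[(\hat Z_r^{\nu+1}-\bar Z_r^{\nu+1})+(O_r(Z^\nu)-Z^\nu)\bigr],
\]
where we used $\bar Z_r^{\nu+1}-Z_r^\nu=O_r(Z^\nu)-Z^\nu$. Applying $\|a+b\|^2\le2\|a\|^2+2\|b\|^2$ and then Step~1 gives
\[
\|Z_r^{\nu+1}-Z_r^\nu\|^2\le 2(\tau_r^\nu)^2\|\hat Z_r^{\nu+1}-\bar Z_r^{\nu+1}\|^2+4(\tau_r^\nu)^2L_r\Delta_r(Z^\nu).
\]
Dividing by $8L_r\tau_r^\nu$ and rearranging yields
\[
-\frac{\tau_r^\nu}{2}\Delta_r(Z^\nu)\le-\frac{1}{8L_r\tau_r^\nu}\|Z_r^{\nu+1}-Z_r^\nu\|^2+\frac{\tau_r^\nu}{4L_r}\|\hat Z_r^{\nu+1}-\bar Z_r^{\nu+1}\|^2 .
\]

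\emph{Step 3 (assembly).} We write $-\tau_r^\nu\Delta_r=-\tfrac{\tau_r^\nu}{2}\Delta_r-\tfrac{\tau_r^\nu}{2}\Delta_r$ and apply Step~2 to one of the two halves. For the other half, we use $\tau_r^\nu\ge\underline\tau^\nu$ and $\Delta_r\ge0$, and then Assumption~\ref{ass:clean_beb}, which applies because $Z^\nu=\mathsf BY^\nu\in\cS$:
\[
-\sum_r\frac{\tau_r^\nu}{2}\Delta_r(Z^\nu)\le-\frac{\underline\tau^\nu}{2}\sum_r\Delta_r(Z^\nu)\le-\frac{\underline\tau^\nu}{2\kappa_{\cP}}\bigl(G(Z^\nu)-G^\star\bigr).
\]
The mismatch coefficients then combine as
\[
\frac{\tau_r^\nu}{2\mu_r}+\frac{\tau_r^\nu}{4L_r}=\tau_r^\nu\,\frac{2/\mu_r+1/L_r}{4},
\]
which matches~\eqref{eq:clean_main_recursion} exactly.

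The only genuinely delicate point is Step~1. It requires strong convexity of $G$ restricted to the slice $Z^\nu+\cU_r$, with modulus $1/L_r$, together with the first-order optimality of $O_r$ on that slice. Both hold because only the conjugates of the agents in $\cV_r$ vary along $\cU_r$, so everything else is bookkeeping.
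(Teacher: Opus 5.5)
Your proposal is correct and is essentially the paper's own argument. You split the progress in half and apply the block error bound to one half. On the other half you use $\Delta_r(Z^\nu)\ge\frac{1}{2L_r}\|\bar Z_r^{\nu+1}-Z_r^\nu\|^2$, which follows from optimality of $O_r$ and $1/L_r$-strong convexity on the slice, together with $\|a+b\|^2\le 2\|a\|^2+2\|b\|^2$; the only difference is that you apply this inequality to $Z_r^{\nu+1}-Z_r^\nu$ directly instead of to $\|\bar Z_r^{\nu+1}-Z_r^\nu\|^2$, which gives identical constants.
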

\begin{proof}
Split the block progress   \(\sum_{r=1}^p\tau_r^\nu\Delta_r(Z^\nu)\) in~\eqref{eq:clean_one_step} into two halves.  For the first, Assumption~\ref{ass:clean_beb} and \(\tau_r^\nu\ge\underline\tau^\nu\) give \vspace{-0.2cm}
\[
\frac12\sum_{r=1}^p\tau_r^\nu\Delta_r(Z^\nu)
\ge
\frac{\underline\tau^\nu}{2\kappa_{\cP}}\bigl(G(Z^\nu)-G^\star\bigr).
\]
For the second, optimality of \(O_r(Z^\nu)\) over \(Z^\nu+\cU_r\) and \(1/L_r\)-strong convexity along \(\cU_r\) give
\[
\Delta_r(Z^\nu)
\ge
\frac{1}{2L_r}\|\bar Z_r^{\nu+1}-Z_r^\nu\|^2
\ge
\frac{1}{4L_r}\|\hat Z_r^{\nu+1}-Z_r^\nu\|^2
-\frac{1}{2L_r}\|\hat Z_r^{\nu+1}-\bar Z_r^{\nu+1}\|^2.
\]   Multiply the preceding lower bound by \(\tau_r^\nu/2\) and use
\(
    Z_r^{\nu+1}-Z_r^\nu
    =
    \tau_r^\nu(\hat Z_r^{\nu+1}-Z_r^\nu),
\)
due to~\eqref{eq:clean_delayed_block_update_a}. Substituting both estimates into Lemma~\ref{lem:clean_one_step} gives \eqref{eq:clean_main_recursion}. \qed
\end{proof}

\smallskip
\noindent\textbf{(iii) Perturbation analysis of the block minimizers.} It remains to bound
  \(\|\hat Z_r^{\nu+1}-\bar Z_r^{\nu+1}\|^2\)
   in Lemma~\ref{lem:clean_main_recursion}.
  The   contribution \(\bar Z_r^{\nu+1}\) uses the current values  \(Y_h^\nu\) of the
  exterior coordinates. By
  Proposition~\ref{prop:clean_delayed_bilevel}, each coordinate
  used to form \(\hat Z_r^{\nu+1}\) instead comes from a block
  minimization with edge-dependent delayed exterior coordinates  \(Y_h^{\nu-\delta_{e,i}}\).
  We first quantify the sensitivity of the block minimizer
  to the exterior parameters,  then bound the mismatch in
  terms of changes in the block contributions to \(Z\)
  over previous iterations.

\smallskip
\noindent\emph{Exterior parameters and delays.}
For block \(r\), let \(\partial_r\) collect the exterior
  edges incident to its agents, and let \(\cM_r\) collect
  the blocks containing these edges:
\[
    \partial_r:=\bigcup_{i\in\cV_r}\cA_i^\partial,
    \qquad
    \cM_r:=
    \{s\in[p]\setminus\{r\}:
    \cE_s\cap\partial_r\ne\emptyset\}. 
\]
The largest delay affecting the block is
\[
    D_r:=
    \begin{cases}
    \displaystyle
    \max_{e\in\cE_r,\,i\in\cV_r^\partial}
    \delta_{e,i},
    &\partial_r\ne\emptyset,\\[1mm]
    0,
    &\partial_r=\emptyset.
    \end{cases}
\]
When \(\partial_r\ne\emptyset\),
\[
    \tfrac12\operatorname{diam}(\cT_r)-1
    \le D_r
    \le\operatorname{diam}(\cT_r)-1.
\]
An exterior edge \(h\) appears in the block subproblem inside the conjugate \(f_i^*\) of each of its endpoints \(i\in\cV_r\), through the term \(\mathsf B_{i,h}Y_h\). Index these appearances by
\[
    \widehat\partial_r
    :=
    \{(i,h):i\in\cV_r^\partial,\ h\in\cA_i^\partial\}.
\]
An exterior edge incident to two agents of \(\cV_r\) therefore has two entries in \(\widehat\partial_r\), since its value can appear at different delays in their conjugate terms.

\smallskip
\noindent\emph{Sensitivity to the exterior parameters.} For each \(E\in\R^{|\widehat\partial_r|\times d}\),
  with rows \(E_{i,h}\) indexed by the boundary incidences
  \((i,h)\in\widehat\partial_r\), define the block solution
\begin{equation}
Y_r(E):=
\argmin_{Y_{\cE_r}\in\R^{|\cE_r|\times d}}
\sum_{i\in\cV_r}
f_i^*\!\left(
\mathsf B_{i,\cA_i^\circ}Y_{\cA_i^\circ}
+
\sum_{h\in\cA_i^\partial}\mathsf B_{i,h}E_{i,h}
\right).
\label{eq:clean_boundary_solution_mapping}
\end{equation}
 The minimizer exists and is unique.

 We next quantify how \(Y_r(E)\) and its contribution
  \(\mathsf B_{\cV_r,\cE_r}Y_r(E)\) to \(Z\) vary with
  the exterior parameters \(E\).
  We first express the latter in terms of \(E\).

Let
\[
  \R^d\ni\eta\mapsto  \chi_r(\eta)
    :=
    \left(\sum_{i\in\cV_r}\nabla f_i\right)^{-1}(\eta),
\]
which   is well defined on \(\R^d\) because \(\sum_{i\in\cV_r}\nabla f_i\) is strongly monotone and continuous. Write \(\eta(E):=\sum_{(j,h)\in\widehat\partial_r}\mathsf B_{j,h}E_{j,h}\) for the sum of the exterior contributions.
At the minimizer~\eqref{eq:clean_boundary_solution_mapping} all agents of the block recover the same primal point.  Lemma~\ref{lem:app_tree_oracle_jacobians} in Appendix~\ref{app:clean_tree_oracle} identifies it as \(\chi_r(\eta(E))\):
\[
    \nabla f_i^*\Bigl(\mathsf B_{i,\cE_r}Y_r(E)+\sum_{h\in\cA_i^\partial}\mathsf B_{i,h}E_{i,h}\Bigr)
    =\chi_r(\eta(E)),
    \qquad i\in\cV_r,
\]
which, using \(\nabla f_i^*=(\nabla f_i)^{-1}\), is equivalent to
\begin{equation}\label{eq:image_formula}
    \mathsf B_{i,\cE_r}Y_r(E)
    =
    \nabla f_i\bigl(\chi_r(\eta(E))\bigr)
    -\sum_{h\in\cA_i^\partial}\mathsf B_{i,h}E_{i,h},
    \qquad i\in\cV_r.
\end{equation}  For \(v\ne i\), a perturbation of \(E_{i,h}\) changes
  \(\mathsf B_{v,\cE_r}Y_r(E)\) only through
  \(\nabla f_v\circ\chi_r\).

We now use~\eqref{eq:image_formula} to describe the
  dependence of the rows \((Y_r(E))_e\), \(e\in\cE_r\),
  on the exterior parameters.  Removing \(e\in\cE_r\) from \(\cT_r\) gives two connected
  subgraphs. For \(i\in\cV_r\), let \(\cV_e(i)\) denote
  the vertex set of the subgraph containing \(i\) and set
  \[
      \overline{\cV}_e(i):=\cV_r\setminus\cV_e(i).
  \]  This  extends the notation of~\eqref{eq:clean_conditional_split} to agents that need not be endpoints of \(e\).
  Summing~\eqref{eq:image_formula} over
  \(v\in\overline{\cV}_e(i)\) gives the following identity,
  since every column of
  \(\mathsf B_{\overline{\cV}_e(i),\cE_r}\) sums to zero
  except that of \(e\), whose sum is \(\pm1\):
  \begin{equation} \label{eq:clean_edge_parameter_formula}
      (Y_r(E))_e
      =
      \pm\sum_{v\in\overline{\cV}_e(i)}
      \left[
          \nabla f_v\bigl(\chi_r(\eta(E))\bigr)
          -
          \sum_{h'\in\cA_v^\partial}
          \mathsf B_{v,h'}E_{v,h'}
      \right],
\end{equation}
  with the sign determined by the orientation of \(e\).
  A perturbation of \(E_{i,h}\) leaves
  \(\sum_{h'\in\cA_v^\partial}\mathsf B_{v,h'}E_{v,h'}\)
  unchanged for every \(v\in\overline{\cV}_e(i)\), since
  \(v\ne i\). It therefore affects \((Y_r(E))_e\) only
  through
  \(\sum_{v\in\overline{\cV}_e(i)}
  \nabla f_v(\chi_r(\eta(E)))\).

  Equations~\eqref{eq:image_formula}
  and~\eqref{eq:clean_edge_parameter_formula} involve
  the maps \(\nabla f_v\circ\chi_r\) and their sums. To treat them in a unified fashion, define, for nonempty \(S\subseteq\cV_r\),
  \[
      \cL_{r,S}(\eta)
      :=
      \sum_{v\in S}\nabla f_v\bigl(\chi_r(\eta)\bigr),
      \qquad \emptyset\ne S\subseteq\cV_r.
  \]
   The choice \(S=\{v\}\) recovers the individual map
  \(\nabla f_v\circ\chi_r\), whereas
  \(S=\overline{\cV}_e(i)\) gives the sum associated
  with \((Y_r(E))_e\).
 Given these two relevant  subsets, we   define
  \[
      \cR_r
      :=
      \bigl\{\{v\}:
          (i,h)\in\widehat\partial_r,\
          v\in\cV_r\setminus\{i\}\bigr\}
      \cup
      \bigl\{\overline{\cV}_e(i):
          e\in\cE_r,\
          (i,h)\in\widehat\partial_r\bigr\}.
  \]

   To express the desired  sensitivity bounds using a single constant,
  we combine the Lipschitz moduli of
  \(\cL_{r,S}\), \(S\in\cR_r\), as follows:
   we seek a common  $\kappa_{\partial,r}\geq 0$  satisfying
  \[
      \operatorname{Lip}(\cL_{r,S})
      \le
      \kappa_{\partial,r}\frac{|S|}{|\cV_r|},
      \qquad \forall S\in\cR_r.
  \] The normalization keeps the subset size explicit: singletons \(S=\{v\}\) carry
  the factor \(1/|\cV_r|\), whereas the sets
  \(S=\overline{\cV}_e(i)\) carry
  \(|\overline{\cV}_e(i)|/|\cV_r|\).
  The smallest  $\kappa_{\partial,r}\geq 0$ satisfying the above inequality   is
  \begin{equation}
      \kappa_{\partial,r}
      :=
      \begin{cases}
          0,
          & \partial_r=\emptyset,\\[1mm]
          \displaystyle
          \max_{S\in\cR_r}
          \frac{|\cV_r|}{|S|}
          \operatorname{Lip}(\cL_{r,S}),
          & \partial_r\ne\emptyset.
      \end{cases}
      \label{eq:clean_kappa_partial}
  \end{equation}

Combining the representations~\eqref{eq:image_formula}
  and~\eqref{eq:clean_edge_parameter_formula} with
  the definition~\eqref{eq:clean_kappa_partial} gives
  the following sensitivity bounds for the rows of
  \(Y_r(E)\) and the rows indexed by \(v\ne i\) of
  \(\mathsf B_{\cV_r,\cE_r}Y_r(E)\), with respect to
  perturbations of a single row \(E_{i,h}\).
 \begin{lemma}
  \label{lem:clean_boundary_sensitivity}
  Suppose Assumptions~\ref{ass:clean_problem_class}
  and~\ref{ass:clean_tree_delay} hold.
  If \(E,E'\in\R^{|\widehat\partial_r|\times d}\)
  differ only in row \((i,h)\in\widehat\partial_r\), then
  \begin{subequations}
  \begin{align}
  \bigl\|(Y_r(E')-Y_r(E))_e\bigr\|
  &\le
  \kappa_{\partial,r}
  \frac{|\overline{\cV}_e(i)|}{|\cV_r|}
  \|E'_{i,h}-E_{i,h}\|,
  &&e\in\cE_r,
  \label{eq:clean_partial_edge_sens}
  \\
  \bigl\|\mathsf B_{v,\cE_r}
         (Y_r(E')-Y_r(E))\bigr\|
  &\le
  \frac{\kappa_{\partial,r}}{|\cV_r|}
  \|E'_{i,h}-E_{i,h}\|,
  &&v\in\cV_r\setminus\{i\}.
  \label{eq:clean_partial_node_sens}
  \end{align}
  \end{subequations}
  \end{lemma}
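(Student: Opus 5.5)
The plan is to read both bounds off the representations \eqref{eq:image_formula} and \eqref{eq:clean_edge_parameter_formula}, which are taken as given (via Lemma~\ref{lem:app_tree_oracle_jacobians}). The only dependence on the perturbed row \(E_{i,h}\) that survives in the relevant quantities is through \(\eta(E)\). Combining this with the Lipschitz moduli encoded in \(\kappa_{\partial,r}\) gives the result.

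\emph{Step 1: how \(\eta\) moves.} Since \(E\) and \(E'\) differ only in row \((i,h)\), the definition \(\eta(E)=\sum_{(j,h')\in\widehat\partial_r}\mathsf B_{j,h'}E_{j,h'}\) gives \(\eta(E')-\eta(E)=\mathsf B_{i,h}(E'_{i,h}-E_{i,h})\). Because \(|\mathsf B_{i,h}|=1\), this yields \(\|\eta(E')-\eta(E)\|=\|E'_{i,h}-E_{i,h}\|\). If \(\partial_r=\emptyset\), there is no such row and the statement is vacuous. Hence we may assume \(\partial_r\neq\emptyset\), so \(\kappa_{\partial,r}\) is given by the max in \eqref{eq:clean_kappa_partial}.

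\emph{Step 2: the node bound \eqref{eq:clean_partial_node_sens}.} Fix \(v\in\cV_r\setminus\{i\}\). By \eqref{eq:image_formula}, \(\mathsf B_{v,\cE_r}Y_r(E)=\nabla f_v(\chi_r(\eta(E)))-\sum_{h'\in\cA_v^\partial}\mathsf B_{v,h'}E_{v,h'}\). The second term involves only rows \((v,h')\) with \(v\ne i\), so it is the same for \(E\) and \(E'\). The difference is therefore \(\cL_{r,\{v\}}(\eta(E'))-\cL_{r,\{v\}}(\eta(E))\). Since \((i,h)\in\widehat\partial_r\) and \(v\ne i\), the set \(\{v\}\) belongs to \(\cR_r\). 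Hence \(\operatorname{Lip}(\cL_{r,\{v\}})\le\kappa_{\partial,r}/|\cV_r|\), and Step 1 finishes the bound.

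\emph{Step 3: the edge bound \eqref{eq:clean_partial_edge_sens}.} Fix \(e\in\cE_r\) and apply \eqref{eq:clean_edge_parameter_formula} with this same \(i\). Every \(v\in\overline{\cV}_e(i)\) satisfies \(v\ne i\), because \(i\in\cV_e(i)\). So the explicit \(E\)-terms again do not involve row \((i,h)\) and cancel. We obtain \((Y_r(E')-Y_r(E))_e=\pm[\cL_{r,\overline{\cV}_e(i)}(\eta(E'))-\cL_{r,\overline{\cV}_e(i)}(\eta(E))]\), with the sign fixed by the orientation of \(e\) and hence the same for \(E\) and \(E'\). The set \(\overline{\cV}_e(i)\) is nonempty, since it contains the endpoint of \(e\) not in \(\cV_e(i)\), and it lies in \(\cR_r\). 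This gives the Lipschitz factor \(\kappa_{\partial,r}|\overline{\cV}_e(i)|/|\cV_r|\), and Step 1 concludes.

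\emph{Main obstacle.} The substantive content lies in \eqref{eq:clean_edge_parameter_formula} itself: the same vertex \(i\) must index both the perturbed row and the side \(\overline{\cV}_e(i)\), so that \(i\) is excluded from the summation. This is exactly what makes the explicit \(E\)-dependence disappear. I would therefore double-check that the summation identity holds for every \(i\in\cV_r\), and not only for endpoints of \(e\). The check uses the tree structure: the columns of \(\mathsf B_{\overline{\cV}_e(i),\cE_r}\) for edges inside \(\overline{\cV}_e(i)\) sum to zero, edges with no endpoint in \(\overline{\cV}_e(i)\) contribute nothing, and \(e\) is the unique block edge crossing the cut. Everything else is bookkeeping.
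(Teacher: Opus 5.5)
Your proposal is correct and is essentially the paper's argument. The ingredients match: the node formula \eqref{eq:image_formula} from Lemma~\ref{lem:app_tree_oracle_jacobians} and the cut formula for \((Y_r(E))_e\); the observation that the explicit boundary terms \(c_j(E)\), \(j\neq i\), are unchanged while \(\eta\) moves by \(\mathsf B_{i,h}(E'_{i,h}-E_{i,h})\); and the definition of \(\kappa_{\partial,r}\) as a maximum over \(\cR_r\). The only difference is cosmetic: the paper derives the cut formula from the explicit tree-incidence inverse (Lemma~\ref{lem:clean_tree_inverse}) rather than by summing \eqref{eq:image_formula} over \(\overline{\cV}_e(i)\), and both rest on the same column-sum check you describe.
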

\begin{proof}
See Appendix~\ref{app:clean_tree_oracle}. \qed
\end{proof}

 \begin{remark}[On \(\kappa_{\partial,r}\)]
    \label{rem:clean_kappa_partial}
     Suppose \(\partial_r\ne\emptyset\).
  By Lemma~\ref{lem:clean_boundary_sensitivity},
  \(\kappa_{\partial,r}\) controls how perturbing \(E_{i,h}\)
  affects \((Y_r(E))_e\) and
  \(\mathsf B_{v,\cE_r}Y_r(E)\) for \(v\ne i\).
  Assumption~\ref{ass:clean_problem_class} gives the crude bound
  \(\kappa_{\partial,r}\le L_r/\mu_r\),  which uses only
  the block's extreme curvature bounds.
 When \(f_i\in C^2\) for \(i\in\cV_r\), write \(H_i(\eta):=\nabla^2 f_i(\chi_r(\eta))\). By the inverse function theorem, \(\chi_r\) is \(C^1\) with Jacobian \(\left(\sum_{j\in\cV_r}H_j(\eta)\right)^{-1}\), so
 \[
\nabla\cL_{r,S}(\eta)^\top=\bigl(\sum_{i\in S}H_i(\eta)\bigr)\left(\sum_{j\in\cV_r}H_j(\eta)\right)^{-1},
\qquad \eta\in\R^d,
\]
 and the mean value inequality gives
\begin{equation}
\label{eq:clean_kappa_partial_smooth}
    \kappa_{\partial,r}
    \le
    \sup_{\eta\in\R^d}\max_{S\in\cR_r}
    \frac{|\cV_r|}{|S|}
    \left\|
    \left(\sum_{i\in S}H_i(\eta)\right)\left(\sum_{j\in\cV_r}H_j(\eta)\right)^{-1}
    \right\|.
\end{equation}
  Eq.~\eqref{eq:clean_kappa_partial_smooth} reveals an implicit preconditioning effect of exact block minimization: the Hessian sum over \(S\) is measured against the aggregate curvature \(\sum_{j\in\cV_r}H_j(\eta)\) of the block; curvature shared by all \(f_i\) cancels in this quotient, which is why \(\kappa_{\partial,r}\) can be much smaller than \(L_r/\mu_r\), as the quadratic case below shows.

  Suppose \(f_i(x)=\frac12x^\top Q_ix-c_i^\top x\), \(Q_i\succ0\), \(i\in\cV_r\). Then  \eqref{eq:clean_kappa_partial_smooth} holds with equality:
\[
\kappa_{\partial,r}
=
\max\left\{
\begin{aligned}
&\max_{\substack{e\in\cE_r\\(i,h)\in\widehat\partial_r}}
\frac{|\cV_r|}{|\overline{\cV}_e(i)|}
\left\|
\left(\sum_{j\in\overline{\cV}_e(i)}Q_j\right)
\left(\sum_{j\in\cV_r}Q_j\right)^{-1}
\right\|,\\
&\max_{\substack{(i,h)\in\widehat\partial_r\\v\in\cV_r\setminus\{i\}}}
|\cV_r|
\left\|
Q_v
\left(\sum_{j\in\cV_r}Q_j\right)^{-1}
\right\|
\end{aligned}
\right\}.
\]
In particular, if \(Q_i=Q\) for all \(i\in\cV_r\),
  then \(\kappa_{\partial,r}=1\).
  Thus \(\kappa_{\partial,r}\) can equal one while \(L_r/\mu_r=\lambda_{\max}(Q)/\lambda_{\min}(Q)\) is arbitrarily large.
\end{remark}

\smallskip
  \noindent\emph{Bounding the mismatch \(\|\hat Z_r^{\nu+1}-\bar Z_r^{\nu+1}\|^2\) caused by delays.}
  We combine Lemma~\ref{lem:clean_boundary_sensitivity}
  with the tree structure of the delays to bound this mismatch.
Fix \(r\in[p]\) and
  \(\nu\ge\max_{s\in[p]}\operatorname{diam}(\cT_s)\). The exact block response defining \(\bar Z_r^{\nu+1}\)
  uses the current exterior coordinates \(Y_h^\nu\). The edge coordinates \(\hat Y_e^{\nu+1}\) forming
  \(\hat Z_r^{\nu+1}\) instead come from block minimizations
  with delayed exterior coordinates \(Y_h^{\nu-\delta_{e,i}}\) (Proposition~\ref{prop:clean_delayed_bilevel}).
  We collect these in the matrices
  \(E^\nu,\widehat E_e^\nu\in
  \R^{|\widehat\partial_r|\times d}\), \(e\in\cE_r\), with rows
  \[
      E^\nu_{i,h}:=Y_h^\nu,
      \qquad
      (\widehat E_e^\nu)_{i,h}
      :=Y_h^{\nu-\delta_{e,i}},
      \qquad (i,h)\in\widehat\partial_r.
  \]
  By~\eqref{eq:clean_block_image_minimizer}, \eqref{eq:clean_block_images} and Proposition~\ref{prop:clean_delayed_bilevel},
  \[
      \bar Z_r^{\nu+1}
      =\mathsf B_{\cV,\cE_r}Y_r(E^\nu),
      \qquad
      \hat Y_e^{\nu+1}
      =(Y_r(\widehat E_e^\nu))_e.
  \]
  The two parameter matrices $E^\nu$ and $\widehat E_e^\nu$ differ row by row as follows:
  \begin{equation}\label{eq:clean_parameter_perturbation}
      E^\nu_{i,h}-(\widehat E_e^\nu)_{i,h}
      =
      \sum_{\ell=\nu-\delta_{e,i}}^{\nu-1}
      (Y_h^{\ell+1}-Y_h^\ell),
  \end{equation}
  at most \(D_r\) increments per row.

  Appendix~\ref{app:clean_tree_mismatch_proof} proves the following bound on \(\|\hat Z_r^{\nu+1}-\bar Z_r^{\nu+1}\|^2\) by combining~\eqref{eq:clean_partial_edge_sens} and~\eqref{eq:clean_partial_node_sens} with the tree structure of the delays \(\delta_{e,i}\). We  remark  that the  exploitation  of the delay structure is essential: a direct Cauchy--Schwarz bound on the squared norm of~\eqref{eq:clean_parameter_perturbation}, a sum of up to \(D_r\) increments, would deteriorate the bound by a factor \(D_r\).

\begin{lemma}
\label{lem:clean_tree_mismatch}
Suppose Assumptions~\ref{ass:clean_problem_class} and~\ref{ass:clean_tree_delay} hold. For every \(r\in[p]\) and \(\nu\ge\max_{s\in[p]}\operatorname{diam}(\cT_s)\),
\begin{equation}
    \|\hat Z_r^{\nu+1}-\bar Z_r^{\nu+1}\|^2
    \le
    4\kappa_{\partial,r}^2|\widehat\partial_r|
    \sum_{\ell=\nu-D_r}^{\nu-1}
    \sum_{(i,h)\in\widehat\partial_r}
    \|Y_h^{\ell+1}-Y_h^\ell\|^2,
    \label{eq:clean_raw_mismatch}
\end{equation}
the sum over \(\ell\) being zero when \(D_r=0\).
\end{lemma}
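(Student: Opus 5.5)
We bound the mismatch row by row over \(v\in\cV_r\), since \(\|\hat Z_r^{\nu+1}-\bar Z_r^{\nu+1}\|^2=\sum_{v\in\cV_r}\|\mathsf B_{v,\cE_r}(\hat Y_{\cE_r}^{\nu+1}-Y_r(E^\nu))\|^2\) (rows outside \(\cV_r\) vanish). The difficulty is that the edges \(e\in\cA_v^\circ\) entering row \(v\) come from \emph{different} parameter matrices \(\widehat E_e^\nu\). We cannot use \eqref{eq:clean_partial_node_sens} directly, because it controls \(\mathsf B_{v,\cE_r}Y_r(\cdot)\) only for a single common parameter matrix.

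The plan is to telescope. For each incidence \((i,h)\in\widehat\partial_r\), write \(E^\nu-\widehat E_e^\nu\) as a sum of single-row changes, and within row \((i,h)\) split further into the individual increments \(Y_h^{\ell+1}-Y_h^\ell\), \(\ell\in[\nu-\delta_{e,i},\nu-1]\), as in \eqref{eq:clean_parameter_perturbation}. The key observation from the tree structure is this. For an edge \(e\in\cA_v^\circ\) and a boundary agent \(i\ne v\), the delay \(\delta_{e,i}\) equals \(\dist_{\cT_r}(v,i)\) when \(e\) points away from \(i\), and \(\dist_{\cT_r}(v,i)-1\) when \(e\) lies on the path from \(v\) to \(i\). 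So at row \(v\), all incident edges except at most one (the first edge \(e_{v\to i}\) of the path toward \(i\)) share a common delay \(\delta_{v,i}:=\dist(v,i)\).

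We introduce an intermediate parameter matrix \(\widetilde E^{(v)}\) with rows \(Y_h^{\nu-\dist(v,i)}\). Then we decompose
\[
\mathsf B_{v,\cE_r}\hat Y^{\nu+1}_{\cE_r}-\mathsf B_{v,\cE_r}Y_r(E^\nu)
=\mathsf B_{v,\cE_r}\bigl(Y_r(\widetilde E^{(v)})-Y_r(E^\nu)\bigr)+\sum_{e\in\cA_v^\circ}\mathsf B_{v,e}\bigl((Y_r(\widehat E_e^\nu))_e-(Y_r(\widetilde E^{(v)}))_e\bigr).
\]
We treat the two terms separately.

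\textbf{Common-parameter term.} We change one row \((i,h)\) at a time. For \(i\ne v\), \eqref{eq:clean_partial_node_sens} gives a factor \(\kappa_{\partial,r}/|\cV_r|\) times \(\|\sum_\ell (Y_h^{\ell+1}-Y_h^\ell)\|\). For \(i=v\), the delay is zero, so nothing changes. In the second term, \(\widehat E_e^\nu\) and \(\widetilde E^{(v)}\) differ only in rows \((i,h)\) with \(e=e_{v\to i}\), and there only by the single increment \(Y_h^{\nu-\dist(v,i)+1}-Y_h^{\nu-\dist(v,i)}\). By \eqref{eq:clean_partial_edge_sens}, each such change costs \(\kappa_{\partial,r}|\overline\cV_e(i)|/|\cV_r|\le\kappa_{\partial,r}\). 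Altogether,
\[
\|(\hat Z-\bar Z)_v\|\le \kappa_{\partial,r}\sum_{(i,h)}\Bigl(\tfrac{1}{|\cV_r|}\bigl\|\textstyle\sum_{\ell}\Delta_h^\ell\bigr\|+\|\Delta_h^{\nu-\dist(v,i)}\|\Bigr),
\qquad \Delta_h^\ell:=Y_h^{\ell+1}-Y_h^\ell .
\]

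\textbf{Squaring and summing (main obstacle).} The danger is the Cauchy--Schwarz loss of a factor \(D_r\) on \(\|\sum_\ell\Delta_h^\ell\|^2\). For the first piece this is compensated by the prefactor \(1/|\cV_r|\): we bound \(\|\sum_\ell\Delta^\ell_h\|^2\le D_r\sum_\ell\|\Delta_h^\ell\|^2\), sum over the \(|\cV_r|\) rows, and use \(D_r\le\operatorname{diam}(\cT_r)-1<|\cV_r|\), which leaves \(\sum_\ell\|\Delta_h^\ell\|^2\) with constant at most one. For the second piece, the map \(v\mapsto\dist(v,i)\) sends the agents at a fixed distance to the same \(\ell\), which threatens a multiplicity factor. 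However, only agents \(v\) for which \(e_{v\to i}\) is incident to the row actually contribute, and \(\mathsf B_{v,e}\ne0\). I expect to need a careful count here, possibly by reorganizing the second piece via \eqref{eq:clean_edge_parameter_formula} so that the charges are made per edge rather than per agent, with \(|\overline\cV_e(i)|/|\cV_r|\) absorbing the multiplicity. The target is that each pair \(((i,h),\ell)\) is charged \(O(1)\) times. After applying \((a+b)^2\le2a^2+2b^2\) and Cauchy--Schwarz over the \(|\widehat\partial_r|\) incidences, this yields the constant \(4\kappa_{\partial,r}^2|\widehat\partial_r|\) in \eqref{eq:clean_raw_mismatch}. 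When \(D_r=0\), all delays vanish and both sides are zero.
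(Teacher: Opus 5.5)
Your overall architecture matches the paper's. You use a node-aligned auxiliary parameter matrix, a common-parameter term controlled by \eqref{eq:clean_partial_node_sens} in which the prefactor $1/|\cV_r|$ absorbs the Cauchy--Schwarz loss along the delay window (this mirrors the paper's $q_v$ estimate), and an edge-correction term controlled by \eqref{eq:clean_partial_edge_sens}.

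\textbf{First gap: the counting step.} You leave the decisive step open, and your displayed per-row inequality cannot close it. Once $\kappa_{\partial,r}|\overline{\cV}_e(i)|/|\cV_r|$ is replaced by $\kappa_{\partial,r}$, every agent at depth $t$ from $i$ is charged $\kappa_{\partial,r}\|\Delta_h^{\nu-t}\|$. Summing squares over $v$ then multiplies $\|\Delta_h^{\nu-t}\|^2$ by the number of agents at that depth, which can be of order $|\cV_r|$. No reorganization through \eqref{eq:clean_edge_parameter_formula} is needed; you simply have to keep the factor. For $e=e_{v\to i}$, the set $\overline{\cV}_e(i)$ is the vertex set of the subtree below $v$ when $\cT_r$ is rooted at $i$. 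These subtrees are pairwise disjoint for distinct $v$ at the same depth, so $\sum_{\dist_{\cT_r}(v,i)=t}|\overline{\cV}_{e_{v\to i}}(i)|^2\le|\cV_r|^2$. After Cauchy--Schwarz over $\widehat\partial_r$, each pair $((i,h),\ell)$ is charged at most once, and the edge-correction part contributes at most $|\widehat\partial_r|\kappa_{\partial,r}^2\sum_\ell\sum_{(i,h)}\|\Delta_h^\ell\|^2$. This disjointness count is the second bound in \eqref{eq:app_rooted_counts} (Lemma~\ref{lem:app_rooted_delay_combinatorics}), the key ingredient of the paper's proof.

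\textbf{Second gap: an off-by-one in the delay window.} Your auxiliary delay $\dist_{\cT_r}(v,i)$ is the delay of the edges at $v$ pointing away from $i$. If $v$ has no such edge (it is a leaf of $\cT_r$ rooted at $i$), no incident edge uses this delay, and it can equal $D_r+1$. For example, take a path block whose endpoint $i$ is a boundary agent and let $v$ be the opposite endpoint. Both of your terms then contain the increment $Y_h^{\nu-D_r}-Y_h^{\nu-D_r-1}$. The two occurrences cancel exactly, but not after the triangle inequality. Your bound therefore has $\ell$ starting at $\nu-D_r-1$, outside the stated window, and the window length is what enters the Lyapunov function of Theorem~\ref{thm:clean_linear_tree}. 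The same effect is why your general argument does not return zero when $D_r=0$, forcing you to treat that case separately.

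\textbf{Fixes, and comparison with the paper.} For the second gap, use $\dist_{\cT_r}(v,i)-1$ at such leaf $v$. No correction then arises there, and all auxiliary delays lie in $[0,D_r]$. The paper avoids the issue automatically by aligning with the freshest delay $\max\{\dist_{\cT_r}(i,v)-1,0\}$, i.e., with the edge toward $i$. It then corrects the possibly several child edges, using the first bound in \eqref{eq:app_rooted_counts}. Your alignment with the child edges, once patched in both places, is a valid variant with the mild advantage that only a single edge per incidence needs correcting.
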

\begin{proof}
See Appendix~\ref{app:clean_tree_mismatch_proof}. \qed
\end{proof}

Lemma~\ref{lem:clean_tree_mismatch} bounds the positive
  mismatch term in~\eqref{eq:clean_main_recursion}
  through squared changes in the dual variables
  \(Y_h\), \(h\in\partial_r\).
  The negative term in~\eqref{eq:clean_main_recursion},
  however, involves squared block increments
  \(\|Z_r^{\nu+1}-Z_r^\nu\|^2\).   We therefore bound
  \(\|Y_h^{\ell+1}-Y_h^\ell\|^2\)
  in terms of
  \(\|Z_s^{\ell+1}-Z_s^\ell\|^2\),
  where \(s\in\cM_r\) is the block containing
  the exterior edge \(h\).

  The difference \(Y_h^{\ell+1}-Y_h^\ell\) is the row
  indexed by \(h\) of
  \(Y_{\cE_s}^{\ell+1}-Y_{\cE_s}^{\ell}\).   The relation
  \[
      Z_s^{\ell+1}-Z_s^\ell
      =
      \mathsf B_{\cV,\cE_s}
      (Y_{\cE_s}^{\ell+1}-Y_{\cE_s}^{\ell})
  \]
  allows us to recover these edge increments from
  \(Z_s^{\ell+1}-Z_s^\ell\).
  Since \(\cT_s\) is a tree,
  \(\mathsf B_{\cV,\cE_s}\) has full column rank, giving
\begin{equation}
    Y_{\cE_s}^{\ell+1}-Y_{\cE_s}^{\ell}
    =(\mathsf B_{\cV,\cE_s}^{\top}\mathsf B_{\cV,\cE_s})^{-1}\mathsf B_{\cV,\cE_s}^{\top}(Z_s^{\ell+1}-Z_s^\ell).
    \label{eq:clean_edge_from_block}
\end{equation}

Let \(\cB_{r,s}:=\cE_s\cap\partial_r\) collect the edges of block \(s\) touching block \(r\); each \(h\in\cB_{r,s}\) occurs in \(|\cI_h\cap\cV_r|\) rows of \(E^\nu\). With \[\Lambda_{r,s}:=\operatorname{Diag}(|\cI_h\cap\cV_r|:h\in\cB_{r,s})\] and\[
    \Gamma_r
    :=
    \begin{cases}
    \displaystyle
    \max_{s\in\cM_r}\lambda_{\max}\!\left(
    \Lambda_{r,s}^{1/2}
    \bigl[(\mathsf B_{\cV,\cE_s}^{\top}\mathsf B_{\cV,\cE_s})^{-1}\bigr]_{\cB_{r,s},\cB_{r,s}}
    \Lambda_{r,s}^{1/2}\right),
    &\cM_r\ne\emptyset,\\[1mm]
    0,
    &\cM_r=\emptyset,
    \end{cases}
\]
 \eqref{eq:clean_edge_from_block}  gives
\begin{equation}
    \sum_{\substack{(i,h)\in\widehat\partial_r\\ h\in\cE_s}}\|Y_h^{\ell+1}-Y_h^\ell\|^2
    =\sum_{h\in\cB_{r,s}}|\cI_h\cap\cV_r|\,\|Y_h^{\ell+1}-Y_h^\ell\|^2
    \le\Gamma_r\|Z_s^{\ell+1}-Z_s^\ell\|^2 .
    \label{eq:clean_increment_conversion}
\end{equation}
If every \(h\in\cB_{r,s}\) has exactly one endpoint in \(\cV_r\), then \(\Lambda_{r,s}=I\).

\begin{lemma}
\label{lem:clean_tree_mismatch_z}
Suppose Assumptions~\ref{ass:clean_problem_class} and~\ref{ass:clean_tree_delay} hold. For every \(r\in[p]\) and \(\nu\ge\max_{s\in[p]}\operatorname{diam}(\cT_s)\),
\begin{equation}
    \|\hat Z_r^{\nu+1}-\bar Z_r^{\nu+1}\|^2
    \le
    4\kappa_{\partial,r}^2|\widehat\partial_r|\,\Gamma_r
    \sum_{\ell=\nu-D_r}^{\nu-1}
    \sum_{s\in\cM_r}
    \|Z_s^{\ell+1}-Z_s^\ell\|^2,
    \label{eq:clean_tree_mismatch_z}
\end{equation}
the sum over \(\ell\) being zero when \(D_r=0\).
\end{lemma}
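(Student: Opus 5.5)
The plan is to chain Lemma~\ref{lem:clean_tree_mismatch} with the increment conversion~\eqref{eq:clean_increment_conversion}. Nothing new is needed beyond regrouping the double sum in~\eqref{eq:clean_raw_mismatch} by the block that owns each exterior edge.

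First, I will start from Lemma~\ref{lem:clean_tree_mismatch}, valid for \(\nu\ge\max_s\operatorname{diam}(\cT_s)\). If \(D_r=0\), the right-hand side is zero and both bounds coincide trivially. This includes the case \(\partial_r=\emptyset\), where \(\cM_r=\emptyset\) and \(\Gamma_r=0\).

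Otherwise, I fix \(\ell\in\{\nu-D_r,\ldots,\nu-1\}\) and partition the inner sum over \(\widehat\partial_r\) by blocks. Every \(h\) appearing in some \((i,h)\in\widehat\partial_r\) is an exterior edge of block \(r\), so \(h\in\partial_r\). Because \(\{\cE_s\}\) partitions \(\cE\), there is a unique \(s\ne r\) with \(h\in\cE_s\), and this \(s\) lies in \(\cM_r\). Hence
\[
\sum_{(i,h)\in\widehat\partial_r}\|Y_h^{\ell+1}-Y_h^\ell\|^2
=\sum_{s\in\cM_r}\ \sum_{\substack{(i,h)\in\widehat\partial_r\\ h\in\cE_s}}\|Y_h^{\ell+1}-Y_h^\ell\|^2 .
\]

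Next, for each \(s\in\cM_r\), I will apply~\eqref{eq:clean_increment_conversion}. Its justification is that the rows indexed by \(\cB_{r,s}\) of~\eqref{eq:clean_edge_from_block} give \(Y^{\ell+1}_{\cB_{r,s}}-Y^\ell_{\cB_{r,s}}=P\,(Z_s^{\ell+1}-Z_s^\ell)\), where \(P\) is the corresponding row block of the pseudoinverse \(\mathsf B_{\cV,\cE_s}^{+}\). Since \(PP^\top=[(\mathsf B^\top\mathsf B)^{-1}]_{\cB_{r,s},\cB_{r,s}}\), the weighted squared norm \(\sum_h|\cI_h\cap\cV_r|\,\|\cdot\|^2\) equals \(\|\Lambda_{r,s}^{1/2}P\Delta Z\|^2\). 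This is at most \(\lambda_{\max}(\Lambda^{1/2}PP^\top\Lambda^{1/2})\|\Delta Z\|^2\le\Gamma_r\|\Delta Z\|^2\), and the bound holds columnwise in \(d\), i.e., in Frobenius norm.

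Finally, summing over \(s\in\cM_r\) and \(\ell\), and multiplying by \(4\kappa_{\partial,r}^2|\widehat\partial_r|\), gives~\eqref{eq:clean_tree_mismatch_z}.

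The only delicate point is bookkeeping. Each \(h\) counted with multiplicity \(|\cI_h\cap\cV_r|\) must match the weight \(\Lambda_{r,s}\), and the regrouping must be by the block of \(h\). Both follow from the definitions, so I expect no genuine obstacle; the real work sits in Lemma~\ref{lem:clean_tree_mismatch}.
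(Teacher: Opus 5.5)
Your proposal is correct and matches the paper's proof: the paper also starts from \eqref{eq:clean_raw_mismatch}, splits its inner sum over the blocks $s\in\cM_r$ (the sets $\cB_{r,s}$ partition $\partial_r$), and bounds each part by \eqref{eq:clean_increment_conversion}. Your derivation of \eqref{eq:clean_increment_conversion} from $PP^\top=[(\mathsf B_{\cV,\cE_s}^\top\mathsf B_{\cV,\cE_s})^{-1}]_{\cB_{r,s},\cB_{r,s}}$ correctly fills in a step the paper states without detail.
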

\begin{proof}
Since the sets \(\cB_{r,s}\), \(s\in\cM_r\), partition \(\partial_r\), the inner sum in~\eqref{eq:clean_raw_mismatch} splits over \(s\in\cM_r\); bounding each part by~\eqref{eq:clean_increment_conversion} gives the claim. \qed
\end{proof}

The mismatch is thus bounded by past increments of the neighboring blocks, in the form required by~\eqref{eq:clean_main_recursion}.

\smallskip\noindent\textbf{(iv) Linear convergence.} Combining Lemma~\ref{lem:clean_tree_mismatch_z} with Lemma~\ref{lem:clean_main_recursion}  yields the main convergence result.

\begin{theorem}
\label{thm:clean_linear_tree}
Suppose Assumptions~\ref{ass:clean_problem_class}, \ref{ass:clean_tree_delay}, and~\ref{ass:clean_beb} hold, with \(p>1\).  Define
\begin{equation}
A
:=
\max_{s\in[p]}
\sum_{r\in[p]:\,D_r>0,\,s\in\cM_r}
\left[
\left(\frac{2}{\mu_r}+\frac{1}{L_r}\right)
\kappa_{\partial,r}^2|\widehat\partial_r|\Gamma_r
\right],
\quad
D:=\max_{r\in[p]}D_r.
\label{eq:clean_tree_A}
\end{equation}
Let \(X^\star=\mathbf 1(x^\star)^\top\), with \(x^\star\) being the minimizer of $F$ in~\eqref{eq:clean_prob}.   If \(\tau_r^\nu\equiv\tau\) and
\begin{equation}
0<\tau\le\min\left\{\frac{1}{\omega_{\cP}},\,2\kappa_{\cP}\right\},\qquad
16LA\kappa_{\cP}\tau
\left[\left(1-\frac{\tau}{2\kappa_{\cP}}\right)^{-D}-1\right]\le1,
\label{eq:clean_tree_stepsize}
\end{equation}
then, for some constant \(C\ge1\) and all \(\nu\ge2(D+1)\),
\begin{equation}
    \|X^\nu-X^\star\|^2
    \le
    C\rho^\nu,
    \quad
    \rho:=1-\frac{\tau}{2\kappa_{\cP}}.
\label{eq:clean_tree_primal_rate}
\end{equation}
\end{theorem}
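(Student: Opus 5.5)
The plan is to combine Lemma~\ref{lem:clean_main_recursion} with Lemma~\ref{lem:clean_tree_mismatch_z} into a Lyapunov recursion. The Lyapunov function adds to the gap \(G(Z^\nu)-G^\star\) a weighted sum of the past block increments \(\|Z_s^{\ell+1}-Z_s^\ell\|^2\) over the delay window \(\ell\in[\nu-D,\nu-1]\). Then \(G(Z^\nu)-G^\star\) is transferred to the primal error \(\|X^\nu-X^\star\|^2\).

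\textbf{Step 1 (one-step inequality).} Fix \(\nu\ge \max_s\operatorname{diam}(\cT_s)\), so that Lemma~\ref{lem:clean_tree_mismatch_z} applies; note \(\operatorname{diam}(\cT_s)\le D+2\) when \(\partial_s\neq\emptyset\), and only blocks with \(D_r>0\) contribute. Write \(a_\nu:=G(Z^\nu)-G^\star\) and \(b_\ell:=\sum_s\frac{1}{L_s}\|Z_s^{\ell+1}-Z_s^\ell\|^2\). Insert \eqref{eq:clean_tree_mismatch_z} into \eqref{eq:clean_main_recursion} with \(\tau_r\equiv\tau\). Swapping the sums over \(r\) and \(s\), the positive term is bounded by
\[
\frac{\tau}{4}\sum_{\ell=\nu-D}^{\nu-1}\sum_{s}\Bigl[\sum_{r:\,D_r>0,\,s\in\cM_r}\bigl(\tfrac{2}{\mu_r}+\tfrac1{L_r}\bigr)4\kappa_{\partial,r}^2|\widehat\partial_r|\Gamma_r\Bigr]\|Z_s^{\ell+1}-Z_s^\ell\|^2
\le \tau A\sum_{\ell=\nu-D}^{\nu-1}\sum_s\|Z_s^{\ell+1}-Z_s^\ell\|^2 .
\]
Using \(L_s\le L\) gives the bound \(\tau AL\sum_{\ell=\nu-D}^{\nu-1}b_\ell\). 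We therefore obtain
\[
a_{\nu+1}\le\rho\,a_\nu-\tfrac{1}{8\tau}b_\nu+\tau AL\sum_{\ell=\nu-D}^{\nu-1}b_\ell .
\]

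\textbf{Step 2 (Lyapunov function; main obstacle).} The difficulty is to absorb the delayed positive terms while keeping exactly the rate \(\rho\), rather than a degraded rate. I would define
\[
\Phi_\nu:=a_\nu+\sum_{k=1}^{D}c_k\,b_{\nu-k}
\]
with weights growing geometrically in \(\rho^{-1}\). Concretely, take \(c_k:=\tau AL\sum_{j=k}^{D}\rho^{-(j-k+1)}\), chosen so that \(\Phi_{\nu+1}\le\rho\Phi_\nu+(c_1\rho^{-1}\cdot\rho-\frac1{8\tau})b_\nu\). More precisely, I match coefficients of \(b_{\nu-k}\) on both sides, which yields the recursion \(c_{k}\ge \rho^{-1}(c_{k+1}+\tau AL)\) with \(c_{D+1}=0\). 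The resulting coefficient of \(b_\nu\) is
\[
c_1-\tfrac1{8\tau}\le \tau AL\,\frac{\rho^{-D}-1}{1-\rho}-\tfrac1{8\tau}.
\]
Since \(1-\rho=\tau/(2\kappa_{\cP})\), this coefficient is \(\le0\) exactly when \(16LA\kappa_{\cP}\tau[\rho^{-D}-1]\le1\), which is condition \eqref{eq:clean_tree_stepsize}; the constant bookkeeping may need care, since a factor of \(\rho\) may shift. The other condition, \(\tau\le 2\kappa_{\cP}\), ensures \(\rho\ge0\). We conclude \(\Phi_{\nu+1}\le\rho\Phi_\nu\) for \(\nu\ge D_0:=\max_s\operatorname{diam}(\cT_s)\), and hence \(a_\nu\le\Phi_\nu\le\rho^{\nu-D_0}\Phi_{D_0}\). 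Here \(\Phi_{D_0}\) is finite because \(b_\ell\) is finite for the initial iterates; starting the window at \(\nu\ge 2(D+1)\) covers both the propagation phase and the window length.

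\textbf{Step 3 (primal transfer).} Let \(Z^\star=\mathsf BY^\star\) be the unique minimizer of \(G\) over \(\cS\). Strong convexity of \(G\), with modulus \(1/L\), together with optimality over \(\cS\), gives \(\|Z^\nu-Z^\star\|^2\le 2L\,a_\nu\). Moreover, \(x_i^\nu=\nabla f_i^*(Z_i^\nu)\) and \(x_i^\star=\nabla f_i^*(Z_i^\star)\) by Proposition~\ref{prop:clean_generic_block_fixed_point} and dual recovery, so the \(1/\mu\)-Lipschitz continuity of \(\nabla f_i^*\) gives \(\|X^\nu-X^\star\|^2\le\mu^{-2}\|Z^\nu-Z^\star\|^2\). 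Combining these with Step 2 yields \eqref{eq:clean_tree_primal_rate} with \(C:=\max\{1,2L\mu^{-2}\rho^{-D_0}\Phi_{D_0}\}\).
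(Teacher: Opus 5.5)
Your proposal is correct and follows the paper's proof essentially step for step. It uses the same one-step inequality from Lemmas~\ref{lem:clean_main_recursion} and~\ref{lem:clean_tree_mismatch_z}, the same geometrically weighted Lyapunov function (your $c_k$ are the paper's weights up to where the factor $L$ is placed), the same reduction of $c_1\le 1/(8\tau)$ to~\eqref{eq:clean_tree_stepsize} (the reduction is exact, so no factor of $\rho$ shifts), and the same primal transfer.

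One side remark is wrong, though harmless. The claim $\operatorname{diam}(\cT_s)\le D+2$ fails in general: a path block whose only boundary agent is its midpoint has $\operatorname{diam}(\cT_s)=2(D_s+1)$. The correct bound is $\operatorname{diam}(\cT_s)\le 2(D_s+1)$, obtained as in the paper by routing through a boundary agent, and it still gives the $D_0\le 2(D+1)$ you actually need.
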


\begin{proof}
For each \(r\), choose \(i\in\cV_r^\partial\). By the definition of \(D_r\), \(\dist_{\cT_r}(i,v)\le D_r+1\) for all \(v\in\cV_r\), so \(\operatorname{diam}(\cT_r)\le2(D_r+1)\le2(D+1)\).
 Lemma~\ref{lem:clean_main_recursion} and~\eqref{eq:clean_tree_mismatch_z}, after regrouping delayed increments by block index, give
\[
\begin{aligned}
G(Z^{\nu+1})-G^\star
&\le
\left(1-\frac{\tau}{2\kappa_{\cP}}\right)
\bigl(G(Z^\nu)-G^\star\bigr)
-\frac{1}{8L\tau}
\sum_{r=1}^p\|Z_r^{\nu+1}-Z_r^\nu\|^2\\
&\quad
{}+\tau A
\sum_{\ell=\nu-D}^{\nu-1}
\sum_{r=1}^p\|Z_r^{\ell+1}-Z_r^\ell\|^2.
\end{aligned}
\]
Set \(\nu_0:=2(D+1)\) and define
\begin{equation}
\begin{aligned}
\cL^\nu&:=G(Z^\nu)-G^\star+\tau A\sum_{j=1}^{D}\left(\sum_{k=1}^{D-j+1}\rho^{-k}\right)
\sum_{r=1}^p\|Z_r^{\nu-j+1}-Z_r^{\nu-j}\|^2.
\end{aligned}
\label{eq:clean_tree_lyapunov}
\end{equation}
Conditions in~\eqref{eq:clean_tree_stepsize} give \(8LA\tau^2\sum_{j=1}^{D}\rho^{-j}\le1.\) Then for \(\nu\ge\nu_0\),
\[
\begin{aligned}
\cL^{\nu+1}
&\le\rho\cL^\nu
-\left(\frac{1}{8L\tau}-\tau A\sum_{j=1}^{D}\rho^{-j}\right)
\sum_{r=1}^p\|Z_r^{\nu+1}-Z_r^\nu\|^2\le\rho\cL^\nu.
\end{aligned}
\]
Thus \(G(Z^\nu)-G^\star\le\cL^{\nu_0}\rho^{\nu-\nu_0}\), which yields, with \(C_0:=\cL^{\nu_0}\rho^{-\nu_0}\) and \(g^\star:=\min_Y g(Y)=G^\star\),
\begin{equation}
    g(Y^\nu)-g^\star
    =
    G(Z^\nu)-G^\star
    \le
    C_0\rho^\nu .
\label{eq:clean_tree_linear_rate}
\end{equation}
Let \(Z^\star\) minimize \(G\) over \(\cS\), and let \(X^\star\) be given by \(x_i^\star=\nabla f_i^*(Z_i^\star)\), \(i\in\cV\).  Since \(G\) is \(1/L\)-strongly convex on \(\cS\), and each \(\nabla f_i^*\) is \(1/\mu\)-Lipschitz,
\begin{equation}
    \|X^\nu-X^\star\|^2
    \le
    \frac{1}{\mu^2}\|Z^\nu-Z^\star\|^2
    \le
    \frac{2LC_0}{\mu^2}\rho^\nu .
\label{eq:clean_tree_primal_prefactor}
\end{equation}
Absorbing constants yields~\eqref{eq:clean_tree_primal_rate}. \qed
\end{proof}

The second inequality in~\eqref{eq:clean_tree_stepsize} is implicit in \(\tau\). The following corollary gives an explicit sufficient condition.

\begin{corollary}
\label{cor:clean_tree_explicit_stepsize}
Suppose Assumptions~\ref{ass:clean_problem_class}, \ref{ass:clean_tree_delay}, and~\ref{ass:clean_beb} hold, with \(p>1\). If \(\tau_r^\nu\equiv\tau\) and
\begin{equation}
0<\tau\le
\min\left\{
\frac{1}{\omega_{\cP}},\
\frac{2\kappa_{\cP}}{2D+1},\
\frac{1}{\sqrt{8L(2D+1)A}}
\right\},
\label{eq:clean_tree_sufficient_stepsize}
\end{equation}
then, for some constant \(C\in(0,\infty)\) and all \(\nu\ge2(D+1)\),
\[
    \|X^\nu-X^\star\|^2
    \le C\rho^\nu,
    \quad
    \rho:=1-\frac{\tau}{2\kappa_{\cP}}.
\]
\end{corollary}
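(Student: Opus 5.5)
The plan is to show that \eqref{eq:clean_tree_sufficient_stepsize} implies both conditions in \eqref{eq:clean_tree_stepsize}, and then invoke Theorem~\ref{thm:clean_linear_tree}. The first condition, \(\tau\le\min\{1/\omega_{\cP},2\kappa_{\cP}\}\), follows immediately, since \(2\kappa_{\cP}/(2D+1)\le 2\kappa_{\cP}\). So everything reduces to bounding the implicit quantity \(\bigl(1-\tfrac{\tau}{2\kappa_{\cP}}\bigr)^{-D}-1\) explicitly. If \(D=0\), this quantity vanishes and the second condition holds trivially. Assume \(D\ge1\) from now on.

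Set \(x:=\tau/(2\kappa_{\cP})\), so that \(0<x\le 1/(2D+1)\) and \(\rho=1-x\). I will bound \((1-x)^{-D}-1\) by an expression linear in \(Dx\). The cleanest route is \((1-x)^{-D}\le e^{Dx/(1-x)}\), which uses \(-\log(1-x)\le x/(1-x)\). Since \(x\le 1/(2D+1)\), we get \(1-x\ge 2D/(2D+1)\), and hence
\[
\frac{Dx}{1-x}\le \frac{(2D+1)x}{2}\le \frac12 .
\]
Using \(e^t-1\le 2t\) for \(t\in[0,1/2]\) (indeed for \(t\le 1.25\)), this gives
\[
(1-x)^{-D}-1\le \frac{2Dx}{1-x}\le (2D+1)x=\frac{(2D+1)\tau}{2\kappa_{\cP}}.
\]
An alternative is to work with the form appearing in the proof of the theorem, \(\sum_{j=1}^D\rho^{-j}\le D\rho^{-D}\), together with Bernoulli-type bounds. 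The exponential estimate is simpler, so I will use it.

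Substituting this bound, the second condition becomes
\[
16LA\kappa_{\cP}\tau\cdot\frac{(2D+1)\tau}{2\kappa_{\cP}}=8L(2D+1)A\tau^2\le1,
\]
which is exactly the third entry of \eqref{eq:clean_tree_sufficient_stepsize}. If \(A=0\), that entry is interpreted as \(+\infty\) and the condition holds trivially. Theorem~\ref{thm:clean_linear_tree} then delivers the stated rate with \(\rho=1-\tau/(2\kappa_{\cP})\) and a finite constant \(C\).

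The only delicate step is the elementary inequality. The constant in front of \((2D+1)x\) must come out to exactly \(1\), not larger, so I would verify that \(e^t-1\le 2t\) on \([0,1/2]\). This holds by convexity: \(e^{1/2}-1\approx0.65\le1\), so the chord from \(0\) to \(1/2\) lies below the line \(2t\).
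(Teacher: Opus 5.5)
Your proposal is correct and follows essentially the paper's argument: you verify the first condition trivially, reduce the second to \(8L(2D+1)A\tau^2\le1\) using \(1-\rho\le 1/(2D+1)\), treat \(D=0\) separately, and invoke Theorem~\ref{thm:clean_linear_tree}. The only difference is the elementary estimate. You bound \(\rho^{-D}\le e^{D(1-\rho)/\rho}\) and use \(e^t-1\le 2t\), whereas the paper bounds \(\sum_{j=1}^D\rho^{-j}\le D\rho^{-D}\le D/(1-D(1-\rho))\) by Bernoulli's inequality; both yield the same factor \(2D+1\).
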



\begin{proof}
The first inequality in~\eqref{eq:clean_tree_stepsize} follows directly from~\eqref{eq:clean_tree_sufficient_stepsize}. For the second, when \(D\ge1\), the bound \(1-\rho\le1/(2D+1)\) yields
\[
\sum_{j=1}^{D}\rho^{-j}
\le\frac{D}{\rho^D}
\le\frac{D}{1-D(1-\rho)}
\le2D+1.
\]
The third bound therefore gives \(8LA\tau^2\sum_{j=1}^{D}\rho^{-j}\le1\), which is equivalent to the second inequality in~\eqref{eq:clean_tree_stepsize}. When \(D=0\), we have \(A=0\), so the second condition
  in~\eqref{eq:clean_tree_stepsize} imposes no additional
  restriction on \(\tau\). Theorem~\ref{thm:clean_linear_tree} applies. \qed
\end{proof}

\begin{remark}[On the rate expression]
Choosing \(\tau\) at the upper bound in
\eqref{eq:clean_tree_sufficient_stepsize} gives
\begin{equation}
\rho
=
1-\frac{1}{2\kappa_{\cP}}
\min\left\{
\frac{1}{\omega_{\cP}},\
\frac{2\kappa_{\cP}}{2D+1},\
\frac{1}{\sqrt{8L(2D+1)A}}
\right\}.
\label{eq:clean_rate_factor}
\end{equation}  For this stepsize and \(p>1\), including the initial
  \(2(D+1)\) rounds used in
  Theorem~\ref{thm:clean_linear_tree}, the number of rounds
  to reach \(\|X^\nu-X^\star\|\le\epsilon\) is
\[
O\!\left(
D+
\max\left\{
\kappa_{\cP}\omega_{\cP},\,
D,\,
\kappa_{\cP}\sqrt{LDA}
\right\}
\log\frac{C}{\epsilon}
\right).
\]
Here \(C\) is the constant in Theorem~\ref{thm:clean_linear_tree}.
   The bound quantifies the interplay between function curvature
  and network topology through four partition/graph-dependent quantities.   Two of them, \(\kappa_{\cP}\) and \(A\),
  also depend on function geometry.
\begin{itemize}
 \item[(a)] {\bf Block preconditioning, \(\kappa_{\cP}\).} The parameter \(\kappa_{\cP}\) measures the coupling remaining
  between the block subproblems after exact block minimization  over the variables
  grouped by \(\cP\). The partition determines which interactions are treated
  jointly; the agents' cost functions determine the progress these
  minimizations achieve.
  For fixed admissible \(\tau\), a smaller \(\kappa_{\cP}\)
  reduces \(\rho\), improving the guaranteed convergence rate.
  \item[(b)] {\bf Block overlap, \(\omega_{\cP}\).} This  is the largest number of
  blocks sharing an agent. It accounts for the overlap
  among simultaneous block updates.
  Keeping  each agent's incident edges in fewer blocks
  limits this overlap and relaxes the sufficient restriction
  \(\tau\le1/\omega_{\cP}\).
  The restriction depends on {\it local overlap, not on the
  total number \(p\) of blocks}.

   \item[(c)] {\bf Message propagation, \(D\).} The largest delay \(D=\max_r D_r\) is determined by
  distances within each tree from boundary agents to the
  nearest endpoints of the updated edges.  Limiting block diameters bounds these delays.
  A smaller \(D\) reduces the bound on the initial
  propagation time and relaxes the delay-dependent
  stepsize restrictions.

 \item[(d)] {\bf Sensitivity to outdated information, \(A\).} This parameter measures how sensitive block updates are
  to outdated information from neighboring blocks.
  A smaller \(A\) reduces the performance penalty of
  communication delays: parallel block updates can use
  less damping, yielding a faster convergence rate.   Definition~\eqref{eq:clean_tree_A} combines block curvature,
  squared response sensitivities \(\kappa_{\partial,r}^2\),
  boundary-incidence counts \(|\widehat\partial_r|\), and
  neighboring-block geometry through \(\Gamma_r\).
  The maximum over \(s\) accounts for the largest accumulated
  weighted influence of one block on its neighbors'
  updates; the sum includes only neighbors \(r\) with \(D_r>0\).
  Partition design changes these factors through the
  location of interfaces and the conjugate functions grouped in
  each block.
  In the common-Hessian quadratic example of
  Remark~\ref{rem:clean_kappa_partial},
  \(\kappa_{\partial,r}=1\), irrespective of Hessian
  conditioning.
  \end{itemize}

The rate thus guides partition selection for the given
  functions and network.
  Unlike tuning gossip weights or communication rounds
  for prescribed optimization updates, choosing \(\cP\) provides additional
  design freedom to adapt the optimization itself to the topology.
  It changes which variables are optimized jointly and the
  coupling between the resulting subproblems.
  The convergence guarantee captures function geometry and
  topology {\it jointly}, rather than only through the separately
  assessed conditioning and mixing parameters of standard
  gossip bounds.
\end{remark}
\begin{remark}[Spanning-tree specialization]
  \label{rem:clean_spanning_tree}
  The framework includes a single-block construction on a
  spanning tree of any connected graph \(\cG\).
  For a spanning tree \(\cT\), choose
  \[
      \mathsf A=\mathsf B_{\cV,\cE(\cT)},\qquad
      \cJ_1=\cE(\cT),\qquad
      \cC_1=\cV.
  \]
  This representation retains only the tree-edge variables,
  all in one block. There are therefore no exterior coordinates or inter-block
  coupling, so \(A=0\) and we may take \(\kappa_{\cP}=1\).
  Under Assumption~\ref{ass:clean_problem_class},
  Algorithm~\ref{alg:gate_uniform_proposed}, run on the edges
  of \(\cT\) with exact message updates and
  \(\tau_1^\nu\equiv1\), returns the exact primal solution
  after at most \(\operatorname{diam}(\cT)+1\) rounds, by
  Proposition~\ref{prop:clean_delayed_bilevel}.
  This provides an optimization analogue of finite-time
  consensus. Despite this finite-round guarantee, a single spanning-tree
  block {\it need not be preferable to a multi-block decomposition}.
  The computational and communication considerations governing
  this choice are discussed in
Sec.~\ref{subsec:clean_tree_comparison}. \vspace{-0.5cm}
  \end{remark}
\subsection{Rate scaling and partition design}
  \label{subsec:clean_rate_examples}\vspace{-0,2cm}
 We examine how partition selection controls network-size   scaling on tree-dominant graphs and rings.
  Throughout this subsection, we consider connected graph   sequences \(\cG_m=(\cV_m,\cE_m)\), with \(|\cV_m|=m\).   Problem and partition quantities may depend on \(m\);  we suppress this dependence in the notation.
  All asymptotic bounds are with respect to \(m\to\infty\),
  with implied constants independent of \(m\).

\smallskip
\noindent\textbf{1) Tree-dominant graphs.}
  For \(\kappa_{\cP}=O(1)\),
  \eqref{eq:clean_rate_factor} reads
  \((1-\rho)^{-1}=O(D)\) whenever
  \(\omega_{\cP}=O(D)\) and \(LA=O(D)\).
  If instead \(\omega_{\cP}=O(1)\) and \(LA=O(1)\),
  conditions in ~\eqref{eq:clean_tree_stepsize}
  admits
  \(\tau=\Theta(\kappa_{\cP}\log D/D)\)
  for sufficiently large \(D\), yielding
  \[(1-\rho)^{-1}=O(D/\log D).\]We next identify a graph class and partition that attain
  the latter regime.
A graph family is \emph{tree-dominant} if each graph
  contains a tree subgraph with a number of remaining
  edges bounded independently of network size.
  For each graph, we choose the edge partition consisting
  of the tree edges as one block and each remaining edge
  as a singleton block.
  Under the curvature assumptions below, this partition
  yields the required parameter bounds.

\begin{proposition}
\label{prop:clean_large_tree_scaling} For each connected graph \(\cG=(\cV,\cE)\), choose
  a tree subgraph \(\cT_1=(\cV_1,\cE_1)\) satisfying
  \[
      |\cE\setminus\cE_1|=O(1),
  \]
  and the edge partition
  \[
      \cP
      :=\{\cE_1\}
      \cup\bigl\{\{h\}:h\in\cE\setminus\cE_1\bigr\}.
  \]
Suppose Assumptions~\ref{ass:clean_problem_class}
  and~\ref{ass:clean_tree_delay} hold, with
  \(L=O(1)\) and \(\mu^{-1}=O(1)\).   Assume there exist \(R\succ0\) and
  \(0<\underline\alpha\le\overline\alpha\), with
  \(\overline\alpha/\underline\alpha=O(1)\), such that
  every agent \(i\) incident to an edge in
  \(\cE\setminus\cE_1\) satisfies \(f_i\in C^2\) and
  \[
      \underline\alpha R
      \preceq\nabla^2 f_i(x)
      \preceq\overline\alpha R,
      \qquad x\in\R^d.
  \]
For \(p>1\) and sufficiently large \(D\),
  \eqref{eq:clean_tree_stepsize} admits a stepsize
  fixed across iterations,
  \(\tau=\Theta(\log D/D)\), yielding
  \[
      (1-\rho)^{-1}=O(D/\log D).
  \]
\end{proposition}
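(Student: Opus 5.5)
The plan is to reduce the claim to four parameter bounds, $\omega_{\cP}=O(1)$, $LA=O(1)$, $\kappa_{\cP}=O(1)$ and $\kappa_{\cP}\ge c>0$, and then to choose $\tau$ in \eqref{eq:clean_tree_stepsize} explicitly.

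\textbf{Combinatorial and sensitivity quantities.} Every agent lies in $\cV_1$ and in at most $|\cE\setminus\cE_1|$ singleton blocks, so $\omega_{\cP}\le 1+|\cE\setminus\cE_1|=O(1)$. A singleton block $\{h\}$ has $\operatorname{diam}=1$, and both its agents are endpoints of $h$, so $\delta_{h,i}=0$ and $D_r=0$. Hence only $r=1$ contributes to $A$ in \eqref{eq:clean_tree_A}, and $D=D_1$.

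For $r=1$ I bound each factor separately:
\begin{itemize}
\item $|\widehat\partial_1|\le 2|\cE\setminus\cE_1|=O(1)$.
\item For a singleton block $s=\{h\}$, $(\mathsf B_{\cV,h}^\top\mathsf B_{\cV,h})^{-1}=1/2$ and $\Lambda_{1,s}\le 2$, so $\Gamma_1\le1$.
\item $2/\mu_1+1/L_1\le 2/\mu+1/\mu=O(1)$.
\item By the crude bound in Remark~\ref{rem:clean_kappa_partial}, $\kappa_{\partial,1}\le L_1/\mu_1\le L/\mu=O(1)$. Directly, $\operatorname{Lip}(\cL_{1,S})\le |S|L/(|\cV_1|\mu)$, since $\chi_1$ is $(|\cV_1|\mu)^{-1}$-Lipschitz.
\end{itemize}
Since $L=O(1)$, this gives $LA=O(1)$. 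Note that none of these bounds uses the Hessian sandwich by $R$.

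\textbf{Block error bound $\kappa_{\cP}=O(1)$ (main obstacle).} This is where the condition $\underline\alpha R\preceq\nabla^2 f_i\preceq\overline\alpha R$ at extra-edge endpoints must enter. The first step is to observe that $\cS/\cU_1$ has dimension at most $d\,|\cE\setminus\cE_1|=O(1)\cdot d$, spanned by the images of the extra edges. So I would define the reduced function $\varphi(w):=\min\{G(U):U\in Z_0+\cU_1+\sum_h \mathsf B_{:,h}w_h\}$. The gap then satisfies
\[
G(Z)-G^\star=\Delta_1(Z)+\bigl(\varphi(w)-\min\varphi\bigr),
\]
and it remains to bound $\varphi(w)-\min\varphi$ by the singleton progresses $\Delta_{\{h\}}$ plus $\Delta_1$.

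I would argue in two steps:
\begin{enumerate}
\item Show $\varphi$ is well conditioned on the $O(1)$ extra-edge coordinates, uniformly in $m$. For this, compute its Hessian as a Schur complement of the dual Hessian $H$ of Remark~\ref{rem:clean_beb_motivation}, then use that the endpoint inverse Hessians are all $\Theta(R^{-1})$ to compare the reduced curvature along each $h$ with the singleton curvature $(\nabla^2 f_i)^{-1}+(\nabla^2 f_j)^{-1}$.
\item Bound the gap of a well-conditioned $O(1)$-block function by its per-coordinate progresses. The one-dimensional progress along $h$, taken at the tree-optimal point, is controlled by $\Delta_{\{h\}}(Z)$ plus a term absorbed by $\Delta_1(Z)$, the latter via strong convexity and smoothness along $\cU_1$.
\end{enumerate}
If the general case resists, I would first carry this out for quadratics, where $\kappa_{\cP}=1/\lambda^+_{\min}(M_{\cP}^{-1/2}HM_{\cP}^{-1/2})$, and then extend by the mean-value/Hessian sandwich argument. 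The lower bound $\kappa_{\cP}\ge1$ is immediate from $\Delta_r\le G(Z)-G^\star$ only when $p=1$. For $p>1$ I would use only that $\kappa_{\cP}$ is bounded below by a positive constant, which follows from $\sum_r\Delta_r\le p\,(G-G^\star)$ and $p=O(1)$.

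\textbf{Stepsize and rate.} With $\kappa_{\cP}\in[c,C']$, set $\tau:=\kappa_{\cP}\log D/D$. Then
\[
\Bigl(1-\frac{\tau}{2\kappa_{\cP}}\Bigr)^{-D}\le \exp\Bigl(\tfrac{\log D}{2}(1+o(1))\Bigr)=D^{1/2+o(1)},
\]
so
\[
16LA\kappa_{\cP}\tau\bigl[(\cdot)^{-D}-1\bigr]=O\bigl(\log D\cdot D^{-1/2+o(1)}\bigr)\to0,
\]
and the condition holds for large $D$. Also $\tau\le 1/\omega_{\cP}$ and $\tau\le 2\kappa_{\cP}$ hold for large $D$. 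Theorem~\ref{thm:clean_linear_tree} then gives
\[
(1-\rho)^{-1}=\frac{2\kappa_{\cP}}{\tau}=\frac{2D}{\log D}=O(D/\log D),
\]
with $\tau=\Theta(\log D/D)$ as claimed.
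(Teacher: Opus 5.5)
Your estimates of \(\omega_{\cP}\), of \(D_r=0\) for the singleton blocks, of \(|\widehat\partial_1|\), \(\Gamma_1\le1\) and \(\kappa_{\partial,1}\le L/\mu\) match the paper's. So do \(\kappa_{\cP}\ge1/p\) and the choice \(\tau=\kappa_{\cP}\log D/D\). One slip: not every agent lies in \(\cV_1\), because \(\cT_1\) need not be spanning. If it were spanning, \(\cU_1=\cS\) and \(\kappa_{\cP}\le1\) would hold trivially. The bound \(\omega_{\cP}\le1+|\cE\setminus\cE_1|\) is still correct.

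The gap is in step~1 of your argument for \(\kappa_{\cP}=O(1)\). Your identity \(G(Z)-G^\star=\Delta_1(Z)+\varphi(w)-\min\varphi\) is correct. Step~2 can also be run on gradients. Let \(x^{(1)}\) equal the tree block's common primal point \(p_0\) on \(\cV_1\) and \(x_i=\nabla f_i^*(Z_i)\) off \(\cV_1\). Then \(\nabla_{w_h}\varphi=\pm(x^{(1)}_i-x^{(1)}_j)\), so \(\|\nabla_{w_h}\varphi\|^2\lesssim\Delta_h(Z)+\Delta_1(Z)\).

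However, uniform conditioning of \(\varphi\) cannot come from comparing reduced and singleton curvature edge by edge:
\begin{itemize}
\item For a chord \(h\) (both endpoints in \(\cV_1\), e.g.\ clique edges when the tree contains a star in each clique), \(\mathsf B_{:,h}w_h\in\cU_1\). So \(\varphi\) is constant in \(w_h\), and its reduced curvature is zero.
\item Whenever the extra edges close a cycle, possibly through \(\cT_1\), \(\varphi\) is not strongly convex in \(w\) at all.
\item Naive norm estimates of the Schur complement of the tree block \(H_{11}\) lose powers of \(m\), since \(\|H_{11}^{-1}\|\) is of order \(m^2\) when \(\cT_1\) is a path on \(\Theta(m)\) agents.
\end{itemize}
Your quadratic fallback has the same issue, since \(\lambda_{\min}^+\) must still be bounded uniformly in \(m\).

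What you need is a PL bound on the quotient \(\cS/\cU_1\). It has dimension \(d(m-|\cV_1|)\) and is parametrized by the off-tree rows. There \(\varphi=\sum_{i\notin\cV_1}f_i^*(Z_i)+\bigl(\sum_{k\in\cV_1}f_k\bigr)^*\bigl(-\sum_{i\notin\cV_1}Z_i\bigr)\), whose gradient in \(Z_i\) is \(x_i-p_0\). Hence \(\varphi-\min\varphi\le\frac L2\sum_{i\notin\cV_1}\|x_i-p_0\|^2\). You then need to bound the right side by \(C\sum_{h\in\cE\setminus\cE_1}\|x^{(1)}_i-x^{(1)}_j\|^2\), with \(C\) depending only on \(|\cE\setminus\cE_1|\). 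This is a chaining argument along paths of extra edges from each off-tree agent to \(\cV_1\). That is the missing idea, and it is the combinatorial core of the paper's proof.

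The paper reaches it more directly, with no reduced function or Schur complement. Weak duality gives \(G(Z)-G^\star\le\sum_{i\in\cV}D_{f_i}(p_0,x_i)\), and the tree terms are exactly \(\Delta_1(Z)\). The off-tree terms are chained to \(\Delta_h\ge\frac{\underline\alpha}{4}\|x_i-x_j\|_R^2\) and to the tree progress at the agents where the extra edges attach.

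Finally, the sandwich by \(R\) is not where the difficulty lies. With \(L\) and \(\mu^{-1}\) bounded, taking \(R=I\), \(\underline\alpha=\mu\), \(\overline\alpha=L\) already suffices in either argument. The real content is uniformity in \(m\), which comes from having boundedly many extra edges.
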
\begin{proof}
  See Appendix~\ref{app:clean_large_tree_scaling}.\end{proof}
The selected tree has diameter at most \(m-1\),
  and possibly much smaller.
  The following examples have
  \(\operatorname{diam}(\cT_1)=\Theta(m)\)
  and \(D=\Theta(m)\).
\begin{itemize}
\item[(a)] \emph{\bf Ring.}  Choose \(m-q\) consecutive edges as one path block,
  where \(1\le q=O(1)\), and put each remaining edge
  in a singleton block.
  Then \(\operatorname{diam}(\cT_1)=m-q\).
  The common-matrix Hessian bounds in
  Proposition~\ref{prop:clean_large_tree_scaling}
  are required only at the \(O(q)\) agents incident
  to the singleton edges. 
\item[(b)] \emph{\bf Barbell.} Consider two cliques of bounded size, each containing
  at least two agents, joined by a path of
  \(\Theta(m)\) edges.
  Choose this path as \(\cT_1\) and put each clique
  edge in a singleton block.
  The number of edges outside \(\cT_1\), and hence
  the number of agents subject to the common-matrix
  Hessian bounds, is \(O(1)\). 
\end{itemize}
Under the curvature assumptions of
  Proposition~\ref{prop:clean_large_tree_scaling},
  both partitions therefore give
  \[
      (1-\rho)^{-1}=O(m/\log m).
  \]
\smallskip
\noindent\emph{Iteration complexity.}
For the ring and barbell partitions above, including
  the initial propagation rounds,
  Theorem~\ref{thm:clean_linear_tree} gives
  \[
      O\!\left(m+\frac{m}{\log m}\log\frac{C}{\epsilon}\right)
  \]
  rounds to reach \(\|X^\nu-X^\star\|\le\epsilon\), where
  \(0<\epsilon<1\) and \(C\ge1\) is an admissible prefactor
  in~\eqref{eq:clean_tree_primal_rate}.

To estimate \(C\), take \(\tau=\kappa_{\cP}\log D/D\) as in the proof of
  Proposition~\ref{prop:clean_large_tree_scaling}.
  With \(\nu_0:=2(D+1)\),
  \[
      \rho^{-\nu_0}
      =\left(1-\frac{\log D}{2D}\right)^{-2(D+1)}
      =D(1+o(1)).
  \]
  Thus~\eqref{eq:clean_tree_primal_prefactor} gives \(C=O(1+D\cL^{\nu_0})\), where \(\cL^\nu\) is defined in~\eqref{eq:clean_tree_lyapunov}. For sequences of problem instances and initializations satisfying the following finite-propagation scaling conditions,
  \begin{equation}
      \frac{G(Z^{\nu_0})-G^\star}{m}=O(1),
      \qquad
      \frac1m\max_{\nu_0-D\le\ell<\nu_0}
      \sum_{r=1}^p\|\hat Z_r^{\ell+1}-Z_r^\ell\|^2=O(1),
  \label{eq:clean_initial_phase_scaling}
  \end{equation}
  with constants independent of \(m\), we have
  \(\cL^{\nu_0}=O(m)\). Indeed,
  \[
      \tau^3 A\sum_{j=1}^D\sum_{k=1}^{D-j+1}\rho^{-k}
      =4A\kappa_{\cP}^2\tau
      \bigl[\rho^{-D}-1-D(1-\rho)\bigr]
      =O\!\left(\frac{\log D}{\sqrt D}\right).
  \]
  Thus \(C=\max\{1,2L\cL^{\nu_0}\rho^{-\nu_0}/\mu^2\}=O(m^2)\). Since \(\log C=O(\log m)\), its contribution is absorbed by the initial \(O(m)\) term, giving
  \[
      O\!\left(m+\frac{m}{\log m}\log\frac1\epsilon\right)
  \]
  rounds. More generally, the same bound holds whenever an admissible \(C\) grows at most polynomially with \(m\).

\smallskip
\noindent\textbf{2) Rings with equal-length partitions.} For \(\omega_{\cP}=O(1)\) and \(L\,A=O(1)\),
  \eqref{eq:clean_rate_factor} gives, when \(D\ge1\),
  \[
      (1-\rho)^{-1}
      =O\!\left(
          \max\{D,\kappa_{\cP}\sqrt D\}
      \right).
  \]
  Shortening the blocks therefore need not improve the
  rate bound if the resulting increase in
  \(\kappa_{\cP}\) offsets the smaller delays.
  We examine this tradeoff by partitioning a ring into
  equal-length paths.

 \begin{proposition}
  \label{prop:clean_ring_partition_scaling}
  Let \(\cP\) partition the edges of a ring with \(m\)
  agents into \(p\ge2\) consecutive path blocks,
  each containing \(D+1\) edges, where
  \(D\ge1\) and \(m=p(D+1)\).
  Assume \(f_i\in C^2\) for every \(i\in\cV\).
  Suppose there exist \(R\succ0\) and
  \(0<\underline\alpha\le\overline\alpha\), with
  \(\overline\alpha/\underline\alpha=O(1)\), such that
  \[
      \underline\alpha R
      \preceq\nabla^2 f_i(x)
      \preceq\overline\alpha R,
      \qquad i\in\cV,\quad x\in\R^d.
  \]
  Then the block error bound~\eqref{eq:clean_beb_image}
  holds with
  \[
      \kappa_{\cP}=O(m^2/D).
  \]
  \end{proposition}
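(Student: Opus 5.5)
We will bound $\kappa_{\cP}$ by comparing $G$ with a quadratic model and reducing the block error bound to a spectral statement about path blocks on a ring. The Hessian bounds $\underline\alpha R\preceq\nabla^2 f_i\preceq\overline\alpha R$ give $\overline\alpha^{-1}R^{-1}\preceq\nabla^2 f_i^*\preceq\underline\alpha^{-1}R^{-1}$. Hence $G$ restricted to $\cS$, and to each slice $Z+\cU_r$, is sandwiched between the quadratics $\frac{1}{2\overline\alpha}\|U-Z^\star\|_{R^{-1}}^2$ and $\frac{1}{2\underline\alpha}\|U-Z^\star\|_{R^{-1}}^2$, up to a linear term that vanishes on $\cS$ by optimality.

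\textbf{Step 1: reduce to a quadratic model.} Strong convexity and smoothness along $\cS$ and along each $\cU_r$, in the $R^{-1}$-weighted norm, give two bounds. First, $G(Z)-G^\star\le\frac{1}{2\underline\alpha}\|Z-Z^\star\|_{R^{-1}}^2$. Second, $\Delta_r(Z)\ge\frac{\underline\alpha}{2\overline\alpha^2}\|P_r(Z-Z^\star)\|^2_{R^{-1}}$, where $P_r$ is an oblique projection onto $\cU_r$. To get the second bound, use $\Delta_r(Z)\ge\frac{\underline\alpha}{2}\|\nabla_{\cU_r}G(Z)\|^2_{R}$ and $\nabla G(Z)-\nabla G(Z^\star)\approx\nabla^2 G\,(Z-Z^\star)$, with $\nabla G(Z^\star)\perp\cS$. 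Since $\overline\alpha/\underline\alpha=O(1)$, all constants are $O(1)$. Because $R$ is common to all agents, we may change variables by $R^{-1/2}$ and factor out the $d$ dimension. This reduces the problem to the scalar case $d=1$ with weights in $[\underline\alpha,\overline\alpha]$, and then, up to $O(1)$ factors, to unit weights. It therefore suffices to show, for $W:=Z-Z^\star\in\cS=\{w\in\R^m:\mathbf 1^\top w=0\}$,
\[
\|w\|^2\le C\,\frac{m^2}{D}\sum_{r=1}^p\|\Pi_{\cU_r}w\|^2 .
\]
Here $\cU_r$ consists of vectors supported on the $D+2$ nodes of path $r$ that sum to zero, and $\Pi_{\cU_r}$ is the corresponding orthogonal projection.

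\textbf{Step 2: the spectral estimate.} Note that $\|\Pi_{\cU_r}w\|^2=\|w_{\cV_r}\|^2-\frac{1}{D+2}(\sum_{i\in\cV_r}w_i)^2$, the variance of $w$ on block $r$. Consecutive blocks share exactly one endpoint, so the blocks cover the ring as $p$ arcs overlapping at $p$ junction nodes. Write $\bar w_r$ for the block mean. Then $\sum_r\|\Pi_{\cU_r}w\|^2$ controls $\|w-\text{(piecewise constant at the block means)}\|^2$. The remaining mass $\sum_r (D+2)\bar w_r^2$ must be controlled by the differences of adjacent block means. At a shared junction node $j$ between blocks $r$ and $r+1$, we have $|\bar w_r-\bar w_{r+1}|\le|w_j-\bar w_r|+|w_j-\bar w_{r+1}|$. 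Each term is at most the within-block deviation, so $(\bar w_r-\bar w_{r+1})^2\le 2(\|\Pi_{\cU_r}w\|^2+\|\Pi_{\cU_{r+1}}w\|^2)$.

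The vector of block means sums to approximately zero (exactly, up to the junction double counting, which we handle by weighting). Hence a discrete Poincaré inequality on the cycle of length $p$ gives $\sum_r\bar w_r^2\le Cp^2\sum_r(\bar w_r-\bar w_{r+1})^2$. Multiplying by $D+2$ yields $\|w\|^2\lesssim (1+Dp^2)\sum_r\|\Pi_{\cU_r}w\|^2$. Since $Dp^2\approx m^2/D$, this is the claimed $\kappa_{\cP}=O(m^2/D)$.

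\textbf{Main obstacle.} The delicate point is Step 1: transferring the block progress $\Delta_r$ for the nonquadratic $G$ to the quadratic projection quantity with only $O(1)$ loss. This requires that both $O_r(Z)$ and the Hessians along the segment between $Z$ and $Z^\star$ stay within the common-$R$ sandwich. The plan is to use the integral form $\nabla G(Z)-\nabla G(Z^\star)=\bar H(Z-Z^\star)$ with $\bar H=\operatorname{blkdiag}(\bar H_i)$ and $\overline\alpha^{-1}R^{-1}\preceq\bar H_i\preceq\underline\alpha^{-1}R^{-1}$. The bound $\|\Pi^{\bar H}_{\cU_r}\cdot\|\ge c\|\Pi_{\cU_r}\cdot\|$ then follows by a norm equivalence on each block. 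The care needed in Step 2 is handling the junction nodes, counted in two blocks, exactly in the Poincaré step.
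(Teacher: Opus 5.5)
Your Step~2 is essentially the paper's quadratic estimate. The paper averages over the disjoint sets $Q_r$ (each path with one endpoint removed), so the block means sum exactly to zero and no junction weighting is needed.

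\textbf{The gap is in Step~1}, where you pass from $\Delta_r$ to Euclidean projections of the displacement $W=Z-Z^\star$. Your bound $\Delta_r(Z)\ge\frac{\underline\alpha}{2\overline\alpha^2}\|P_r^{\bar H}W\|^2$, with $P_r^{\bar H}$ the $\bar H$-orthogonal projection, is correct. The claim that $\|P_r^{\bar H}W\|\ge c\,\|\Pi_{\cU_r}W\|$ ``follows by a norm equivalence on each block'' is false. The two projections onto $\cU_r$ have kernels $\bar H^{-1}\cU_r^{\perp}$ and $\cU_r^{\perp}$. These differ as soon as the $\bar H_i$, $i\in\cV_r$, are not all equal, so one projection can vanish where the other does not.

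Here is a concrete counterexample. Take $d=1$, $R=1$ and $f_i(x)=\frac{\alpha_i}{2}x^2$ with $\alpha_i\in\{\underline\alpha,\overline\alpha\}$ not constant on $\cV_r$; this is admissible under the hypotheses, and $Z^\star=0$. Put $Z_i=\alpha_i$ for $i\in\cV_r$ and choose the off-block entries so that $\sum_iZ_i=0$. Then $x_i=Z_i/\alpha_i=1$ on $\cV_r$, so $Z$ is already block-$r$ optimal and $\Delta_r(Z)=0$. Yet $\Pi_{\cU_r}(Z-Z^\star)$ has entries $\alpha_i-a_r\neq0$ on $\cV_r$, where $a_r:=|\cV_r|^{-1}\sum_{j\in\cV_r}\alpha_j$.

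Hence no blockwise bound $\Delta_r(Z)\ge c\|\Pi_{\cU_r}(Z-Z^\star)\|^2$ exists. Your reduction ``it suffices to show $\|w\|^2\le C\frac{m^2}{D}\sum_r\|\Pi_{\cU_r}w\|^2$ for $w=Z-Z^\star$'' is therefore not justified. Factoring out $d$ is also unjustified, since the whitened $\bar H_i$ need not be scalar.

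\textbf{The fix} is to apply your spectral estimate to the gradient rather than to the displacement; this is the paper's route.
\begin{itemize}
\item In whitened variables you already have $\Delta_r\ge\frac{\underline\alpha}{2}\|\Pi_{\cU_r}g\|^2$ with $g=\nabla\widehat G(W)^\top$.
\item Since $\cU_r\subseteq\cS$, $\Pi_{\cU_r}g=\Pi_{\cU_r}\Pi_{\cS}g$. Step~2 applied to $V=\Pi_{\cS}g\in\cS$ then gives $\sum_r\|\Pi_{\cU_r}g\|^2\ge(\kappa_{\cP}^{(0)})^{-1}\|\Pi_{\cS}g\|^2$.
\item The PL inequality from $\overline\alpha^{-1}$-strong convexity on $\cS$ gives $\|\Pi_{\cS}g\|^2\ge\frac{2}{\overline\alpha}(\widehat G(W)-G^\star)$.
\item Combining these yields $\kappa_{\cP}\le(\overline\alpha/\underline\alpha)\kappa_{\cP}^{(0)}=O(m^2/D)$.
\end{itemize}
This needs neither an integral Hessian nor any comparison of projections. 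Equivalently, in your notation, apply Step~2 to $\Pi_{\cS}\bar HW$ and use $\|\Pi_{\cS}\bar HW\|\ge\lambda_{\min}(\bar H)\|W\|$ for $W\in\cS$. That version costs a factor $(\overline\alpha/\underline\alpha)^2$ instead.
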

  \begin{proof}
  See Appendix~\ref{app:clean_ring_partition_scaling}.
  \end{proof}

 For this partition,
  \[
      \omega_{\cP}=2,\qquad
      D_r=D,\qquad
      |\widehat\partial_r|=2,\qquad
      \Gamma_r\le1,
      \qquad r\in[p].
  \]
  After the change of variables \(u=R^{1/2}x\), the normalized Hessian bounds give \(LA=O(1)\). The same contraction factor applies in the original coordinates (Appendix~\ref{app:clean_ring_partition_scaling}).
  Consequently,
  \[
      (1-\rho)^{-1}
      =O\!\left(
          \max\left\{\frac{m^2}{D},D,\frac{m^2}{\sqrt D}\right\}
      \right)
      =O(m^2/\sqrt D).
  \]
  Within the equal-length family, this upper bound
  is minimized in order by taking \(D=\Theta(m)\),
  equivalently a bounded number of blocks,
  giving \(O(m^{3/2})\).

  Under the curvature assumptions of
  Proposition~\ref{prop:clean_large_tree_scaling},
  one long path with a bounded singleton remainder
  instead gives \(O(m/\log m)\).
  Both choices have \(D=\Theta(m)\), but their
  inter-block coupling differs.
  Thus partition design must account for the coupling
  left between blocks, rather than balance their
  lengths alone.
\vspace{-0.2cm}

\section{Surrogate enhancement}
\label{sec:clean_surrogate}

In this section, we introduce a surrogate variant of GATE to reduce per-iteration computational and communication costs.  The updates in~\eqref{eq:clean_dmp_updates_a} and~\eqref{eq:clean_factor_to_var} of Algorithm~\ref{alg:gate_uniform_proposed} require solving local minimization problems exactly, and each intra-cluster message \(\mu_{i\to e}^\nu(\cdot)\) is a function of the receiving edge variable \(Y_e\), which can be expensive to compute and transmit.  This is particularly acute in the nonquadratic case, where messages need not admit compact finite-dimensional parametrizations; even in the quadratic case, exact messages may carry dense matrix parameters.  Our approach is to replace each \(g_i(Y_{\cA_i})=f_i^*((\mathsf BY)_i)\) by a tractable surrogate: this makes the block subproblems cheaper to solve and induces messages with lightweight parametrizations.

More formally, for each node \(i\in\cV\), let \[ \R^{|\cA_i|\times d}\times\R^{|\cA_i|\times d}\ni(Y_{\cA_i},\bar Y_{\cA_i})\mapsto \widetilde g_i(Y_{\cA_i};\bar Y_{\cA_i})\] be a surrogate of \(g_i(Y_{\cA_i})=f_i^*(\mathsf B_{i,\cA_i}Y_{\cA_i})\).  While we can directly work on the general \(\widetilde g_i\), we mainly focus on a structured choice obtained from a surrogate of the local conjugate \(f_i^*\): choose \[ \R^d\times\R^d\ni(U_i,Z_i)\mapsto \widetilde f_i^*(U_i;Z_i),\] and set
\begin{equation}
\widetilde g_i(Y_{\cA_i};\bar Y_{\cA_i})
:=
\widetilde f_i^*(\mathsf B_{i,\cA_i}Y_{\cA_i};\mathsf B_{i,\cA_i}\bar Y_{\cA_i}),
\label{eq:clean_sur_factor_structured}
\end{equation}
where \(\mathsf B_{i,\cA_i}\) only gives the composition with the edge variables $Y_{\cA_i}$ and their center $\bar Y_{\cA_i}$; \(\widetilde f_i^*\) is chosen from the local conjugate \(f_i^*\) alone and is graph-independent. At iteration \(\nu\), write \(Z^\nu:=\mathsf BY^\nu\) and \(Z_i^\nu:=(Z^\nu)_i\).  Replacing the exact local terms in the edge-variable update~\eqref{eq:clean_dmp_updates_a} and the message update~\eqref{eq:clean_factor_to_var} by these surrogates yields Algorithm~\ref{alg:clean_sur_dmp}. When \(\widetilde f_i^*(U;Z)\equiv f_i^*(U)\), Algorithm~\ref{alg:gate_uniform_proposed} is recovered. 

\begin{algorithm}[t!]
{\fontsize{6.2}{7.0}\selectfont
\DontPrintSemicolon
\SetKwFor{ForEach}{foreach}{}{end foreach}
\SetKwFor{NodeActions}{Each node}{executes the following steps, in parallel with the other nodes:}{}
\SetAlgoRefName{2}
\caption{GATE-S: Graph-Tearing message passing with surrogates}
\label{alg:clean_sur_dmp}\medskip
\KwIn{Edge assignment \(\mathfrak a\) with ownership sets
\(\{\cA_i^{\mathfrak a}\}_{i\in\cV}\); edge partition
\(\{\cE_r\}_{r=1}^p\); initial edge variables \(Y^0\);
finite-valued strongly convex initial messages
\(\widetilde\mu_{i\to e}^0\), \(e\in\cE\), \(i\in\cI_e\);
local surrogates \(\widetilde f_i^*(\cdot;\cdot)\);
relaxation parameters \(\tau_r^\nu\in[0,1]\); \(N\geq1\).}

\For{\(\nu=0,1,\ldots,N-1\)}{
\NodeActions{\(i\in\cV\)}{
\BlankLine
\texttt{(1) Assigned edge-variable update.}\;
\ForEach{\(r\in[p]\) and \(e\in\cA_i^{\mathfrak a}\cap\cE_r\), compute}{
\begin{subequations}\label{eq:clean_sur_node_owned_updates}
\begin{align}
\hat Y_e^{\nu+1}
&\in\argmin_{y_e\in\R^{1\times d}}
\left\{\sum_{j\in\cI_e}\widetilde\mu_{j\to e}^{\nu}(y_e)\right\},
\label{eq:clean_sur_dmp_updates_a}\\
Y_e^{\nu+1}
&=Y_e^\nu+\tau_r^\nu\bigl(\hat Y_e^{\nu+1}-Y_e^\nu\bigr).
\label{eq:clean_sur_dmp_updates_b}
\end{align}
\end{subequations}
Send \(Y_e^{\nu+1}\) to the other endpoint of \(e\).\;
}
\BlankLine
\texttt{(2) Node-to-edge message update.}\;
\ForEach{\(r\in[p]\) and \(e\in\cA_i\cap\cE_r\), compute}{
\begin{equation}
\begin{aligned}
\widetilde\mu_{i\to e}^{\nu+1}(y_e)
&=\min_{Y_{\cA_i^\circ\setminus\{e\}}}
\Bigg\{
\widetilde f_i^*\!\left(\mathsf B_{i,e}y_e
+\mathsf B_{i,\cA_i^\circ\setminus\{e\}}
Y_{\cA_i^\circ\setminus\{e\}}
+\mathsf B_{i,\cA_i^\partial}
Y_{\cA_i^\partial}^{\nu+1};\mathsf B_{i,\cA_i}Y_{\cA_i}^{\nu+1}\right)\\
&\qquad+\sum_{h\in\cA_i^\circ\setminus\{e\}}
\ \sum_{j\in\cI_h\setminus\{i\}}
\widetilde\mu_{j\to h}^{\nu}(Y_h)
\Bigg\}.
\end{aligned}
\label{eq:clean_sur_factor_to_var}
\end{equation}
Send \(\widetilde\mu_{i\to e}^{\nu+1}\) to the other endpoint of \(e\).\;
}
}
}
\BlankLine
\textbf{Primal recovery.}
Each node \(i\in\cV\) computes
\(x_i^N:=\nabla f_i^*((\mathsf BY^N)_i)\).
}
\end{algorithm}

\subsection{Surrogate regularity conditions}
\label{subsec:clean_surrogate_regularity}

We require  the following mild regularity conditions on the local surrogates to guarantee  convergence of Algorithm~\ref{alg:clean_sur_dmp}. 

\begin{assumption}
\label{ass:clean_surrogate_nodewise}
For each node \(i\in\cV\), the local surrogate \(\widetilde f_i^*\) is differentiable and satisfies the following conditions:
\begin{enumerate}
    \item[(i)] (\texttt{touching and gradient consistency}).  For all \(Z_i\in\R^d\),
    \[
        \widetilde f_i^*(Z_i;Z_i)=f_i^*(Z_i),\quad
        \nabla_U\widetilde f_i^*(Z_i;Z_i)=\nabla f_i^*(Z_i).
    \]
    \item[(ii)] (\texttt{majorization}).  For all \(U_i,Z_i\in\R^d\),
    \[
        f_i^*(U_i)\le \widetilde f_i^*(U_i;Z_i).
    \]
    \item[(iii)] (\texttt{strong convexity and smoothness in $U_i$}).  There exist constants \(\widetilde\mu_i^*,\widetilde L_i^*\in(0,+\infty)\) such that, for every center \(Z_i\), the map \(U_i\mapsto\widetilde f_i^*(U_i;Z_i)\) is \(\widetilde\mu_i^*\)-strongly convex and \(\widetilde L_i^*\)-smooth.
    \item[(iv)] (\texttt{Lipschitz continuity in $Z_i$}). The gradient \(\nabla_U\widetilde f_i^*(U_i;Z_i)\) is globally Lipschitz in \(Z_i\), uniformly in \(U_i\).
\end{enumerate}
\end{assumption}
For a center \(Z\in\cS\), define
\[
    \widetilde G(U;Z):=\sum_{i\in\cV}\widetilde f_i^*(U_i;Z_i),
    \qquad U\in\R^{m\times d}.
\]
For \(Z=\mathsf BY\), the value of the surrogate block minimization equals the minimum of \(\widetilde G(\cdot;Z)\) over the slice \(Z+\cU_r\):
\begin{equation*}
\begin{gathered}
    \min_{W_r\in\R^{|\cE_r|\times d}}
    \sum_{i\in\cV}\widetilde g_i\!\left((W_r,Y_{\overline{\cE}_r})_{\cA_i};Y_{\cA_i}\right)
    =\min_{U\in Z+\cU_r}\widetilde G(U;Z)
    =:\widetilde G(\widetilde O_r(Z);Z),
\end{gathered}
\end{equation*}
where \(\widetilde O_r(Z)\) is the unique minimizer, well defined because \(\widetilde G(\cdot;Z)\) is strongly convex and \(Z+\cU_r\) is closed. Let
\begin{equation}
    \widetilde\Delta_r(Z):=
    G(Z)-\widetilde G(\widetilde O_r(Z);Z)\ge0
\label{eq:clean_sur_progress_def}
\end{equation}
be the progress of one exact minimization of the surrogate block problem from \(Z\); nonnegativity follows from the touching and optimality conditions. 

We postulate the following surrogate block error bound condition, which is the counterpart of Assumption~\ref{ass:clean_beb};   it bounds the optimality gap by the aggregate surrogate block progress.
\begin{assumption}[surrogate block error bound]
\label{ass:clean_sur_beb}
There exists a partition-dependent constant \(\widetilde\kappa_{\cP}<\infty\) such that
\begin{equation}
    G(Z)-G^\star
    \le
    \widetilde\kappa_{\cP}\sum_{r=1}^p\widetilde\Delta_r(Z),
    \quad Z\in\cS.
\label{eq:clean_sur_beb_image}
\end{equation}
\end{assumption}
As in the exact case, the smallest admissible constant \(\widetilde\kappa_{\cP}\) in Assumption~\ref{ass:clean_sur_beb} is the supremum of the ratio of the global gap \(G(Z)-G^\star\) to the aggregate surrogate block progress \(\sum_{r=1}^p\widetilde\Delta_r(Z)\) over \(Z\in\cS\) with \(G(Z)>G^\star\).
This constant enters the rate of Algorithm~\ref{alg:clean_sur_dmp}; its role is analogous to that of \(\kappa_{\cP}\) for Algorithm~\ref{alg:gate_uniform_proposed}.  

By the touching, optimality, and majorization conditions,
\[
    0\le
    \widetilde\Delta_r(Z)
    \le
    G(Z)-G(\widetilde O_r(Z))
    \le
    \Delta_r(Z).
\]
Consequently, the smallest admissible constants in Assumptions~\ref{ass:clean_beb} and~\ref{ass:clean_sur_beb} satisfy \(\kappa_{\cP}\le\widetilde\kappa_{\cP}\). If there exists \(\eta\in(0,1]\) such that
\begin{equation*}
\widetilde\Delta_r(Z)\ge \eta\,\Delta_r(Z),
\quad \forall Z\in\cS,\quad r\in[p],
\end{equation*}
then \(\widetilde\kappa_{\cP}\le\kappa_{\cP}/\eta\).  Thus, the price of using surrogate block updates is captured by this relative progress fraction: if \(\eta\) remains bounded away from zero, the network-size scaling of the exact block error bound is preserved up to a constant factor.

\subsection{Surrogate examples}
\label{subsec:clean_surrogate_examples}

The following are two practical choices of \(\widetilde f_i^*\): a first-order surrogate with fixed metric, and a second-order surrogate that uses second-order information and allows the preconditioner to vary with the center.

\begin{example}[First-order surrogate]
\label{ex:clean_sur_first}
For a fixed \(M_i\in\mathbb S_{++}^d\), set
\begin{equation}
\widetilde f_i^*(U_i;Z_i)
:=
 f_i^*(Z_i)
 +\inner{\nabla f_i^*(Z_i), U_i-Z_i}
 +\frac12\inner{U_i-Z_i, M_i(U_i-Z_i)} .
\label{eq:clean_sur_first_order}
\end{equation}
A special case is the scalar metric: \(M_i=\alpha_i^{-1}I_d\), with \(\alpha_i>0\).
\end{example}

\begin{example}[Second-order surrogate]
\label{ex:clean_sur_second}
Let \(H_i(Z_i)\in\mathbb S_{++}^d\) be chosen from second-order information, for instance \(H_i(Z_i)=\nabla^2 f_i^*(Z_i)+\zeta_i I_d\) with \(\zeta_i\ge0\) when \(f_i^*\in C^2\), and define
\begin{equation*}
\widetilde f_i^*(U_i;Z_i)
:=
 f_i^*(Z_i)
 +\inner{\nabla f_i^*(Z_i), U_i-Z_i}
 +\frac12\inner{U_i-Z_i, H_i(Z_i)(U_i-Z_i)}.
\end{equation*}
\end{example}

\subsubsection{Regularity and block error bound comparison}
\label{subsec:clean_surrogate_beb_regimes}
\noindent\textbf{Surrogate regularity.}\ Let \(H_i(Z_i)\) denote the quadratic metric in the surrogate, with \(H_i(Z_i)\equiv M_i\) in Example~\ref{ex:clean_sur_first} and center-dependent \(H_i\) in Example~\ref{ex:clean_sur_second}.  Touching and gradient consistency are immediate.  If \(f_i^*\in C^2\), majorization holds whenever \(\nabla^2f_i^*(U_i)\preceq H_i(Z_i)\) for all \(U_i,Z_i\in\R^d\). Assumption~\ref{ass:clean_surrogate_nodewise}(iii) holds with \(\widetilde\mu_i^*:=\inf_{Z_i\in\R^d}\lambda_{\min}(H_i(Z_i))\) and \(\widetilde L_i^*:=\sup_{Z_i\in\R^d}\lambda_{\max}(H_i(Z_i))\) whenever \(0<\widetilde\mu_i^*\le\widetilde L_i^*<\infty\). For Example~\ref{ex:clean_sur_first}, Assumption~\ref{ass:clean_surrogate_nodewise}(iv) follows from the global Lipschitz continuity of \(\nabla f_i^*\). For nonconstant \(H_i\), Example~\ref{ex:clean_sur_second} fails this global requirement and is retained as a practical variant. If \(H_i\) is locally Lipschitz and the edge iterates are bounded, the surrogate gradients are uniformly Lipschitz in the center on the relevant bounded sets. The subsequent center-sensitivity analysis can instead use these local Lipschitz bounds; we do not pursue this extension here.

\noindent\textbf{Block error bound comparison.}\ Given a partition \(\cP\), we next compare $\widetilde\kappa_\cP$ in Assumption~\ref{ass:clean_sur_beb} with the exact constant \(\kappa_{\cP}\) for the fixed-metric surrogate in Example~\ref{ex:clean_sur_first}.  Fix \(r\) and write a block variation as \(U=Z+P\) with \(P\in\cU_r\).  By \eqref{eq:clean_sur_progress_def} and the change of variables \(U=Z+P\), \(\widetilde\Delta_r(Z)=G(Z)-\min_{P\in\cU_r}\widetilde G(Z+P;Z)\).  For \(i\notin\cV_r\), \(\mathsf B_{i,\cE_r}=0\), so every \(P\in\cU_r\) satisfies \(P_i=0\).  Hence \eqref{eq:clean_sur_first_order} gives
\[
    \widetilde G(Z+P;Z)
    =
    G(Z)+\inner{(\nabla G(Z)^\top)_{\cV_r},P_{\cV_r}}
    +\frac12\sum_{i\in\cV_r}\inner{P_i,M_iP_i}.
\]
Substituting this identity into the definition gives
\begin{equation}
    \widetilde\Delta_r(Z)
    =-
    \min_{P\in\cU_r}
    \left\{
        \inner{(\nabla G(Z)^\top)_{\cV_r},P_{\cV_r}}
        +\frac12\sum_{i\in\cV_r}\inner{P_i,M_iP_i}
    \right\}.
\label{eq:clean_fixed_sur_progress_explicit}
\end{equation}
If the fixed metrics on \(\cV_r\) satisfy the curvature sandwich
\begin{equation}
    \rho_i^{-1}M_i
    \preceq \nabla^2 f_i^*(z)
    \preceq M_i,
    \quad \forall z\in\R^d,\quad i\in\cV_r,
\label{eq:clean_fixed_majorant_curvature}
\end{equation}
with \(\rho_i\ge1\) and \(\rho_r:=\max_{i\in\cV_r}\rho_i\), then the lower curvature bound in \eqref{eq:clean_fixed_majorant_curvature} implies
\[
\begin{aligned}
    \Delta_r(Z)
    &\le
    -\min_{P\in\cU_r}
    \left\{
        \inner{(\nabla G(Z)^\top)_{\cV_r},P_{\cV_r}}
        +\frac{1}{2\rho_r}\sum_{i\in\cV_r}\inner{P_i,M_iP_i}
    \right\} \\
    &=
    \rho_r
    \left[-\min_{Q\in\cU_r}
    \left\{
        \inner{(\nabla G(Z)^\top)_{\cV_r},Q_{\cV_r}}
        +\frac12\sum_{i\in\cV_r}\inner{Q_i,M_iQ_i}
    \right\}\right] \\
    &=
    \rho_r\widetilde\Delta_r(Z),
\end{aligned}
\]
where the equality uses \(P=\rho_rQ\), the linearity of \(\cU_r\), and \eqref{eq:clean_fixed_sur_progress_explicit}.  Hence
\begin{equation*}
    \kappa_{\cP}
    \le
    \widetilde\kappa_{\cP}
    \le
    \rho_{\max}\kappa_{\cP},
    \quad
    \rho_{\max}:=\max_{r\in[p]}\rho_r.
\end{equation*}
Since \(\rho_{\max}=\max_{i\in\cV}\rho_i\), it is independent of the partition. If \(\rho_{\max}=O(1)\), then \(\widetilde\kappa_{\cP}\) and \(\kappa_{\cP}\) have the same network-size scaling. If the surrogate Hessian \(M_i\) matches \(\nabla^2 f_i^*\) well (i.e., \(\rho_i\mathrel{\approx}1\)), the resulting degradation from \(\kappa_\cP\) to \(\widetilde\kappa_\cP\) is only a small factor. In the scalar case, choosing \(M_i=\mu_{f_i}^{-1}I_d\) ensures~\eqref{eq:clean_fixed_majorant_curvature} with \(\rho_i=L_{f_i}/\mu_{f_i}\), thereby retaining the network scaling of the block error bound up to a local condition number.

\subsubsection{Quadratic message implementation}
\label{subsec:clean_surrogate_message_structure}

The surrogates in Examples~\ref{ex:clean_sur_first} and~\ref{ex:clean_sur_second} are quadratic in \(U_i\). With quadratic initial messages, the updates in Algorithm~\ref{alg:clean_sur_dmp} preserve this form, so each message is represented by a vector and a Hessian matrix. Concretely, each message can be written as
\begin{equation}
    \widetilde\mu_{i\to e}^{\nu}(Y_e)
    \sim
    \frac12\inner{Y_e,H_{i\to e}^{\nu}Y_e}
    +\inner{P_{i\to e}^{\nu},Y_e},
    \quad H_{i\to e}^{\nu}\in\mathbb S_+^d,\quad P_{i\to e}^{\nu}\in\R^d.
\label{eq:clean_sur_message_quadratic}
\end{equation}
For \(h,h'\in\cA_i\), \(\nabla^2_{Y_hY_{h'}}\widetilde g_i=\mathsf B_{i,h}\mathsf B_{i,h'}M_i\) in Example~\ref{ex:clean_sur_first}, and \(\nabla^2_{Y_hY_{h'}}\widetilde g_i=\mathsf B_{i,h}\mathsf B_{i,h'}H_i(Z_i^\nu)\) in Example~\ref{ex:clean_sur_second}.  These Hessians can be dense, and so can the message Hessians \(H_{i\to e}^{\nu}\).

\smallskip
\noindent\textbf{Stabilization of message Hessians.}\ In Example~\ref{ex:clean_sur_first}, the Hessian recursion does not depend on the surrogate centers.  On the tree cluster, it is time-independent and stabilizes after finitely many sweeps.  After this stabilization phase, the Hessian matrices no longer need to be transmitted; subsequent iterations update and send only the vectors \(P_{i\to e}^{\nu}\).

\smallskip
\noindent\textbf{Scalar-metric messages.}\ The scalar metric even avoids matrix-valued Hessian communication entirely.  Take \(M_i=\alpha_i^{-1}I_d\) with \(\alpha_i>0\), and initialize the message Hessians as scalar multiples of \(I_d\). Then \(\nabla^2_{Y_hY_{h'}}\widetilde g_i =\alpha_i^{-1}\mathsf B_{i,h}\mathsf B_{i,h'}I_d\), and Schur complements preserve this isotropic form.  Therefore every message has the scalar-quadratic form
\[
    \widetilde\mu_{i\to e}^{\nu}(Y_e)
    \sim
    \frac{h_{i\to e}^{\nu}}{2}\|Y_e\|^2+
      \inner{P_{i\to e}^{\nu}, Y_e},
    \quad
    h_{i\to e}^{\nu}\in\R_+,\quad P_{i\to e}^{\nu}\in\R^d .
\]
Only one scalar Hessian coefficient and one \(d\)-vector need to be transmitted. 

\smallskip
\noindent\textbf{Slice-wise Hessian communication.}\ The communication bottleneck is dense Hessian transmission.  In both Examples~\ref{ex:clean_sur_first} and~\ref{ex:clean_sur_second}, \(H_{i\to e}^{\nu}\) in \eqref{eq:clean_sur_message_quadratic} can be a full \(d\times d\) matrix.  In Example~\ref{ex:clean_sur_first}, full-matrix transmission is confined to a finite Hessian-stabilization phase.  In contrast, Example~\ref{ex:clean_sur_second} has no such finite stabilization guarantee: \(H_i(Z_i^\nu)\) varies with the center, so the message Hessians can remain dense and time-varying.
To avoid this repeated full-matrix transmission in practice, we propose the following slice-wise Hessian communication strategy, in which only a slice of the Hessian is sent at each round.
For center-dependent metrics, this is a heuristic communication
strategy whose convergence is not analyzed in this paper.

Under the quadratic message representation~\eqref{eq:clean_sur_message_quadratic}, the vector \(P_{i\to e}^{\nu}\) is sent every round, while the receiving node maintains a Hessian cache \(\mathsf H_{i\to e}^{\nu}\).  Decompose \(\mathbb S^d\) as the direct sum \(\mathbb S^d=\bigoplus_{k=1}^K\mathbb S_k\), where each \(\mathbb S_k\) is chosen to admit a low-cost parametrization for communication and \(\Pi_k\) denotes the projection onto \(\mathbb S_k\).  Examples include diagonal bands, matrix rows or columns, and low-rank subspaces. When node \(i\) computes \(\widetilde\mu_{i\to e}^{\nu+1}\) using~\eqref{eq:clean_sur_factor_to_var}, each incoming message \(\widetilde\mu_{j\to h}^{\nu}\) in the local minimization is evaluated with the current vector \(P_{j\to h}^{\nu}\) and the cached Hessian \(\mathsf H_{j\to h}^{\nu}\).  After this minimization, node \(i\) has computed the full Hessian \(H_{i\to e}^{\nu+1}\) of \(\widetilde\mu_{i\to e}^{\nu+1}\), but sends only the slice indexed by \(s(\nu)\in[K]\).  The receiving node then updates only that slice of its cache:
\[
\Pi_{s(\nu)}\mathsf H_{i\to e}^{\nu+1}
=\Pi_{s(\nu)}H_{i\to e}^{\nu+1},
\quad
\Pi_k\mathsf H_{i\to e}^{\nu+1}
=\Pi_k\mathsf H_{i\to e}^{\nu}\quad(k\ne s(\nu)).
\]
Thus each round sends one slice of each dense message Hessian, while the receiving node keeps the remaining slices from its Hessian cache.  For the fixed-metric surrogate~\eqref{eq:clean_sur_first_order}, if the updates are well defined and each slice is used at least once in every \(S\) consecutive rounds, all Hessian caches in cluster \(r\) equal the stabilized message Hessians after at most
\[
    S\left(\left\lceil\frac{\operatorname{diam}(\cF_r)}{2}\right\rceil+1\right)\quad \text{rounds.}
\]
  
Note that the cached Hessians may be indefinite.  In practice, one can add \(\delta I_d\) to each cache, with \(\delta>0\) chosen to keep the local subproblem well defined. \vspace{-0.2cm}

\subsection{Convergence analysis}
\label{subsec:clean_surrogate_linear}

We establish linear convergence of Algorithm~\ref{alg:clean_sur_dmp}. Similarly to the    analysis of Algorithm~\ref{alg:gate_uniform_proposed} (cf.   Sec.~\ref{sec:clean_convergence}), we proceed in two steps. \textbf{Step~1} expresses Algorithm~\ref{alg:clean_sur_dmp} as a delayed block-Jacobi scheme with delayed exterior coordinates and surrogate centers; \textbf{Step~2} establishes convergence of this scheme. The main difference from the exact case is that the block minimizers also depend on surrogate centers, requiring an additional perturbation analysis with respect to these centers. 

\subsubsection{Step~1: Equivalence to a delayed surrogate block-Jacobi scheme}
Fix a block \(r\).  For \(e\in\cE_r\), extend the delays in~\eqref{eq:clean_dual_delay} to all \(i\in\cV_r\) by
\begin{equation}
\delta_{e,i}
:=\frac{\dist_{\cF_r}(e,i)-1}{2},
\qquad e\in\cE_r,\quad i\in\cV_r,
\label{eq:clean_sur_deltaei}
\end{equation}
\(\widetilde D_r:=\max_{e\in\cE_r,\,i\in\cV_r}\delta_{e,i}=\operatorname{diam}(\cT_r)-1\), and \(\widetilde D:=\max_{r\in[p]}\widetilde D_r\). Unlike \(D_r\), which considers only boundary agents, \(\widetilde D_r\) includes delays from all agents of the block.

We expand the messages used in \texttt{step~1} of Algorithm~\ref{alg:clean_sur_dmp} through the recursion in \texttt{step~2}, obtaining the following equivalence analogous to Proposition~\ref{prop:clean_delayed_bilevel}. The proof is omitted.
\begin{lemma}
\label{lem:clean_sur_delayed}
Under Assumptions~\ref{ass:clean_tree_delay} and~\ref{ass:clean_surrogate_nodewise}, for every \(r\in[p]\), \(e\in\cE_r\), and \(\nu\ge\max_{s\in[p]}\operatorname{diam}(\cT_s)\), the updates of Algorithm~\ref{alg:clean_sur_dmp} read
\begin{equation}
\begin{aligned}
Y_e^{\nu+1}
&=Y_e^\nu+\tau_r^\nu(\hat Y_e^{\nu+1}-Y_e^\nu),\\
\hat Y_e^{\nu+1}
&=
\argmin_{Y_e}\ \min_{Y_{\cE_r\setminus\{e\}}}
\sum_{i\in\cV_r}
\widetilde f_i^*\!\left(
\mathsf B_{i,\cA_i^\circ}Y_{\cA_i^\circ}
+\sum_{h\in\cA_i^\partial}\mathsf B_{i,h}Y_h^{\nu-\delta_{e,i}};
Z_i^{\nu-\delta_{e,i}}
\right).
\end{aligned}
\label{eq:clean_sur_delayed_block}
\end{equation}
\end{lemma}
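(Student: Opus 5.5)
The argument mirrors Proposition~\ref{prop:clean_delayed_bilevel}. The plan is to unfold the surrogate messages of~\eqref{eq:clean_sur_factor_to_var} backward in time, along the tree $\cT_r$, and then to identify the resulting nested minimization with the block problem~\eqref{eq:clean_sur_delayed_block}. The argument rests on one inductive claim about messages.

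Fix $r$, an edge $e\in\cE_r$ with endpoint $i$, and the subtree $(\cV_e(i),\cE_e(i))$. We claim that, for every $\nu\ge 1+\max_{k\in\cV_e(i)}\dist_{\cT_r}(i,k)$,
\[
\widetilde\mu^{\nu}_{i\to e}(y_e)=\min_{Y_{\cE_e(i)}}\sum_{k\in\cV_e(i)}\widetilde f_k^*\Bigl(\mathsf B_{k,\cA_k^\circ}Y_{\cA_k^\circ}+\sum_{h\in\cA_k^\partial}\mathsf B_{k,h}Y_h^{\nu-\dist(i,k)};\,Z_k^{\nu-\dist(i,k)}\Bigr),
\]
with $Y_e=y_e$ inside the $\cA_k^\circ$ terms. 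The minimum is uniquely attained, and the function is finite-valued and strongly convex.

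The proof of the claim is by induction on $|\cV_e(i)|$. In the leaf case, \eqref{eq:clean_sur_factor_to_var} at round $\nu$ uses $Y^{\nu}_{\cA_i^\partial}$ and the center $\mathsf B_{i,\cA_i}Y^{\nu}_{\cA_i}=Z_i^{\nu}$, so the delay is $0$. Note that the update producing $\widetilde\mu^{\nu}$ is the one indexed $\nu-1$ and uses $Y^{\nu}$.

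For the inductive step, substitute the hypothesis for each incoming $\widetilde\mu^{\nu-1}_{j\to h}$, $h\in\cA_i^\circ\setminus\{e\}$. Every agent $k$ in the child subtree through $h$ then carries exterior coordinates and center at round $(\nu-1)-\dist(j,k)=\nu-\dist(i,k)$. The time condition $\nu-1\ge1+\max_k\dist(j,k)$ guarantees that no initial message $\widetilde\mu^0$ survives. Exchanging the minimizations is justified by the partition~\eqref{eq:gate_component_partition}: the variables of distinct child subtrees enter disjoint sums. Uniqueness and strong convexity are propagated exactly as in the derivation preceding Proposition~\ref{prop:gate_message_fixed_point}, with Assumption~\ref{ass:clean_surrogate_nodewise}(iii) replacing the strong convexity and smoothness of $f_k^*$. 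The composition of a strongly convex $\widetilde f_i^*(\cdot;Z)$ with $\mathsf B_{i,e}y_e+\mathsf B_{i,\cA_i^\circ\setminus\{e\}}(\cdot)$ plus strongly convex messages is jointly strongly convex, because $\mathsf B_{i,h}=\pm1$ and the child messages supply strong convexity in each $Y_h$.

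With the claim in hand, apply step~(1) at round $\nu$. The objective $\widetilde\mu^{\nu}_{i\to e}+\widetilde\mu^{\nu}_{i'\to e}$ is a sum over $k\in\cV_e(i)\sqcup\cV_e(i')=\cV_r$. For each such $k$, $\dist(k,\text{own endpoint})=\min_{j\in\cI_e}\dist_{\cT_r}(k,j)=\delta_{e,k}$ by~\eqref{eq:clean_sur_deltaei}. The two inner minimizations combine into $\min_{Y_{\cE_r\setminus\{e\}}}$ with $Y_e$ shared, and we then minimize over $Y_e$. This gives exactly the expression for $\hat Y_e^{\nu+1}$ in~\eqref{eq:clean_sur_delayed_block}. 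The minimizer is unique because the sum is strongly convex in $y_e$, and the relaxation line is just~\eqref{eq:clean_sur_dmp_updates_b}.

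The time threshold follows from $\max_k\delta_{e,k}+1\le\operatorname{diam}(\cT_r)$, which is what the claim requires for both endpoints, so $\nu\ge\max_s\operatorname{diam}(\cT_s)$ suffices. The main obstacle is the careful index bookkeeping. The surrogate center in~\eqref{eq:clean_sur_factor_to_var} is $\mathsf B_{i,\cA_i}Y^{\nu+1}_{\cA_i}$, which includes interior edges. One must therefore check that it equals $Z_i$ at the same delayed round as the exterior coordinates. This holds, since both are read from $Y^{\nu+1}$ in the same message update. Thus a single delay $\delta_{e,i}$ governs both the exterior coordinates and the center of agent $i$.
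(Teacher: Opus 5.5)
Your proposal is correct and follows essentially the same route as the paper, which omits this proof as a direct analogue of Proposition~\ref{prop:clean_delayed_bilevel}. That proof runs an induction over the tree rooted at \(e\), identifies each unfolded message with a subtree minimization at delayed rounds, and then merges the two endpoint messages into the block problem. Your extra check is exactly the bookkeeping the surrogate case adds: the surrogate center \(\mathsf B_{i,\cA_i}Y^{\nu+1}_{\cA_i}=Z_i^{\nu+1}\) is read in the same message update as the exterior coordinates, so a single delay \(\delta_{e,i}\) governs both, now also for interior agents.
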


\subsubsection{Step~2: Linear convergence of the delayed scheme}
The remaining  convergence proof combines three ingredients: \textbf{(i)} a descent estimate for \(G\) along the iterates, \textbf{(ii)} the surrogate block error bound, and \textbf{(iii)} a perturbation analysis of the block minimizers, bounding the mismatch caused by delays. The arguments for \textbf{(i)} and \textbf{(ii)} parallel the exact case and are given in Appendix~\ref{subsec:app_sur_descent}. We focus here on \textbf{(iii)}: the block minimizers in Lemma~\ref{lem:clean_sur_delayed} also depend on delayed surrogate centers, requiring additional control of center perturbations.

\smallskip\noindent\textbf{Perturbation analysis with respect to exterior parameters and surrogate centers.}
For exterior parameters \(E\in\R^{|\widehat\partial_r|\times d}\), with rows indexed by \((i,h)\in\widehat\partial_r\), and surrogate centers \(\vartheta\in\R^{|\cV_r|\times d}\), define the block solution map
\[
\widetilde Y_r(E,\vartheta)
=
\argmin_{Y_{\cE_r}}
\sum_{i\in\cV_r}
\widetilde f_i^*\!\left(
\mathsf B_{i,\cA_i^\circ}Y_{\cA_i^\circ}
+
\sum_{h\in\cA_i^\partial}\mathsf B_{i,h}E_{i,h};
\vartheta_i
\right).
\]
We now introduce the frozen-center counterparts of \(\kappa_{\partial,r}\). For fixed centers \(\vartheta\), let \(\widetilde\chi_r(\cdot;\vartheta)\) be defined by
\[
\sum_{i\in\cV_r}
\bigl(\nabla_u\widetilde f_i^*(\cdot;\vartheta_i)\bigr)^{-1}
(\widetilde\chi_r(\eta;\vartheta))
=\eta,
\]
where \(\widetilde\chi_r(\cdot;\vartheta)\) is well defined by strong monotonicity.  For nonempty \(S\subseteq\cV_r\), set
\begin{equation}
\widetilde\cL_{r,S}(\eta;\vartheta)
:=
\sum_{i\in S}
\bigl(\nabla_u\widetilde f_i^*(\cdot;\vartheta_i)\bigr)^{-1}
(\widetilde\chi_r(\eta;\vartheta)).
\label{eq:clean_sur_aggregate_map}
\end{equation}
Using the same family of vertex subsets \(\cR_r\) as in~\eqref{eq:clean_kappa_partial}, define
\begin{equation}
\widetilde\kappa_{\partial,r}
:=
\begin{cases}
    0,
    & \partial_r=\emptyset,\\[1mm]
    \displaystyle
    \sup_{\vartheta\in\R^{|\cV_r|\times d}}
    \max_{S\in\cR_r}
    \frac{|\cV_r|}{|S|}
    \operatorname{Lip}\bigl(\widetilde\cL_{r,S}(\cdot;\vartheta)\bigr),
    & \partial_r\ne\emptyset.
\end{cases}
\label{eq:clean_sur_kappa_partial}
\end{equation}
The supremum over \(\vartheta\) makes the bounds for exterior perturbations uniform in the surrogate centers.

For a map \(F(\eta;\vartheta)\), let \(\operatorname{Lip}_{\vartheta_i}F(\eta;\vartheta)\) denote its Lipschitz modulus in the \(i\)-th center coordinate of \(\vartheta\), with \(\eta\) and the other center coordinates fixed.  Define the normalized center-coordinate modulus
\begin{equation}
\begin{aligned}
\widetilde\kappa_{c,r}
:=
\sup_{\substack{\eta\in\R^d,\;\vartheta\in\R^{|\cV_r|\times d}}}
\max_{i\in\cV_r}
\max\bigg\{&
\max_{e\in\cE_r}
\frac{|\cV_r|}{|\overline{\cV}_e(i)|}
\operatorname{Lip}_{\vartheta_i}
\bigl(\widetilde\cL_{r,\overline{\cV}_e(i)}(\eta;\vartheta)\bigr),
\\
&\hspace{-5em}
\operatorname{Lip}_{\vartheta_i}
\bigl(\widetilde\cL_{r,\{i\}}(\eta;\vartheta)\bigr),\;
\max_{v\in\cV_r\setminus\{i\}}
|\cV_r|\operatorname{Lip}_{\vartheta_i}
\bigl(\widetilde\cL_{r,\{v\}}(\eta;\vartheta)\bigr)
\bigg\}.
\end{aligned}
\label{eq:clean_sur_kappa_center}
\end{equation}
Assumption~\ref{ass:clean_surrogate_nodewise}(iii)--(iv) guarantees that these moduli are finite, with \(\widetilde\kappa_{c,r}=0\) for exact surrogates; see Appendix~\ref{subsec:app_sur_tree_center} for their interpretation.
These moduli give the following sensitivity bounds.
\begin{lemma}
\label{lem:clean_surrogate_tree_sensitivity}
Suppose Assumptions~\ref{ass:clean_tree_delay} and \ref{ass:clean_surrogate_nodewise} hold.  For every block \(r\), the surrogate block solution map \(\widetilde Y_r(\cdot,\cdot)\) is single-valued.

\smallskip\noindent\textnormal{\emph{(I) Sensitivity to exterior parameters.}} If \(E,E'\in\R^{|\widehat\partial_r|\times d}\) differ only in row \((i,h)\in\widehat\partial_r\), then, for any center \(\vartheta\) and edge \(e\in\cE_r\),
\begin{subequations}
\begin{align}
\|[\widetilde Y_r(E',\vartheta)-\widetilde Y_r(E,\vartheta)]_e\|
&\le
\widetilde\kappa_{\partial,r}\frac{|\overline{\cV}_e(i)|}{|\cV_r|}\|E'_{i,h}-E_{i,h}\|,
\label{eq:clean_sur_boundary_edge_sens}\\
\|\mathsf B_{v,\cE_r}(\widetilde Y_r(E',\vartheta)-\widetilde Y_r(E,\vartheta))\|
&\le
\widetilde\kappa_{\partial,r}\frac{1}{|\cV_r|}\|E'_{i,h}-E_{i,h}\|,
\quad v\ne i .
\label{eq:clean_sur_boundary_node_sens}
\end{align}
\end{subequations}
\smallskip\noindent\textnormal{\emph{(II) Sensitivity to surrogate centers.}} If two centers \(\vartheta,\widehat\vartheta\) differ only in coordinate \(i\), then, for any exterior parameter \(E\) and edge \(e\in\cE_r\),
\begin{subequations}
\begin{align}
\|[\widetilde Y_r(E,\widehat\vartheta)-\widetilde Y_r(E,\vartheta)]_e\|
&\le
\widetilde\kappa_{c,r}\frac{|\overline{\cV}_e(i)|}{|\cV_r|}
\|\widehat\vartheta_i-\vartheta_i\|,
\label{eq:clean_sur_center_edge_sens}\\
\|\mathsf B_{i,\cE_r}(\widetilde Y_r(E,\widehat\vartheta)-\widetilde Y_r(E,\vartheta))\|
&\le
\widetilde\kappa_{c,r}\|\widehat\vartheta_i-\vartheta_i\|,
\label{eq:clean_sur_center_node_sens_local}\\
\|\mathsf B_{v,\cE_r}(\widetilde Y_r(E,\widehat\vartheta)-\widetilde Y_r(E,\vartheta))\|
&\le
\frac{\widetilde\kappa_{c,r}}{|\cV_r|}\|\widehat\vartheta_i-\vartheta_i\|,
\quad v\ne i .
\label{eq:clean_sur_center_node_sens_nonlocal}
\end{align}
\end{subequations}
\end{lemma}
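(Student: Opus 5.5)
The plan is to mirror the proof of Lemma~\ref{lem:clean_boundary_sensitivity}, with the exact conjugates \(f_i^*\) replaced by the frozen-center surrogates \(u\mapsto\widetilde f_i^*(u;\vartheta_i)\).

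\emph{Single-valuedness.} By Assumption~\ref{ass:clean_surrogate_nodewise}(iii), each \(u\mapsto\widetilde f_i^*(u;\vartheta_i)\) is \(\widetilde\mu_i^*\)-strongly convex and \(\widetilde L_i^*\)-smooth. Since \(\mathsf B_{\cV_r,\cE_r}\) has full column rank on a tree (Proposition~\ref{prop:connected}), the block objective is strongly convex in \(Y_{\cE_r}\). Hence \(\widetilde Y_r(E,\vartheta)\) is unique.

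\emph{Common primal point.} Write \(\widetilde\phi_i:=(\nabla_u\widetilde f_i^*(\cdot;\vartheta_i))^{-1}\); it is well defined and Lipschitz, being the gradient of the conjugate of a strongly convex smooth function. The optimality conditions on each tree edge \(e=(i,j)\) give \(\nabla_u\widetilde f_i^*(U_i;\vartheta_i)=\nabla_u\widetilde f_j^*(U_j;\vartheta_j)\). Connectivity of \(\cT_r\) then forces a common value \(x\) at every agent. Summing \(U_i=\widetilde\phi_i(x)\) over \(\cV_r\), the columns of \(\mathsf B_{\cV_r,\cE_r}\) sum to zero, so \(\sum_i\widetilde\phi_i(x)=\eta(E)\), that is, \(x=\widetilde\chi_r(\eta(E);\vartheta)\). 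This yields the surrogate analogue of~\eqref{eq:image_formula}:
\[
\mathsf B_{v,\cE_r}\widetilde Y_r(E,\vartheta)=\widetilde\cL_{r,\{v\}}(\eta(E);\vartheta)-\sum_{h\in\cA_v^\partial}\mathsf B_{v,h}E_{v,h}.
\]
Summing this identity over \(v\in\overline{\cV}_e(i)\), as in~\eqref{eq:clean_edge_parameter_formula}, gives
\[
(\widetilde Y_r)_e=\pm\Bigl[\widetilde\cL_{r,\overline{\cV}_e(i)}(\eta(E);\vartheta)-\sum_{v\in\overline{\cV}_e(i)}\sum_{h'\in\cA_v^\partial}\mathsf B_{v,h'}E_{v,h'}\Bigr].
\]

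\emph{Part (I).} Perturbing only \(E_{i,h}\) changes \(\eta\) by \(\mathsf B_{i,h}(E'_{i,h}-E_{i,h})\), which has the same norm as \(E'_{i,h}-E_{i,h}\). It leaves the explicit \(E\)-terms untouched for every \(v\ne i\), and \(i\notin\overline{\cV}_e(i)\). Both inequalities then follow from the Lipschitz bounds encoded in \(\widetilde\kappa_{\partial,r}\) through \(\cR_r\), applied with the sets \(S=\overline{\cV}_e(i)\) and \(S=\{v\}\). The bounds are uniform in \(\vartheta\) because of the supremum over \(\vartheta\) in~\eqref{eq:clean_sur_kappa_partial}.

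\emph{Part (II).} Now \(\eta\) is fixed, and the explicit \(E\)-terms do not depend on \(\vartheta\). Each of the three bounds is therefore exactly a center-Lipschitz modulus of \(\widetilde\cL_{r,S}\), with the normalizations built into~\eqref{eq:clean_sur_kappa_center}:
\begin{itemize}
\item \(S=\overline{\cV}_e(i)\) for the edge bound;
\item \(S=\{i\}\), carrying no \(1/|\cV_r|\) factor, for the local node bound;
\item \(S=\{v\}\), \(v\ne i\), for the nonlocal node bound.
\end{itemize}
In this case \(i\) may lie in \(\overline{\cV}_e(i)\)'s complement or not; the definition handles both.

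\emph{Main obstacle.} The remaining task is to confirm that \(\widetilde\kappa_{c,r}\) is finite. This amounts to checking that \(\vartheta_i\mapsto\widetilde\chi_r\) and \(\vartheta_i\mapsto\widetilde\phi_v(\widetilde\chi_r)\) are Lipschitz, and the difficulty is that \(\widetilde\phi_i\) depends on \(\vartheta_i\) only implicitly, through an inverse. The plan has two steps.
\begin{itemize}
\item \emph{Inverse map.} Assumption~\ref{ass:clean_surrogate_nodewise}(iv), combined with the strong convexity constant \(\widetilde\mu_i^*\), shows that \(\widetilde\phi_i(x;\vartheta_i)\) is Lipschitz in \(\vartheta_i\). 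Write \(u=\widetilde\phi_i(x;\vartheta_i)\) and \(u'=\widetilde\phi_i(x;\vartheta_i')\), so that \(\nabla_u\widetilde f_i^*(u';\vartheta_i')=\nabla_u\widetilde f_i^*(u;\vartheta_i)\). Strong monotonicity then bounds \(\|u'-u\|\) by \((\mathrm{Lip}_{\vartheta}\nabla_u\widetilde f_i^*/\widetilde\mu_i^*)\,\|\vartheta_i'-\vartheta_i\|\).
\item \emph{Implicit equation.} Strong monotonicity of \(\sum_j\widetilde\phi_j\) in \(x\), with modulus \(\sum_j 1/\widetilde L_j^*\), transfers this Lipschitz bound to \(\widetilde\chi_r\) via its defining equation.
\end{itemize}
This is the deferred finiteness claim referenced to the appendix.
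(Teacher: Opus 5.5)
Your proposal is correct and follows the paper's proof essentially step for step. Edgewise optimality gives a common primal point, so each node image equals $\widetilde\cL_{r,\{v\}}(\eta(E);\vartheta)-c_v(E)$; summing over the cut $\overline{\cV}_e(i)$ gives the edge rows; and both parts then follow directly from the definitions of $\widetilde\kappa_{\partial,r}$ and $\widetilde\kappa_{c,r}$.

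The finiteness of $\widetilde\kappa_{c,r}$, which you flag as the main obstacle, is not part of this lemma: the inequalities hold for whatever value the constant takes. Your strong-monotonicity argument for finiteness is nonetheless valid, and unlike the paper's implicit-function remark it does not need $C^2$ surrogates. One small slip: $i$ never lies in $\overline{\cV}_e(i)$, since that set is by definition the component not containing $i$, but this does not affect the argument.
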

\begin{proof}
See Appendix~\ref{subsec:app_sur_tree_center}. \qed
\end{proof}

\smallskip\noindent\textbf{Bounding the mismatch caused by delays.}
As in Sec.~\ref{subsec:clean_delayed_block_jacobi_convergence}, we bound the mismatch between the image contribution \(\hat Z_r^{\nu+1}\) of the algorithm's delayed edge responses and the contribution \(\bar Z_r^{\nu+1}\) obtained by surrogate block minimization at the current exterior coordinates and surrogate centers.
For this comparison, write \(Z^\nu:=\mathsf BY^\nu\), with \(Z_i^\nu\) its \(i\)-th row, and define, for \(r\in[p]\),
\begin{equation}
\begin{aligned}
Z_r^\nu&:=\mathsf B_{\cV,\cE_r}Y_{\cE_r}^\nu,\\
\hat Z_r^{\nu+1}&:=\mathsf B_{\cV,\cE_r}\hat Y_{\cE_r}^{\nu+1},\\
\bar Z_r^{\nu+1}&:=\widetilde O_r(Z^\nu)-Z^\nu+Z_r^\nu.
\end{aligned}
\label{eq:clean_sur_block_images}
\end{equation}

Fix \(r\in[p]\) and \(\nu\ge\max_{s\in[p]}\operatorname{diam}(\cT_s)\), and let \(Z_{\cV_r}^\nu\) denote the restriction of \(Z^\nu\) to \(\cV_r\). For \(e\in\cE_r\), collect the current and delayed exterior parameters in \(E^\nu,\widehat E_e^\nu\in\R^{|\widehat\partial_r|\times d}\), and the delayed centers in \(Z_{r,e}^\nu\):
\[
\begin{aligned}
(E^\nu)_{i,h}&:=Y_h^\nu,\qquad
(\widehat E_e^\nu)_{i,h}:=Y_h^{\nu-\delta_{e,i}},\quad (i,h)\in\widehat\partial_r,\\
Z_{r,e}^\nu&:=(Z_i^{\nu-\delta_{e,i}})_{i\in\cV_r}.
\end{aligned}
\]
By~\eqref{eq:clean_sur_block_images}, the definition of \(\widetilde O_r\), and Lemma~\ref{lem:clean_sur_delayed},
\[
\begin{aligned}
\bar Z_r^{\nu+1}&=\mathsf B_{\cV,\cE_r}\widetilde Y_r(E^\nu,Z_{\cV_r}^\nu),\\
\hat Y_e^{\nu+1}&=[\widetilde Y_r(\widehat E_e^\nu,Z_{r,e}^\nu)]_e.
\end{aligned}
\]
These sensitivity bounds yield the following image-space mismatch estimate. Compared with Lemma~\ref{lem:clean_tree_mismatch_z}, the bound includes additional contributions involving \(\widetilde\kappa_{c,r}^2\) arising from delayed surrogate centers.
\begin{lemma}
\label{lem:clean_surrogate_tree_mismatch}
Under Assumptions~\ref{ass:clean_tree_delay} and \ref{ass:clean_surrogate_nodewise}, for each block \(r\), define
\[
\begin{aligned}
\widetilde M_{r,\partial}
&:=
\Gamma_r
\left(
8\widetilde\kappa_{\partial,r}^2|\widehat\partial_r|
+
8|\partial_r|(|\cV_r|+\widetilde D_r)\widetilde\kappa_{c,r}^2
\right),\\
\widetilde M_{r,c}
&:=
8(|\cV_r|+\widetilde D_r)\widetilde\kappa_{c,r}^2.
\end{aligned}
\]
Then, for every iteration \(\nu\ge\max_{r\in[p]}\operatorname{diam}(\cT_r)\),
\begin{equation}
\|\hat Z_r^{\nu+1}-\bar Z_r^{\nu+1}\|^2
\le
\widetilde M_{r,\partial}
\sum_{\ell=\nu-\widetilde D_r}^{\nu-1}
\sum_{s\in\cM_r}
\|Z_s^{\ell+1}-Z_s^\ell\|^2
+
\widetilde M_{r,c}
\sum_{\ell=\nu-\widetilde D_r}^{\nu-1}
\|Z_r^{\ell+1}-Z_r^\ell\|^2.
\label{eq:clean_sur_boundary_center_mismatch}
\end{equation}
The sums over \(\ell\) are understood to be zero when \(\widetilde D_r=0\).
\end{lemma}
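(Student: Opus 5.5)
The proof follows the exact-case argument of Lemmas~\ref{lem:clean_tree_mismatch} and~\ref{lem:clean_tree_mismatch_z}, with one more perturbation channel for the surrogate centers. Fix \(r\) and \(\nu\ge\max_s\operatorname{diam}(\cT_s)\). For each \(e\in\cE_r\) we pass from \((E^\nu,Z^\nu_{\cV_r})\) to \((\widehat E_e^\nu,Z^\nu_{r,e})\) in two stages.
\begin{itemize}
\item[(a)] First change the exterior parameters row by row at fixed centers \(Z^\nu_{\cV_r}\). This is controlled by part~(I) of Lemma~\ref{lem:clean_surrogate_tree_sensitivity}.
\item[(b)] Then change the centers coordinate by coordinate at the delayed parameters \(\widehat E_e^\nu\). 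This is controlled by part~(II).
\end{itemize}
The mismatch then splits as \(\hat Z_r^{\nu+1}-\bar Z_r^{\nu+1}=\mathcal E_\partial+\mathcal E_c\), and we use \(\|a+b\|^2\le2\|a\|^2+2\|b\|^2\); this accounts for the factor \(8=2\cdot4\) relative to the constant \(4\) of Lemma~\ref{lem:clean_tree_mismatch}.

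\emph{Exterior channel.} The term \(\mathcal E_\partial\) is exactly the situation of Lemma~\ref{lem:clean_tree_mismatch}, with \(\kappa_{\partial,r}\) replaced by \(\widetilde\kappa_{\partial,r}\). The sensitivity bounds \eqref{eq:clean_sur_boundary_edge_sens}--\eqref{eq:clean_sur_boundary_node_sens} are uniform in the center and have the same form as \eqref{eq:clean_partial_edge_sens}--\eqref{eq:clean_partial_node_sens}. We can therefore repeat the appendix argument verbatim: the weights \(|\overline{\cV}_e(i)|/|\cV_r|\) and \(1/|\cV_r|\) are combined with the tree structure of the delays \(\delta_{e,i}\), so that no extra factor \(D_r\) appears. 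Since \(\delta_{e,i}\le D_r\le\widetilde D_r\), the sum over \(\ell\) may be extended to \(\ell\ge\nu-\widetilde D_r\). We then convert \(Y_h\)-increments into \(\|Z_s^{\ell+1}-Z_s^\ell\|^2\) via \eqref{eq:clean_increment_conversion}. This produces the \(8\widetilde\kappa_{\partial,r}^2|\widehat\partial_r|\Gamma_r\) part of \(\widetilde M_{r,\partial}\).

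\emph{Center channel.} For each \(i\in\cV_r\), the center gap is the telescoping sum
\[
Z_i^\nu-Z_i^{\nu-\delta_{e,i}}=\sum_{\ell=\nu-\delta_{e,i}}^{\nu-1}\bigl(Z_i^{\ell+1}-Z_i^\ell\bigr),
\]
and \(Z_i^{\ell+1}-Z_i^\ell=(Z_r^{\ell+1}-Z_r^\ell)_i+\sum_{h\in\cA_i^\partial}\mathsf B_{i,h}(Y_h^{\ell+1}-Y_h^\ell)\).

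To bound \(\|\mathcal E_c\|^2\) we sum the image-space effect of each center coordinate, using \eqref{eq:clean_sur_center_node_sens_local}--\eqref{eq:clean_sur_center_node_sens_nonlocal}.
\begin{itemize}
\item At the node \(v=i\) whose center moves, the effect is \(\widetilde\kappa_{c,r}\|\cdot\|\).
\item At nodes \(v\neq i\), the effect is \(\widetilde\kappa_{c,r}/|\cV_r|\) per coordinate. Summing over \(i\in\cV_r\) gives an averaged total of \(\widetilde\kappa_{c,r}\).
\item The edge rows are handled through \eqref{eq:clean_sur_center_edge_sens}, in the same way as the exterior channel.
\end{itemize}
Unlike the exterior case, the delays here do not telescope neatly along the tree, because the delay depends on \(i\) and every node of the block contributes. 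I would therefore apply Cauchy--Schwarz with \((|\cV_r|+\widetilde D_r)\) terms: \(|\cV_r|\) from the node contributions and \(\widetilde D_r\) from the time increments. This gives
\[
\|\mathcal E_c\|^2\le 4(|\cV_r|+\widetilde D_r)\widetilde\kappa_{c,r}^2\sum_{\ell=\nu-\widetilde D_r}^{\nu-1}\sum_{i\in\cV_r}\|Z_i^{\ell+1}-Z_i^\ell\|^2 .
\]
The inner sum is then split into the in-block part \(\|Z_r^{\ell+1}-Z_r^\ell\|^2\), which yields \(\widetilde M_{r,c}\), and the boundary part. The boundary part is bounded by \(|\partial_r|\sum_h\|Y_h^{\ell+1}-Y_h^\ell\|^2\le|\partial_r|\Gamma_r\sum_{s\in\cM_r}\|Z_s^{\ell+1}-Z_s^\ell\|^2\), again using \eqref{eq:clean_increment_conversion}. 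This yields the second part of \(\widetilde M_{r,\partial}\).

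The main obstacle is the bookkeeping in the center channel. One has to organize the per-edge, per-center telescoping so that the nonlocal factors \(1/|\cV_r|\) and the edge weights \(|\overline{\cV}_e(i)|/|\cV_r|\) give a constant of order \(|\cV_r|+\widetilde D_r\) rather than \(|\cV_r|\widetilde D_r\). I would first try ordering the coordinate swaps from the leaves toward \(e\), mirroring the exterior-case proof. If that fails, I would fall back on the crude Cauchy--Schwarz bound, since the lemma's constants already allow that size.
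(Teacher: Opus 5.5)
Your exterior channel is fine. With the centers frozen at \(Z^\nu_{\cV_r}\), part~(I) of Lemma~\ref{lem:clean_surrogate_tree_sensitivity} is uniform in \(\vartheta\), so the appendix proof of Lemma~\ref{lem:clean_tree_mismatch} carries over verbatim. Your final reduction through \eqref{eq:clean_increment_conversion} is also exactly how the \(|\partial_r|\Gamma_r\) term arises.

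The gap is the center channel. It is the only new content of the lemma, and you do not establish it. In your staging, the center swap is made at the edge-dependent parameters \(\widehat E_e^\nu\). So the node row \((\mathcal E_c)_v=\sum_{e\in\cA_v^\circ}\mathsf B_{v,e}\bigl[\widetilde{\mathsf Y}_{r,e}(\widehat E_e^\nu,Z^\nu_{r,e})-\widetilde{\mathsf Y}_{r,e}(\widehat E_e^\nu,Z^\nu_{\cV_r})\bigr]\) is not a difference of node images \(\widetilde{\mathsf Z}_{r,v}(E,\cdot)\) of a single block solution. The node bounds \eqref{eq:clean_sur_center_node_sens_local}--\eqref{eq:clean_sur_center_node_sens_nonlocal} therefore do not apply, including the factor \(1/|\cV_r|\) your count relies on; only the edge bound \eqref{eq:clean_sur_center_edge_sens} is available. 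Nor is there a single Cauchy--Schwarz over \(|\cV_r|+\widetilde D_r\) terms. The pairs (center coordinate, time increment) number \(|\cV_r|\widetilde D_r\), and an edgewise bound with telescoped delays gives a multiplicative constant of at least that order. Your fallback therefore fails: on a path block, \(|\cV_r|\) and \(\widetilde D_r\) are both of the order of the block size, and the stated constants do not allow their product. Separately, splitting \(Z_i^{\ell+1}-Z_i^\ell\) into in-block and boundary increments costs a factor \(2\). Your claimed \(\|\mathcal E_c\|^2\le4(|\cV_r|+\widetilde D_r)\widetilde\kappa_{c,r}^2\cdots\) would therefore give \(16\), not \(8\), in \(\widetilde M_{r,c}\).

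The missing idea is to align the exterior parameters \emph{and} the centers at each node simultaneously. Set \(d_i(v):=\min_{e\in\cA_v^\circ}\delta_{e,i}\), with \(d_v(v)=0\), and \(Z^\nu_{r,v}:=(Z_i^{\nu-d_i(v)})_{i\in\cV_r}\). Root \(\cT_r\) at any \(i\in\cV_r\), not only at boundary agents. Then \(\delta_{e,i}\in\{d_i(v),d_i(v)+1\}\) for \(e\in\cA_v^\circ\), with the \(+1\) exactly on the child edges of \(v\). So, contrary to your worry, the center delays have the same tree structure as the exterior ones.

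Inserting the single pair \((E_v^\nu,Z^\nu_{r,v})\) at node \(v\) produces two center terms of different nature.
\begin{itemize}
\item \emph{Transport term.} It involves only child edges and one increment \(Z_i^{\nu-d_i(v)}-Z_i^{\nu-d_i(v)-1}\) per center. Combine \eqref{eq:clean_sur_center_edge_sens}, \(\sum_{e\in\mathrm{Ch}_{r,i}(v)}|\overline{\cV}_e(i)|\le n_{r,i}(v)-1\), Cauchy--Schwarz over the \(|\cV_r|\) centers, and the depth-shell bound in \eqref{eq:app_rooted_counts}. This gives \(|\cV_r|\widetilde\kappa_{c,r}^2\sum_\ell\sum_i\|Z_i^{\ell+1}-Z_i^\ell\|^2\), with no delay factor.
\item \emph{Common-state term.} The term \(\widetilde{\mathsf Z}_{r,v}(E^\nu,Z^\nu_{r,v})-\widetilde{\mathsf Z}_{r,v}(E^\nu,Z^\nu_{\cV_r})\) has a common exterior parameter, so the nonlocal bound \(\widetilde\kappa_{c,r}/|\cV_r|\) applies. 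Coordinate \(v\) never moves, so \eqref{eq:clean_sur_center_node_sens_local} is never needed. Telescoping and \(\sum_v d_i(v)\le|\cV_r|\widetilde D_r\) give \(\widetilde D_r\widetilde\kappa_{c,r}^2\sum_\ell\sum_i\|Z_i^{\ell+1}-Z_i^\ell\|^2\).
\end{itemize}
The additive \(|\cV_r|+\widetilde D_r\) is the sum of these two pieces, not the count of one Cauchy--Schwarz.

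If you keep your two-stage order, the common-state part still carries \(\widehat E_e^\nu\). Realigning it to \(E_v^\nu\) creates extra exterior cross terms, so even a repaired version would not reproduce the stated constants.
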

\begin{proof}
See Appendix~\ref{subsec:app_sur_boundary_center_mismatch}. \qed
\end{proof}
\smallskip\noindent\textbf{Linear convergence.}
For each block \(r\), define $\widetilde\mu_r^*:=\min_{i\in\cV_r}\widetilde\mu_i^*,\;\widetilde L_r^*:=\max_{i\in\cV_r}\widetilde L_i^*.$
Using this mismatch estimate, we obtain the following convergence theorem.

\begin{theorem}
\label{thm:clean_surrogate_linear}
Suppose Assumptions~\ref{ass:clean_problem_class}, \ref{ass:clean_tree_delay}, \ref{ass:clean_surrogate_nodewise}, and~\ref{ass:clean_sur_beb} hold, with \(p>1\).  Define \(\underline{\widetilde\mu}^*:=\min_{r\in[p]}\widetilde\mu_r^*\) and
\begin{equation}
\widetilde A
:=
\max_{s\in[p]}
\left[
\sum_{r\in[p]:\,\widetilde D_r>0,\,s\in\cM_r}
\frac{2\widetilde L_r^*+\widetilde\mu_r^*}{4}\,\widetilde M_{r,\partial}
+
\mathbf 1_{\{\widetilde D_s>0\}}
\frac{2\widetilde L_s^*+\widetilde\mu_s^*}{4}\,\widetilde M_{s,c}
\right].
\label{eq:clean_sur_A}
\end{equation}
Let \(X^\star=\mathbf 1(x^\star)^\top\) be the consensus primal optimizer. If \(\tau_r^\nu\equiv\tau\) and
\begin{equation}
0<\tau\le\min\left\{\frac{1}{\omega_{\cP}},\,2\widetilde\kappa_{\cP}\right\},\quad
\frac{16\widetilde A\widetilde\kappa_{\cP}\tau}{\underline{\widetilde\mu}^*}
\left[\left(1-\frac{\tau}{2\widetilde\kappa_{\cP}}\right)^{-\widetilde D}-1\right]\le1,
\label{eq:clean_sur_stepsize}
\end{equation}
then, for some constant \(C\in(0,\infty)\) and all \(\nu\ge\widetilde D+1\),
\begin{equation}
    \|X^\nu-X^\star\|^2
    \le
    C\widetilde\rho^\nu,
    \quad
    \widetilde\rho:=1-\frac{\tau}{2\widetilde\kappa_{\cP}}.
\label{eq:clean_sur_primal_rate}
\end{equation}
\end{theorem}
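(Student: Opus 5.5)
The plan mirrors the proof of Theorem~\ref{thm:clean_linear_tree}, with the surrogate quantities in place of the exact ones. There are three steps.

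\textbf{Step 1: descent inequality.} I first derive the surrogate analogue of Lemmas~\ref{lem:clean_one_step}--\ref{lem:clean_main_recursion}, the ones deferred to Appendix~\ref{subsec:app_sur_descent}. Fix $r$.

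First, majorization, touching and Jensen's inequality over overlapping blocks (valid since $\tau\le1/\omega_{\cP}$) give
\[
G(Z^{\nu+1})\le G(Z^\nu)+\sum_r\tau\bigl[\widetilde G(Z^\nu-Z_r^\nu+\hat Z_r^{\nu+1};Z^\nu)-G(Z^\nu)\bigr].
\]
This is exactly the computation in Lemma~\ref{lem:clean_one_step}, with $G$ replaced by $\widetilde G(\cdot;Z^\nu)$ inside each block. The step is legitimate because $f_i^*\le\widetilde f_i^*(\cdot;Z_i^\nu)$ and the two agree at the center.

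Second, $\widetilde\Delta_r$ is defined by optimality of $\widetilde O_r(Z^\nu)$ over $Z^\nu+\cU_r$, and $\widetilde G(\cdot;Z^\nu)$ is $\widetilde L_r^*$-smooth along $\cU_r$. Together these give
\[
\widetilde G(Z^\nu-Z_r^\nu+\hat Z_r^{\nu+1};Z^\nu)\le G(Z^\nu)-\widetilde\Delta_r(Z^\nu)+\tfrac{\widetilde L_r^*}{2}\|\hat Z_r^{\nu+1}-\bar Z_r^{\nu+1}\|^2 .
\]

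Third, I split the progress $\tau\sum_r\widetilde\Delta_r$ into two halves. On the first half I apply Assumption~\ref{ass:clean_sur_beb}, which gives the contraction factor $\widetilde\rho$. On the second half I use $\widetilde\mu_r^*$-strong convexity along $\cU_r$, which gives $\widetilde\Delta_r\ge\frac{\widetilde\mu_r^*}{2}\|\bar Z_r^{\nu+1}-Z_r^\nu\|^2$. The inequality $\|a\|^2\ge\frac12\|a+b\|^2-\|b\|^2$ then produces a negative term $-\frac{\widetilde\mu_r^*}{8\tau}\|Z_r^{\nu+1}-Z_r^\nu\|^2$ and a positive mismatch coefficient $\tau\frac{2\widetilde L_r^*+\widetilde\mu_r^*}{4}$. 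That coefficient is exactly the weight appearing in \eqref{eq:clean_sur_A}.

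\textbf{Step 2: delayed recursion and regrouping.} I insert Lemma~\ref{lem:clean_surrogate_tree_mismatch} for the mismatch terms. This is valid for $\nu\ge\max_r\operatorname{diam}(\cT_r)=\widetilde D+1$, using Lemma~\ref{lem:clean_sur_delayed}. I then regroup the delayed increments by block index. The $\widetilde M_{r,\partial}$ part charges block $s$ through each $r$ with $s\in\cM_r$. The $\widetilde M_{r,c}$ part charges block $r$ itself, which is the indicator term in \eqref{eq:clean_sur_A}. Lower bounding $\widetilde\mu_r^*\ge\underline{\widetilde\mu}^*$, this yields
\[
G(Z^{\nu+1})-G^\star\le\widetilde\rho\,(G(Z^\nu)-G^\star)-\tfrac{\underline{\widetilde\mu}^*}{8\tau}\sum_r\|Z_r^{\nu+1}-Z_r^\nu\|^2+\tau\widetilde A\sum_{\ell=\nu-\widetilde D}^{\nu-1}\sum_r\|Z_r^{\ell+1}-Z_r^\ell\|^2 .
\]

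\textbf{Step 3: Lyapunov argument.} I use the same Lyapunov function \eqref{eq:clean_tree_lyapunov}, with $(A,D,\rho)$ replaced by $(\widetilde A,\widetilde D,\widetilde\rho)$. The second condition in \eqref{eq:clean_sur_stepsize} is equivalent to $8\widetilde A\tau^2\sum_{j=1}^{\widetilde D}\widetilde\rho^{-j}\le\underline{\widetilde\mu}^*$. This makes the coefficient of the newest increment nonpositive, so $\cL^{\nu+1}\le\widetilde\rho\,\cL^\nu$ for $\nu\ge\widetilde D+1$. Linear decay of $G(Z^\nu)-G^\star$ follows.

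To pass to the primal iterates I argue as in \eqref{eq:clean_tree_primal_prefactor}. Primal recovery uses the exact $\nabla f_i^*$, and $G$ is $1/L$-strongly convex on $\cS$. These give $\|X^\nu-X^\star\|^2\le\frac{2L}{\mu^2}(G(Z^\nu)-G^\star)$, and the constants are absorbed into $C$.

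\textbf{Main obstacle.} I expect the delicate point to be Step 1 with varying centers. The progress and the error bound are defined with the center $Z^\nu$, but the delayed responses use centers $Z^{\nu-\delta_{e,i}}$. That discrepancy must be entirely absorbed into $\hat Z_r^{\nu+1}-\bar Z_r^{\nu+1}$, which is what Lemma~\ref{lem:clean_surrogate_tree_mismatch} controls. The other thing to check is that the Jensen step uses only the majorization $f_i^*\le\widetilde f_i^*(\cdot;Z_i^\nu)$ at the current center, and not at delayed ones.
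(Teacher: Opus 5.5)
Your proposal is correct and follows the paper's proof essentially step for step. The paper proceeds in the same order: a descent lemma via Jensen, majorization/touching and $\widetilde L_r^*$-smoothness on the slice; a half-split of the progress using Assumption~\ref{ass:clean_sur_beb} and $\widetilde\mu_r^*$-strong convexity; insertion of Lemma~\ref{lem:clean_surrogate_tree_mismatch} with regrouping into $\widetilde A$; the same Lyapunov function under the equivalent condition $8\widetilde A\tau^2\sum_{j=1}^{\widetilde D}\widetilde\rho^{-j}\le\underline{\widetilde\mu}^*$; and the same primal transfer through the $1/L$-strong convexity of $G$ on $\cS$.
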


\begin{proof}
See Appendix~\ref{subsec:app_sur_descent}. \qed
\end{proof}

\begin{remark}
Theorem~\ref{thm:clean_surrogate_linear} parallels Theorem~\ref{thm:clean_linear_tree}: Algorithm~\ref{alg:clean_sur_dmp} converges linearly, with rate \(\widetilde\rho\) now given by~\eqref{eq:clean_sur_primal_rate}. The stepsize restriction~\eqref{eq:clean_sur_stepsize} has the same form as~\eqref{eq:clean_tree_stepsize}. The main differences are that the exact quantities \((1/L_r,1/\mu_r,D,A)\) are replaced by \((\widetilde\mu_r^*,\widetilde L_r^*,\widetilde D,\widetilde A)\), and that \(\kappa_{\cP}\) is replaced by \(\widetilde\kappa_{\cP}\), which may be larger when the surrogate captures less exact block progress. The resulting deviation from the exact constants is the rate price for cheaper local computation and communication. For the exact surrogate, Theorem~\ref{thm:clean_surrogate_linear} recovers the rate factor of Theorem~\ref{thm:clean_linear_tree} at the same admissible stepsize.

As in~\eqref{eq:clean_tree_sufficient_stepsize}, an explicit sufficient stepsize is
\[
0<\tau\le\min\left\{
\frac{1}{\omega_{\cP}},\
\frac{2\widetilde\kappa_{\cP}}{2\widetilde D+1},\
\sqrt{\frac{\underline{\widetilde\mu}^*}
{8(2\widetilde D+1)\widetilde A}}
\right\}.
\]
The proof is analogous to that of Corollary~\ref{cor:clean_tree_explicit_stepsize} and is omitted.
\par\vspace{-0.4cm}
\end{remark}

\subsection{Comparison with consensus-based methods}
\label{subsec:clean_consensus_comparison}

We now compare the convergence rate of Algorithm~\ref{alg:clean_sur_dmp} with those of consensus-based methods, such as DGD~\cite{Yuan:2016dv}, EXTRA~\cite{shi2015extra}, and SONATA~\cite{sun2022distributed}, in terms of network-size scaling.  We show that, for a class of tree-dominant graphs, sufficiently accurate surrogates yield an acceleration in network-size scaling relative to these consensus baselines. On general graphs, our scheme achieves a rate no worse than the standard spectral-gap bound for the corresponding dual-gradient method when a suitable proximal-linear surrogate is chosen.

\smallskip
\noindent\textbf{Acceleration in tree-dominant graphs.} Consider a tree-dominant graph, and choose  a partition \(\cP\) that has a dominant tree block and only \(O(1)\) remaining singleton edge blocks. The analysis in Sec.~\ref{subsec:clean_rate_examples} gives \(\kappa_{\cP}=O(1)\), \(A=O(1)\).  Theorem~\ref{thm:clean_linear_tree} then yields \((1-\rho)^{-1}=O(D/\log D)\).  For the ring and barbell examples, the exact inverse rate gap is \(O(m/\log m)\).  In contrast, consensus baselines have \(O(m^2)\) rate scaling on these graphs through the inverse spectral-gap factor \(1/\lambda_{\min}^+(\mathsf B\mathsf B^\top)\).  The exact \(O(m/\log m)\) inverse rate gap therefore gives network-size acceleration over those baselines.

The constant \(\widetilde\kappa_{\cP}\) preserves the network-size scaling of \(\kappa_{\cP}\) whenever the surrogate block progress captures the exact block progress up to a constant fraction.  Indeed, if, for some network-independent \(\eta\in(0,1]\), \(\widetilde\Delta_r(Z)\ge \eta\,\Delta_r(Z)\) holds for all \(Z\in\cS\) and \(r\in[p]\), then \(\kappa_{\cP}\le\widetilde\kappa_{\cP}\le\kappa_{\cP}/\eta\). For example, once a quadratic surrogate satisfies the Hessian sandwich inequality~\eqref{eq:clean_fixed_majorant_curvature}, it gives \(\widetilde\kappa_{\cP}\le\rho_{\max}\kappa_{\cP}\) with network-independent \(\rho_{\max}\).  Consequently, for these fixed-metric quadratic surrogates on the tree-dominant families and partitions above, \(\widetilde\kappa_{\cP}=O(1)\) and Lemma~\ref{lem:clean_surrogate_tree_mismatch} gives \(\widetilde A=O(m)\). Theorem~\ref{thm:clean_surrogate_linear} then gives \((1-\widetilde\rho)^{-1}=O(\sqrt{m\widetilde D})\), for \(\widetilde D\ge1\). For the ring and barbell examples, where \(\widetilde D=\Theta(m)\), this yields an \(O(m)\) inverse rate gap, which matches the exact method's rate scaling up to a logarithmic factor. 

\smallskip
\noindent\textbf{Consensus scaling recovery.} On general graphs, consider a direct proximal-linear surrogate for \(g_i(Y_{\cA_i})=f_i^*(\mathsf B_{i,\cA_i}Y_{\cA_i})\), outside the structured subclass~\eqref{eq:clean_sur_factor_structured}. Specifically, choose \(c_{i,e}>0\) so that the following surrogate majorizes \(g_i\) at every center:
\begin{equation}
\widetilde g_i(Y_{\cA_i};Y_{\cA_i}^\nu)
:=
g_i(Y_{\cA_i}^\nu)
+
\sum_{e\in\cA_i}
\inner{
\nabla_{Y_e}g_i(Y_{\cA_i}^\nu),\,Y_e-Y_e^\nu
}
+
\frac12\sum_{e\in\cA_i}c_{i,e}\|Y_e-Y_e^\nu\|^2 .
\label{eq:clean_edge_separable_first_order_factor}
\end{equation}
Since \eqref{eq:clean_edge_separable_first_order_factor} is separable in \(\{Y_e:e\in\cA_i\}\), the message update satisfies
\begin{equation*}
\widetilde\mu_{i\to e}^{\nu+1}(Y_e)
\sim
\inner{
\mathsf B_{i,e}\nabla f_i^*((\mathsf BY^{\nu+1})_i),\,Y_e-Y_e^{\nu+1}
}
+
\frac{c_{i,e}}{2}\|Y_e-Y_e^{\nu+1}\|^2 .
\end{equation*}
For \(\nu\ge1\) and \(e\in\cE_r\) with \(\cI_e=\{i,j\}\), the edge update is the dual-gradient step

\[
\widehat Y_e^{\nu+1}
=Y_e^\nu-
\frac{
\mathsf B_{i,e}\nabla f_i^*(Z_i^\nu)
+\mathsf B_{j,e}\nabla f_j^*(Z_j^\nu)
}{c_{i,e}+c_{j,e}},
\]
followed by \(Y_e^{\nu+1}=Y_e^\nu+\tau_r^\nu(\widehat Y_e^{\nu+1}-Y_e^\nu)\).
For this direct surrogate, the block progress depends on \(Y\) only through \(Z=\mathsf BY\):
\[
    \widetilde\Delta_r(Z)
    :=
    \sum_{e\in\cE_r}
    \frac{\|\mathsf B_{\cV,e}^{\top}\nabla G(Z)^{\top}\|^2}{2(c_{i,e}+c_{j,e})},
    \qquad Z\in\cS,\quad \cI_e=\{i,j\}.
\]
Consequently, the smallest constant in the corresponding block error bound is
\[
    \widetilde\kappa_{\cP}
    =
    \sup_{Z\in\cS:\,G(Z)>G^\star}
    \frac{G(Z)-G^\star}
    {\frac12\sum_{e\in\cE}
    (c_{i,e}+c_{j,e})^{-1}
    \|\mathsf B_{\cV,e}^{\top}\nabla G(Z)^{\top}\|^2},
\]
and is independent of the partition \(\cP\). Strong convexity of \(G\) on \(\cS\) gives
\[
    G(Z)-G^\star
    \le
    \frac{L}{2\lambda_{\min}^+(\mathsf B\mathsf B^\top)}
    \|\mathsf B^\top\nabla G(Z)^\top\|^2.
\]
If \(L\) and the coefficients \(c_{i,e}+c_{j,e}\) are bounded independently of network size, then
\begin{equation}
    \widetilde\kappa_{\cP}
    =
    O\left(\frac{1}{\lambda_{\min}^+(\mathsf B\mathsf B^\top)}\right),
    \label{eq:clean_direct_sur_consensus_kappa}
\end{equation}
where \(\lambda_{\min}^+\) is the smallest positive eigenvalue.  Thus, this surrogate recovers the usual consensus spectral-gap rate scaling \(O(1/\lambda_{\min}^+(\mathsf B\mathsf B^\top))\), matching classical nonaccelerated consensus-based methods.\vspace{-0.3cm}
  \subsection{Comparison with tree-based aggregation methods}\vspace{-0.2cm}
  \label{subsec:clean_tree_comparison}
   Spanning-tree methods relay iterates~\cite{vogels2021relaysum}
  or aggregate gradients for master-node
  updates~\cite{pmlr-v70-scaman17a}.  
   The tree can be constructed in a decentralized manner,
  by electing a root and selecting paths to
  it~\cite{vogels2021relaysum}.
    However, {\it using a single spanning tree need not be desirable
  in every application}. \textbf{(i)} It provides {\it no route redundancy}:
  a permanent failure of any tree edge or non-leaf agent
  disconnects it, requiring repair to restore communication. \textbf{(ii)}
   Distances along the tree can exceed those in the full graph, lengthening propagation delays.  \textbf{(iii)}
  In master-node schemes, the root is additionally a single
  point of failure for the optimization, and each update
  must wait for all gradient
  contributions~\cite{pmlr-v70-scaman17a}. 

  GATE and GATE-S are significantly more flexible than these schemes. \textbf{(a)}  They permit both a {\it single} spanning-tree block
  and decompositions retaining {\it all} physical edges, without
  a computational master. \textbf{(b)} Their trees define optimization blocks, not merely
  aggregation routes: the decomposition can therefore be
  adapted jointly to the agents' functions and the network,
  exploiting the block-preconditioning effect quantified
  by the analysis. \textbf{(c)} Retaining edges also provides reconfiguration options.
  If the physical graph remains connected after a link
  failure, deleting the failed edge and splitting its block
  into the remaining nonempty tree blocks preserves a valid
  decomposition without changing the other blocks.
  The retained variables can initialize a new run under
  the theorem assumptions for the new partition.

   Even within GATE and GATE-S, {\it a single spanning-tree block need not be preferable}, albeit being an option. For GATE with exact messages, it gives the finite-round guarantee of Remark~\ref{rem:clean_spanning_tree}; partitioning into smaller blocks is still convenient when some agents have high degree, since each message minimization at an agent involves all its other edges in the block. When the messages admit no closed form, GATE-S uses local surrogates to obtain tractable messages; a single block $r$  then uses centers up to  \(\widetilde D_r=\operatorname{diam}(\cT_r)-1\) rounds old, a delay that affects the rate. Smaller-diameter blocks shorten these delays and can permit larger stepsizes and faster guaranteed convergence, provided the added inter-block coupling and overlap remain controlled (Theorem~\ref{thm:clean_surrogate_linear}
).
 
\vspace{-0.3cm}
\section{Numerical experiments}
\label{sec:clean_numerics}
\vspace{-0.2cm}
In this section, we evaluate GATE and its surrogate variants on strongly convex quadratic and regularized logistic regression problems over various graphs, with and without a dominant tree structure. The quadratic instances have the form
\[
    f_i(x)=\frac12x^\top H_i x+q_i^\top x,
    \qquad H_i\succ0.
\]
The non-quadratic instances have the form
\[
    f_i(x)
    =
    \frac1{N_i}\sum_{j=1}^{N_i}
    \log\!\left(1+\exp(-b_{ij}a_{ij}^{\top}x)\right)
    +\frac12x^\top R_i x-h_i^\top x,
    \qquad R_i\succ0.
\]
The consensus baselines use the Metropolis mixing matrix \(W\). Stepsizes for the proposed methods and the consensus baselines are tuned using an ad-hoc grid search on a logarithmic scale, yielding the optimal numerical performance. Performance is measured by the error of the iterates from the solution of the problem, $\left\|X^\nu-\mathbf 1(x^\star)^\top\right\|.$

\subsection{Scaling verification and partition effects}
\label{subsec:clean_numerical_scaling}

In this part, we verify the scaling with network size and dominant-tree diameter and examine the effect of the partition.

\par\addvspace{\bigskipamount}
\noindent\emph{Network-size scaling on rings.}\enspace
We first verify network-size scaling for quadratic problems on ring networks with \(d=20\).  Each \(H_i\) is generated as a random symmetric Gaussian matrix with a positive diagonal correction chosen so that \(\kappa(H_i)=2\). The edge partition consists of one \((m-2)\)-edge path cluster and two singleton-edge clusters. GATE-S1 uses the fixed diagonal metric with entries \((M_i)_{kk}=(H_i^{-1})_{kk}+\sum_{\ell\ne k}|(H_i^{-1})_{k\ell}|\), so \(M_i\succeq H_i^{-1}\). Figure~\ref{fig:clean_ring_qp_scaling}(a) shows that GATE and GATE-S1 are faster  than EXTRA and SONATA, with the gap widening as \(m\) grows.  Figure~\ref{fig:clean_ring_qp_scaling}(b) indicates linear scaling for the GATE variants and quadratic scaling for the consensus baselines, which is consistent with the theoretical comparison in Sec.~\ref{subsec:clean_consensus_comparison}.

\begin{figure}[th]
    \centering
    \setlength{\tabcolsep}{2pt}
    \begin{tabular}{ccc}
        \includegraphics[width=0.31\linewidth]{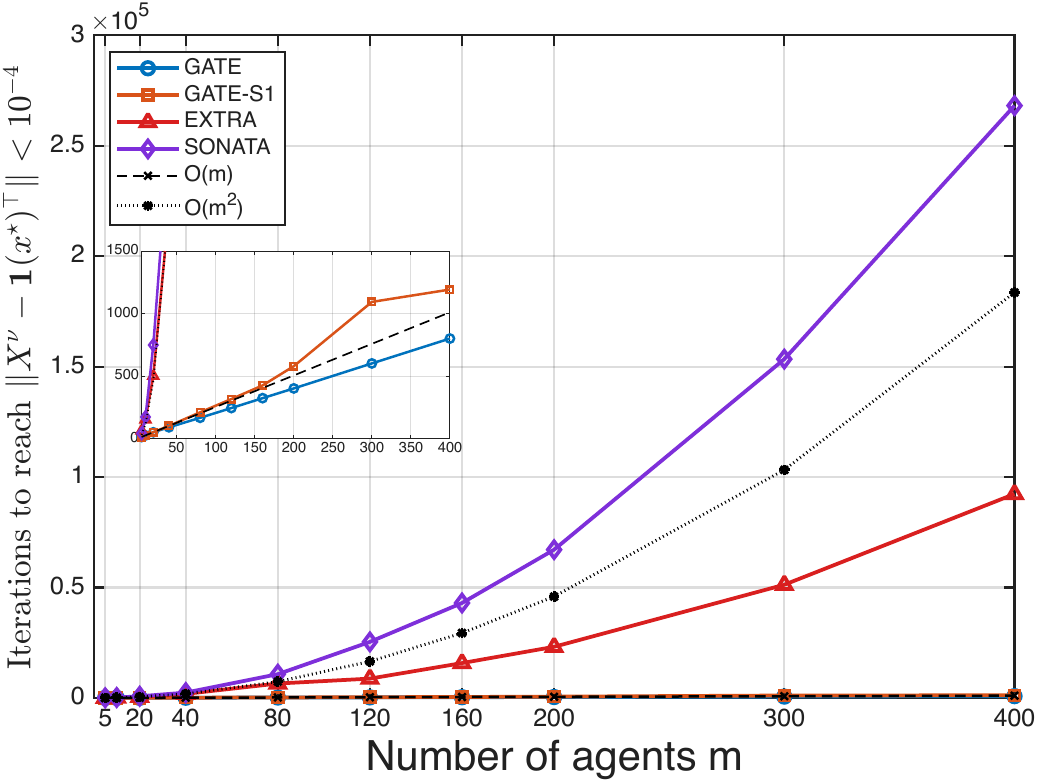}
        &
        \includegraphics[width=0.31\linewidth]{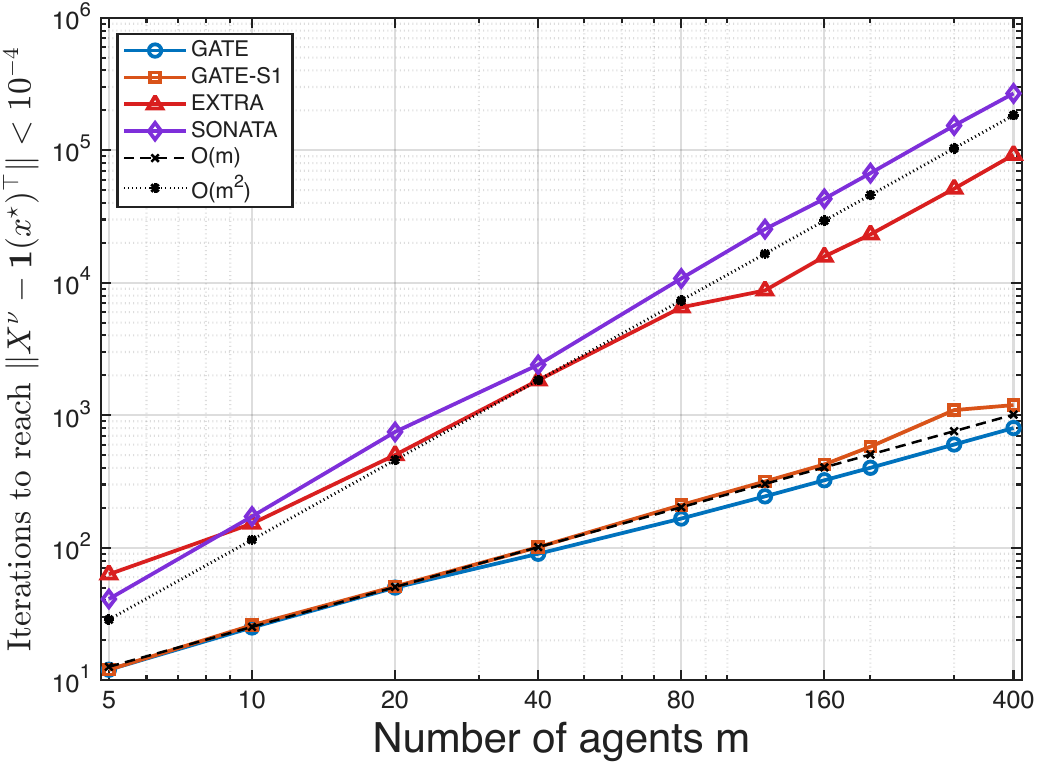}
        &
        \includegraphics[width=0.31\linewidth]{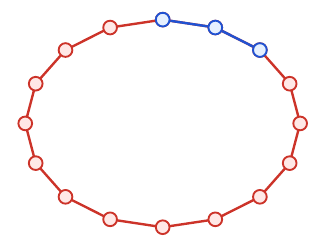}
        \\
        \parbox{0.31\linewidth}{\centering\footnotesize (a) Iteration count.}
        &
        \parbox{0.31\linewidth}{\centering\footnotesize (b) Iteration count (log--log).}
        &
        \parbox{0.31\linewidth}{\centering\footnotesize (c) Graph and partition.}
    \end{tabular}
    \caption{Network-size scaling for quadratic problems on a ring. Panel~(a) reports the iteration counts versus the number of agents \(m\), while panel~(b) presents the same data on log--log axes to make their dependence on \(m\) more apparent.  Panel~(c) shows the graph and edge partition.  The stopping criterion is \(\|X^\nu-\mathbf 1(x^\star)^\top\|<10^{-4}\).}
    \label{fig:clean_ring_qp_scaling}
\end{figure}
To test whether the scaling observed above persists beyond the quadratic setting, we next consider the non-quadratic logistic regression problem, for which neither GATE surrogate is exact.  We use \(d=10\), \(N_i=40\), and the same partition as in the quadratic experiment.  The Hessian condition number is set to five. The quadratic-surrogate metrics are \(\nabla^2 f_i^*(Z_i^\nu)+\delta I_d\) for GATE-S2 and \(\operatorname{Diag}(\nabla^2 f_i^*(Z_i^\nu))+\delta I_d\) for GATE-S1, with \(\delta\) tuned over \(\{0,10^{-6},10^{-4},10^{-2},1\}\) separately for each method. Figure~\ref{fig:clean_ring_logreg_dense_scaling}(a) shows that GATE-S1 and GATE-S2 remain faster than EXTRA and SONATA, with the gap widening as \(m\) grows.  Figure~\ref{fig:clean_ring_logreg_dense_scaling}(b) indicates near-linear scaling for the GATE surrogates, whereas the consensus baselines approach quadratic scaling over the upper range.

\begin{figure}[th]
    \centering
    \setlength{\tabcolsep}{2pt}
    \begin{tabular}{ccc}
        \includegraphics[width=0.31\linewidth]{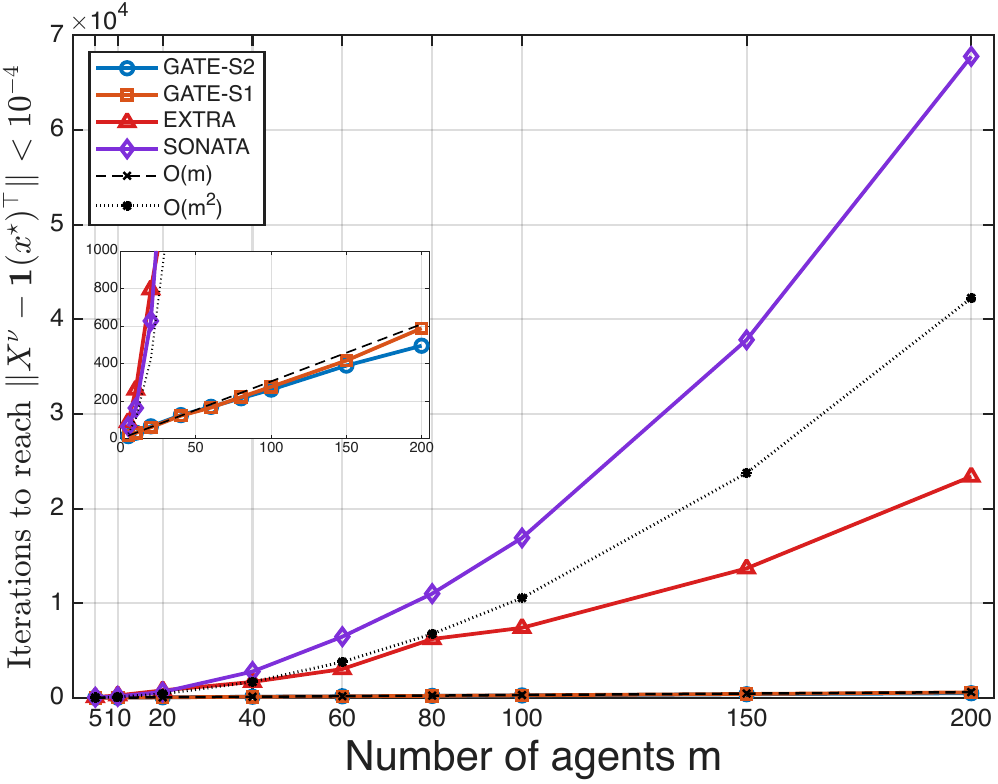}
        &
        \includegraphics[width=0.31\linewidth]{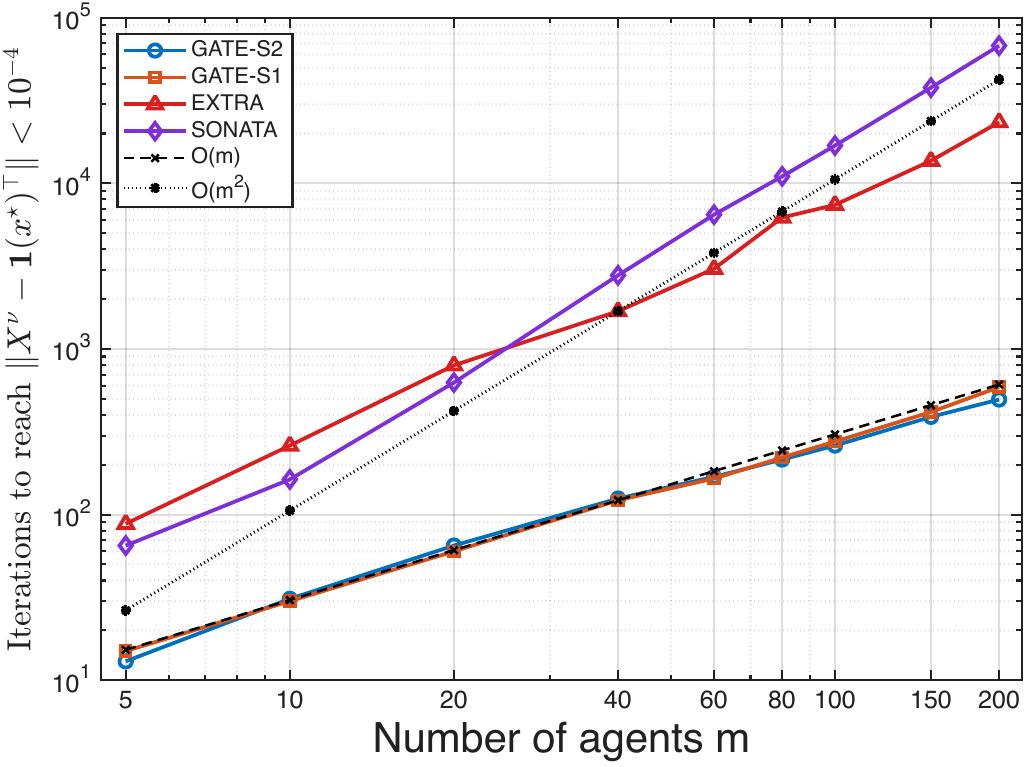}
        &
        \includegraphics[width=0.31\linewidth]{ring_scaling_graph_pattern.pdf}
        \\
        \parbox{0.31\linewidth}{\centering\footnotesize (a) Iteration count.}
        &
        \parbox{0.31\linewidth}{\centering\footnotesize (b) Iteration count (log--log).}
        &
        \parbox{0.31\linewidth}{\centering\footnotesize (c) Graph and partition.}
    \end{tabular}
    \caption{Network-size scaling for logistic-regression problems on a ring. Panel~(a) reports the iteration counts versus the number of agents \(m\), while panel~(b) presents the same data on log--log axes to make their dependence on \(m\) more apparent.  Panel~(c) shows the graph and edge partition.  The stopping criterion is \(\|X^\nu-\mathbf 1(x^\star)^\top\|<10^{-4}\).}
    \label{fig:clean_ring_logreg_dense_scaling}
\end{figure}

\par\addvspace{\bigskipamount}
\noindent\emph{Dependence on the dominant-tree diameter.}\enspace
We next study the dependence of the convergence rate on \(D\) for a quadratic problem over a non-ring graph.  The graph consists of two three-node cliques connected by a growing non-path tree, with the partition containing one dominant tree cluster and six singleton-edge clusters.  We set \(H_i=H_0\in\mathbb S_{++}^5\) for all \(i\), where \(H_0\) is dense and \(\kappa(H_0)=2\).  For GATE-S1, we use a diagonal metric with entries \((M_i)_{kk}=(H_0^{-1})_{kk}+\sum_{\ell\ne k}|(H_0^{-1})_{k\ell}|\), so \(M_i\succeq H_0^{-1}\).

With \(m\) fixed, Figure~\ref{fig:clean_nonring_comb_qp_scaling}(b) indicates the \(O(D)\) and \(O(\sqrt{Dm})\) rate scalings for GATE and GATE-S1, respectively, consistent with Theorems~\ref{thm:clean_linear_tree} and~\ref{thm:clean_surrogate_linear}. Figure~\ref{fig:clean_nonring_comb_qp_scaling}(a) then considers \(D=\Theta(\sqrt m)\) as \(m\) varies and exhibits the corresponding \(m^{1/2}\) and \(m^{3/4}\) scalings for GATE and GATE-S1, respectively, further confirming the theoretical scaling.

\begin{figure}[th]
    \centering
    \setlength{\tabcolsep}{2pt}
    \begin{tabular}{ccc}
        \includegraphics[width=0.31\linewidth]{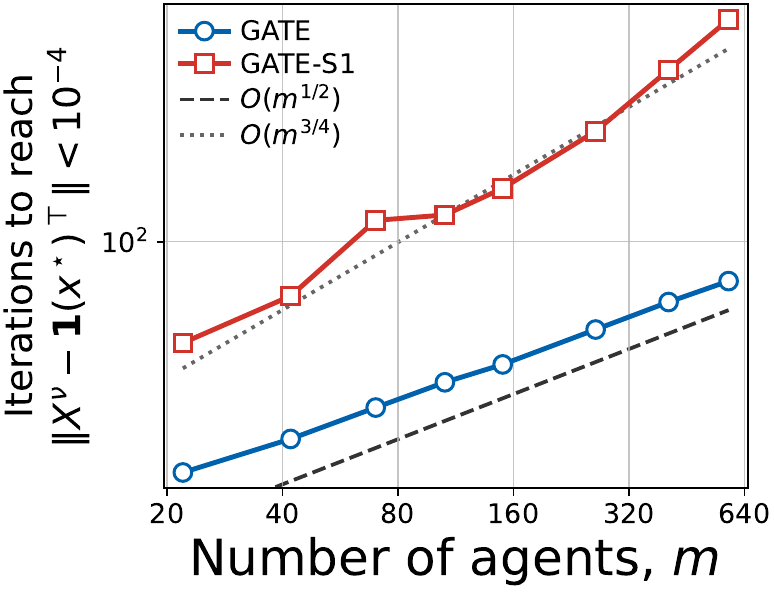}
        &
        \includegraphics[width=0.31\linewidth]{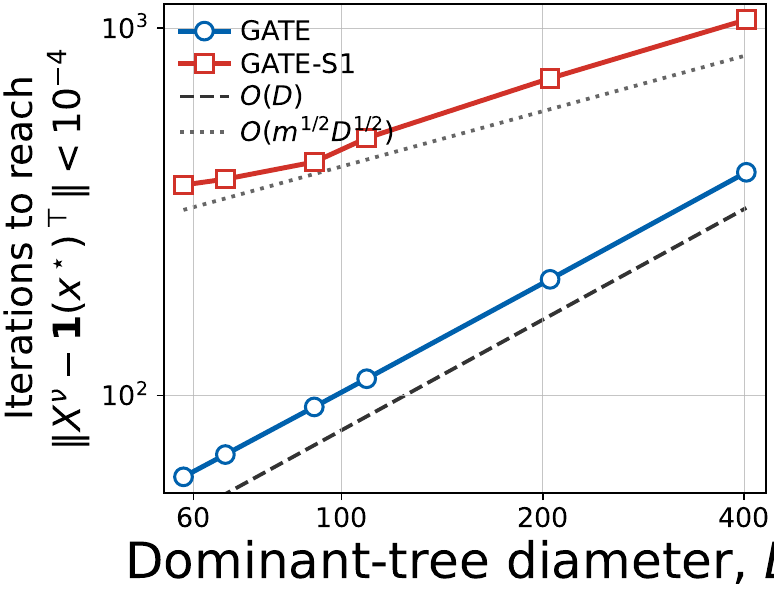}
        &
        \includegraphics[width=0.31\linewidth]{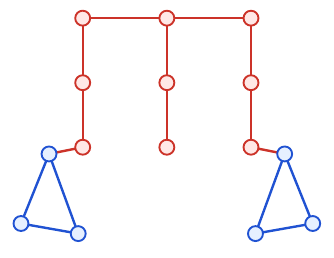}
        \\
        \parbox{0.31\linewidth}{\centering\footnotesize (a) Scaling in \(m\).}
        &
        \parbox{0.31\linewidth}{\centering\footnotesize (b) Scaling with \(m\) fixed.}
        &
        \parbox{0.31\linewidth}{\centering\footnotesize (c) Graph and partition.}
    \end{tabular}
    \caption{Network-size and diameter scaling for quadratic problems on a non-ring graph family.  Panel~(a) reports the iteration counts versus the number of agents \(m\) for graphs with \(D=\Theta(\sqrt m)\).  Panel~(b) reports the iteration counts versus  \(D\) at fixed \(m=406\).  Panel~(c) shows the graph and edge partition.  The stopping criterion is \(\|X^\nu-\mathbf 1(x^\star)^\top\|<10^{-4}\).}
    \label{fig:clean_nonring_comb_qp_scaling}
\end{figure}

\par\addvspace{\bigskipamount}
\noindent\emph{Effect of the partition.}\enspace
We next examine the effect of the partition for a quadratic problem on a ring.  For each network size, we compare three partitions: P1 contains one \((m-2)\)-edge path and two singleton-edge clusters; P2 arranges two paths of length \(m/2\); and P3 contains \(\sqrt m\) consecutive path blocks of \(\sqrt m\) edges each.

Figure~\ref{fig:clean_ring_qp_partition_ablation} shows that P1 and P2 substantially outperform P3, while P1 is modestly faster than P2.  This ordering is consistent with the discussion following Proposition~\ref{prop:clean_ring_partition_scaling}: \(D=\Theta(m)\) minimizes the rate bound for equal-length partitions of the ring, whereas Proposition~\ref{prop:clean_large_tree_scaling} gives the sharper \(O(m)\) rate for the long-path partition.

\begin{figure}[th]
    \centering
    \begin{minipage}[c]{0.69\linewidth}
        \centering
        \includegraphics[width=\linewidth]{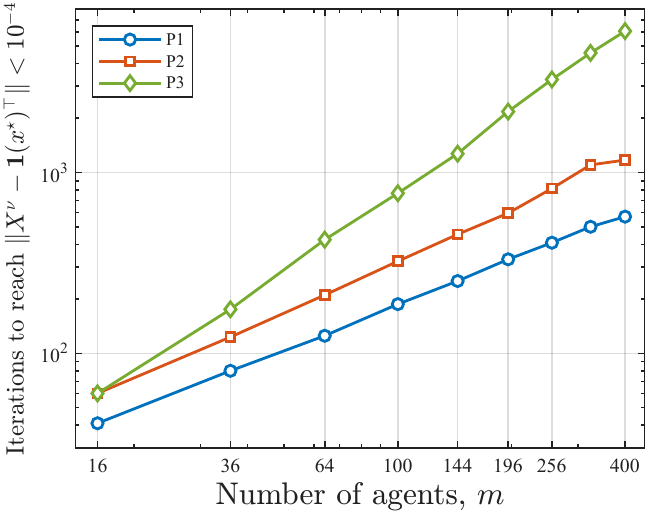}
        \par\vspace{2pt}
        {\footnotesize (a) Iteration-count comparison across partitions.\par}
    \end{minipage}
    \hfill
    \begin{minipage}[c]{0.27\linewidth}
        \centering
        \includegraphics[width=0.96\linewidth]{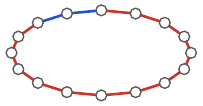}
        \par\vspace{-1pt}
        {\footnotesize (b) P1: one path and two singleton edges.\par}
        \vspace{4pt}
        \includegraphics[width=0.96\linewidth]{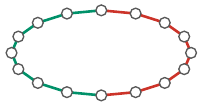}
        \par\vspace{-1pt}
        {\footnotesize (c) P2: two paths.\par}
        \vspace{4pt}
        \includegraphics[width=0.96\linewidth]{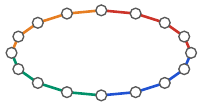}
        \par\vspace{-1pt}
        {\footnotesize (d) P3: \(\sqrt m\) paths.\par}
    \end{minipage}
    \caption{Effect of the partition on GATE for quadratic problems on a ring (\(d=10\)).  Panel~(a) reports the iteration counts versus the number of agents \(m\) for P1, P2, and P3.  Panels~(b)--(d) show the corresponding partitions on a representative ring.  The stopping criterion is \(\|X^\nu-\mathbf 1(x^\star)^\top\|<10^{-4}\).}
    \label{fig:clean_ring_qp_partition_ablation}
\end{figure}

\subsection{Performance comparison}
\label{subsec:clean_numerical_performance}

We now compare  the algorithms in terms of iteration count and cumulative communication on tree-dominant barbell networks and a multi-cycle network.  Communication is measured by the total number of scalars transmitted over directed links, normalized by the ambient dimension \(d\).  

\par\addvspace{\bigskipamount}
\noindent\emph{Tree-dominant barbell networks.}\enspace
We consider barbell graphs formed by two four-node cliques connected by a path of \(L\in\{10,100\}\) edges.  The partition contains one dominant tree cluster formed by the path and a three-edge star centered at the path endpoint in each clique, while each of the six remaining clique edges forms a singleton-edge cluster. 

For the quadratic problems, we set \(d=100\) and use dense $H_i$ generated from Gaussian Gram matrices with a diagonal correction, yielding \(\kappa(H_i)\approx100\).  GATE uses exact quadratic messages, while GATE-S1 and GATE-S2 (cache Hessian) use a fixed diagonal metric and a slice-wise Hessian cache, respectively.  Figure~\ref{fig:clean_barbell_qp_all_algorithms} shows that GATE is markedly faster than all other methods, reaching the target in only \(31\) and \(114\) iterations for \(L=10\) and \(L=100\), respectively.  Although GATE initially transmits dense message Hessians, these matrices stabilize after finitely many rounds and only the message vectors are sent thereafter; consequently, GATE also has the lowest cumulative communication.  GATE-S2 (cache Hessian) sends only message vectors and \(d\)-vector Hessian slices.  Once its cache has recovered the stabilized message Hessians, its tail follows essentially the same linear rate as GATE, consistent with the finite cache-recovery property in Sec.~\ref{subsec:clean_surrogate_message_structure}.  GATE-S1 is comparable to EXTRA, the fastest consensus baseline on these instances.  CTA and ATC do not reach \(10^{-6}\) within \(10^5\) iterations.

\begin{figure}[th]
    \centering
    \setlength{\tabcolsep}{1pt}
    \begin{tabular}{ccc}
        \includegraphics[height=0.24\linewidth,keepaspectratio]{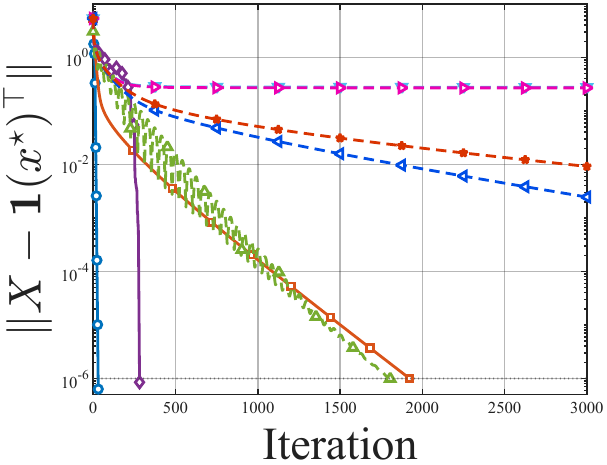}
        &
        \includegraphics[height=0.24\linewidth,keepaspectratio]{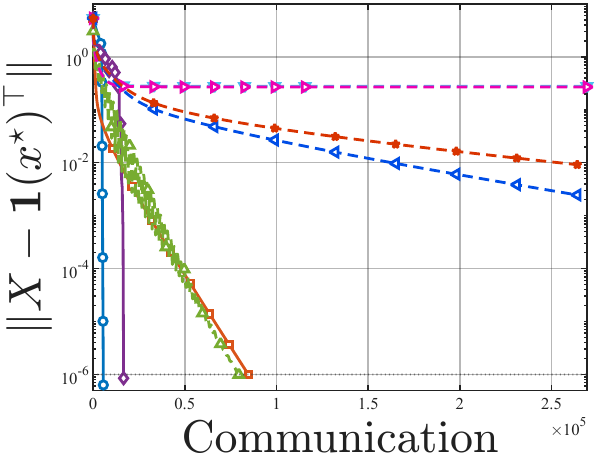}
        &
        \raisebox{0.035\linewidth}{\includegraphics[width=0.25\linewidth]{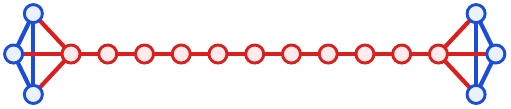}}
        \\
        \parbox{0.32\linewidth}{\centering\footnotesize (a) Iteration comparison.}
        &
        \parbox{0.32\linewidth}{\centering\footnotesize (b) Communication comparison.}
        &
        \parbox{0.32\linewidth}{\centering\footnotesize (c) Barbell, \(L=10\).}
        \\[4pt]
        \includegraphics[height=0.24\linewidth,keepaspectratio]{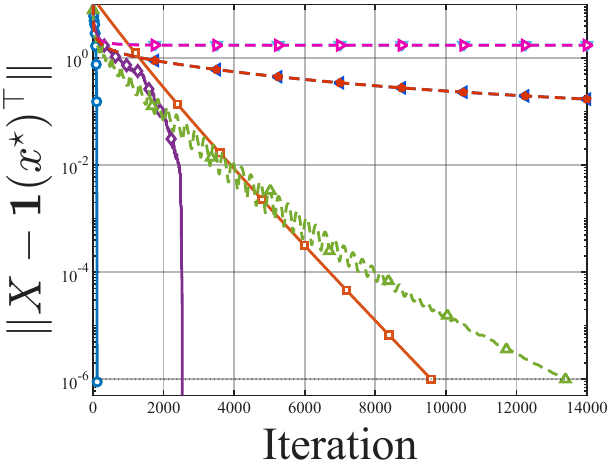}
        &
        \includegraphics[height=0.24\linewidth,keepaspectratio]{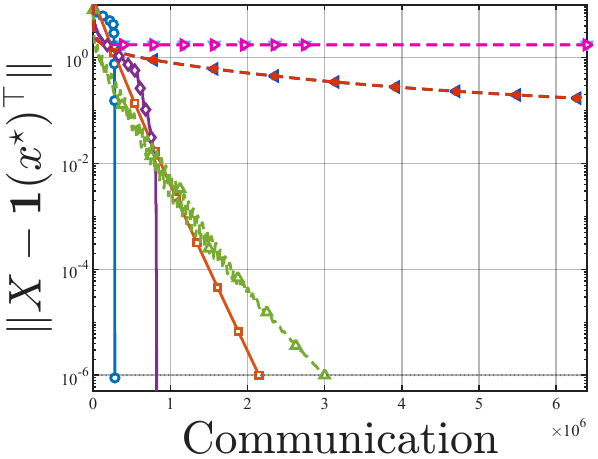}
        &
        \raisebox{0.035\linewidth}{\includegraphics[width=0.25\linewidth]{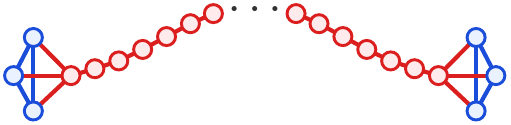}}
        \\
        \parbox{0.32\linewidth}{\centering\footnotesize (d) Iteration comparison.}
        &
        \parbox{0.32\linewidth}{\centering\footnotesize (e) Communication comparison.}
        &
        \parbox{0.32\linewidth}{\centering\footnotesize (f) Barbell, \(L=100\).}
        \\
        \multicolumn{3}{c}{%
            \begingroup
            \setlength{\fboxsep}{2pt}%
            \setlength{\fboxrule}{0.4pt}%
            \fbox{\includegraphics[width=0.64\linewidth]{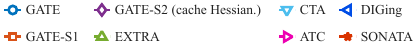}}%
            \endgroup}
    \end{tabular}
    \caption{Performance comparison for quadratic problems on two barbell graphs.  Panels~(a)--(c) use \(L=10\), while panels~(d)--(f) use \(L=100\).  Panels~(a) and (d) report the primal error versus iteration count, while panels~(b) and (e) report it versus cumulative communication.  Panels~(c) and (f) show the corresponding graphs and partitions; red edges form the dominant tree cluster and blue edges form singleton-edge clusters.  The stopping criterion is \(\|X^\nu-\mathbf 1(x^\star)^\top\|\leq10^{-6}\).}
    \label{fig:clean_barbell_qp_all_algorithms}
\end{figure}

We next test logistic-regression problems on the same graphs and partitions, with \(d=100\) and \(N_i=80\).  GATE-S2 (exact Hessian), GATE-S1, and GATE-S2 (cache Hessian) use the full center-dependent Hessian, its diagonal, and a slice-wise Hessian cache, respectively.  The exact-Hessian variant is included only in the iteration plots because it transmits dense Hessian matrices at every iteration.  Figure~\ref{fig:clean_barbell_logreg_all_algorithms} shows that the cache variant is second only to GATE-S2 (exact Hessian) in iteration count and requires the least communication among all other methods that reach the target. For \(L=100\), GATE-S2 (cache Hessian) requires about one third as much communication as either GATE-S1 or EXTRA.

\begin{figure}[th]
    \centering
    \setlength{\tabcolsep}{1pt}
    \begin{tabular}{ccc}
        \includegraphics[height=0.24\linewidth,keepaspectratio]{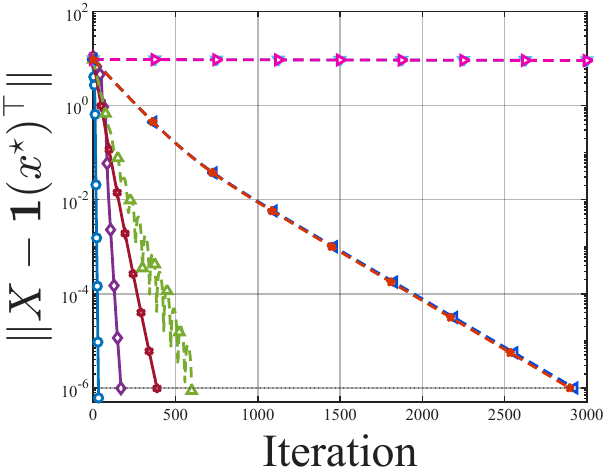}
        &
        \includegraphics[height=0.24\linewidth,keepaspectratio]{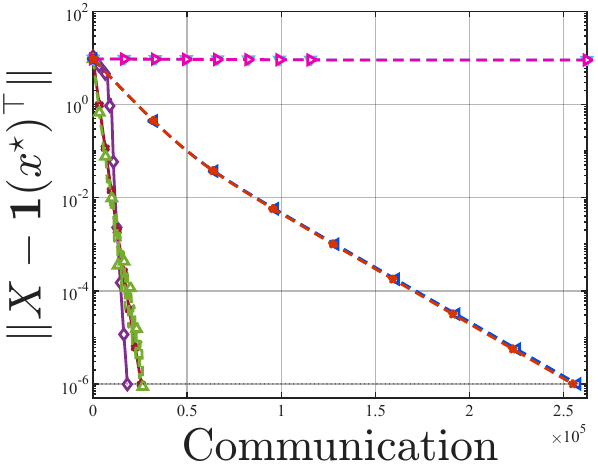}
        &
        \raisebox{0.035\linewidth}{\includegraphics[width=0.25\linewidth]{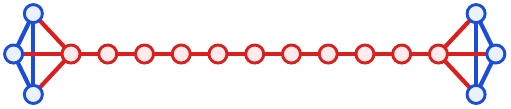}}
        \\
        \parbox{0.32\linewidth}{\centering\footnotesize (a) Iteration comparison.}
        &
        \parbox{0.32\linewidth}{\centering\footnotesize (b) Communication comparison.}
        &
        \parbox{0.32\linewidth}{\centering\footnotesize (c) Barbell, \(L=10\).}
        \\[4pt]
        \includegraphics[height=0.24\linewidth,keepaspectratio]{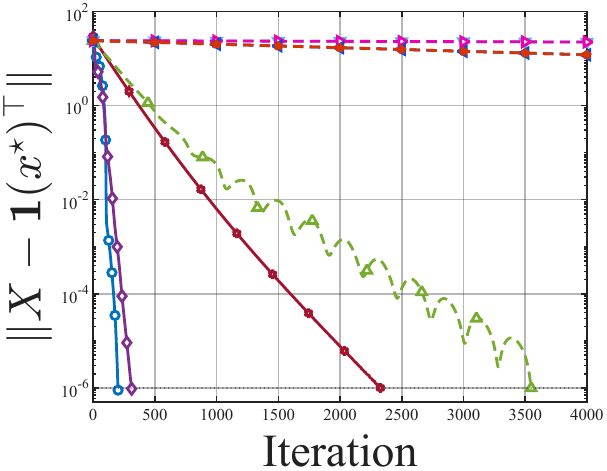}
        &
        \includegraphics[height=0.24\linewidth,keepaspectratio]{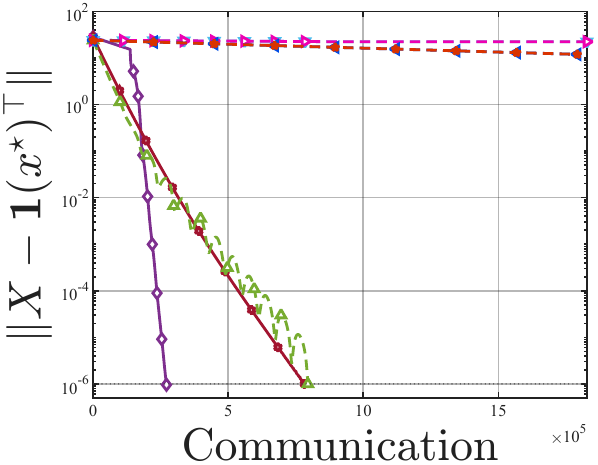}
        &
        \raisebox{0.035\linewidth}{\includegraphics[width=0.25\linewidth]{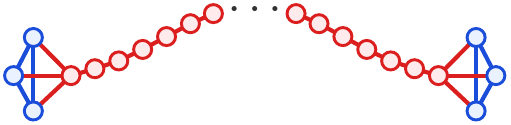}}
        \\
        \parbox{0.32\linewidth}{\centering\footnotesize (d) Iteration comparison.}
        &
        \parbox{0.32\linewidth}{\centering\footnotesize (e) Communication comparison.}
        &
        \parbox{0.32\linewidth}{\centering\footnotesize (f) Barbell, \(L=100\).}
        \\
        \multicolumn{3}{c}{%
            \begingroup
            \setlength{\fboxsep}{2pt}%
            \setlength{\fboxrule}{0.4pt}%
            \fbox{\includegraphics[width=0.68\linewidth]{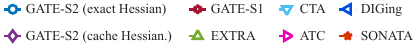}}%
            \endgroup}
    \end{tabular}
    \caption{Logistic-regression comparison on two barbell graphs.  Panels~(a)--(c) use \(L=10\), while panels~(d)--(f) use \(L=100\).  Panels~(a) and (d) report the error versus iteration count, while panels~(b) and (e) report it versus cumulative communication; GATE-S2 (exact Hessian) is omitted from the communication panels.  Panels~(c) and (f) show the corresponding graphs and partitions; red edges form the dominant tree cluster and blue edges form singleton-edge clusters.  The stopping criterion is \(\|X^\nu-\mathbf 1(x^\star)^\top\|\leq10^{-6}\).}
    \label{fig:clean_barbell_logreg_all_algorithms}
\end{figure}

\par\addvspace{\bigskipamount}
\noindent\emph{A multi-cycle network.}\enspace
We next test whether the gains persist on the network in Figure~\ref{fig:clean_multicycle_comparison}, which contains several cycles but no pronounced long-tree structure. We consider partitions beyond Assumption~\ref{ass:clean_tree_delay} to assess the empirical robustness of GATE to the choice of partition.
The quadratic problem uses \(d=200\) and dense local Hessians with condition number eight, while the logistic-regression problem uses \(d=50\) and \(N_i=80\).  For the quadratic problem, we compare exact GATE, two GATE-S1 variants, and GATE-S2 (cache Hessian) against the consensus baselines.  For logistic regression, we replace exact GATE with GATE-S2 (exact Hessian) and retain the remaining methods.

Figure~\ref{fig:clean_multicycle_comparison}(a) shows that exact GATE converges fastest for the quadratic problem. GATE-S2 (cache Hessian) follows essentially the same tail rate, and Figure~\ref{fig:clean_multicycle_comparison}(b) shows that it requires the least communication.  The scalar and diagonal GATE-S1 variants also outperform all consensus baselines in both iteration count and communication.  For logistic regression, Figure~\ref{fig:clean_multicycle_comparison}(d) shows that GATE-S2 (exact Hessian) is fastest, followed by the cache variant; Figure~\ref{fig:clean_multicycle_comparison}(e) shows that the cache variant again requires the least communication among the displayed methods.  The diagonal GATE-S1 variant remains faster than all consensus baselines in both measures, whereas the scalar GATE-S1 variant is less competitive on this instance.  EXTRA is the strongest consensus baseline in both problems; DIGing and SONATA require more communication, while CTA and ATC do not reach the target.  Thus, the practical gains observed on the barbell graphs persist on this multi-cycle network.

\begin{figure}[th]
    \centering
    \setlength{\tabcolsep}{1pt}
    \begin{tabular}{ccc}
        \includegraphics[height=0.24\linewidth,keepaspectratio]{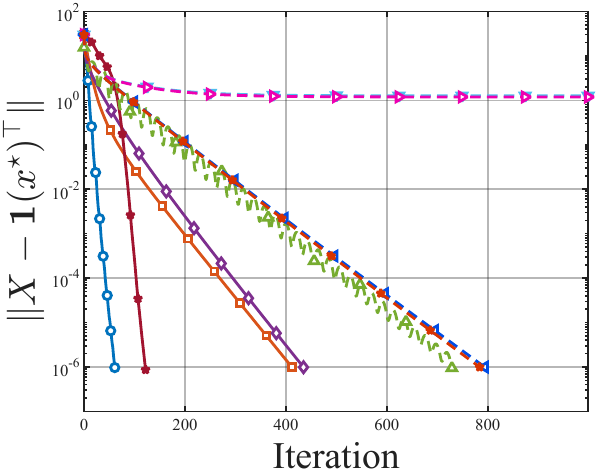}
        &
        \includegraphics[height=0.24\linewidth,keepaspectratio]{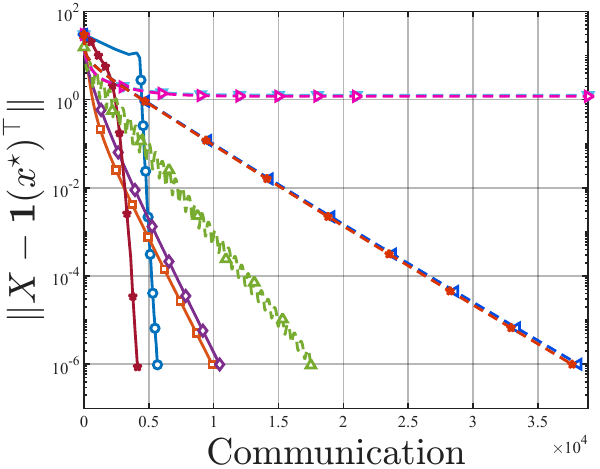}
        &
        \raisebox{0.035\linewidth}{\includegraphics[width=0.25\linewidth]{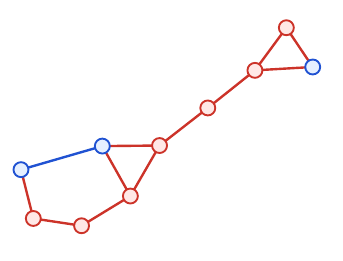}}
        \\
        \parbox{0.32\linewidth}{\centering\footnotesize (a) QP: iteration.}
        &
        \parbox{0.32\linewidth}{\centering\footnotesize (b) QP: communication.}
        &
        \parbox{0.32\linewidth}{\centering\footnotesize (c) Graph and partition.}
        \\[4pt]
        \includegraphics[height=0.24\linewidth,keepaspectratio]{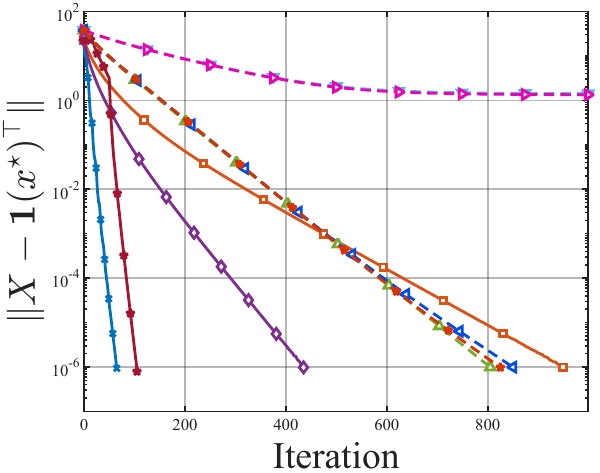}
        &
        \includegraphics[height=0.24\linewidth,keepaspectratio]{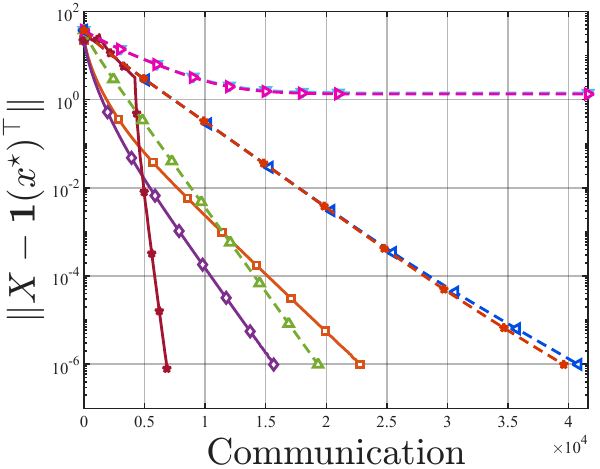}
        &
        \raisebox{0.035\linewidth}{\includegraphics[width=0.25\linewidth]{logreg_test_all_graph_partition.pdf}}
        \\
        \parbox{0.32\linewidth}{\centering\footnotesize (d) LR: iteration.}
        &
        \parbox{0.32\linewidth}{\centering\footnotesize (e) LR: communication.}
        &
        \parbox{0.32\linewidth}{\centering\footnotesize (f) Graph and partition.}
        \\
        \multicolumn{3}{c}{\includegraphics[width=0.90\linewidth]{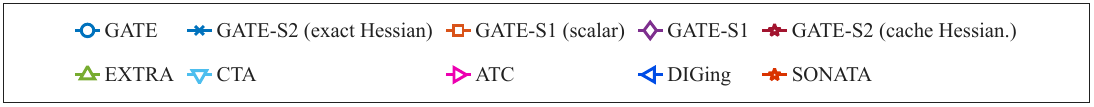}}
    \end{tabular}
    \caption{Performance comparison for quadratic and logistic-regression problems on a multi-cycle network.  Panels~(a)--(c) show the quadratic results with \(d=200\), while panels~(d)--(f) show the logistic-regression (LR) results with \(d=50\).  Panels~(a) and (d) report the error versus iteration count, while panels~(b) and (e) report it versus cumulative communication; GATE-S2 (exact Hessian) is omitted from panel~(e).  Panels~(c) and (f) show the common graph and partition; red edges form the message-passing cluster and the blue edge forms the singleton-edge cluster.  The stopping criterion is \(\|X^\nu-\mathbf 1(x^\star)^\top\|\leq10^{-6}\).}
    \label{fig:clean_multicycle_comparison}
\end{figure}

\section{Conclusion}
\label{sec:conclusion}
 In this work, we developed a graph-decomposition framework for jointly designing the consensus representation, dual-coordinate
  blocks, and connected carriers. The framework makes graph
  structure part of the optimization decomposition itself,
  beyond its role in communicating prescribed updates.
  Specializing to the incidence representation and tree-structured  edge blocks, we proposed \texttt{GATE}, a single-loop decentralized message-passing algorithm derived from the block fixed-point
  decomposition.    
  
   For smooth strongly convex local objectives, we established
  linear convergence and quantified a \emph{block-preconditioning
  effect}: the partition changes the coupling between subproblems,
  so network topology and function geometry enter the rate jointly.
  The resulting bounds guide partition selection for the given
  functions and network. For the tree-dominant families and
  partitions considered, under the stated curvature assumptions,
  the guaranteed factor \(\rho\) satisfies
  \((1-\rho)^{-1}=O(D/\log D)\), with \(D\) comparable to the
  dominant-tree diameter and \(O(D)\) initial propagation rounds.
  GATE-S reduces per-iteration computation and communication
  through tractable local surrogates and compact message
  parametrizations, while retaining linear convergence under the regularity assumptions.

   Numerical experiments on quadratic and logistic-regression
  problems support the theoretical scaling and demonstrate
  iteration and communication savings over the tested
  consensus-based baselines. Future directions include
  automated partition selection guided by the rate bounds,
  conjugate-free variants, and extensions to non(-strongly)
  convex objectives and stochastic settings.

\section*{Appendix}

\appendix

\section{Proofs of Results in Sec.~\ref{sec:clean_convergence}}

\subsection{Proof of Proposition~\ref{prop:clean_delayed_bilevel}}
\label{app:clean_delayed_bilevel_proof}

Fix a block \(r\), an edge \(e\in\cE_r\), and an iteration \(\nu\ge\operatorname{diam}(\cT_r)\).  Since \(e\in\cE_r\) is a vertex of the factor graph \(\cF_r\), root \(\cF_r\) at \(e\).  For every vertex \(v\neq e\), let \({\rm pa}(v)\) be its parent.   For \(i\in\cV_r\), let \(\cT_i\) be the component of \(\cF_r\setminus\{(i,{\rm pa}(i))\}\) containing \(i\), and denote by \(\cV_i\) and \(\cE_i\) its vertices in \(\cV_r\) and \(\cE_r\), respectively.  Thus \({\rm pa}(i)\notin\cE_i\), where \(\cE_i\) consists exactly of the active edge variables strictly below \(i\). For \(\ell\in\cV_r\), define the rooted local factor
\[
\Psi_\ell^{\nu,e}(Y_{\cA_\ell^\circ})
:=
f_\ell^*\!\left(
\mathsf B_{\ell,\cA_\ell^\circ}Y_{\cA_\ell^\circ}
	+
	\sum_{h\in\cA_\ell^\partial}\mathsf B_{\ell,h}Y_h^{\nu-\delta_{e,\ell}}
	\right),
\]
where the second term is absent when \(\ell\in\cV_r^\circ\).

\noindent\textbf{Step 1: Subtree message identity.}  For each  \(i\in\cV_r\), set \(q_i:=(\dist_{\cF_r}(e,i)-1)/2\).   Thus \(q_i=\delta_{e,i}\) for boundary nodes. Since \(q_i+1\le\operatorname{diam}(\cT_r)\le\nu\), every message in the expansion has index \(\nu-q_i\ge1\) and satisfies \eqref{eq:clean_factor_to_var}; no initial message function remains. We prove that
\begin{equation}
\mu_{i\to{\rm pa}(i)}^{\,\nu-q_i}\!\left(Y_{{\rm pa}(i)}\right)
=
\min_{Y_{\cE_i}}
\sum_{\ell\in\cV_i}
\Psi_\ell^{\nu,e}(Y_{\cA_\ell^\circ}).
\label{eq:app_clean_subtree_message}
\end{equation}
We prove~\eqref{eq:app_clean_subtree_message} by induction on the height of \(\cT_i\).

\textit{Base case.}  If \(\cV_i=\{i\}\), then \(\cE_i=\emptyset\) and \(\cA_i^\circ=\{{\rm pa}(i)\}\). Evaluating \eqref{eq:clean_factor_to_var} with \(\nu+1\) replaced by \(\nu-q_i\) gives
\[
\mu_{i\to{\rm pa}(i)}^{\,\nu-q_i}\!\left(Y_{{\rm pa}(i)}\right)
=
\Psi_i^{\nu,e}(Y_{\cA_i^\circ}),
\]
which is~\eqref{eq:app_clean_subtree_message} for the base case.

\textit{Inductive step.} Assume now that~\eqref{eq:app_clean_subtree_message} holds for all strict descendants of \(i\).  The same substitution in \eqref{eq:clean_factor_to_var} gives
\begin{equation}
\begin{aligned}
\mu_{i\to{\rm pa}(i)}^{\,\nu-q_i}\!\left(Y_{{\rm pa}(i)}\right)
&=
\min_{Y_{\cA_i^\circ\setminus\{{\rm pa}(i)\}}}
\Bigg\{
\Psi_i^{\nu,e}(Y_{\cA_i^\circ})
+
\sum_{h\in\cA_i^\circ\setminus\{{\rm pa}(i)\}}
\ \sum_{j\in\cI_h\setminus\{i\}}
\mu_{j\to h}^{\,\nu-q_i-1}(Y_h)
\Bigg\}.
\end{aligned}
\label{eq:app_clean_message_expand}
\end{equation}
For each pair \((h,j)\) in the double sum, \({\rm pa}(h)=i\), \({\rm pa}(j)=h\), and \(q_j=q_i+1\). The induction hypothesis therefore gives
\[
\mu_{j\to h}^{\,\nu-q_i-1}(Y_h)
=
\mu_{j\to{\rm pa}(j)}^{\,\nu-q_j}(Y_{{\rm pa}(j)})
=
\min_{Y_{\cE_j}}
\sum_{\ell\in\cV_j}
\Psi_\ell^{\nu,e}(Y_{\cA_\ell^\circ}).
\]
Since \(h={\rm pa}(j)\), the pairs \((h,j)\) correspond one-to-one to \(j\in\cV_i\setminus\{i\}\) with \({\rm pa}({\rm pa}(j))=i\). The sets \(\cA_i^\circ\setminus\{{\rm pa}(i)\}\) and these \(\cE_j\) partition \(\cE_i\); hence substituting into~\eqref{eq:app_clean_message_expand}, reindexing by \(j\), and merging the minimizations gives
\[
\begin{aligned}
\mu_{i\to{\rm pa}(i)}^{\,\nu-q_i}\!\left(Y_{{\rm pa}(i)}\right)
&=
\min_{Y_{\cE_i}}
\Bigg\{
\Psi_i^{\nu,e}(Y_{\cA_i^\circ})
+
\sum_{\substack{j\in\cV_i\setminus\{i\}\\{\rm pa}({\rm pa}(j))=i}}
\sum_{\ell\in\cV_j}
\Psi_\ell^{\nu,e}(Y_{\cA_\ell^\circ})
\Bigg\}.
\end{aligned}
\]
Using \(\cV_i=\{i\}\sqcup\bigsqcup_{\substack{j\in\cV_i\setminus\{i\}\\{\rm pa}({\rm pa}(j))=i}}\cV_j\), we obtain
\[
\mu_{i\to{\rm pa}(i)}^{\,\nu-q_i}\!\left(Y_{{\rm pa}(i)}\right)
=
\min_{Y_{\cE_i}}
\sum_{\ell\in\cV_i}
\Psi_\ell^{\nu,e}(Y_{\cA_\ell^\circ}).
\]

\noindent\textbf{Step 2: Sum of messages at the root.}  Applying the identity to \eqref{eq:clean_dmp_updates_a}, we can write
\[
\hat Y_e^{\nu+1}
\in
\argmin_{Y_e}
\left\{
\sum_{i\in\cI_e}\mu_{i\to e}^{\nu}(Y_e)
\right\}.
\]
For  \(i\in\cI_e\), one has \(q_i=0\), so \eqref{eq:app_clean_subtree_message} applies with \({\rm pa}(i)=e\).   The sets \(\cV_i\), \(i\in\cI_e\), partition \(\cV_r\), and the corresponding sets \(\cE_i\) partition \(\cE_r\setminus\{e\}\). Therefore
\[
\hat Y_e^{\nu+1}
\in
\argmin_{Y_e}\ \min_{Y_{\cE_r\setminus\{e\}}}
\sum_{\ell\in\cV_r}\Psi_\ell^{\nu,e}(Y_{\cA_\ell^\circ}).
\]
This is exactly \eqref{eq:clean_expanded_delayed_oracle}. \qed

\subsection{Proof of Lemma~\ref{lem:clean_boundary_sensitivity}}
\label{app:clean_tree_oracle}

The next two lemmas isolate the two ingredients for Lemma~\ref{lem:clean_boundary_sensitivity}: for each fixed boundary vector, the minimization over \(Y_{\cE_r}\) generates a common primal variable on \(\cT_r\), and the linear system \(\mathsf B_{\cV_r,\cE_r}Y_{\cE_r}=q\) admits an explicit graph-compliant solution. For each block \(r\), write \(N_r:=|\cV_r|\) and \(\widehat J_r:=|\widehat\partial_r|\). For \(E\in\R^{\widehat J_r\times d}\) with rows indexed by \((i,h)\in\widehat{\partial}_r\), define
\[
c_i(E):=\sum_{h\in\cA_i^\partial}\mathsf B_{i,h}E_{i,h},
\quad
\eta(E):=\sum_{i\in\cV_r}c_i(E)
=\sum_{(i,h)\in\widehat{\partial}_r}\mathsf B_{i,h}E_{i,h}.
\]
Also write, for a minimizer \(Y_r(E)\) in~\eqref{eq:clean_boundary_solution_mapping},
\[
\bar x_i(E):=
    \nabla f_i^*\bigl((\mathsf B_{\cV_r,\cE_r}Y_r(E))_i+c_i(E)\bigr),
    \quad i\in\cV_r .
\]
For \(e\in\cE_r\) and \(v\in\cV_r\), write
\[
\mathsf{Y}_{r,e}(E):=(Y_r(E))_e,\qquad
\mathsf{Z}_{r,v}(E):=\mathsf B_{v,\cE_r}Y_r(E).
\]

\begin{lemma}
\label{lem:app_tree_oracle_jacobians}
Under Assumptions~\ref{ass:clean_problem_class} and~\ref{ass:clean_tree_delay}, the vectors \(\bar x_i(E)\), \(i\in\cV_r\), have a common value, which depends on \(E\) only through \(\eta(E)\); write it as \(\chi_r(\eta(E))\). That is
\begin{subequations}\label{eq:app_common_state}
\begin{align}
&\bar x_i(E)=\chi_r(\eta(E)),
&& i\in\cV_r, \label{eq:app_common_state_a}\\
&\sum_{i\in\cV_r}\nabla f_i(\chi_r(\eta(E)))=\eta(E),
&& \label{eq:app_common_state_b}\\
&(\mathsf B_{\cV_r,\cE_r}Y_r(E))_i
=\nabla f_i(\chi_r(\eta(E)))-c_i(E),
&& i\in\cV_r . \label{eq:app_common_state_c}
\end{align}
\end{subequations}
\end{lemma}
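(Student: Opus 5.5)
The approach is to write the first-order optimality conditions of the strongly convex problem \eqref{eq:clean_boundary_solution_mapping} and exploit that \(\mathsf B_{\cV_r,\cE_r}\) is the incidence matrix of a tree. Existence and uniqueness of \(Y_r(E)\) are already available: Proposition~\ref{prop:connected} gives injectivity of \(\mathsf B_{\cV_r,\cE_r}\), so the objective is strongly convex in \(Y_{\cE_r}\). The stationarity condition with respect to \(Y_e\), \(e\in\cE_r\) with \(\cI_e=\{i,j\}\), reads
\[
\mathsf B_{i,e}\nabla f_i^*\bigl((\mathsf B_{\cV_r,\cE_r}Y_r(E))_i+c_i(E)\bigr)+\mathsf B_{j,e}\nabla f_j^*\bigl((\mathsf B_{\cV_r,\cE_r}Y_r(E))_j+c_j(E)\bigr)=0 .
\]
Since \(\mathsf B_{i,e}=-\mathsf B_{j,e}=\pm1\), this says \(\bar x_i(E)=\bar x_j(E)\) for every block edge. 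Connectivity of the tree \(\cT_r\) then propagates equality along paths, so all \(\bar x_i(E)\), \(i\in\cV_r\), coincide; call the common value \(\bar x(E)\).

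Next I would derive \eqref{eq:app_common_state_c} and \eqref{eq:app_common_state_b}. Inverting \(\nabla f_i^*=(\nabla f_i)^{-1}\) (valid under Assumption~\ref{ass:clean_problem_class}) in the definition of \(\bar x_i\) gives \((\mathsf B_{\cV_r,\cE_r}Y_r(E))_i=\nabla f_i(\bar x(E))-c_i(E)\), which is \eqref{eq:app_common_state_c} once \(\bar x(E)\) is identified with \(\chi_r(\eta(E))\). To obtain the identification, sum this identity over \(i\in\cV_r\): each column of \(\mathsf B_{\cV_r,\cE_r}\) has one \(+1\) and one \(-1\) within \(\cV_r\), so the left side sums to zero. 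This yields \(\sum_{i\in\cV_r}\nabla f_i(\bar x(E))=\eta(E)\). The map \(\sum_i\nabla f_i\) is continuous and strongly monotone, hence a bijection of \(\R^d\). Therefore \(\bar x(E)=\chi_r(\eta(E))\), which gives \eqref{eq:app_common_state_a} and \eqref{eq:app_common_state_b} and shows the dependence on \(E\) only through \(\eta(E)\).

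I do not expect a serious obstacle. The only delicate points are bookkeeping ones. First, when summing rows, only rows in \(\cV_r\) carry the block edges, and exterior edges enter solely through \(c_i(E)\). Second, the sign conventions \(\mathsf B_{i,e}=-\mathsf B_{j,e}\) must be handled correctly. A remark on uniqueness of \(Y_r(E)\) via the tree's full column rank is all that is needed to make \(\bar x_i(E)\) well defined.
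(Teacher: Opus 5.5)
Your proposal is correct and follows essentially the same route as the paper: the edgewise stationarity conditions force \(\bar x_u(E)=\bar x_v(E)\) across every block edge, connectivity of \(\cT_r\) makes the value common, and summing the identities \(\nabla f_i(\bar x(E))=(\mathsf B_{\cV_r,\cE_r}Y_r(E))_i+c_i(E)\) over \(i\in\cV_r\), where the left side sums to \(\eta(E)\) because the columns of \(\mathsf B_{\cV_r,\cE_r}\) sum to zero, identifies that value as \(\chi_r(\eta(E))\) by bijectivity of \(\sum_i\nabla f_i\). The only cosmetic difference is that you establish uniqueness of \(Y_r(E)\) at the outset via Proposition~\ref{prop:connected}, whereas the paper argues for an arbitrary minimizer and deduces single-valuedness afterwards from Lemma~\ref{lem:clean_tree_inverse}.
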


\begin{proof}
Fix \(E\) and the minimizer \(Y_r(E)\). For \(e=(u,v)\in\cE_r\), the optimality condition gives
\[
0
=\mathsf B_{u,e}\bar x_u(E)+\mathsf B_{v,e}\bar x_v(E)
=\mathsf B_{u,e}\bigl(\bar x_u(E)-\bar x_v(E)\bigr).
\]
Thus \(\bar x_u(E)=\bar x_v(E)\) for every \(e=(u,v)\in\cE_r\).  Since \(\cT_r\) is connected, all \(\bar x_i(E)\) are equal.  Because \(\nabla f_i=(\nabla f_i^*)^{-1}\), \(\nabla f_i(\bar x_i(E))=(\mathsf B_{\cV_r,\cE_r}Y_r(E))_i+c_i(E)\) for \(i\in\cV_r\). Summing over \(i\in\cV_r\) and using the zero column sums of \(\mathsf B_{\cV_r,\cE_r}\) gives \(\sum_{i\in\cV_r}\nabla f_i(\bar x_i(E))=\eta(E)\). Since all \(\bar x_i(E)\) are equal, this equation identifies their common value as \(\chi_r(\eta(E))\).  Thus the common value depends on \(E\) only through \(\eta(E)\), and \eqref{eq:app_common_state_a}--\eqref{eq:app_common_state_b} hold.  Rearranging the first identity before summation gives~\eqref{eq:app_common_state_c}. \qed
\end{proof}

\medskip
\begin{lemma}
\label{lem:clean_tree_inverse}
For any \(q=(q_i)_{i\in\cV_r}\) with \(q_i\in\R^d\) and \(\sum_{i\in\cV_r}q_i=0\), the equation \(\mathsf B_{\cV_r,\cE_r}Y_{\cE_r}=q\) has a unique solution.  If \(e\in\cE_r\) and \(\cS\) is one of the two connected components of \(\cT_r\setminus\{e\}\), then \(Y_e=\varepsilon_{e,\cS}\sum_{i\in\cS}q_i\) for some sign \(\varepsilon_{e,\cS}\in\{\pm1\}\).
\end{lemma}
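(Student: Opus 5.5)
The plan has three parts: uniqueness, existence, and the explicit edge formula. The formula gives both a candidate solution and its uniqueness, so I will start from a dimension count and then verify the formula directly.

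\emph{Uniqueness.} By Proposition~\ref{prop:connected} applied to the tree \(\cT_r\), the matrix \(\mathsf B_{\cV_r,\cE_r}\) has trivial kernel. This holds because \(\cT_r\) is its own spanning tree, and the rows outside \(\cV_r\) vanish. Hence any solution is unique; the argument is columnwise in the \(d\) coordinates.

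\emph{Existence.} The range of \(\mathsf B_{\cV_r,\cE_r}\) is contained in \(\{q:\sum_{i\in\cV_r}q_i=0\}\), since every column of an incidence matrix sums to zero. Injectivity gives a range of dimension \(|\cE_r|\,d=(|\cV_r|-1)d\). This equals the dimension of the zero-sum subspace, so the two coincide and a solution exists for every zero-sum \(q\).

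\emph{Edge formula.} Fix \(e\in\cE_r\) and a component \(\cS\) of \(\cT_r\setminus\{e\}\). I will sum the rows \(i\in\cS\) of the equation \(\mathsf B_{\cV_r,\cE_r}Y_{\cE_r}=q\), which gives
\[
\sum_{h\in\cE_r}\Bigl(\sum_{i\in\cS}\mathsf B_{i,h}\Bigr)Y_h=\sum_{i\in\cS}q_i .
\]
Every edge \(h\neq e\) of the tree has either both endpoints in \(\cS\) or none. In the first case \(\sum_{i\in\cS}\mathsf B_{i,h}=1-1=0\); in the second it is \(0\) trivially. No edge other than \(e\) crosses between \(\cS\) and its complement, because \(\cS\) is a component after removing \(e\). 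The edge \(e\) has exactly one endpoint in \(\cS\), so its coefficient is \(\mathsf B_{i_e,e}=\pm1\), where \(i_e\) is that endpoint. Hence \(Y_e=\varepsilon_{e,\cS}\sum_{i\in\cS}q_i\) with \(\varepsilon_{e,\cS}:=\mathsf B_{i_e,e}\in\{\pm1\}\). This also gives an explicit formula for the unique solution.

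The only delicate point is the claim that no edge other than \(e\) joins \(\cS\) to its complement within \(\cE_r\). This follows directly from the definition of connected components, so I expect no real obstacle. I will also check that using the complementary component yields a consistent value: the two sums are negatives of each other because \(\sum_{i\in\cV_r} q_i=0\), and the two signs are opposite.
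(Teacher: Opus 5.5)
Your proof is correct and follows the paper's argument essentially step for step. Injectivity of the tree incidence matrix, the zero column sums, and a dimension count give \(\Range(\mathsf B_{\cV_r,\cE_r})\) equal to the zero-sum subspace; summing the rows indexed by \(\cS\) then isolates \(Y_e\), because no other edge crosses the cut. The only cosmetic difference is that you get injectivity by citing Proposition~\ref{prop:connected}, whereas the paper states the rank \(|\cE_r|=|\cV_r|-1\) directly.
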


\begin{proof}
Each column of \(\mathsf B_{\cV_r,\cE_r}\) sums to zero.  Since \(\cT_r\) is a tree, \(\rank(\mathsf B_{\cV_r,\cE_r})=|\cE_r|=|\cV_r|-1\), and hence \(\Range(\mathsf B_{\cV_r,\cE_r}) =\{q\in\R^{|\cV_r|\times d}:\sum_{i\in\cV_r}q_i=0\}\). Thus \(\mathsf B_{\cV_r,\cE_r}Y_{\cE_r}=q\) has a unique solution.  Fix one component \(\cS\) of \(\cT_r\setminus\{e\}\).  Summing the rows indexed by \(\cS\) gives
\[
    \sum_{i\in\cS}q_i
    =
    \sum_{a\in\cE_r}\left(\sum_{i\in\cS}\mathsf B_{i,a}\right)Y_a
    =
    \left(\sum_{i\in\cS}\mathsf B_{i,e}\right)Y_e,
    \qquad
    \sum_{i\in\cS}\mathsf B_{i,e}\in\{\pm1\},
\]
where the terms with \(a\neq e\) vanish because those edges do not cross from \(\cS\) to \(\cV_r\setminus\cS\). This gives the formula. \qed
\end{proof}

\medskip
\noindent\textit{Proof of Lemma~\ref{lem:clean_boundary_sensitivity}.} For \(E\in\R^{\widehat J_r\times d}\) with rows indexed by \((i,h)\in\widehat{\partial}_r\), set \(q_i(E):=\nabla f_i(\chi_r(\eta(E)))-c_i(E)\). Lemma~\ref{lem:app_tree_oracle_jacobians} gives \(\mathsf B_{\cV_r,\cE_r}Y_r(E)=q(E)\), and \(\sum_{i\in\cV_r}q_i(E)=0\).  Lemma~\ref{lem:clean_tree_inverse} gives a unique solution of \(\mathsf B_{\cV_r,\cE_r}Y_{\cE_r}=q(E)\).  Hence \(Y_r(E)\) is single-valued. It also gives, for a boundary incidence \((i,h)\in\widehat{\partial}_r\) and the component \(\overline{\cV}_e(i)\) not containing \(i\),
\begin{equation}
\mathsf{Y}_{r,e}(E)
=
\varepsilon_{e,i}
\sum_{j\in\overline{\cV}_e(i)}
\bigl(\nabla f_j(\chi_r(\eta(E)))-c_j(E)\bigr),
\quad
\varepsilon_{e,i}\in\{\pm1\}.
\label{eq:app_cut_formula}
\end{equation}
Let \(E,E'\) differ only in coordinate \((i,h)\).  Since \(i\notin\overline{\cV}_e(i)\), the boundary terms \(c_j(\cdot)\) in \eqref{eq:app_cut_formula} are unchanged for all \(j\in\overline{\cV}_e(i)\).  Also \(\eta(E')-\eta(E)=\mathsf B_{i,h}(E'_{i,h}-E_{i,h})\).  Hence
\[
\mathsf{Y}_{r,e}(E')-\mathsf{Y}_{r,e}(E)
=
\varepsilon_{e,i}
\bigl(\cL_{r,\overline{\cV}_e(i)}(\eta(E'))-\cL_{r,\overline{\cV}_e(i)}(\eta(E))\bigr),
\]
and \eqref{eq:clean_partial_edge_sens} follows from the definition of \(\kappa_{\partial,r}\).

For \(v\ne i\), the coordinate \((i,h)\) does not enter \(c_v\), so \(c_v(E')=c_v(E)\).  By \eqref{eq:app_common_state_c} and the definition of \(\mathsf{Z}_{r,v}\), \(\mathsf{Z}_{r,v}(E)=\nabla f_v(\chi_r(\eta(E)))-c_v(E)\).  Since \(\cL_{r,\{v\}}(\eta)=\nabla f_v(\chi_r(\eta))\), we obtain
\[
\mathsf{Z}_{r,v}(E')-\mathsf{Z}_{r,v}(E)
=
\cL_{r,\{v\}}(\eta(E'))-\cL_{r,\{v\}}(\eta(E)).
\]
Then, \eqref{eq:clean_partial_node_sens} follows from the definition of \(\kappa_{\partial,r}\). \qed

\subsection{Proof of Lemma~\ref{lem:clean_tree_mismatch}}
\label{app:clean_tree_mismatch_proof}

The proof combines the sensitivity bounds in Lemma~\ref{lem:clean_boundary_sensitivity} with the following counting bounds for the rooted tree.

\begin{lemma}
\label{lem:app_rooted_delay_combinatorics}
Fix \((i,h)\in\widehat{\partial}_r\) and root the active tree \(\cT_r\) at the boundary node \(i\).  For \(v\in\cV_r\), set
\begin{equation}
\delta_{r,i}(v):=\max\{\dist_{\cT_r}(i,v)-1,0\}.
\label{eq:app_rooted_boundary_delay}
\end{equation}
For \(v\ne i\), let \(\mathrm{Ch}_{r,i}(v)\) be the set of active edges from \(v\) to its children in this rooted tree; set \(\mathrm{Ch}_{r,i}(i):=\emptyset\).  Let \(p(v)\) be the active edge from \(v\ne i\) to its parent, and set
\begin{equation*}
n_{r,i}(i):=N_r,
\qquad
n_{r,i}(v):=|\overline{\cV}_{p(v)}(i)|,
\quad v\ne i.
\end{equation*}
Then
\begin{equation}
\sum_{e\in\mathrm{Ch}_{r,i}(v)}|\overline{\cV}_e(i)|
\le
n_{r,i}(v)-1,
\qquad
\sum_{\dist_{\cT_r}(i,v)=t}\frac{(n_{r,i}(v)-1)^2}{N_r^2}\le 1,
\quad t\ge0.
\label{eq:app_rooted_counts}
\end{equation}
\end{lemma}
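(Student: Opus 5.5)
The plan is to interpret every quantity in Lemma~\ref{lem:app_rooted_delay_combinatorics} through subtree sizes of $\cT_r$ rooted at the boundary node $i$. The delay $\delta_{r,i}$ in \eqref{eq:app_rooted_boundary_delay} is not needed for the counts.

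First I identify $n_{r,i}(v)$. For $v\ne i$, removing the parent edge $p(v)$ splits $\cT_r$ into two components. The one not containing $i$ is $\overline{\cV}_{p(v)}(i)$, and it is exactly the vertex set of the subtree rooted at $v$: a vertex $k$ lies in it if and only if the unique $i$--$k$ path in $\cT_r$ passes through $p(v)$. Hence $n_{r,i}(v)=|\mathrm{sub}(v)|$, the size of the subtree rooted at $v$. Similarly, for a child edge $e\in\mathrm{Ch}_{r,i}(v)$ joining $v$ to a child $w$, we get $\overline{\cV}_e(i)=\mathrm{sub}(w)$. I also set $n_{r,i}(i)=N_r=|\mathrm{sub}(i)|$, so the root fits the same pattern.

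For the first inequality in \eqref{eq:app_rooted_counts}, take $v\ne i$. The subtree $\mathrm{sub}(v)$ is the disjoint union of $\{v\}$ and the subtrees $\mathrm{sub}(w)$ of its children; this is the same partition argument as \eqref{eq:gate_component_partition}, and it relies on $\cT_r$ being a tree. Therefore
\[
\sum_{e\in\mathrm{Ch}_{r,i}(v)}|\overline{\cV}_e(i)|=n_{r,i}(v)-1,
\]
with equality. For $v=i$, the left-hand side is an empty sum because $\mathrm{Ch}_{r,i}(i)=\emptyset$, so it equals $0\le N_r-1$.

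For the second inequality, consider $t=0$ first. The only vertex at distance $0$ is $i$, and $(N_r-1)^2/N_r^2\le1$. Now take $t\ge1$. The subtrees rooted at distinct vertices of the same depth $t$ are pairwise disjoint, since a vertex has a unique ancestor at each depth. None of them contains $i$. Hence
\[
\sum_{\dist_{\cT_r}(i,v)=t}n_{r,i}(v)\le N_r-1,
\qquad\text{so}\qquad
\sum_{\dist_{\cT_r}(i,v)=t}(n_{r,i}(v)-1)\le N_r.
\]
All terms $n_{r,i}(v)-1$ are nonnegative, so the sum of their squares is at most the square of their sum. This gives $\sum_{\dist_{\cT_r}(i,v)=t}(n_{r,i}(v)-1)^2\le N_r^2$, and dividing by $N_r^2$ yields the claim.

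I expect no step to be a real obstacle. The only delicate point is the identification of $\overline{\cV}_{p(v)}(i)$ and $\overline{\cV}_e(i)$ with rooted subtrees. This must respect the paper's convention that $\overline{\cV}_e(i)=\cV_r\setminus\cV_e(i)$ is the component \emph{not} containing $i$. That convention is exactly what makes these sets lie below $v$ in the tree rooted at $i$.
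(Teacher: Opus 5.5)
Your proposal is correct and follows essentially the same route as the paper. The paper likewise shows that the child components partition $\overline{\cV}_{p(v)}(i)\setminus\{v\}$, which gives equality for $v\ne i$, and for the second bound it uses that same-depth components are pairwise disjoint together with ``sum of squares $\le$ square of the sum''. Your remark that these components exclude $i$ (giving the bound $N_r-1$) is a harmless sharpening that the argument does not need.
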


\begin{proof}
For the first bound in~\eqref{eq:app_rooted_counts}, take \(v\neq i\) and put \(p=p(v)\).  The components \(\overline{\cV}_e(i)\) with \(e\in\mathrm{Ch}_{r,i}(v)\) are pairwise disjoint, and their union is \(\overline{\cV}_p(i)\setminus\{v\}\).  Hence equality holds in the first bound of \eqref{eq:app_rooted_counts} for \(v\neq i\).  If \(v=i\), then \(\mathrm{Ch}_{r,i}(v)=\emptyset\), and the displayed inequality is immediate.

Fix \(t\ge1\).  If two distinct nodes \(v,w\) have \(\dist_{\cT_r}(i,v)=\dist_{\cT_r}(i,w)=t\), then \(\overline{\cV}_{p(v)}(i)\cap\overline{\cV}_{p(w)}(i)=\emptyset\): otherwise some node \(u\) would have both \(v\) and \(w\) on the unique path from \(i\) to \(u\), and the two vertices at depth \(t\) on this path would have to coincide.  Hence
\[
\begin{gathered}
    \sum_{\dist_{\cT_r}(i,v)=t} n_{r,i}(v)\le N_r.
    \\
    \sum_{\dist_{\cT_r}(i,v)=t}(n_{r,i}(v)-1)^2
    \le
    \left(\sum_{\dist_{\cT_r}(i,v)=t}(n_{r,i}(v)-1)\right)^2
    \le N_r^2,
\end{gathered}
\]
The same conclusion is immediate for \(t=0\), since \(n_{r,i}(i)=N_r\). This proves the second bound in~\eqref{eq:app_rooted_counts}. \qed
\end{proof}

\medskip
\noindent\textit{Proof of Lemma~\ref{lem:clean_tree_mismatch}.}
By~\eqref{eq:clean_block_images}, \(\bar Z_r^{\nu+1}=\mathsf B_{\cV,\cE_r}Y_r(E^\nu)\) with \((E^\nu)_{(i,h)}=Y_h^\nu\), and \(\hat Y_e^{\nu+1}=\mathsf{Y}_{r,e}(\widehat E_e^\nu)\) with \((\widehat E_e^\nu)_{(i,h)}=Y_h^{\nu-\delta_{e,i}}\) for \((i,h)\in\widehat{\partial}_r\). For \(v\in\cV_r\), define the node mismatch
\[
m_v:=(\hat Z_r^{\nu+1}-\bar Z_r^{\nu+1})_v.
\]
The vector \(\widehat E_e^\nu\) depends on the incident edge \(e\), whereas \(m_v\) fixes the node \(v\).  To estimate \(m_v\), insert the auxiliary node-aligned boundary vector
\begin{equation*}
    (E_v^\nu)_{(i,h)}:=Y_h^{\nu-\delta_{r,i}(v)},\qquad (i,h)\in\widehat{\partial}_r.
\end{equation*}
For \(e\in\cA_v^\circ\), the definition of \(\delta_{e,i}\) gives \(\delta_{e,i}=\delta_{r,i}(v)\) unless \(e\in\mathrm{Ch}_{r,i}(v)\), in which case \(\delta_{e,i}=\delta_{r,i}(v)+1\).   At \(v=i\), both delays are zero for every incident edge. Thus \(E_v^\nu\) uses, in each boundary coordinate \((i,h)\), the freshest value seen by any active edge incident to \(v\). Since \((\hat Z_r^{\nu+1})_v =\sum_{e\in\cA_v^\circ}\mathsf B_{v,e}\mathsf{Y}_{r,e}(\widehat E_e^\nu)\) and \((\bar Z_r^{\nu+1})_v=\mathsf{Z}_{r,v}(E^\nu)\), inserting and subtracting \(\mathsf{Z}_{r,v}(E_v^\nu)\) gives
\begin{equation*}
\begin{aligned}
m_v
&=
\underbrace{
\sum_{e\in\cA_v^\circ}\mathsf B_{v,e}
\left(\mathsf{Y}_{r,e}(\widehat E_e^\nu)
-\mathsf{Y}_{r,e}(E_v^\nu)\right)
}_{=:p_v}
+
\underbrace{
\mathsf{Z}_{r,v}(E_v^\nu)-\mathsf{Z}_{r,v}(E^\nu)
}_{=:q_v}.
\end{aligned}
\end{equation*}
Write \(\Delta Y_h^\ell:=Y_h^{\ell+1}-Y_h^\ell\).
\smallskip

\noindent\textbf{Estimate of \(p_v\).} Fix an ordering of the boundary coordinates \((i,h)\in\widehat{\partial}_r\).  For each incident edge \(e\in\cA_v^\circ\), telescope the difference from \(E_v^\nu\) to \(\widehat E_e^\nu\) by changing one boundary coordinate at a time.  In the step that changes coordinate \((i,h)\), let \(E_{e,(i,h)}^-\) and \(E_{e,(i,h)}^+\) denote the two consecutive boundary vectors.  Then
\[
\mathsf{Y}_{r,e}(\widehat E_e^\nu)-\mathsf{Y}_{r,e}(E_v^\nu)
=
\sum_{(i,h)\in\widehat{\partial}_r}
\bigl(\mathsf{Y}_{r,e}(E_{e,(i,h)}^+)-\mathsf{Y}_{r,e}(E_{e,(i,h)}^-)\bigr),
\]
where the \((i,h)\)-summand is zero if \((\widehat E_e^\nu)_{(i,h)}=(E_v^\nu)_{(i,h)}\).  The preceding observation shows that this coordinate difference is zero unless \(e\in\mathrm{Ch}_{r,i}(v)\).  If \(e\in\mathrm{Ch}_{r,i}(v)\), then
\[
(\widehat E_e^\nu)_{(i,h)}-(E_v^\nu)_{(i,h)}
=
-\Delta Y_h^{\nu-(\delta_{r,i}(v)+1)}.
\]
Therefore
\[
p_v
=
\sum_{(i,h)\in\widehat{\partial}_r}
\underbrace{\sum_{e\in\mathrm{Ch}_{r,i}(v)}
\mathsf B_{v,e}
\bigl(\mathsf{Y}_{r,e}(E_{e,(i,h)}^+)-\mathsf{Y}_{r,e}(E_{e,(i,h)}^-)\bigr)}_{=:p_{v,(i,h)}}.
\]
If \(\mathrm{Ch}_{r,i}(v)=\emptyset\), then \(p_{v,(i,h)}=0\). Otherwise, using the sensitivity bound \eqref{eq:clean_partial_edge_sens} and the first bound in \eqref{eq:app_rooted_counts},
\[
\|p_{v,(i,h)}\|
\le
\kappa_{\partial,r}
\frac{n_{r,i}(v)-1}{N_r}
\|\Delta Y_h^{\nu-(\delta_{r,i}(v)+1)}\|.
\]
By Cauchy-Schwarz over the \(\widehat J_r\) boundary coordinates,
\begin{equation}
\|p_v\|^2
\le
\widehat J_r\kappa_{\partial,r}^2
\sum_{\substack{(i,h)\in\widehat{\partial}_r\\\mathrm{Ch}_{r,i}(v)\ne\emptyset}}
\frac{(n_{r,i}(v)-1)^2}{N_r^2}
\|\Delta Y_h^{\nu-(\delta_{r,i}(v)+1)}\|^2.
\label{eq:app_pv_pointwise}
\end{equation}
Summing~\eqref{eq:app_pv_pointwise} over \(v\) and exchanging the \(v\)- and \((i,h)\)-sums gives
\[
\sum_{v\in\cV_r}\|p_v\|^2
\le
\widehat J_r\kappa_{\partial,r}^2
\sum_{(i,h)\in\widehat{\partial}_r}
\sum_{\substack{v\in\cV_r\\\mathrm{Ch}_{r,i}(v)\ne\emptyset}}
\frac{(n_{r,i}(v)-1)^2}{N_r^2}
\|\Delta Y_h^{\nu-(\delta_{r,i}(v)+1)}\|^2.
\]
For fixed \((i,h)\), group the inner sum by rooted depth \(t=\dist_{\cT_r}(i,v)\).  On the nodes with \(\mathrm{Ch}_{r,i}(v)\ne\emptyset\), one has \(v\ne i\) and \(1\le\delta_{r,i}(v)+1=t\le D_r\). Thus all nodes in the same depth shell use the same increment \(\Delta Y_h^{\nu-t}\), and the second bound in~\eqref{eq:app_rooted_counts} gives
\[
\sum_{\substack{v\in\cV_r\\\mathrm{Ch}_{r,i}(v)\ne\emptyset}}
\frac{(n_{r,i}(v)-1)^2}{N_r^2}
\|\Delta Y_h^{\nu-(\delta_{r,i}(v)+1)}\|^2
\le
\sum_{t=1}^{D_r}
\|\Delta Y_h^{\nu-t}\|^2.
\]
Summing over \((i,h)\) gives
\begin{equation}
\sum_{v\in\cV_r}\|p_v\|^2
\le
\widehat J_r\kappa_{\partial,r}^2
\sum_{\ell=\nu-D_r}^{\nu-1}
\sum_{(i,h)\in\widehat{\partial}_r}\|\Delta Y_h^\ell\|^2.
\label{eq:app_transport_bound}
\end{equation}

\smallskip

\noindent\textbf{Estimate of \(q_v\).} Telescope the difference from \(E^\nu\) to \(E_v^\nu\) by changing one boundary coordinate at a time.  If \(v=i\), then \(\delta_{r,i}(v)=0\), so the corresponding coordinate difference is zero. Hence only coordinates \((i,h)\) with \(v\neq i\) remain, and the sensitivity bound~\eqref{eq:clean_partial_node_sens} applies:
\begin{equation}
\|q_v\|^2
\le
\widehat J_r\kappa_{\partial,r}^2
\sum_{(i,h)\in\widehat{\partial}_r:\,i\neq v}
\frac{1}{N_r^2}
\|Y_h^{\nu-\delta_{r,i}(v)}-Y_h^\nu\|^2.
\label{eq:app_qv_pointwise}
\end{equation}
For every integer \(t\ge 0\),
\[
\|Y_h^{\nu-t}-Y_h^\nu\|^2
\le
t\sum_{m=1}^t\|\Delta Y_h^{\nu-m}\|^2.
\]
Substituting this into~\eqref{eq:app_qv_pointwise}, summing over \(v\), and using the bound \(\sum_{v\in\cV_r}\delta_{r,i}(v)\le N_rD_r\), which follows from \(0\le\delta_{r,i}(v)\le D_r\), together with \(D_r/N_r\le1\), we obtain
\begin{equation}
\sum_{v\in\cV_r}\|q_v\|^2
\le
\widehat J_r\kappa_{\partial,r}^2
\sum_{\ell=\nu-D_r}^{\nu-1}
\sum_{(i,h)\in\widehat{\partial}_r}\|\Delta Y_h^\ell\|^2.
\label{eq:app_common_state_bound}
\end{equation}
Finally,
\[
\|\hat Z_r^{\nu+1}-\bar Z_r^{\nu+1}\|^2
=
\sum_{v\in\cV_r}\|m_v\|^2
\le
2\sum_{v\in\cV_r}\|p_v\|^2
+2\sum_{v\in\cV_r}\|q_v\|^2.
\]
Combining~\eqref{eq:app_transport_bound} and~\eqref{eq:app_common_state_bound} proves \eqref{eq:clean_raw_mismatch}. \qed

\subsection{Proof of Proposition~\ref{prop:clean_large_tree_scaling}}
\label{app:clean_large_tree_scaling}

For \(Z\in\cS\), put \(x_i:=\nabla f_i^*(Z_i)\).  We first express the block progress terms in \eqref{eq:clean_block_image_minimizer}.  Let \(h=\{i,j\}\in \overline{\cE}_1\) be a singleton edge block, and write \(\Delta_h\) for the progress of this block.  Changing only \(Y_h\) by \(s\) changes the image coordinates to \(Z_i+\mathsf B_{i,h}s\) and \(Z_j+\mathsf B_{j,h}s\).  Since \(\mathsf B_{i,h}+\mathsf B_{j,h}=0\) and \(|\mathsf B_{i,h}|=1\), the singleton block minimization in \eqref{eq:clean_block_image_minimizer} reduces to
\[
    \min_{u_i+u_j=Z_i+Z_j} f_i^*(u_i)+f_j^*(u_j).
\]
Let \((u_i^h,u_j^h)\) be its minimizer.  The first-order condition gives \(\nabla f_i^*(u_i^h)=\nabla f_j^*(u_j^h)=p_h\).  Thus \(u_i^h=\nabla f_i(p_h)\) and \(u_j^h=\nabla f_j(p_h)\), where \(p_h\) is the unique solution of \(\nabla f_i(p_h)+\nabla f_j(p_h)=Z_i+Z_j\).  By the definition of the block progress and this identity,
\[
\begin{aligned}
    \Delta_h(Z)
&=
f_i^*(Z_i)+f_j^*(Z_j)-f_i^*(u_i^h)-f_j^*(u_j^h)\\
&=
\sum_{k\in\{i,j\}}\left[
f_k^*(Z_k)-f_k^*(\nabla f_k(p_h))
-\inner{p_h,Z_k-\nabla f_k(p_h)}
\right].
\end{aligned}
\]
By the definition of \(x_k\), \(\nabla f_k(x_k)=Z_k\).  Fenchel's identity gives
\[
    f_k(x_k)+f_k^*(Z_k)=\inner{Z_k,x_k},\quad
    f_k(p_h)+f_k^*(\nabla f_k(p_h))=\inner{\nabla f_k(p_h),p_h}.
\]
Substituting these identities yields, for $k\in\{i,j\}$,
\[
    f_k^*(Z_k)-f_k^*(\nabla f_k(p_h))
    -\inner{p_h,Z_k-\nabla f_k(p_h)}
    =
    f_k(p_h)-f_k(x_k)-\inner{\nabla f_k(x_k),p_h-x_k}.
\]
Define the Bregman divergence \(D_{f_i}(u,v):=f_i(u)-f_i(v)-\inner{\nabla f_i(v),u-v}\). Then
\[
    \Delta_h(Z)=D_{f_i}(p_h,x_i)+D_{f_j}(p_h,x_j).
\]
Write \(\|u\|_{R}^2:=u^\top R u\) for \(u\in\R^d\).
The lower Hessian bound in Proposition~\ref{prop:clean_large_tree_scaling} yields
\begin{equation}
\Delta_h(Z)
\ge
\frac{\underline\alpha}{2}
\bigl(\|p_h-x_i\|_{R}^2+\|p_h-x_j\|_{R}^2\bigr)
\ge
\frac{\underline\alpha}{4}\|x_i-x_j\|_{R}^2 .
\label{eq:app_singleton_edge_progress}
\end{equation}
The same argument applied to \eqref{eq:clean_block_image_minimizer} for the tree block yields \(u_i=\nabla f_i(p_0)\), \(i\in\cV_1\), where \(p_0\) is the unique solution of \(\sum_{i\in\cV_1}\nabla f_i(p_0)=\sum_{i\in\cV_1}Z_i\).  Let \(\cI_\partial:=\bigcup_{h\in\overline{\cE}_1}\cI_h\) be the set of agents incident to the singleton edge blocks.  The corresponding progress \(\Delta_1\) satisfies
\begin{equation}
    \Delta_1(Z)=\sum_{i\in\cV_1}D_{f_i}(p_0,x_i)
    \ge
    \frac{\underline\alpha}{2}
    \sum_{i\in\cI_\partial\cap\cV_1}\|x_i-p_0\|_{R}^2 .
\label{eq:app_tree_block_progress}
\end{equation}

We next compare the global gap with the block progress.  By Fenchel's identity,
\[
\begin{aligned}
\sum_{i\in\cV}D_{f_i}(p,x_i)
&=
\sum_{i\in\cV}\left(f_i(p)-f_i(x_i)-\inner{Z_i,p-x_i}\right)=
G(Z)+\sum_{i\in\cV}f_i(p)-\inner{\sum_{i\in\cV}Z_i,p}.
\end{aligned}
\]
Since \(Z\in\cS=\Range(\mathsf B)\), \(\sum_{i\in\cV}Z_i=0\).  Taking \(p=p_0\), using \(\cV\setminus\cV_1\subseteq\cI_\partial\), and applying weak duality for \eqref{eq:clean_prob} and~\eqref{eq:clean_dual}, which gives \(-G^\star\le\sum_{i\in\cV}f_i(p_0)\), we obtain
\[
\begin{aligned}
    G(Z)-G^\star
    &\le G(Z)+\sum_{i\in\cV}f_i(p_0)
    =\sum_{i\in\cV}D_{f_i}(p_0,x_i)=
    \Delta_1(Z)+\sum_{i\in\cV\setminus\cV_1}D_{f_i}(p_0,x_i)\\
    &\le
    \Delta_1(Z)
    +\frac{\overline\alpha}{2}
    \sum_{i\in\cI_\partial}\|x_i-p_0\|_{R}^2 .
\end{aligned}
\]
It remains to bound the finite boundary term by block progress.  Extending \(\cT_1\) to a spanning tree of \(\cG\) gives \(|\cV\setminus\cV_1|\le|\overline{\cE}_1|\). Each vertex outside \(\cV_1\) has a path to \(\cV_1\) with at most \(|\overline{\cE}_1|\) edges in \(\overline{\cE}_1\), ending at a vertex of \(\cI_\partial\cap\cV_1\). Applying Cauchy's inequality along these paths and summing gives, with \(C:=1+|\overline{\cE}_1|(|\overline{\cE}_1|+1)=O(1)\),
\[
    \sum_{i\in\cI_\partial}\|x_i-p_0\|_{R}^2
    \le
    C\left(
    \sum_{i\in\cI_\partial\cap\cV_1}
    \|x_i-p_0\|_{R}^2
    +
    \sum_{\{i,j\}\in\overline{\cE}_1}
    \|x_i-x_j\|_{R}^2
    \right).
\]
Combining the last two bounds with \eqref{eq:app_tree_block_progress} and \eqref{eq:app_singleton_edge_progress} gives
\[
    G(Z)-G^\star
    \le
    \left(1+2C\frac{\overline\alpha}{\underline\alpha}\right)
    \left(\Delta_1(Z)+\sum_{h\in\overline{\cE}_1}\Delta_h(Z)\right),
\]
which is exactly the block error bound for the partition \(\cP=\{\cE_1\}\cup\{\{h\}:h\in\overline{\cE}_1\}\).  Since \(C=O(1)\) and \(\overline\alpha/\underline\alpha=O(1)\), this proves \(\kappa_{\cP}=O(1)\).

Finally we bound the constants in Theorem~\ref{thm:clean_linear_tree}.  The partition has \(p=1+|\overline{\cE}_1|=O(1)\). Since \(\Delta_r(Z)\le G(Z)-G^\star\), the block error bound implies \(\kappa_{\cP}\ge1/p\), and hence \(\kappa_{\cP}=\Theta(1)\).  Singleton blocks have no interior propagation, so \(D_r=0\) for them; only block \(1\) can contribute to \(A\).  For block \(1\), \(\widehat J_1\le2|\overline{\cE}_1|=O(1)\). Every neighboring block is a singleton edge \(\cE_s=\{h\}\), and its incidence column has norm \(\sqrt2\), so \(\Gamma_1\le1\) by its definition.  Moreover, for \(S\subseteq\cV_1\), the map \(\sum_{i\in S}\nabla f_i\) is \(|S|L\)-Lipschitz, while \((\sum_{i\in\cV_1}\nabla f_i)^{-1}\) is \((N_1\mu_1)^{-1}\)-Lipschitz.  Hence \(\kappa_{\partial,1}\le L/\mu_1=O(1)\).  Together with the uniform bounds on \(\mu_1^{-1}\) and \(L\), the definition of \(A\) gives \(A=O(1)\).  Choose \(\tau=\kappa_{\cP}\log D/D=\Theta(\log D/D)\). The first inequality in~\eqref{eq:clean_tree_stepsize} clearly holds for sufficiently large \(D\). Moreover, \(\rho=1-\log D/(2D)\), so \(\rho^{-D}=D^{1/2}(1+o(1))\) and
\[
8LA\tau^2\sum_{j=1}^{D}\rho^{-j}
=16LA\kappa_{\cP}\tau(\rho^{-D}-1)
=O\!\left(\frac{\log D}{\sqrt D}\right)\longrightarrow0.
\]
Thus, \eqref{eq:clean_tree_stepsize} holds for all sufficiently large \(D\), yielding \((1-\rho)^{-1}=2D/\log D\).\qed

\subsection{Proof of Proposition~\ref{prop:clean_ring_partition_scaling}}
\label{app:clean_ring_partition_scaling}

For \(Z\in\cS\), set \(G_0(Z):=\frac12\operatorname{tr}(ZR^{-1}Z^\top)\). Let \(\kappa_{\cP}^{(0)}\) be its smallest admissible block error-bound constant and set \(W:=ZR^{-1/2}\).  This bijection maps \(\cS\) and each \(\cU_r\) onto themselves, so block error-bound constants are invariant under the corresponding change of objective.  Moreover, \(G_0(WR^{1/2})=\frac12\|W\|^2\).  Below, we write \(Z\) for \(W\) and work with \(G_0(Z)=\frac12\|Z\|^2\). We first prove
\begin{equation}
    \frac{D+2}{4\sin^2(\pi/p)}
    \le \kappa_{\cP}^{(0)}
    \le 1+\frac{3(D+1)}{2\sin^2(\pi/p)},
    \qquad
    \kappa_{\cP}^{(0)}
    =\Theta\!\left(\frac{m^2}{D}\right),
    \label{eq:clean_equal_ring_beb}
\end{equation}
For a path block \(\cE_r\), the incidence matrix has rank \(|\cV_r|-1\), and hence
\[
    \cU_r=\left\{U\in\R^{m\times d}:
    U_i=0\ (i\notin\cV_r),\ 
    \sum_{i\in\cV_r}U_i=0\right\}.
\]
By definition, \(\Delta_r^{(0)}(Z)=\frac12\|Z\|^2- \min_{U\in\cU_r}\frac12\|Z+U\|^2\). The minimum is attained when \(Z_i+U_i=\bar Z_{\cV_r}:=|\cV_r|^{-1}\sum_{j\in\cV_r}Z_j\) for \(i\in\cV_r\).  Hence
\[
    \Delta_r^{(0)}(Z)
    =\frac12\sum_{i\in\cV_r}\|Z_i-\bar Z_{\cV_r}\|^2.
\]
Relabel the blocks by \(r=0,\ldots,p-1\) and index the ring nodes cyclically as \(v_0,\ldots,v_{m-1}\).  Let
\[
    Q_r:=\{v_{r(D+1)+j}:0\le j\le D\},
    \qquad b_r:=v_{(r+1)(D+1)},
\]
where indices are taken modulo \(m\).  The sets \(Q_r\) form a disjoint partition of \(\cV\), with \(|Q_r|=D+1\).  Since \(b_r\) is the first node of \(Q_{r+1}\), the nodes incident to block \(r\) are \(\cV_r=Q_r\cup\{b_r\}\).  With \(\xi_r:=(D+1)^{-1}\sum_{i\in Q_r}Z_i\), we have
\[
    2\Delta_r^{(0)}(Z)=\sum_{i\in Q_r}\|Z_i-\xi_r\|^2
    +\frac{D+1}{D+2}\|Z_{b_r}-\xi_r\|^2.
\]
Thus
\begin{equation}
    \sum_{i\in Q_r}\|Z_i-\xi_r\|^2\le2\Delta_r^{(0)}(Z),
    \qquad
    \|Z_{b_r}-\xi_r\|^2\le4\Delta_r^{(0)}(Z).
    \label{eq:clean_ring_local_bounds}
\end{equation}
Since \(b_r\in Q_{r+1}\), \eqref{eq:clean_ring_local_bounds} gives
\[
    \|\xi_{r+1}-\xi_r\|^2
    \le
    2\|\xi_r-Z_{b_r}\|^2
    +2\|Z_{b_r}-\xi_{r+1}\|^2
    \le 8\Delta_r^{(0)}(Z)+4\Delta_{r+1}^{(0)}(Z).
\]
Summing cyclically yields
\begin{equation}
    \sum_{r=0}^{p-1}\|\xi_{r+1}-\xi_r\|^2
    \le 12\sum_{r=0}^{p-1}\Delta_r^{(0)}(Z).
    \label{eq:clean_ring_mean_difference_sum}
\end{equation}
Since \(Z\in\cS\) and the sets \(Q_r\) form a partition of \(\cV\), \(\sum_{r=0}^{p-1}\xi_r=(D+1)^{-1}\sum_{i\in\cV}Z_i=0\). Set \(\Xi:=(\xi_0^\top,\ldots,\xi_{p-1}^\top)^\top\in\R^{p\times d}\) and
\[
    C_p:=
    \begin{bmatrix}
        -1 & 1 & & \\
           & \ddots & \ddots & \\
           & & -1 & 1\\
         1 & & & -1
    \end{bmatrix}.
\]
The cycle spectrum satisfies \(\lambda_{\min}^+(C_p^\top C_p) =4\sin^2(\pi/p)\); see, e.g., \cite{chung1997spectral}.  Therefore,
\begin{equation}
    \sum_{r=0}^{p-1}\|\xi_{r+1}-\xi_r\|^2
    =\|C_p\Xi\|_F^2
    \ge4\sin^2(\pi/p)\|\Xi\|_F^2
    =4\sin^2(\pi/p)\sum_{r=0}^{p-1}\|\xi_r\|^2.
    \label{eq:clean_ring_cycle_spectral_bound}
\end{equation}
Combining \eqref{eq:clean_ring_local_bounds}, \eqref{eq:clean_ring_mean_difference_sum}, and \eqref{eq:clean_ring_cycle_spectral_bound} yields
\[
    \|Z\|^2
    =\sum_{r=0}^{p-1}\left[\sum_{i\in Q_r}\|Z_i-\xi_r\|^2
    +(D+1)\|\xi_r\|^2\right]
    \le2\left(1+\frac{3(D+1)}{2\sin^2(\pi/p)}\right)
    \sum_{r=0}^{p-1}\Delta_r^{(0)}(Z).
\]
This proves the upper bound in \eqref{eq:clean_equal_ring_beb}.

For the lower bound, choose a nonzero \(\Xi\in\R^{p\times d}\) whose columns lie in the eigenspace of \(C_p^\top C_p\) associated with \(4\sin^2(\pi/p)\), and denote its rows by \(\xi_r\).  Then
\[
    \sum_{r=0}^{p-1}\xi_r=0,\qquad
    \sum_{r=0}^{p-1}\|\xi_{r+1}-\xi_r\|^2
    =4\sin^2(\pi/p)\sum_{r=0}^{p-1}\|\xi_r\|^2.
\]
Set \(Z_i=\xi_r\) for \(i\in Q_r\), \(r=0,\ldots,p-1\).  Then \(\sum_{i\in\cV}Z_i=(D+1)\sum_{r=0}^{p-1}\xi_r=0\) and
\[
    \begin{aligned}
    G_0(Z)
    &=\frac12\sum_{r=0}^{p-1}\sum_{i\in Q_r}\|Z_i\|^2
      =\frac{D+1}{2}\sum_{r=0}^{p-1}\|\xi_r\|^2.
    \end{aligned}
\]
Moreover, \(\cV_r=Q_r\cup\{b_r\}\), with \(Z_i=\xi_r\) for \(i\in Q_r\) and \(Z_{b_r}=\xi_{r+1}\), so \(\bar Z_{\cV_r}=((D+1)\xi_r+\xi_{r+1})/(D+2)\).  Substitution into the definition of \(\Delta_r^{(0)}\) yields
\[
    \begin{aligned}
    \Delta_r^{(0)}(Z)
    &=\frac12\left[(D+1)\|\xi_r-\bar Z_{\cV_r}\|^2
      +\|\xi_{r+1}-\bar Z_{\cV_r}\|^2\right]=\frac{D+1}{2(D+2)}\|\xi_{r+1}-\xi_r\|^2.
    \end{aligned}
\]
Hence \(G_0(Z)/\sum_{r=0}^{p-1}\Delta_r^{(0)}(Z) =(D+2)/(4\sin^2(\pi/p))\) and \eqref{eq:clean_equal_ring_beb} follows.

It remains to transfer the bound to \(G\).  Define \(\widehat G(W):=G(WR^{1/2})\) for \(W\in\cS\). The Hessian bounds in Proposition~\ref{prop:clean_ring_partition_scaling} give \(\overline\alpha^{-1}I\preceq \nabla^2\!\left[f_i^*(R^{1/2}\,\cdot)\right](W_i) \preceq\underline\alpha^{-1}I\).  Let \(P_{\cS}\) and \(P_r\) be the Euclidean projectors onto \(\cS\) and \(\cU_r\), respectively, and define \(\widehat\Delta_r(W):=\widehat G(W)- \min_{U\in W+\cU_r}\widehat G(U)\). For \(G_0(V)=\frac12\|V\|^2\), the minimizer over \(V+\cU_r\) is \(V-P_rV\); hence \(\Delta_r^{(0)}(V)=\frac12\|V\|^2-\frac12\|V-P_rV\|^2 =\frac12\|P_rV\|^2\).  The quadratic block error bound therefore gives \(\sum_{r=0}^{p-1}\|P_rV\|^2\ge (\kappa_{\cP}^{(0)})^{-1}\|V\|^2\) for \(V\in\cS\).  Since \(\cU_r\subseteq\cS\), \(P_rP_{\cS}=P_r\).  The smoothness and strong convexity bounds above then yield
\[
\begin{aligned}
    \sum_{r=0}^{p-1}\widehat\Delta_r(W)
    &\ge\frac{\underline\alpha}{2}
    \sum_{r=0}^{p-1}\|P_r\nabla\widehat G(W)^\top\|^2\ge\frac{\underline\alpha}{2\kappa_{\cP}^{(0)}}
    \|P_{\cS}\nabla\widehat G(W)^\top\|^2
    \ge\frac{\underline\alpha}{\overline\alpha\kappa_{\cP}^{(0)}}
    \bigl(\widehat G(W)-G^\star\bigr).
\end{aligned}
\]
Therefore \(\kappa_{\cP}\le (\overline\alpha/\underline\alpha)\kappa_{\cP}^{(0)}\), and \eqref{eq:clean_equal_ring_beb} proves \(\kappa_{\cP}=O(m^2/D)\). \qed

{
To obtain the rate bound, we apply Theorem~\ref{thm:clean_linear_tree}
to \(\widehat f_i(u):=f_i(R^{-1/2}u)\), which are
\(\underline\alpha\)-strongly convex and \(\overline\alpha\)-smooth.
Thus \(\kappa_{\partial,r}\le\overline\alpha/\underline\alpha\).
With two boundary incidences, at most two neighbors per block, and
\(\Gamma_r\le1\), \eqref{eq:clean_tree_A} gives
\[
    LA\le4\left(2\frac{\overline\alpha}{\underline\alpha}+1\right)
    \left(\frac{\overline\alpha}{\underline\alpha}\right)^2=O(1).
\]
With the same stepsize and initial data \(Y^0R^{-1/2}\) and
\(\mu_{i\to e}^0(\,\cdot\,R^{1/2})\),
Algorithm~\ref{alg:gate_uniform_proposed} for \(\widehat f_i\) generates
edge iterates \(Y^\nu R^{-1/2}\) and primal iterates \(X^\nu R^{1/2}\).
These fixed invertible transformations change only the constant \(C\)
in the convergence bound, so the same \(\rho\) applies to the original iterates.
}


\section{Proofs of Results in Sec.~\ref{sec:clean_surrogate}}
\label{app:clean_surrogate_proofs}

For the proof notation, fix a block \(r\). Write \(N_r:=|\cV_r|\), \(J_r:=|\partial_r|\), and \(\widehat J_r:=|\widehat\partial_r|\).  For \(E\in\R^{\widehat J_r\times d}\) with rows indexed by \((i,h)\in\widehat{\partial}_r\), use the boundary contribution \(c_i(E)=\sum_{h\in\cA_i^\partial}\mathsf B_{i,h}E_{i,h}\) from Appendix~\ref{app:clean_tree_oracle}, and write \(c_r(E):=(c_i(E))_{i\in\cV_r}\).  For \(U,\vartheta\in\R^{N_r\times d}\), the exact and surrogate block slices are
\[
G_r(U):=\sum_{i\in\cV_r}f_i^*(U_i),
\quad
\widetilde G_r(U;\vartheta):=\sum_{i\in\cV_r}\widetilde f_i^*(U_i;\vartheta_i).
\]
Only nodes in \(\cV_r\) appear in these slices; the remaining terms are constant under a block-\(r\) update.
For the block solution map, write
\[
\begin{aligned}
\widetilde{\mathsf{Y}}_{r,e}(E,\vartheta)&:=[\widetilde Y_r(E,\vartheta)]_e,
&&e\in\cE_r,\\
\widetilde{\mathsf{Z}}_{r,v}(E,\vartheta)&:=\mathsf B_{v,\cE_r}\widetilde Y_r(E,\vartheta),
&&v\in\cV_r.
\end{aligned}
\]

\subsection{Proof of Lemma~\ref{lem:clean_surrogate_tree_sensitivity}}
\label{subsec:app_sur_tree_center}

\noindent\textit{Proof of \eqref{eq:clean_sur_center_edge_sens}--\eqref{eq:clean_sur_center_node_sens_nonlocal}.} Fix a block \(r\).  For \(E\in\R^{\widehat J_r\times d}\) with rows indexed by \((i,h)\in\widehat{\partial}_r\), set \(\eta(E):=\sum_{i\in\cV_r}c_i(E)\). Suppose \(\vartheta,\widehat\vartheta\in\R^{N_r\times d}\) differ only in coordinate \(i\).  Define
\[
u_j^0:=\widetilde{\mathsf{Z}}_{r,j}(E,\vartheta)+c_j(E),\qquad
u_j^1:=\widetilde{\mathsf{Z}}_{r,j}(E,\widehat\vartheta)+c_j(E).
\]
Applying the same optimality condition for each active edge variable as in the proof of Lemma~\ref{lem:app_tree_oracle_jacobians} to the two minimizers \(\widetilde Y_r(E,\vartheta)\) and \(\widetilde Y_r(E,\widehat\vartheta)\), with \(f_j^*\) replaced by \(\widetilde f_j^*(\cdot;\vartheta_j)\) and \(\widetilde f_j^*(\cdot;\widehat\vartheta_j)\), respectively, gives common values \(x^0,x^1\in\R^d\) such that
\[
\begin{gathered}
u_j^0=
\bigl(\nabla_u\widetilde f_j^*(\cdot;\vartheta_j)\bigr)^{-1}(x^0),
\quad
u_j^1=
\bigl(\nabla_u\widetilde f_j^*(\cdot;\widehat\vartheta_j)\bigr)^{-1}(x^1),
\quad j\in\cV_r,\\
\sum_{j\in\cV_r}u_j^0=\sum_{j\in\cV_r}u_j^1=\eta(E).
\end{gathered}
\]
By the definition of \(\widetilde\cL_{r,S}\) in~\eqref{eq:clean_sur_aggregate_map},
\[
u_j^0=\widetilde\cL_{r,\{j\}}(\eta(E);\vartheta),
\qquad
u_j^1=\widetilde\cL_{r,\{j\}}(\eta(E);\widehat\vartheta).
\]
Using~\eqref{eq:clean_sur_kappa_center} at the fixed input \(\eta(E)\) gives
\[
\|\widetilde{\mathsf{Z}}_{r,i}(E,\widehat\vartheta)
-\widetilde{\mathsf{Z}}_{r,i}(E,\vartheta)\|
=
\left\|
\widetilde\cL_{r,\{i\}}(\eta(E);\widehat\vartheta)
-\widetilde\cL_{r,\{i\}}(\eta(E);\vartheta)
\right\|
\le
\widetilde\kappa_{c,r}\|\widehat\vartheta_i-\vartheta_i\|.
\]
For \(v\ne i\),
\[
\|\widetilde{\mathsf{Z}}_{r,v}(E,\widehat\vartheta)
-\widetilde{\mathsf{Z}}_{r,v}(E,\vartheta)\|
=
\left\|
\widetilde\cL_{r,\{v\}}(\eta(E);\widehat\vartheta)
-\widetilde\cL_{r,\{v\}}(\eta(E);\vartheta)
\right\|
\le
\frac{\widetilde\kappa_{c,r}}{N_r}
\|\widehat\vartheta_i-\vartheta_i\|.
\]
By Lemma~\ref{lem:clean_tree_inverse}, for some sign \(\varepsilon_{e,i}\in\{\pm1\}\),
\[
\widetilde{\mathsf{Y}}_{r,e}(E,\widehat\vartheta)
-\widetilde{\mathsf{Y}}_{r,e}(E,\vartheta)
=
\varepsilon_{e,i}
\sum_{v\in\overline{\cV}_e(i)}(u_v^1-u_v^0).
\]
The choice \(i\notin\overline{\cV}_e(i)\) matches the edge term in \eqref{eq:clean_sur_kappa_center}: the center perturbation is in coordinate \(i\), while the aggregate is over the opposite component \(\overline{\cV}_e(i)\).  By \eqref{eq:clean_sur_aggregate_map},
\[
\sum_{v\in\overline{\cV}_e(i)}(u_v^1-u_v^0)
=
\widetilde\cL_{r,\overline{\cV}_e(i)}(\eta(E);\widehat\vartheta)
-\widetilde\cL_{r,\overline{\cV}_e(i)}(\eta(E);\vartheta).
\]
Therefore
\[
\|\widetilde{\mathsf{Y}}_{r,e}(E,\widehat\vartheta)
-\widetilde{\mathsf{Y}}_{r,e}(E,\vartheta)\|
\le
\widetilde\kappa_{c,r}
\frac{|\overline{\cV}_e(i)|}{N_r}\|\widehat\vartheta_i-\vartheta_i\|.
\]
This proves \eqref{eq:clean_sur_center_edge_sens}--\eqref{eq:clean_sur_center_node_sens_nonlocal}.

\smallskip
\noindent\textit{Proof of \eqref{eq:clean_sur_boundary_edge_sens}--\eqref{eq:clean_sur_boundary_node_sens}.} Fix a center tuple \(\vartheta=(\vartheta_i)_{i\in\cV_r}\), and suppose \(E,E'\) differ only in boundary coordinate \((i,h)\).  With \(\vartheta\) fixed, \eqref{eq:clean_sur_aggregate_map} is the surrogate counterpart of the aggregate map used in Lemma~\ref{lem:clean_boundary_sensitivity}.  Repeating the proof of Lemma~\ref{lem:clean_boundary_sensitivity} with \(\cL_{r,S}\) replaced by \(\widetilde\cL_{r,S}(\cdot;\vartheta)\) gives
\[
\|\widetilde{\mathsf{Y}}_{r,e}(E',\vartheta)
-\widetilde{\mathsf{Y}}_{r,e}(E,\vartheta)\|
\le
\widetilde\kappa_{\partial,r}\frac{|\overline{\cV}_e(i)|}{N_r}\|E'_{i,h}-E_{i,h}\|
\]
and
\[
\|\widetilde{\mathsf{Z}}_{r,v}(E',\vartheta)
-\widetilde{\mathsf{Z}}_{r,v}(E,\vartheta)\|
\le
\widetilde\kappa_{\partial,r}\frac1{N_r}\|E'_{i,h}-E_{i,h}\|,\quad v\ne i.
\]
This proves \eqref{eq:clean_sur_boundary_edge_sens}--\eqref{eq:clean_sur_boundary_node_sens}.\qed

\smallskip
{
\noindent\textbf{Remark on \(\widetilde\kappa_{c,r}\).} The constant \(\widetilde\kappa_{c,r}\) in~\eqref{eq:clean_sur_kappa_center} measures the center sensitivity of the frozen aggregate maps and can reflect the associated surrogate approximation quality. It vanishes for the exact surrogate \(\widetilde f_i^*(U_i;Z_i)=f_i^*(U_i)\). Finiteness requires control of the center dependence in addition to the curvature bounds in Assumption~\ref{ass:clean_surrogate_nodewise}(iii). When \(\widetilde f_j^*\in C^2\) in \((u,\theta)\) for \(j\in\cV_r\), fix \(i\in\cV_r\), \(\eta\in\R^d\), and a center tuple \(\vartheta\in\R^{|\cV_r|\times d}\).  To identify the local derivative entering \(\operatorname{Lip}_{\vartheta_i}\) in \eqref{eq:clean_sur_kappa_center}, vary only the \(i\)-th center, denoted by \(\theta\).  By~\eqref{eq:clean_sur_aggregate_map}, along this one-coordinate variation, for any nonempty \(S\subseteq\cV_r\),
\[
\left.\widetilde\cL_{r,S}(\eta;\vartheta)\right|_{\vartheta_i=\theta}
=\sum_{j\in S}u_j(\theta),
\]
where \(u_j(\theta)\), \(j\in\cV_r\), are determined by
\[
\nabla_u\widetilde f_j^*(u_j(\theta);\vartheta_j)=x(\theta)\quad(j\ne i),\qquad
\nabla_u\widetilde f_i^*(u_i(\theta);\theta)=x(\theta),\qquad
\sum_{j\in\cV_r}u_j(\theta)=\eta.
\]
Applying the implicit function theorem at \(\theta=\vartheta_i\), write
\[
H_j:=
\left(\nabla_{uu}^2\widetilde f_j^*(u_j(\vartheta_i);\vartheta_j)\right)^{-1}
\quad(j\ne i),
\]
\[
H_i:=
\left(\nabla_{uu}^2\widetilde f_i^*(u_i(\vartheta_i);\vartheta_i)\right)^{-1},\;
C_i:=\bigl(\nabla_\theta[\nabla_u\widetilde f_i^*(u_i(\vartheta_i);\theta)]\bigr)^\top\big|_{\theta=\vartheta_i},\;
H_\Sigma:=\sum_{j\in\cV_r}H_j.
\]
Differentiating the defining equations gives
\[
\nabla_\theta u_j(\vartheta_i)^\top
=H_j\nabla_\theta x(\vartheta_i)^\top\quad(j\ne i),\qquad
\nabla_\theta u_i(\vartheta_i)^\top
=H_i\nabla_\theta x(\vartheta_i)^\top-H_iC_i.
\]
Differentiating \(\sum_{j\in\cV_r}u_j(\theta)=\eta\) then yields
\[
0=H_\Sigma\nabla_\theta x(\vartheta_i)^\top-H_iC_i,
\qquad
\nabla_\theta x(\vartheta_i)^\top=H_\Sigma^{-1}H_iC_i.
\]
Therefore, for every component \(\overline{\cV}_e(i)\) not containing \(i\),
\[
\nabla_\theta\!\left(\sum_{v\in\overline{\cV}_e(i)}u_v\right)(\vartheta_i)^\top
=
\left(\sum_{v\in\overline{\cV}_e(i)}H_v\right)H_\Sigma^{-1}H_iC_i,
\]
\[
\nabla_\theta u_i(\vartheta_i)^\top
=H_i(H_\Sigma^{-1}H_i-I)C_i,\qquad
\nabla_\theta u_v(\vartheta_i)^\top
=H_vH_\Sigma^{-1}H_iC_i\quad(v\ne i).
\]
Thus \(\widetilde\kappa_{c,r}\) is certified by any upper bound on
\[
\begin{aligned}
\sup_{\eta,\vartheta}
\max_{i\in\cV_r}
\Bigg\{&
\max_{e\in\cE_r}
\frac{|\cV_r|}{|\overline{\cV}_e(i)|}
\left\|
\left(\sum_{v\in\overline{\cV}_e(i)}H_v\right)
H_\Sigma^{-1}H_iC_i
\right\|,
\\
&
\left\|H_i(H_\Sigma^{-1}H_i-I)C_i\right\|,
\quad
\max_{v\ne i}|\cV_r|
\left\|H_vH_\Sigma^{-1}H_iC_i\right\|
\Bigg\}.
\end{aligned}
\]
Thus \(\widetilde\kappa_{c,r}\) exhibits a preconditioning effect analogous to that in Remark~\ref{rem:clean_kappa_partial}.
The additional factor \(C_i\) captures the center sensitivity of the local surrogate gradient. Assumption~\ref{ass:clean_surrogate_nodewise}(iv) ensures that the matrices \(C_i\) are uniformly bounded. Together with the uniform curvature bounds in Assumption~\ref{ass:clean_surrogate_nodewise}(iii), this ensures that \(\widetilde\kappa_{c,r}<\infty\).}

\subsection{Proof of Lemma~\ref{lem:clean_surrogate_tree_mismatch}}
\label{subsec:app_sur_boundary_center_mismatch}

Fix a block \(r\) and an iteration \(\nu\ge\max_{s\in[p]}\operatorname{diam}(\cT_s)\).  For \(i,v\in\cV_r\), set \(d_i(v):=0\) if \(i=v\), and \(d_i(v):=\min_{e\in\cA_v^\circ}\delta_{e,i}\) otherwise.
Use the node-aligned boundary vector \(E_v^\nu\) from the proof of Lemma~\ref{lem:clean_tree_mismatch}, and set \(Z_{r,v}^\nu:=(Z_i^{\nu-d_i(v)})_{i\in\cV_r}\) for the center coordinates. In particular, \([Z_{r,v}^\nu]_v=Z_v^\nu\).

\smallskip
\noindent\textbf{Mismatch decomposition.} For \(v\in\cV_r\), define the node mismatch
\[
    m_v:=(\hat Z_r^{\nu+1}-\bar Z_r^{\nu+1})_v.
\]
The pair \((\widehat E_e^\nu,Z_{r,e}^\nu)\) depends on the incident edge \(e\), whereas \(m_v\) fixes the node \(v\).  To estimate \(m_v\), insert the node-aligned pair \((E_v^\nu,Z_{r,v}^\nu)\). Since
\[
(\hat Z_r^{\nu+1})_v
=
\sum_{e\in\cA_v^\circ}
\mathsf B_{v,e}\widetilde{\mathsf{Y}}_{r,e}
(\widehat E_e^\nu,Z_{r,e}^\nu)
\]
and \((\bar Z_r^{\nu+1})_v=\widetilde{\mathsf{Z}}_{r,v}(E^\nu,Z_{\cV_r}^\nu)\), adding and subtracting
\[
\sum_{e\in\cA_v^\circ}
\mathsf B_{v,e}\widetilde{\mathsf{Y}}_{r,e}(E_v^\nu,Z_{r,v}^\nu)
=
\widetilde{\mathsf{Z}}_{r,v}(E_v^\nu,Z_{r,v}^\nu)
\]
gives the exact decomposition
\begin{equation*}
    m_v=p_v+q_v,
\end{equation*}
where
\begin{align}
p_v
&:=
\sum_{e\in\cA_v^\circ}\mathsf B_{v,e}
\left[
\widetilde{\mathsf{Y}}_{r,e}(\widehat E_e^\nu,Z_{r,e}^\nu)
-
\widetilde{\mathsf{Y}}_{r,e}(E_v^\nu,Z_{r,v}^\nu)
\right],
\label{eq:app_sur_pv}\\
q_v
&:=
\widetilde{\mathsf{Z}}_{r,v}(E_v^\nu,Z_{r,v}^\nu)
-
\widetilde{\mathsf{Z}}_{r,v}(E^\nu,Z_{\cV_r}^\nu).
\label{eq:app_sur_qv}
\end{align}
\smallskip

\noindent\textbf{Estimate of \(p_v\).} Insert \(\widetilde{\mathsf{Y}}_{r,e}(E_v^\nu,Z_{r,e}^\nu)\) in each summand of~\eqref{eq:app_sur_pv}.  This gives
\[
p_v=p_v^\partial+p_v^c,
\]
where
\[
\begin{aligned}
p_v^\partial
&:=
\sum_{e\in\cA_v^\circ}\mathsf B_{v,e}
\left[
\widetilde{\mathsf{Y}}_{r,e}(\widehat E_e^\nu,Z_{r,e}^\nu)
-
\widetilde{\mathsf{Y}}_{r,e}(E_v^\nu,Z_{r,e}^\nu)
\right],\\
p_v^c
&:=
\sum_{e\in\cA_v^\circ}\mathsf B_{v,e}
\left[
\widetilde{\mathsf{Y}}_{r,e}(E_v^\nu,Z_{r,e}^\nu)
-
\widetilde{\mathsf{Y}}_{r,e}(E_v^\nu,Z_{r,v}^\nu)
\right].
\end{aligned}
\]
The term \(p_v^\partial\) changes only the boundary vector.  For \(e\in\cA_v^\circ\) and \((i,h)\in\widehat{\partial}_r\), the delay definition and \eqref{eq:app_rooted_boundary_delay} give \(\delta_{e,i}\in\{\delta_{r,i}(v),\delta_{r,i}(v)+1\}\).  Hence the same argument in~\eqref{eq:app_transport_bound} gives
\begin{equation}
\sum_{v\in\cV_r}\|p_v^\partial\|^2
\le
\widehat J_r\widetilde\kappa_{\partial,r}^2
\sum_{\ell=\nu-\widetilde D_r}^{\nu-1}
\sum_{(i,h)\in\widehat{\partial}_r}\|Y_h^{\ell+1}-Y_h^\ell\|^2.
\label{eq:app_sur_pv_boundary}
\end{equation}
For the center part, write \(\Delta Z_i^\ell:=Z_i^{\ell+1}-Z_i^\ell\).  For each \(i\in\cV_r\), root the tree at \(i\), and let \(\mathrm{Ch}_{r,i}(v):=\{e\in\cA_v^\circ:\delta_{e,i}=d_i(v)+1\}\), whose elements are among the child edges of \(v\) in this rooted tree.  For \(v\ne i\), let \(p_i(v)\) be the active edge from \(v\) to its parent, and set
\[
n_{r,i}(i):=N_r,\qquad
n_{r,i}(v):=|\overline{\cV}_{p_i(v)}(i)|,
\quad v\ne i.
\]
Since \(\delta_{e,i}=0\) for every \(e\in\cA_i^\circ\), \(\mathrm{Ch}_{r,i}(i)=\emptyset\).
For each \(e\in\cA_v^\circ\), telescope differences from \(Z_{r,v}^\nu\) to \(Z_{r,e}^\nu\) by changing center coordinates one at a time.  In the coordinate-\(i\) step, let \(\vartheta_{e,i}^-\) and \(\vartheta_{e,i}^+\) denote the consecutive center tuples, and define
\[
p_{v,i}^c
:=
\sum_{e\in\cA_v^\circ}\mathsf B_{v,e}
\left[
\widetilde{\mathsf{Y}}_{r,e}(E_v^\nu,\vartheta_{e,i}^+)
-
\widetilde{\mathsf{Y}}_{r,e}(E_v^\nu,\vartheta_{e,i}^-)
\right].
\]
Then \(p_v^c=\sum_{i\in\cV_r}p_{v,i}^c\).  Since \(\delta_{e,i}\in\{d_i(v),d_i(v)+1\}\) for every \(e\in\cA_v^\circ\), the \(i\)-coordinate difference is zero unless \(e\in\mathrm{Ch}_{r,i}(v)\).  If \(e\in\mathrm{Ch}_{r,i}(v)\), then
\[
(\vartheta_{e,i}^+-\vartheta_{e,i}^-)_i
=
[Z_{r,e}^\nu]_i-[Z_{r,v}^\nu]_i
=
Z_i^{\nu-\delta_{e,i}}-Z_i^{\nu-d_i(v)}
=-\Delta Z_i^{\nu-d_i(v)-1}.
\]
If \(\mathrm{Ch}_{r,i}(v)=\emptyset\), then \(p_{v,i}^c=0\).  Otherwise \(d_i(v)+1\le\widetilde D_r\), and the sensitivity bound~\eqref{eq:clean_sur_center_edge_sens} together with the counting argument for \(v\ne i\) in~\eqref{eq:app_rooted_counts} gives
\[
\begin{aligned}
\|p_{v,i}^c\|^2
&\le
\widetilde\kappa_{c,r}^2
\frac{\left(\sum_{e\in\mathrm{Ch}_{r,i}(v)}|\overline{\cV}_e(i)|\right)^2}{N_r^2}
\|\Delta Z_i^{\nu-d_i(v)-1}\|^2\\
&\le
\widetilde\kappa_{c,r}^2
\frac{(n_{r,i}(v)-1)^2}{N_r^2}
\|\Delta Z_i^{\nu-d_i(v)-1}\|^2.
\end{aligned}
\]
By Cauchy-Schwarz,
\[
\|p_v^c\|^2
\le
N_r\widetilde\kappa_{c,r}^2
\sum_{\substack{i\in\cV_r,\,\mathrm{Ch}_{r,i}(v)\ne\emptyset}}
\frac{(n_{r,i}(v)-1)^2}{N_r^2}
\|\Delta Z_i^{\nu-d_i(v)-1}\|^2.
\]
Summing over \(v\) and rearranging terms gives
\begin{align}
\sum_{v\in\cV_r}\|p_v^c\|^2
&\le
N_r\widetilde\kappa_{c,r}^2
\sum_{i\in\cV_r}
\sum_{v\in\cV_r:\,\mathrm{Ch}_{r,i}(v)\ne\emptyset}
\frac{(n_{r,i}(v)-1)^2}{N_r^2}
\|\Delta Z_i^{\nu-d_i(v)-1}\|^2
\notag\\
&=
N_r\widetilde\kappa_{c,r}^2
\sum_{i\in\cV_r}\sum_{t=1}^{\widetilde D_r}
\left(
\sum_{v\in\cV_r:\,\mathrm{Ch}_{r,i}(v)\ne\emptyset,\,d_i(v)+1=t}
\frac{(n_{r,i}(v)-1)^2}{N_r^2}
\right)
\|\Delta Z_i^{\nu-t}\|^2
\notag\\
&\le
N_r\widetilde\kappa_{c,r}^2
\sum_{i\in\cV_r}\sum_{t=1}^{\widetilde D_r}
\|\Delta Z_i^{\nu-t}\|^2=
N_r\widetilde\kappa_{c,r}^2
\sum_{\ell=\nu-\widetilde D_r}^{\nu-1}
\sum_{i\in\cV_r}\|Z_i^{\ell+1}-Z_i^\ell\|^2,
\label{eq:app_sur_pv_center}
\end{align}
Here \(d_i(v)=\max\{\dist_{\cT_r}(i,v)-1,0\}\), so each inner sum is over vertices at depth \(t\) and is at most one by~\eqref{eq:app_rooted_counts}. Combining~\eqref{eq:app_sur_pv_boundary} and~\eqref{eq:app_sur_pv_center} yields
\begin{equation}
\sum_{v\in\cV_r}\|p_v\|^2
\le
2\widehat J_r\widetilde\kappa_{\partial,r}^2
\sum_{\ell=\nu-\widetilde D_r}^{\nu-1}
\sum_{(i,h)\in\widehat{\partial}_r}\|Y_h^{\ell+1}-Y_h^\ell\|^2
+
2N_r\widetilde\kappa_{c,r}^2
\sum_{\ell=\nu-\widetilde D_r}^{\nu-1}
\sum_{i\in\cV_r}\|Z_i^{\ell+1}-Z_i^\ell\|^2.
\label{eq:app_sur_pv_bound}
\end{equation}

\smallskip

\noindent\textbf{Estimate of \(q_v\).} Split \(q_v=q_v^\partial+q_v^c\), where
\[
q_v^\partial
:=
\widetilde{\mathsf{Z}}_{r,v}(E_v^\nu,Z_{r,v}^\nu)
-
\widetilde{\mathsf{Z}}_{r,v}(E^\nu,Z_{r,v}^\nu),
\quad
q_v^c
:=
\widetilde{\mathsf{Z}}_{r,v}(E^\nu,Z_{r,v}^\nu)
-
\widetilde{\mathsf{Z}}_{r,v}(E^\nu,Z_{\cV_r}^\nu).
\]
With \(Z_{r,v}^\nu\) fixed, the proof of Lemma~\ref{lem:clean_tree_mismatch} gives the same \(q_v\)-estimate with \(\widetilde\kappa_{\partial,r}\) in place of \(\kappa_{\partial,r}\):
\begin{equation}
\sum_{v\in\cV_r}\|q_v^\partial\|^2
\le
\widehat J_r\widetilde\kappa_{\partial,r}^2
\sum_{\ell=\nu-\widetilde D_r}^{\nu-1}
\sum_{(i,h)\in\widehat{\partial}_r}\|Y_h^{\ell+1}-Y_h^\ell\|^2.
\label{eq:app_sur_qv_boundary}
\end{equation}
For the center part, keep the boundary vector fixed at \(E^\nu\) and change \(Z_{\cV_r}^\nu\) to \(Z_{r,v}^\nu\) one center coordinate at a time. Let \(q_{v,i}^c\) be the change in \(\widetilde{\mathsf{Z}}_{r,v}\) during the \(i\)-coordinate replacement.  Since \(d_v(v)=0\), we have \(q_{v,v}^c=0\). Thus \(q_v^c=\sum_{i\in\cV_r\setminus\{v\}}q_{v,i}^c\), and~\eqref{eq:clean_sur_center_node_sens_nonlocal} gives \(\|q_{v,i}^c\|\le\frac{\widetilde\kappa_{c,r}}{N_r}\|Z_i^{\nu-d_i(v)}-Z_i^\nu\|\) for \(i\ne v\). Hence, using Cauchy--Schwarz and \((N_r-1)/N_r^2\le1/N_r\),
\begin{equation}
\|q_v^c\|^2
\le
\frac{\widetilde\kappa_{c,r}^2}{N_r}
\sum_{i\in\cV_r}\|Z_i^{\nu-d_i(v)}-Z_i^\nu\|^2.
\label{eq:app_sur_qvc_point}
\end{equation}
For every \(i\), telescoping gives
\begin{equation*}
\|Z_i^{\nu-d_i(v)}-Z_i^\nu\|^2
\le
d_i(v)\sum_{m=1}^{d_i(v)}
\|Z_i^{\nu-m+1}-Z_i^{\nu-m}\|^2,
\quad 0\le d_i(v)\le \widetilde D_r.
\end{equation*}
Therefore
\begin{equation}
\frac1{N_r}\sum_{v\in\cV_r}\sum_{i\in\cV_r}
\|Z_i^{\nu-d_i(v)}-Z_i^\nu\|^2
\le
\widetilde D_r
\sum_{\ell=\nu-\widetilde D_r}^{\nu-1}
\sum_{i\in\cV_r}\|Z_i^{\ell+1}-Z_i^\ell\|^2,
\label{eq:app_sur_center_diff_sums}
\end{equation}
where we used \(\sum_{v\in\cV_r}d_i(v)\le N_r\widetilde D_r\).  Combining~\eqref{eq:app_sur_qvc_point} and~\eqref{eq:app_sur_center_diff_sums} yields
\begin{equation*}
\sum_{v\in\cV_r}\|q_v^c\|^2
\le
\widetilde D_r\widetilde\kappa_{c,r}^2
\sum_{\ell=\nu-\widetilde D_r}^{\nu-1}
\sum_{i\in\cV_r}\|Z_i^{\ell+1}-Z_i^\ell\|^2.
\end{equation*}
Together with~\eqref{eq:app_sur_qv_boundary},
\begin{equation}
\sum_{v\in\cV_r}\|q_v\|^2
\le
2\widehat J_r\widetilde\kappa_{\partial,r}^2
\sum_{\ell=\nu-\widetilde D_r}^{\nu-1}
\sum_{(i,h)\in\widehat{\partial}_r}\|Y_h^{\ell+1}-Y_h^\ell\|^2
+
2\widetilde D_r\widetilde\kappa_{c,r}^2
\sum_{\ell=\nu-\widetilde D_r}^{\nu-1}
\sum_{i\in\cV_r}\|Z_i^{\ell+1}-Z_i^\ell\|^2.
\label{eq:app_sur_qv_bound}
\end{equation}
Since
\[
\|\hat Z_r^{\nu+1}-\bar Z_r^{\nu+1}\|^2
=
\sum_{v\in\cV_r}\|m_v\|^2
\le
2\sum_{v\in\cV_r}\|p_v\|^2
+
2\sum_{v\in\cV_r}\|q_v\|^2,
\]
combining~\eqref{eq:app_sur_pv_bound} and~\eqref{eq:app_sur_qv_bound} gives
\begin{align}
\|\hat Z_r^{\nu+1}-\bar Z_r^{\nu+1}\|^2
&\le
8\widehat J_r\widetilde\kappa_{\partial,r}^2
\sum_{\ell=\nu-\widetilde D_r}^{\nu-1}
\sum_{(i,h)\in\widehat{\partial}_r}\|Y_h^{\ell+1}-Y_h^\ell\|^2
\nonumber\\
&\quad+
4(N_r+\widetilde D_r)\widetilde\kappa_{c,r}^2
\sum_{\ell=\nu-\widetilde D_r}^{\nu-1}
\sum_{i\in\cV_r}\|Z_i^{\ell+1}-Z_i^\ell\|^2.
\label{eq:app_sur_raw_boundary_center_mismatch}
\end{align}

\smallskip
\noindent\textbf{Reduction to block increments.} It remains to replace the coordinate increments by block and boundary increments. Since \(Z^\ell=\mathsf BY^\ell\), \(Z_r^\ell=\mathsf B_{\cV,\cE_r}Y_{\cE_r}^\ell\), and \(\cA_i=\cA_i^\circ\sqcup\cA_i^\partial\), for \(i\in\cV_r\),
\[
\begin{aligned}
Z_i^{\ell+1}-Z_i^\ell
&=
\sum_{h\in\cA_i}\mathsf B_{i,h}(Y_h^{\ell+1}-Y_h^\ell)=(Z_r^{\ell+1}-Z_r^\ell)_i
+
\sum_{h\in\cA_i^\partial}\mathsf B_{i,h}(Y_h^{\ell+1}-Y_h^\ell).
\end{aligned}
\]
Cauchy's inequality, \(J_{r,i}:=|\cA_i^\partial|\le J_r\), and \(\widehat{\partial}_r=\bigsqcup_{i\in\cV_r}(\{i\}\times\cA_i^\partial)\) give
\begin{equation*}
\sum_{i\in\cV_r}\|Z_i^{\ell+1}-Z_i^\ell\|^2
\le
2\|Z_r^{\ell+1}-Z_r^\ell\|^2
+2J_r\sum_{(i,h)\in\widehat{\partial}_r}\|Y_h^{\ell+1}-Y_h^\ell\|^2.
\end{equation*}
Summing over \(\ell=\nu-\widetilde D_r,\ldots,\nu-1\) and substituting into~\eqref{eq:app_sur_raw_boundary_center_mismatch} gives
\begin{align}
\|\hat Z_r^{\nu+1}-\bar Z_r^{\nu+1}\|^2
&\le
\left(
8\widetilde\kappa_{\partial,r}^2\widehat J_r
+8J_r(N_r+\widetilde D_r)\widetilde\kappa_{c,r}^2
\right)
\sum_{\ell=\nu-\widetilde D_r}^{\nu-1}
\sum_{(i,h)\in\widehat{\partial}_r}\|Y_h^{\ell+1}-Y_h^\ell\|^2
\nonumber\\*
&\quad+
8(N_r+\widetilde D_r)\widetilde\kappa_{c,r}^2
\sum_{\ell=\nu-\widetilde D_r}^{\nu-1}
\|Z_r^{\ell+1}-Z_r^\ell\|^2.
\label{eq:app_sur_raw_boundary_center_mismatch_image}
\end{align}
By~\eqref{eq:clean_increment_conversion},
\[
\sum_{\substack{(i,h)\in\widehat{\partial}_r\\h\in\cE_s}}
\|Y_h^{\ell+1}-Y_h^\ell\|^2
\le\Gamma_r\|Z_s^{\ell+1}-Z_s^\ell\|^2,
\quad s\in\cM_r.
\]
Since \(\widehat{\partial}_r=\bigsqcup_{s\in\cM_r}\{(i,h)\in\widehat{\partial}_r:h\in\cE_s\}\), summing this estimate over \(s\) and \(\ell=\nu-\widetilde D_r,\ldots,\nu-1\) yields
\[
\sum_{\ell=\nu-\widetilde D_r}^{\nu-1}
\sum_{(i,h)\in\widehat{\partial}_r}\|Y_h^{\ell+1}-Y_h^\ell\|^2
\le
\Gamma_r
\sum_{\ell=\nu-\widetilde D_r}^{\nu-1}
\sum_{s\in\cM_r}
\|Z_s^{\ell+1}-Z_s^\ell\|^2.
\]
Substitution into~\eqref{eq:app_sur_raw_boundary_center_mismatch_image}, together with the definitions of \(\widetilde M_{r,\partial}\) and \(\widetilde M_{r,c}\), proves \eqref{eq:clean_sur_boundary_center_mismatch}. \qed

\subsection{Proof of Theorem~\ref{thm:clean_surrogate_linear}}
\label{subsec:app_sur_descent}

As in the proof of Theorem~\ref{thm:clean_linear_tree}, we first establish a descent estimate and apply the surrogate block error bound. Combining the resulting recursion with the mismatch bound in Lemma~\ref{lem:clean_surrogate_tree_mismatch} then yields linear convergence.

\begin{lemma}[Descent]
\label{lem:app_sur_one_step}
Under Assumption~\ref{ass:clean_surrogate_nodewise}, if \(0\le\tau_r^\nu\le1/\omega_{\cP}\) for every \(r\in[p]\), then
\begin{equation}
G(Z^{\nu+1})
\le
G(Z^\nu)-\sum_{r=1}^p\tau_r^\nu\widetilde\Delta_r(Z^\nu)
+
\sum_{r=1}^p\tau_r^\nu\frac{\widetilde L_r^*}{2}
\|\hat Z_r^{\nu+1}-\bar Z_r^{\nu+1}\|^2.
\label{eq:app_sur_one_step}
\end{equation}
\end{lemma}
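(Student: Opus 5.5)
The argument mirrors the proof of Lemma~\ref{lem:clean_one_step}, with $G$ replaced by the surrogate $\widetilde G(\cdot;Z^\nu)$ centered at the current image. The two curvature constants are swapped for $\widetilde L_r^*$ (smoothness) and $\widetilde\mu_r^*$ (strong convexity) along $\cU_r$. Majorization is used at the end to return to $G$. Throughout, fix $\nu$, set $\vartheta:=Z^\nu$, and write $\widetilde G^\nu(U):=\widetilde G(U;Z^\nu)$.

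\textbf{Step 1: per-block surrogate comparison.} Fix $r$. By~\eqref{eq:clean_sur_block_images},
\[
Z^\nu-Z_r^\nu+\bar Z_r^{\nu+1}=\widetilde O_r(Z^\nu),
\]
which minimizes $\widetilde G^\nu$ over $Z^\nu+\cU_r$. Hence $\inner{\nabla\widetilde G^\nu(\widetilde O_r(Z^\nu))^\top,H}=0$ for every $H\in\cU_r$. The difference $\hat Z_r^{\nu+1}-\bar Z_r^{\nu+1}$ lies in $\cU_r$ and touches only rows in $\cV_r$. Along these rows, $\widetilde G^\nu$ is $\widetilde L_r^*$-smooth by Assumption~\ref{ass:clean_surrogate_nodewise}(iii). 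Therefore
\[
\widetilde G^\nu(Z^\nu-Z_r^\nu+\hat Z_r^{\nu+1})\le \widetilde G^\nu(\widetilde O_r(Z^\nu))+\tfrac{\widetilde L_r^*}{2}\|\hat Z_r^{\nu+1}-\bar Z_r^{\nu+1}\|^2 .
\]
Using~\eqref{eq:clean_sur_progress_def}, the first term on the right equals $G(Z^\nu)-\widetilde\Delta_r(Z^\nu)$.

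\textbf{Step 2: Jensen aggregation.} Write $Z_i^{\nu+1}$ as the convex combination~\eqref{eq:clean_z_relaxed}; this is valid since $\tau_r^\nu\le 1/\omega_{\cP}$. Here I will apply Jensen to the convex map $U_i\mapsto\widetilde f_i^*(U_i;Z_i^\nu)$, with the center frozen, rather than to $f_i^*$. The weight $1-\sum_r\tau_r^\nu$ multiplies $\widetilde f_i^*(Z_i^\nu;Z_i^\nu)$, which equals $f_i^*(Z_i^\nu)$ by touching. For $i\notin\cV_r$ the block-$r$ candidate row equals $Z_i^\nu$, so summing over $i$ gives
\[
\widetilde G^\nu(Z^{\nu+1})-G(Z^\nu)\le\sum_r\tau_r^\nu\bigl[\widetilde G^\nu(Z^\nu-Z_r^\nu+\hat Z_r^{\nu+1})-G(Z^\nu)\bigr].
\]
Here touching is used again to identify $\widetilde G^\nu(Z^\nu)=G(Z^\nu)$.

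\textbf{Step 3: return to $G$.} Majorization (Assumption~\ref{ass:clean_surrogate_nodewise}(ii)) gives $G(Z^{\nu+1})\le\widetilde G^\nu(Z^{\nu+1})$. Combining this with Steps 1--2 yields~\eqref{eq:app_sur_one_step}.

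The only delicate point is the bookkeeping. Jensen must be applied to the surrogate with a single common center $Z^\nu$, and this is exactly the center used in defining $\widetilde O_r$ and $\bar Z_r^{\nu+1}$. With that choice the touching terms cancel consistently across blocks, and the delayed centers in $\hat Y$ play no role here; they are absorbed into the mismatch term, which Lemma~\ref{lem:clean_surrogate_tree_mismatch} handles.
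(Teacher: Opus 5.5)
Your proof is correct and uses the same ingredients as the paper: Jensen under $\tau_r^\nu\le1/\omega_{\cP}$, touching, majorization, and first-order optimality of $\widetilde O_r(Z^\nu)$ combined with $\widetilde L_r^*$-smoothness along $\cU_r$. The only difference is the order. The paper applies Jensen to $f_i^*$ first and then majorizes each block candidate $G(Z^\nu-Z_r^\nu+\hat Z_r^{\nu+1})$ by $\widetilde G(\cdot;Z^\nu)$, whereas you majorize once at $Z^{\nu+1}$ and apply Jensen to the frozen-center surrogate; this is equally valid because $\widetilde f_i^*(\cdot;Z_i^\nu)$ is convex by Assumption~\ref{ass:clean_surrogate_nodewise}(iii).
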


\begin{proof}
Since \(\tau_r^\nu\le1/\omega_{\cP}\), applying Jensen's inequality to each \(f_i^*\) and summing over the nodes yields
\[
G(Z^{\nu+1})-G(Z^\nu)
\le
\sum_{r=1}^p\tau_r^\nu
\left[G(Z^\nu-Z_r^\nu+\hat Z_r^{\nu+1})-G(Z^\nu)\right].
\]
For fixed \(r\), all outside coordinates are unchanged.  Majorization and touching give
\[
G(Z^\nu-Z_r^\nu+\hat Z_r^{\nu+1})-G(Z^\nu)
\le
\widetilde G(Z^\nu-Z_r^\nu+\hat Z_r^{\nu+1};Z^\nu)-
\widetilde G(Z^\nu;Z^\nu).
\]
Since \(Z^\nu-Z_r^\nu+\bar Z_r^{\nu+1}\) minimizes \(\widetilde G(\cdot;Z^\nu)\) over \(Z^\nu+\cU_r\) and \(\hat Z_r^{\nu+1}-\bar Z_r^{\nu+1}\in\cU_r\), first-order optimality gives
\[
\left\langle
\nabla_U\widetilde G(Z^\nu-Z_r^\nu+\bar Z_r^{\nu+1};Z^\nu)^\top,
\hat Z_r^{\nu+1}-\bar Z_r^{\nu+1}
\right\rangle=0.
\]
The \(\widetilde L_r^*\)-smoothness on this slice then yields
\[
\widetilde G(Z^\nu-Z_r^\nu+\hat Z_r^{\nu+1};Z^\nu)
\le
\widetilde G(Z^\nu-Z_r^\nu+\bar Z_r^{\nu+1};Z^\nu)
+
\frac{\widetilde L_r^*}{2}\|\hat Z_r^{\nu+1}-\bar Z_r^{\nu+1}\|^2.
\]
Combining these bounds with the definition of \(\widetilde\Delta_r(Z^\nu)\) proves~\eqref{eq:app_sur_one_step}.
\qed
\end{proof}

\begin{lemma}[Surrogate block error bound]
\label{lem:app_sur_main_recursion}
Under Assumptions~\ref{ass:clean_surrogate_nodewise} and~\ref{ass:clean_sur_beb}, assume that \(0<\tau_r^\nu\le1/\omega_{\cP}\) for every \(r\in[p]\), and let \(\underline\tau^\nu:=\min_{r\in[p]}\tau_r^\nu\). Then
\begin{equation}
\begin{aligned}
G(Z^{\nu+1})-G^\star
\le\ &
\left(1-\frac{\underline\tau^\nu}{2\widetilde\kappa_{\cP}}\right)
\bigl(G(Z^\nu)-G^\star\bigr)
-
\sum_{r=1}^p\frac{\widetilde\mu_r^*}{8\tau_r^\nu}\|Z_r^{\nu+1}-Z_r^\nu\|^2  \\
&\quad+
\sum_{r=1}^p\tau_r^\nu\frac{2\widetilde L_r^*+\widetilde\mu_r^*}{4}
\|\hat Z_r^{\nu+1}-\bar Z_r^{\nu+1}\|^2.
\end{aligned}
\label{eq:app_sur_main_recursion}
\end{equation}
\end{lemma}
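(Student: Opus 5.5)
The plan mirrors the proof of Lemma~\ref{lem:clean_main_recursion}, with the exact block quantities replaced by their surrogate counterparts. We start from the descent estimate of Lemma~\ref{lem:app_sur_one_step} and subtract \(G^\star\) on both sides. The block progress term \(\sum_{r}\tau_r^\nu\widetilde\Delta_r(Z^\nu)\) is split into two equal halves, which are used in different ways.

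\textbf{First half.} Since \(\tau_r^\nu\ge\underline\tau^\nu\), Assumption~\ref{ass:clean_sur_beb} gives
\[
\tfrac12\sum_{r=1}^p\tau_r^\nu\widetilde\Delta_r(Z^\nu)\ge\frac{\underline\tau^\nu}{2\widetilde\kappa_{\cP}}\bigl(G(Z^\nu)-G^\star\bigr).
\]
This produces the contraction factor in \eqref{eq:app_sur_main_recursion}.

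\textbf{Second half.} We lower-bound each \(\widetilde\Delta_r(Z^\nu)\) by a squared distance. By touching, \(G(Z^\nu)=\widetilde G(Z^\nu;Z^\nu)\). Moreover, \(O:=Z^\nu-Z_r^\nu+\bar Z_r^{\nu+1}\) minimizes \(\widetilde G(\cdot;Z^\nu)\) over \(Z^\nu+\cU_r\), and \(Z^\nu-O=Z_r^\nu-\bar Z_r^{\nu+1}\in\cU_r\). Only the rows in \(\cV_r\) vary along \(\cU_r\), so \(\widetilde G(\cdot;Z^\nu)\) is \(\widetilde\mu_r^*\)-strongly convex on this slice. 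First-order optimality therefore gives
\[
\widetilde\Delta_r(Z^\nu)\ge\frac{\widetilde\mu_r^*}{2}\|\bar Z_r^{\nu+1}-Z_r^\nu\|^2.
\]
The elementary inequality \(\|a\|^2\ge\frac12\|a+b\|^2-\|b\|^2\) then yields
\[
\widetilde\Delta_r(Z^\nu)\ge\frac{\widetilde\mu_r^*}{4}\|\hat Z_r^{\nu+1}-Z_r^\nu\|^2-\frac{\widetilde\mu_r^*}{2}\|\hat Z_r^{\nu+1}-\bar Z_r^{\nu+1}\|^2.
\]

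\textbf{Combining.} Multiply the last bound by \(\tau_r^\nu/2\) and use \(Z_r^{\nu+1}-Z_r^\nu=\tau_r^\nu(\hat Z_r^{\nu+1}-Z_r^\nu)\). This gives the negative term \(-\frac{\widetilde\mu_r^*}{8\tau_r^\nu}\|Z_r^{\nu+1}-Z_r^\nu\|^2\) and a positive mismatch term \(\tau_r^\nu\frac{\widetilde\mu_r^*}{4}\|\hat Z_r^{\nu+1}-\bar Z_r^{\nu+1}\|^2\). Adding the descent mismatch \(\tau_r^\nu\frac{\widetilde L_r^*}{2}\|\cdot\|^2\) from Lemma~\ref{lem:app_sur_one_step} gives the total coefficient \(\tau_r^\nu\frac{2\widetilde L_r^*+\widetilde\mu_r^*}{4}\), which is exactly \eqref{eq:app_sur_main_recursion}.

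The only step needing care is the strong-convexity lower bound on \(\widetilde\Delta_r\). It relies on three facts: \(\widetilde\Delta_r\) is measured from \(G(Z^\nu)=\widetilde G(Z^\nu;Z^\nu)\), which is where touching enters; the minimizer \(O\) is taken with the same frozen center \(Z^\nu\); and the modulus is \(\widetilde\mu_r^*\) because only \(\cV_r\)-rows vary along \(\cU_r\). Assumption~\ref{ass:clean_surrogate_nodewise}(iii) supplies this modulus uniformly in the center, so the bound holds at every iterate.
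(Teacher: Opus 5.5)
Your proposal is correct and follows the paper's proof step for step. You split the surrogate progress into halves and apply Assumption~\ref{ass:clean_sur_beb} to one half. On the other half you use touching plus \(\widetilde\mu_r^*\)-strong convexity of the frozen-center slice, the inequality \(\|a\|^2\ge\frac12\|a+b\|^2-\|b\|^2\), and \(Z_r^{\nu+1}-Z_r^\nu=\tau_r^\nu(\hat Z_r^{\nu+1}-Z_r^\nu)\), then combine with Lemma~\ref{lem:app_sur_one_step}.
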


\begin{proof}
Split \(\sum_{r=1}^p\tau_r^\nu\widetilde\Delta_r(Z^\nu)\) in~\eqref{eq:app_sur_one_step} into two halves.  The surrogate block error bound gives
\[
\frac12\sum_{r=1}^p\tau_r^\nu\widetilde\Delta_r(Z^\nu)
\ge
\frac{\underline\tau^\nu}{2\widetilde\kappa_{\cP}}
\bigl(G(Z^\nu)-G^\star\bigr).
\]
For the second half, touching and strong convexity of the nondelayed surrogate slice give
\[
\widetilde\Delta_r(Z^\nu)
\ge
\frac{\widetilde\mu_r^*}{2}\|\bar Z_r^{\nu+1}-Z_r^\nu\|^2.
\]
Using
\[
\|\bar Z_r^{\nu+1}-Z_r^\nu\|^2
\ge
\frac12\|\hat Z_r^{\nu+1}-Z_r^\nu\|^2-
\|\hat Z_r^{\nu+1}-\bar Z_r^{\nu+1}\|^2
\]
and \(Z_r^{\nu+1}-Z_r^\nu=\tau_r^\nu(\hat Z_r^{\nu+1}-Z_r^\nu)\), the second half of the progress satisfies
\[
\frac12\tau_r^\nu\widetilde\Delta_r(Z^\nu)
\ge
\frac{\widetilde\mu_r^*}{8\tau_r^\nu}\|Z_r^{\nu+1}-Z_r^\nu\|^2
-
\tau_r^\nu\frac{\widetilde\mu_r^*}{4}
\|\hat Z_r^{\nu+1}-\bar Z_r^{\nu+1}\|^2.
\]
Substituting these bounds into Lemma~\ref{lem:app_sur_one_step} gives~\eqref{eq:app_sur_main_recursion}.\qed
\end{proof}

\noindent\textit{Proof of Theorem~\ref{thm:clean_surrogate_linear}.} Set \(\nu_0:=\widetilde D+1=\max_{r\in[p]}\operatorname{diam}(\cT_r)\). Substituting~\eqref{eq:clean_sur_boundary_center_mismatch} into \eqref{eq:app_sur_main_recursion} and using~\eqref{eq:clean_sur_A} gives for all \(\nu\ge\nu_0\)
\[
\begin{aligned}
G(Z^{\nu+1})-G^\star
&\le
\left(1-\frac{\tau}{2\widetilde\kappa_{\cP}}\right)
\bigl(G(Z^\nu)-G^\star\bigr)
-
\frac{\underline{\widetilde\mu}^*}{8\tau}
\sum_{r=1}^p\|Z_r^{\nu+1}-Z_r^\nu\|^2
\\
&\quad
+
\tau\widetilde A
\sum_{\ell=\nu-\widetilde D}^{\nu-1}
\sum_{r=1}^p\|Z_r^{\ell+1}-Z_r^\ell\|^2.
\end{aligned}
\]
Define the Lyapunov function
\[
\widetilde{\cL}^{\nu}:=G(Z^\nu)-G^\star
+\tau\widetilde A\sum_{j=1}^{\widetilde D}
\left(\sum_{k=1}^{\widetilde D-j+1}\widetilde\rho^{-k}\right)
\sum_{r=1}^p\|Z_r^{\nu-j+1}-Z_r^{\nu-j}\|^2.
\]
Condition~\eqref{eq:clean_sur_stepsize} gives \(\frac{8\widetilde A\tau^2}{\underline{\widetilde\mu}^*}\sum_{j=1}^{\widetilde D}\widetilde\rho^{-j}\le1\). Then for \(\nu\ge\nu_0\),
\[
\begin{aligned}
\widetilde{\cL}^{\nu+1}
&\le\widetilde\rho\widetilde{\cL}^{\nu}
-\left(\frac{\underline{\widetilde\mu}^*}{8\tau}
-\tau\widetilde A\sum_{j=1}^{\widetilde D}\widetilde\rho^{-j}\right)
\sum_{r=1}^p\|Z_r^{\nu+1}-Z_r^\nu\|^2\le\widetilde\rho\widetilde{\cL}^{\nu}.
\end{aligned}
\]
Thus \(G(Z^\nu)-G^\star\le\widetilde{\cL}^{\nu_0}\widetilde\rho^{\nu-\nu_0}\), and
\[
\|X^\nu-X^\star\|^2
\le\frac{1}{\mu^2}\|Z^\nu-Z^\star\|^2
\le\frac{2L\widetilde{\cL}^{\nu_0}}{\mu^2}
\widetilde\rho^{\nu-\nu_0},
\qquad \nu\ge\nu_0.
\]
Absorbing constants gives~\eqref{eq:clean_sur_primal_rate}.
 \qed

\bibliography{master-bibliography}
\bibliographystyle{plain}

\end{document}